\newcommand{\nat}{\mathbb{N}}
\newcommand{\zed}{\mathbb{Z}}
\newcommand{\C}{\mathbb{C}}
\newcommand{\Hom}{\mathrm{Hom}}
\newcommand{\Sym}{\mathrm{Sym}}
\newcommand{\qb}[2]{\genfrac{[}{]}{0pt}{}{#1}{#2}}
\newcommand{\ve}{\varepsilon}
\newcommand{\id}{\mathrm{id}}
\newcommand{\gdim}{\mathrm{gdim}}
\newcommand{\mf}{\mathrm{mf}}
\newcommand{\MF}{\mathrm{MF}}
\newcommand{\hmf}{\mathrm{hmf}}
\newcommand{\HMF}{\mathrm{HMF}}
\newcommand{\ch}{\mathsf{Ch}^{\mathsf{b}}}
\newcommand{\hch}{\mathsf{hCh}^{\mathsf{b}}}
\newcommand{\Tr}{\mathrm{Tr}}
\newcommand{\tc}{\mathrm{tc}}
\theoremstyle{plain}
\newtheorem{theorem}{Theorem}[section]
\newtheorem{lemma}[theorem]{Lemma}
\newtheorem{proposition}[theorem]{Proposition}
\newtheorem{corollary}[theorem]{Corollary}
\theoremstyle{definition}
\newtheorem{definition}[theorem]{Definition}
\theoremstyle{remark}
\newtheorem{remark}[theorem]{Remark}
\numberwithin{equation}{section}
\begin{document}

\title{Colored $\mathfrak{sl}(N)$ link homology via matrix factorizations}

\author{Hao Wu}

\address{Department of Mathematics, The George Washington University, Monroe Hall, Room 240, 2115 G Street, NW, Washington DC 20052}

\email{haowu@gwu.edu}

\subjclass[2000]{Primary 57M25}

\keywords{Reshetikhin-Turaev $\mathfrak{sl}(N)$ link invariant, Khovanov-Rozansky homology, matrix factorization, symmetric polynomial}

\begin{abstract}
The Reshetikhin-Turaev $\mathfrak{sl}(N)$ polynomial of links colored by wedge powers of the defining representation has been categorified via several different approaches. Here, we give a concise introduction to the categorification using matrix factorizations, which is a direct generalization of the Khovanov-Rozansky homology. Full details of the construction are given in \cite{Wu-color}. We also briefly review deformations and applications of this categorification given in \cite{Wu-color-equi,Wu-color-ras,Wu-color-MFW}.
\end{abstract}

\maketitle

\tableofcontents

\section{Introduction}\label{sec-intro}

\subsection{Background} In the early 1980s, Jones \cite{Jones} defined the Jones polynomial, which was generalized to the HOMFLY-PT polynomial in \cite{HOMFLY,PT}. Later, Reshetikhin and Turaev \cite{Resh-Tur1} constructed a large family of polynomial invariants for framed links whose components are colored by finite dimensional representations of a complex semisimple Lie algebra, of which the HOMFLY-PT polynomial is the special example corresponding to the defining representation of $\mathfrak{sl}(N;\C)$. 

In general, the Reshetikhin-Turaev invariants for links are abstract and hard to evaluate. But, when the Lie algebra is $\mathfrak{sl}(N;\C)$ and every component of the link is colored by a wedge power of the defining representation, Murakami, Ohtsuki and Yamada \cite{MOY} constructed a state sum for the corresponding $\mathfrak{sl}(N)$ quantum invariant. Their construction also comes with a set of graphical recursive relations, which we call the MOY calculus.

If every component of the link is colored by the defining representation, then the construction in \cite{MOY} recovers the uncolored $\mathfrak{sl}(N)$ HOMFLY-PT polynomial. Based on this, Khovanov and Rozansky \cite{KR1} categorified the uncolored $\mathfrak{sl}(N)$ HOMFLY-PT polynomial, which generalizes the Khovanov homology \cite{K1}. Their construction is based on matrix factorizations associated to $1,2$-colored MOY graphs.

\subsection{Some conventions}\label{subsec-conventions} Throughout this paper, $N$ is a fixed positive integer. 

All links and tangles in this paper are oriented and colored. That is, every component of the link or tangle is assigned an orientation and an element of $\{0,1,\dots,N\}$, which we call the color\footnote{In this paper, instead of saying that an object is colored by the $k$-fold wedge power of the defining representation of $\mathfrak{sl}(N;\C)$, we simply say that it is colored by $k$.} of this component. A link that is completely colored by $1$ is called uncolored.

Following the convention in \cite{KR1}, the degree of a polynomial in this paper is twice its usual degree.

\subsection{The colored $\mathfrak{sl}(N)$ link homology}\label{subsec-main-results} Khovanov and Rozansky's construction in \cite {KR1} was generalized in \cite{Wu-color} to categorify the Reshetikhin-Turaev $\mathfrak{sl}(N)$ polynomial of links colored by any wedge powers of the defining representation. The goal of the present paper is to give a concise introduction to this generalization. In particular, we will sketch proofs of Theorems \ref{main} and \ref{euler-char-main} below.

\begin{theorem}\cite[Theorem 1.1]{Wu-color}\label{main}
Let $D$ be a diagram of a tangle whose components are colored by elements of $\{0,1,\dots,N\}$, and $C(D)$ the chain complex defined in Definition \ref{complex-knotted-MOY-def}. Then the following are true:

\begin{enumerate}[(i)]
	\item $C(D)$ is a bounded chain complex over the homotopy category of graded matrix factorizations.
	\item $C(D)$ is $\zed_2\oplus\zed\oplus\zed$-graded, where the $\zed_2$-grading is the $\zed_2$-grading of the underlying matrix factorizations, the first $\zed$-grading is the quantum grading of the underlying matrix factorizations, and the second $\zed$-grading is the homological grading.
	\item The homotopy type of $C(D)$, with the $\zed_2\oplus\zed\oplus\zed$-grading, is invariant under Reidemeister moves. 
	\item If every component of $D$ is colored by $1$, then $C(D)$ is isomorphic to the chain complex defined by Khovanov and Rozansky in \cite{KR1}.
\end{enumerate}
\end{theorem}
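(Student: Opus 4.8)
The plan is to assemble $C(D)$ from purely local data — one graded matrix factorization per colored MOY graph, one short complex per crossing — and then verify the four assertions in order, with the overwhelming majority of the work concentrated in the Reidemeister invariance of part (iii). \emph{Parts (i) and (ii).} To each colored MOY graph $\Gamma$ one assigns a matrix factorization $C(\Gamma)$ over the polynomial ring whose variables are attached to the edges of $\Gamma$, with potential the alternating sum, over the endpoints of $\Gamma$, of the relevant power-sum polynomials; $C(\Gamma)$ is the tensor product, over the common edge subrings, of Koszul matrix factorizations attached to the vertices and to the marked points on the edges. The $\zed_2$-grading is the intrinsic grading of a matrix factorization; the quantum $\zed$-grading is the internal grading of the polynomial ring (with the paper's convention that degrees are doubled), and one checks the vertex factorizations are homogeneous of the prescribed degrees, so that the factorization differential carries the standard degree shift. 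A preliminary lemma to record is that the homotopy type of $C(\Gamma)$ is independent of the placement of marked points (the ``sliding a mark past a mark'' computation). For a tangle diagram $D$, each crossing is replaced by the finite complex of MOY graphs carrying the explicit $\chi$-morphisms, and $C(D)$ is the total complex of the resulting cube of resolutions, tensored together. Boundedness is immediate because $D$ has finitely many crossings, each contributing a finite-length complex; that the total differential squares to zero reduces to two local checks — that consecutive differentials within a single crossing complex compose to zero, and that the $\chi$-maps at distinct crossings commute — both of which become manageable using the principle that the pertinent morphism spaces between MOY matrix factorizations are one-dimensional in the relevant degree, so that each composite is a scalar to be evaluated on a single generator. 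The homological $\zed$-grading is the cube grading, and homogeneity of the $\chi$-maps with respect to all three gradings (with the stated shifts) is checked by inspection. This yields (i) and (ii).

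\emph{Part (iii).} The key technical inputs are the \emph{categorified MOY relations}: a list of direct-sum decompositions $C(\Gamma) \cong \bigoplus_i C(\Gamma_i)$, up to grading shifts, in the homotopy category of graded matrix factorizations, categorifying the MOY calculus — removal of colored circles, edge merging and splitting, the bigon relations, the square/rectangle relations, and their colored generalizations. Each such decomposition is proved by an explicit manipulation of the underlying Koszul matrix factorizations — exhibiting the summands as explicit sub- and quotient factorizations together with the splitting maps, or changing variables to peel off a contractible tensor factor built from an ``excess'' variable — and these computations are the heart of the matter. Granting them, each Reidemeister move is treated by: (a) writing both sides as cubes of MOY resolutions; (b) using the categorified MOY relations to rewrite the summands; and (c) invoking the Gaussian-elimination lemma for complexes of matrix factorizations to cancel contractible direct summands, iterating steps (b) and (c) until the two complexes are visibly homotopy equivalent with all grading shifts in agreement. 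In the colored setting this must be run for moves involving strands of every color; I would organize the case analysis by using the edge-splitting decomposition to reduce, wherever possible, colored Reidemeister moves to a short list of basic ones, which are then handled directly.

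\emph{Part (iv) and the main obstacle.} When every component of $D$ is colored by $1$, only MOY graphs with edges colored $1$ and $2$ occur; one checks by direct comparison of definitions that $C(\Gamma)$ as defined here agrees — up to the fixed grading normalization — with the matrix factorization of \cite{KR1}, and likewise that the crossing complexes and their $\chi$-maps coincide, so that $C(D)$ and the Khovanov--Rozansky complex are the total complex of the very same cube and hence isomorphic; the only delicate point is bookkeeping of the orientation and grading-shift conventions. The main obstacle is part (iii): establishing the full set of categorified colored MOY relations, and then carrying out the Gaussian-elimination bookkeeping for every colored Reidemeister move while controlling all three gradings through the cancellations. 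Both the Koszul-factorization computations behind the colored decompositions and the combinatorial case analysis for the colored moves are considerably heavier than in the uncolored Khovanov--Rozansky construction.
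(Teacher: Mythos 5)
Your treatment of parts (i), (ii) and (iv) matches the paper's: the crossing complexes are built from the MOY resolutions, $d^2\simeq 0$ and the identification with the Khovanov--Rozansky complex both follow from the one-dimensionality (in the lowest quantum degree) of the relevant morphism spaces, and boundedness and the three gradings are immediate from the construction. The divergence --- and the gap --- is in part (iii).

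Your plan for Reidemeister invariance is to decompose both sides of each colored move via the categorified MOY relations and then cancel contractible summands by Gaussian elimination until the complexes visibly agree. The difficulty is that the differentials $d_k^{\pm}$ in the complex of a colored crossing are defined only \emph{implicitly}, by the uniqueness (up to homotopy and scaling) of a homotopically nontrivial morphism of quantum degree $1$ between consecutive resolutions. To run Gaussian elimination on the tensor-product double complex of, say, an $(m,n)$-colored Reidemeister II or III move, you must know which matrix entries of the differential become isomorphisms after applying the MOY decompositions, and that requires explicit formulas for the $d_k^{\pm}$ and for the scalars appearing in their components --- information your proposal does not provide and which is prohibitively hard to extract for general colors. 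The paper avoids this entirely: it proves invariance by induction on the highest color appearing in the move. The inductive step wraps the colored strands in digons (MOY Relation (3)), so that $\hat{C}(\Gamma_i)\simeq \hat{C}(D_i)\{[m][n]\}$, replaces the high-colored crossings by compositions of lower-colored ones using the \emph{fork-sliding invariance} $\hat{C}(D_{i,0}^{\pm})\simeq\hat{C}(D_{i,1}^{\pm})$ (Proposition \ref{fork-sliding-invariance-general}), applies the induction hypothesis, and then cancels the quantum-integer multiplicity $[m][n]$ using the Krull--Schmidt property of $\hch(\hmf)$ together with Yonezawa's cancellation lemma (Lemmas \ref{Krull-Schmidt-hmf}, \ref{KS-oplus-cancel} and \ref{yonezawa-lemma}). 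Explicit differentials and Gaussian elimination are needed only for the special fork-sliding cases with $m=1$ or $l=1$ (Lemma \ref{fork-sliding-invariance-special}). Your remark about ``reducing colored moves to a short list of basic ones via edge splitting'' points in the right direction, but without the fork-sliding statement and the Krull--Schmidt cancellation mechanism the reduction does not go through, and the direct elimination scheme you propose in its place is not executable as stated.
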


Since the homotopy category $\hmf_{R,w}$ of graded matrix factorizations is not abelian, we can not directly define the homology of $C(D)$. But, as in \cite{KR1}, we can still construct a homology $H(D)$ from $C(D)$. Recall that each matrix factorization comes with a differential map $d_{mf}$. If $D$ is a link diagram, then the base ring $R$ is $\C$, and the potential $w=0$. So all the matrix factorizations in $C(D)$ are actually cyclic chain complexes. Taking the homology with respect to $d_{mf}$, we change $C(D)$ into a chain complex $(H(C(D),d_{mf}), d^\ast)$ of finite dimensional graded vector spaces, where $d^\ast$ is the differential map induced by the differential map $d$ of $C(D)$. We define 
\[
H(D)= H(H(C(D),d_{mf}), d^\ast). 
\]
If $D$ is a diagram of a tangle with end points, then $R$ is a graded polynomial ring with homogeneous indeterminates of positive gradings, and $w$ is in the maximal homogeneous ideal $\mathfrak{I}$ of $R$ generated by all the indeterminates. So $(C(D)/\mathfrak{I}\cdot C(D), d_{mf})$ is a cyclic chain complex. Its homology $(H(C(D)/\mathfrak{I}\cdot C(D), d_{mf}),d^\ast)$ is a chain complex of finite dimensional graded vector spaces, where $d^\ast$ is the differential map induced by the differential map $d$ of $C(D)$. We define
\[
H(D)= H(H(C(D)/\mathfrak{I}\cdot C(D),d_{mf}), d^\ast).
\] 
In either case, $H(D)$ inherits the $\zed_2\oplus\zed\oplus\zed$-grading of $C(D)$. We call $H(D)$ the colored $\mathfrak{sl}(N)$ homology of $D$. The corollary below follows easily from Theorem \ref{main}.

\begin{corollary}\label{homology-inv-main}
Let $D$ be a diagram of a tangle whose components are colored by elements of $\{0,1,\dots,N\}$. Then $H(D)$ is a finite dimensional $\zed_2\oplus\zed\oplus\zed$-graded vector space over $\C$. Reidemeister moves of $D$ induce isomorphisms of $H(D)$ preserving the $\zed_2\oplus\zed\oplus\zed$-grading.
\end{corollary}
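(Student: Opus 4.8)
The plan is to deduce the corollary from Theorem~\ref{main} by applying, term by term, the functor underlying the very definition of $H(D)$: the one sending a graded matrix factorization to the $d_{mf}$-homology used above. Concretely, if $D$ is a link diagram then $R=\C$ and $w=0$, and every object of $\hmf_{R,w}$ occurring in $C(D)$ is, by Definition~\ref{complex-knotted-MOY-def}, a finitely generated graded module over $\C$ --- hence a finite-dimensional $\zed_2\oplus\zed$-graded vector space --- equipped with a differential $d_{mf}$ of $\zed_2$-degree $1$ with $d_{mf}^2=w\cdot\id=0$; if $D$ has endpoints then $R$ is a graded polynomial ring with indeterminates of positive degree, $w\in\mathfrak{I}$, and for a matrix factorization $M$ over $(R,w)$ the quotient $M/\mathfrak{I}\cdot M$ is again a finite-dimensional $\zed_2\oplus\zed$-graded vector space on which $d_{mf}$ acts with square $w\cdot\id=0$ in $R/\mathfrak{I}=\C$. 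In either case set $\Phi(M)=H(M,d_{mf})$, respectively $\Phi(M)=H(M/\mathfrak{I}\cdot M,d_{mf})$. Then $\Phi$ is an additive functor from $\hmf_{R,w}$ to finite-dimensional graded $\C$-vector spaces: a homogeneous morphism of matrix factorizations commutes with $d_{mf}$ and so induces a homogeneous map on $\Phi$, while the defining relation $f-g=d_{mf}h+hd_{mf}$ of a homotopy of morphisms of matrix factorizations is preserved by the passage to $M/\mathfrak{I}\cdot M$, so homotopic morphisms induce the same map on $\Phi$.

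Next I would apply $\Phi$ term by term to $C(D)$. By Theorem~\ref{main}(i), $C(D)$ is a bounded chain complex over $\hmf_{R,w}$; in particular its differential $d$ is a morphism in $\hmf_{R,w}$ with $d\circ d=0$ there. Since $\Phi$ is a functor on $\hmf_{R,w}$, applying it yields an honest bounded chain complex $\Phi(C(D))$ of finite-dimensional $\zed_2\oplus\zed$-graded vector spaces, with differential $d^\ast=\Phi(d)$ of homological degree $1$ and, by Theorem~\ref{main}(ii), of $\zed_2$-degree $0$ and quantum degree $0$. Therefore $H(D)=H(\Phi(C(D)),d^\ast)$ is the homology of a bounded complex of finite-dimensional graded vector spaces, hence a finite-dimensional $\zed_2\oplus\zed\oplus\zed$-graded vector space, the three gradings being the $\zed_2$-grading and the quantum grading inherited from $\Phi$ together with the homological grading of $\Phi(C(D))$. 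This is the first assertion of the corollary.

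For the invariance, Theorem~\ref{main}(iii) provides, for each Reidemeister move relating $D$ to $D'$, a homotopy equivalence between $C(D)$ and $C(D')$ in the category of bounded chain complexes over $\hmf_{R,w}$, with all chain maps and chain homotopies homogeneous of $\zed_2$-degree $0$ and quantum degree $0$. Because $\Phi$ is additive, it sends chain maps to chain maps and chain homotopies to chain homotopies, and hence carries this data to a homotopy equivalence between $\Phi(C(D))$ and $\Phi(C(D'))$ respecting all gradings. Passing to homology gives the asserted isomorphism $H(D)\cong H(D')$ preserving the $\zed_2\oplus\zed\oplus\zed$-grading.

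The one genuinely delicate point is the bookkeeping of the two nested notions of homotopy --- homotopy of matrix factorizations, which builds $\hmf_{R,w}$, versus homotopy of chain complexes over $\hmf_{R,w}$ --- together with the fact that the differential of $C(D)$ and the relation $d^2=0$ are a priori only specified in $\hmf_{R,w}$, i.e.\ up to matrix-factorization homotopy. Checking that $\Phi$ is well defined on $\hmf_{R,w}$, that it turns $C(D)$ into an honest (not merely up-to-homotopy) complex of graded vector spaces, and that it is compatible with both layers of homotopy is exactly what makes the corollary ``follow easily'' from Theorem~\ref{main}: it requires no computation, only the formal behaviour of $d_{mf}$ under morphisms, homotopies, and the quotient by $\mathfrak{I}$.
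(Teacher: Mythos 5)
Your overall strategy is exactly the intended one: the corollary is meant to follow from Theorem \ref{main} by applying, term by term, the functor $\Phi$ that takes a matrix factorization to the $d_{mf}$-homology of $M/\mathfrak{I}M$, observing that $\Phi$ kills matrix-factorization homotopies (so it is well defined on $\hmf_{R,w}$, turns the up-to-homotopy relation $d^2\simeq 0$ into an honest $d^{\ast 2}=0$, and carries homotopy equivalences in $\hch(\hmf_{R,w})$ to homotopy equivalences of genuine complexes of graded vector spaces). That part of your write-up, including the careful separation of the two layers of homotopy, is correct.

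There is, however, one wrong step: your justification of finite-dimensionality. It is not true that the objects of $\hmf_{R,w}$ occurring in $C(D)$ are finitely generated over $R$ (equivalently, over $\C$ when $D$ is a link diagram), nor that $M/\mathfrak{I}M$ is a finite-dimensional vector space. The matrix factorizations $C(\Gamma_k^L)$ are Koszul matrix factorizations built over rings of partially symmetric polynomials involving the \emph{internal} alphabets of the marking, and only the alphabets at the end points are absorbed into $R$; already $C(\bigcirc_m)$ is a free $\Sym(\mathbb{X})$-module of rank $2^m$, hence infinite-dimensional over $R=\C$, and $\mathfrak{I}$ does not touch the internal variables. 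What is true --- and what you should invoke instead --- is that these objects are \emph{homotopically finite} (that is precisely the condition for membership in $\hmf_{R,w}$, verified in Lemma \ref{MOY-object-of-hmf}), so by Proposition \ref{prop-homology-detects-homotopy}(ii) each $\Phi(M)=H_R(M)$ is finite-dimensional over $\C$. Since $C(D)$ is bounded by Theorem \ref{main}(i), $\Phi(C(D))$ is then a bounded complex of finite-dimensional graded vector spaces and its homology $H(D)$ is finite-dimensional. With that substitution the rest of your argument goes through unchanged.
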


For a tangle $T$, denote by $H^{\ve,i,j}(T)$ the subspace of $H(T)$ of homogeneous elements of $\zed_2$-degree $\ve$, quantum degree $i$ and homological degree $j$. The Poincar\'e polynomial $\mathrm{P}_T (\tau, q, t)$ of $H(T)$ is defined to be 
\[
\mathrm{P}_T (\tau, q, t) = \sum_{\ve,i,j} \tau^\ve q^i t^j \dim H^{\ve,i,j}(T) ~\in \C[\tau,q,t]/(\tau^2-1).
\]

Based on the construction by Murakami, Ohtsuki and Yamada \cite{MOY}, we give in Definition \ref{MOY-poly-def} a re-normalization $\mathrm{RT}_L(q)$ of the Reshetikhin-Turaev $\mathfrak{sl}(N)$ polynomial for links colored by non-negative integers. For a link $L$ colored by non-negative integers, the graded Euler characteristic of $H(L)$ is equal to $\mathrm{RT}_L(q)$. More precisely, we have the following theorem.

\begin{theorem}\cite[Theorem 1.3]{Wu-color}\label{euler-char-main}
Let $L$ be a link colored by non-negative integers. Then 
\[
\mathrm{P}_L (1, q, -1) = \mathrm{RT}_L(q).
\]

Moreover, define the total color $\tc(L)$ of $L$ to be the sum of the colors of the components of $L$. Then $H^{\ve,i,j}(L) =0$ if $\ve-\tc(L)= 1 \in \zed_2$. In particular,
\[
\mathrm{P}_L (\tau, q, t) = \tau^{\tc(L)} \sum_{i,j}  q^i t^j \dim H^{\tc(L),i,j}(L).
\]
\end{theorem}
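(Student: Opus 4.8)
The plan is to reduce both assertions to the local structure of the chain complex $C(L)$. For the first claim, $\mathrm{P}_L(1,q,-1) = \mathrm{RT}_L(q)$, the strategy is the standard one for this type of categorification: one shows that the graded Euler characteristic is invariant under the MOY calculus and matches the normalization in Definition \ref{MOY-poly-def}. Concretely, I would first recall that each crossing in $D$ is resolved into a finite $\zed$-linear combination of knotted MOY graphs with matrix-factorization coefficients, so that the graded Euler characteristic $\chi(H(L)) := \mathrm{P}_L(1,q,-1)$ is a $\zed[q,q^{-1}]$-linear combination of the quantities $\chi(H(\Gamma))$ for the closed MOY graphs $\Gamma$ appearing. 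Then I would establish that $\chi(H(\Gamma))$ satisfies precisely the MOY relations: the graded dimensions of the matrix factorizations associated to edges, wide edges, and the elementary MOY bivalent/trivalent vertices, computed via the Koszul-complex description of $C(\Gamma)$ and the known graded dimension of $\Hom_{\hmf}(\emptyset,\Gamma)$, reproduce the graphical identities of \cite{MOY} on the level of quantum dimensions. Since the MOY calculus determines $\mathrm{RT}_L(q)$ uniquely from these local identities together with the value on the unknot, and since the signs $(-1)^j$ from the homological grading exactly account for the signs in the skein relation produced by the crossing complexes, the two sides agree. The main technical content here is the compatibility of the homotopies used in Theorem \ref{main} with the decategorified relations, which amounts to checking that no quantum-degree shifts are lost when passing to Euler characteristics.

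For the second claim — the $\zed_2$-grading of $H(L)$ is concentrated in the single residue $\tc(L) \bmod 2$ — the plan is an induction on the combinatorial structure of $D$, using the additivity of the $\zed_2$-grading under the operations that build $C(D)$. The base case is a collection of disjoint colored circles: a single circle colored by $k$ contributes a matrix factorization whose $\zed_2$-grading is supported in degree $k \bmod 2$ (this is visible from the $2$-periodic Koszul factorization attached to a $k$-colored arc, whose potential is a single-variable polynomial and whose minimal model has $\zed_2$-degree $k$). For the inductive step I would track how the $\zed_2$-grading behaves when one forms the tensor product of matrix factorizations over merge/split vertices and when one takes the mapping cone over a crossing: tensoring adds $\zed_2$-degrees, and the mapping cone construction shifts the $\zed_2$-grading of one summand by $1$ — but the crossing between two edges colored $a$ and $b$ contributes resolutions whose colors have total $a+b$, so every term in the crossing complex, after the cone shifts, sits in $\zed_2$-degree $a+b \bmod 2$. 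Summing over all components, the total $\zed_2$-degree of every homogeneous element of $C(L)$, hence of $H(L)$, is forced to equal $\sum_c k_c = \tc(L) \bmod 2$. The final displayed formula for $\mathrm{P}_L(\tau,q,t)$ is then immediate: only the $\ve = \tc(L)$ part survives, so $\tau^\ve$ may be factored out as $\tau^{\tc(L)}$.

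The hard part, I expect, will be the bookkeeping in the first claim rather than the second. Verifying that $\chi(H(\Gamma))$ satisfies every MOY relation requires knowing the graded dimension of the matrix factorization of each elementary MOY graph — in particular the "dumbbell"/square relations and the relation expressing a thick edge in terms of thinner ones — and these computations rely on nontrivial identities among Schur polynomials and on the explicit form of the Koszul matrix factorizations in Definition \ref{complex-knotted-MOY-def}. By contrast, the $\zed_2$-grading statement is essentially a parity count once the behavior of the $\zed_2$-grading under $\otimes$ and under mapping cones is pinned down, and that behavior is built into the definitions. A secondary subtlety is ensuring that passing from $C(L)$ to $H(L)$ — i.e. taking homology first with respect to $d_{mf}$ and then with respect to $d^\ast$ — preserves both the Euler characteristic (clear, since Euler characteristic is unchanged by taking homology of a complex of finite-dimensional spaces) and the $\zed_2$-grading concentration (clear, since taking homology cannot create new gradings).
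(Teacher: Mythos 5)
Your treatment of the first identity, $\mathrm{P}_L(1,q,-1)=\mathrm{RT}_L(q)$, is essentially the paper's argument: one shows that $\gdim(C(\Gamma))|_{\tau=1}$ satisfies MOY Relations (1)--(7) (by decategorifying the homotopy equivalences of Sections \ref{sec-MOY-decomps-part1} and \ref{sec-MOY-decomps-part2}), invokes the uniqueness statement in Part (9) of Theorem \ref{MOY-poly-skein} to conclude $\gdim(C(\Gamma))|_{\tau=1}=\left\langle\Gamma\right\rangle_N$, and then matches the signs and shifts of Definition \ref{complex-colored-crossing-def} against Definition \ref{MOY-poly-def}. That part is fine.

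The $\zed_2$-concentration argument, however, has a genuine gap. You treat it as a chain-level parity count: ``the total $\zed_2$-degree of every homogeneous element of $C(L)$, hence of $H(L)$, is forced to equal $\tc(L)$.'' But a matrix factorization is a $2$-periodic object $M_0\rightarrow M_1\rightarrow M_0$ with \emph{both} $M_0$ and $M_1$ nonzero; neither $C(\Gamma)$ for a resolution $\Gamma$ nor $C(L)$ itself is supported in a single $\zed_2$-degree. The statement to be proved is about the \emph{homology}, and it does not follow from the behavior of the $\zed_2$-grading under $\otimes$: the tensor products in Definition \ref{MOY-mf-def} are taken over rings $\Sym(\mathbb{W}_1|\cdots|\mathbb{W}_n)$, the local pieces $C(v)$ have nonzero potential and hence are not complexes and have no homology of their own, and there is no K\"unneth isomorphism that would let you propagate concentration from pieces to the whole. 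Even your base case only shows that $H(\bigcirc_m)$ (not $C(\bigcirc_m)$) is concentrated in degree $m$, via the nontrivial regular-sequence computation of Proposition \ref{circle-module}; the resolutions $\Gamma_k^L$ of a colored crossing are not disjoint unions of circles, and establishing that $H(\Gamma)$ is concentrated in degree $\mathrm{cr}(\Gamma)\bmod 2$ for a general closed MOY graph is precisely the content of Part (2) of Lemma \ref{MOY-gdim-rt}. The paper proves it by the same induction on the highest color that proves Part (9) of Theorem \ref{MOY-poly-skein}, using the homological MOY decompositions to reduce to circles and to $1$-colored graphs; some such reduction is unavoidable, and your proposal does not supply it. A second, smaller omission: the concentration for closed graphs is stated in terms of the colored rotation number $\mathrm{cr}(\Gamma)$, so you still need the (easy) check that $\mathrm{cr}(\Gamma)\equiv\tc(L)\pmod 2$ for every resolution $\Gamma$ of $L$.
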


\subsection{Deformations and applications} The construction of the colored $\mathfrak{sl}(N)$ link homology $H$ is based on matrix factorizations associated to MOY graphs with potentials induced by $X^{N+1}$. One can modify this construction by considering matrix factorizations with potentials induced by 
\[
f(X)=X^{N+1} + \sum_{k=1}^N (-1)^{k}\frac{N+1}{N+1-k}B_{k} X^{N+1-k},
\]
where $B_{k}$ is a homogeneous indeterminate of degree $2k$, and get an equivariant $\mathfrak{sl}(N)$ link homology $H_f$. $H_f$ is a finitely generated $\zed_2\oplus\zed\oplus\zed$-graded $\C[B_1,\dots,B_N]$-module. The construction of $H_f$ and the proof of its invariance are given in \cite{Wu-color-equi}, which generalizes the work of Krasner \cite{Krasner} in the uncolored case.

For any $b_1,\dots,b_N \in \C$, one can perform the above construction using matrix factorizations associated to MOY graphs with potentials induced by
\[
P(X)=X^{N+1} + \sum_{k=1}^N (-1)^{k}\frac{N+1}{N+1-k}b_{k} X^{N+1-k},
\]
which gives a deformed $\mathfrak{sl}(N)$ link homology $H_P$. For any link $L$, $H_P(L)$ is a finitely dimensional $\zed_2\oplus\zed$-graded and $\zed$-filtered vector space over $\C$. The quotient map 
\[
\pi:\C[B_1,\dots,B_N] \rightarrow \C~(\cong \C[B_1,\dots,B_N]/(B_1-b_1,\dots,B_N-b_N))
\] 
given by $\pi(B_k)=b_k$ induces a functor $\varpi$ of between categories of matrix factorizations. Using this functor, one can easily show that the invariance of the equivariant $\mathfrak{sl}(N)$ link homology $H_f$ implies the invariance of the deformed $\mathfrak{sl}(N)$ link homology $H_P$. As in the uncolored case, the filtration of $H_P$ induces a spectral sequence converging to $H_P$ with $E_1$-page isomorphic to the undeformed $\mathfrak{sl}(N)$ link homology $H$. Proofs of these results can be found in \cite{Wu-color-equi}.

When $P(X)$ is generic, that is, when $P'(X)=(N+1)(X^N+\sum_{k=1}^N (-1)^{k}b_{k} X^{N-k})$ has $N$ distinct root in $\C$, $H_P(L)$ has a basis that generalizes the basis given by Lee \cite{Lee2} and Gornik \cite{Gornik}. See \cite{Wu-color-ras} for the construction. \cite{Wu-color-ras} also contains the definition of the colored $\mathfrak{sl}(N)$ Rasmussen invariants and the bounds for slice genus and self linking number given by these invariants.

The undeformed $\mathfrak{sl}(N)$ link homology $H$ also gives new bounds for the self linking number and the braid index. (See \cite{Wu-color-MFW}.) These bounds generalize the well known Morton-Franks-Williams inequality \cite{FW,Mo}.

\subsection{Other approaches to the colored $\mathfrak{sl}(N)$ link homology}  The Reshetikhin-Turaev $\mathfrak{sl}(N)$ polynomial of links colored by wedge powers of the defining representation has been categorified via several different approaches. Next we quickly review some recent results in this direction.

Using matrix factorizations, Yonezawa \cite{Yonezawa3} defined essentially the Poincar\'e polynomial $\mathrm{P}_T$ of the colored $\mathfrak{sl}(N)$ link homology $H$.

Stroppel \cite{Stroppel} gave a Lie-theoretic construction of the Khovanov homology, which is proved in \cite{Brundan-Stroppel} to be isomorphic to Khovanov's original construction. (See also \cite[Section 5]{Stroppel2}.) Mazorchuk and Stroppel \cite{Mazorchuk-Stroppel} described a Koszul dual construction for $\mathfrak{sl}(N)$ link homology.

Mackaay, Stosic and Vaz \cite{Mackaay-Stosic-Vaz2} constructed a $\zed^{\oplus 3}$-graded HOMFLY-PT homology for $1,2$-colored links, which generalizes Khovanov and Rozansky's construction in \cite{KR2}. Webster and Williamson \cite{Webster-Williamson} further generalized this homology to links colored by any non-negative integers using the equivariant cohomology of general linear groups and related spaces.

Cautis and Kamnitzer \cite{Cautis-Kamnitzer-1,Cautis-Kamnitzer-2} constructed a link homology using the derived category of coherent sheaves on certain flag-like varieties. Their homology is conjectured to be isomorphic to the $\mathfrak{sl}(N)$ Khovanov-Rozansky homology in \cite{KR1}. Using $\mathfrak{sl}(2)$ actions on certain categories of D-modules and coherent sheaves, they \cite{Cautis-talk} also categorified the $\mathfrak{sl}(N)$ polynomial for links in $S^3$ colored by wedge powers of the defining representation.

Using categorifications of the tensor products of integrable representations of Kac-Moody algebras and quantum groups, Webster \cite{Webster1,Webster2} categorified, for any simple complex Lie algebra $\mathfrak{g}$, the quantum $\mathfrak{g}$ invariant for links colored by any finite dimensional representations of $\mathfrak{g}$. All the known categorifications of the colored Reshetikhin-Turaev $\mathfrak{sl}(N)$ polynomial are expected to agree with Webster's categorification for $\mathfrak{g}=\mathfrak{sl}(N;\C)$.

\section{The MOY Construction of the Reshetikhin-Turaev $\mathfrak{sl}(N)$ Polynomial}\label{sec-MOY-polynomial}

In \cite{MOY}, Murakami, Ohtsuki and Yamada gave an alternative construction of the Reshetikhin-Turaev $\mathfrak{sl}(N)$ polynomial for links colored by non-negative integers. The construction of our colored $\mathfrak{sl}(N)$ link homology $H$ is modeled on their construction. So we review their construction in this section. The notations and normalizations we use here are slightly different from those used in \cite{MOY}. 

\subsection{MOY graphs} 
\begin{definition}\label{MOY-graph-def}
An abstract MOY graph is an oriented graph with each edge colored by a non-negative integer such that, for every vertex $v$ with valence at least $2$, the sum of the colors of the edges entering $v$ is equal to the sum of the colors of the edges leaving $v$. We call this common sum the width of $v$.

A vertex of valence $1$ in an abstract MOY graph is called an end point. A vertex of valence greater than $1$ is called an internal vertex. An abstract MOY graph $\Gamma$ is said to be closed if it has no end points. We say that an abstract MOY graph is trivalent is all of its internal vertices have valence $3$.

An embedded MOY graph, or simply a MOY graph, is an embedding of an abstract MOY graph into $\mathbb{R}^2$ such that, through each vertex $v$, there is a straight line $L_v$ so that all the edges entering $v$ enter through one side of $L_v$ and all edges leaving $v$ leave through the other side of $L_v$.
\end{definition}

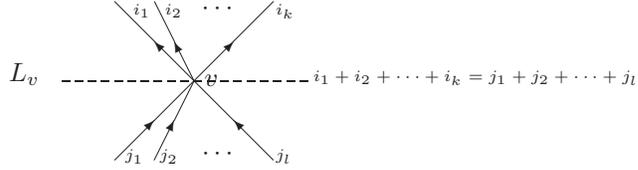
\begin{figure}[ht]

\setlength{\unitlength}{1pt}

\begin{picture}(360,80)(-180,-40)


\put(0,0){\vector(-1,1){15}}

\put(-15,15){\line(-1,1){15}}

\put(-23,25){\tiny{$i_1$}}

\put(0,0){\vector(-1,2){7.5}}

\put(-7.5,15){\line(-1,2){7.5}}

\put(-11,25){\tiny{$i_2$}}

\put(3,25){$\cdots$}

\put(0,0){\vector(1,1){15}}

\put(15,15){\line(1,1){15}}

\put(31,25){\tiny{$i_k$}}


\put(4,-2){$v$}

\multiput(-50,0)(5,0){19}{\line(1,0){3}}

\put(-70,0){$L_v$}

\put(45,0){\tiny{$i_1+i_2+\cdots +i_k = j_1+j_2+\cdots +j_l$}}


\put(-30,-30){\vector(1,1){15}}

\put(-15,-15){\line(1,1){15}}

\put(-26,-30){\tiny{$j_1$}}

\put(-15,-30){\vector(1,2){7.5}}

\put(-7.5,-15){\line(1,2){7.5}}

\put(-13,-30){\tiny{$j_2$}}

\put(3,-30){$\cdots$}

\put(30,-30){\vector(-1,1){15}}

\put(15,-15){\line(-1,1){15}}

\put(31,-30){\tiny{$j_l$}}

\end{picture}

\caption{An internal vertex of a MOY graph}\label{general-MOY-vertex-poly-figure}

\end{figure}

\subsection{The MOY graph polynomial} To each closed trivalent MOY graph, Murakami, Ohtsuki and Yamada \cite{MOY} associated a polynomial, which we call the MOY graph polynomial. They express the colored Reshetikhin-Turaev $\mathfrak{sl}(N)$ polynomial as a combination of MOY graph polynomials. We review the MOY graph polynomial in this subsection.

Define $\mathcal{N}=\{-N+1, -N+3,\cdots, N-3, N-1\}$ and $\mathcal{P(N)}$ to be the set of subsets of $\mathcal{N}$. For a finite set $A$, denote by $\#A$ the cardinality of $A$. Define a function $\pi:\mathcal{P(N)} \times \mathcal{P(N)} \rightarrow \zed_{\geq 0}$ by
\[
\pi (A_1, A_2) = \# \{(a_1,a_2) \in A_1 \times A_2 ~|~ a_1>a_2\} \text{ for } A_1,~A_2 \in \mathcal{P(N)}.
\]

\begin{figure}[ht]
$
\xymatrix{
\input{tri-vertex-s} & \text{or} & \input{tri-vertex-m}
} 
$
\caption{}\label{tri-vertex} 

\end{figure}

Let $\Gamma$ be a closed trivalent MOY graph, and $E(\Gamma)$ the set of edges of $\Gamma$. Denote by $\mathrm{c}:E(\Gamma) \rightarrow \nat$ the color function of $\Gamma$. That is, for every edge $e$ of $\Gamma$, $\mathrm{c}(e) \in \nat$ is the color of $e$. A state of $\Gamma$ is a function $\sigma: E(\Gamma) \rightarrow \mathcal{P(N)}$ such that
\begin{enumerate}[(i)]
	\item For every edge $e$ of $\Gamma$, $\#\sigma(e) = \mathrm{c}(e)$.
	\item For every vertex $v$ of $\Gamma$, as depicted in Figure \ref{tri-vertex}, we have $\sigma(e)=\sigma(e_1) \cup \sigma(e_2)$. (In particular, this implies that $\sigma(e_1) \cap \sigma(e_2)=\emptyset$.)
\end{enumerate}

For a state $\sigma$ of $\Gamma$ and a vertex $v$ of $\Gamma$ (as depicted in Figure \ref{tri-vertex}), the weight of $v$ with respect to $\sigma$ is defined to be 
\[
\mathrm{wt}(v;\sigma) = q^{\frac{\mathrm{c}(e_1)\mathrm{c}(e_2)}{2} - \pi(\sigma(e_1),\sigma(e_2))}.
\]

Given a state $\sigma$ of $\Gamma$, replace each edge $e$ of $\Gamma$ by $\mathrm{c}(e)$ parallel edges, assign to each of these new edges a different element of $\sigma(e)$ and, at every vertex, connect each pair of new edges assigned the same element of $\mathcal{N}$. This changes $\Gamma$ into a collection $\mathcal{C}$ of embedded oriented circles, each of which is assigned an element of $\mathcal{N}$. By abusing notation, we denote by $\sigma(C)$ the element of $\mathcal{N}$ assigned to $C\in \mathcal{C}$. Note that: 
\begin{itemize}
	\item There may be intersections between different circles in $\mathcal{C}$. But, each circle in $\mathcal{C}$ is embedded, that is, without self-intersections or self-tangency.
	\item There may be more than one way to do this. But if we view $\mathcal{C}$ as a virtue link and the intersection points between different elements of $\mathcal{C}$ as virtual crossings, then the above construction is unique up to purely virtual regular Reidemeister moves.
\end{itemize}
For each $C\in \mathcal{C}$, define the rotation number $\mathrm{rot}(C)$ the usual way. That is,
\[
\mathrm{rot}(C) = 
\begin{cases}
1 & \text{if } C \text{ is counterclockwise,} \\
-1 & \text{if } C \text{ is clockwise.}
\end{cases}
\]
The rotation number $\mathrm{rot}(\sigma)$ of $\sigma$ is then defined to be
\[
\mathrm{rot}(\sigma) = \sum_{C\in \mathcal{C}} \sigma(C) \mathrm{rot}(C).
\]

The $\mathfrak{sl}(N)$ MOY polynomial of $\Gamma$ is defined to be
\begin{equation}\label{MOY-bracket-def}
\left\langle \Gamma \right\rangle_N := \sum_{\sigma} (\prod_v \mathrm{wt}(v;\sigma)) q^{\mathrm{rot}(\sigma)},
\end{equation}
where $\sigma$ runs through all states of $\Gamma$ and $v$ runs through all vertices of $\Gamma$.

Murakami, Ohtsuki and Yamada \cite{MOY} established a set of graphical recursive relations for the $\mathfrak{sl}(N)$ MOY polynomial, which we call the MOY calculus. The MOY calculus plays an important role in guiding us through the construction of the colored $\mathfrak{sl}(N)$ homology. 

Before stating the MOY calculus, we need to introduce our normalization of quantum integers.

\begin{definition}\label{def-quantum-integers}
Quantum integers are elements of $\zed[q,q^{-1}]$. In this paper, we use the normalization
\begin{eqnarray*}
[j] & := & \frac{q^j-q^{-j}}{q-q^{-1}}, \\ 
{[j]}! & := & [1] \cdot [2] \cdots [j], \\
\qb{j}{k} & := & \frac{[j]!}{[k]!\cdot [j-k]!}.
\end{eqnarray*}
\end{definition}

The following is the MOY calculus.

\begin{theorem}\cite{MOY}\label{MOY-poly-skein}
The $\mathfrak{sl}(N)$ MOY graph polynomial $\left\langle \ast\right\rangle_N$ for close trivalent MOY graphs satisfies:
\begin{enumerate}
  \item $\left\langle \bigcirc_m \right\rangle_N = \qb{N}{m}$, where $\bigcirc_m$ is a circle colored by $m$.
  \item $\left\langle \setlength{\unitlength}{1pt}
\begin{picture}(50,50)(-80,20)

\put(-60,10){\vector(0,1){10}}

\put(-60,20){\vector(-1,1){20}}

\put(-60,20){\vector(1,1){10}}

\put(-50,30){\vector(-1,1){10}}

\put(-50,30){\vector(1,1){10}}

\put(-75,3){\tiny{$i+j+k$}}

\put(-55,21){\tiny{$j+k$}}

\put(-80,42){\tiny{$i$}}

\put(-60,42){\tiny{$j$}}

\put(-40,42){\tiny{$k$}}

\end{picture} \right\rangle_N = \left\langle \setlength{\unitlength}{1pt}
\begin{picture}(50,50)(40,20)

\put(60,10){\vector(0,1){10}}

\put(60,20){\vector(1,1){20}}

\put(60,20){\vector(-1,1){10}}

\put(50,30){\vector(1,1){10}}

\put(50,30){\vector(-1,1){10}}

\put(45,3){\tiny{$i+j+k$}}

\put(38,21){\tiny{$i+j$}}

\put(80,42){\tiny{$k$}}

\put(60,42){\tiny{$j$}}

\put(40,42){\tiny{$i$}}

\end{picture} \right\rangle_N$.
	\item $\left\langle \input{v-vector-m+n-bubble-slide}\right\rangle_N = \qb{m+n}{n} \cdot\left\langle \setlength{\unitlength}{.75pt}
\begin{picture}(55,80)(-20,40)
\put(0,0){\vector(0,1){80}}
\put(5,75){\tiny{$_{m+n}$}}
\end{picture}\right\rangle_N $.
	\item $\left\langle \setlength{\unitlength}{.75pt}
\begin{picture}(60,80)(-30,40)
\put(0,0){\vector(0,1){30}}
\put(0,30){\vector(0,1){20}}
\put(0,50){\vector(0,1){30}}

\put(-1,40){\line(1,0){2}}

\qbezier(0,30)(25,20)(25,30)
\qbezier(0,50)(25,60)(25,50)
\put(25,50){\vector(0,-1){20}}

\put(5,75){\tiny{$_{m}$}}
\put(5,5){\tiny{$_{m}$}}
\put(-30,38){\tiny{$_{m+n}$}}
\put(14,60){\tiny{$_{n}$}}
\end{picture}\right\rangle_N = \qb{N-m}{n} \cdot \left\langle \setlength{\unitlength}{.75pt}
\begin{picture}(40,80)(-20,40)
\put(0,0){\vector(0,1){80}}
\put(5,75){\tiny{$_{m}$}}
\end{picture}\right\rangle_N$.
	\item $\left\langle \input{decomp-III-1-slide}\right\rangle_N = \left\langle \setlength{\unitlength}{.75pt}
\begin{picture}(60,60)(-30,30)

\put(-20,0){\vector(0,1){60}}

\put(20,60){\vector(0,-1){60}}

\put(-25,30){\tiny{$_1$}}

\put(22,30){\tiny{$_m$}}
\end{picture}\right\rangle_N + [N-m-1] \cdot \left\langle \setlength{\unitlength}{.75pt}
\begin{picture}(60,60)(100,30)

\put(110,0){\vector(1,1){20}}

\put(130,20){\vector(1,-1){20}}

\put(130,40){\vector(0,-1){20}}

\put(130,40){\vector(-1,1){20}}

\put(150,60){\vector(-1,-1){20}}

\put(105,0){\tiny{$_1$}}

\put(105,55){\tiny{$_1$}}

\put(152,0){\tiny{$_m$}}

\put(152,55){\tiny{$_m$}}

\put(132,30){\tiny{$_{m-1}$}}

\end{picture}\right\rangle_N$.
	\item $\left\langle \input{decomp-IV-1-slide}\right\rangle_N = \qb{m-1}{n} \cdot \left\langle \setlength{\unitlength}{.75pt}
\begin{picture}(85,90)(-30,45)

\put(-20,0){\vector(0,1){45}}

\put(-20,45){\vector(0,1){45}}

\put(20,0){\vector(0,1){45}}

\put(20,45){\vector(0,1){45}}

\put(20,45){\vector(-1,0){40}}

\put(-27,20){\tiny{$_1$}}

\put(23,20){\tiny{$_{m+l-1}$}}

\put(-27,65){\tiny{$_l$}}

\put(23,65){\tiny{$_m$}}

\put(-5,38){\tiny{$_{l-1}$}}

\end{picture}\right\rangle_N  + \qb{m-1}{n-1} \cdot \left\langle \setlength{\unitlength}{.75pt}
\begin{picture}(60,90)(110,45)

\put(110,0){\vector(2,3){20}}

\put(150,0){\vector(-2,3){20}}

\put(130,30){\vector(0,1){30}}

\put(130,60){\vector(-2,3){20}}

\put(130,60){\vector(2,3){20}}

\put(117,20){\tiny{$_1$}}

\put(140,20){\tiny{$_{m+l-1}$}}

\put(117,65){\tiny{$_l$}}

\put(140,65){\tiny{$_m$}}

\put(133,42){\tiny{$_{m+l}$}}
\end{picture} \right\rangle_N$.\vspace{.5cm}	
	\item $\left\langle \input{decomp-V-1-slide}\right\rangle_N = \sum_{j=\max\{m-n,0\}}^m \qb{l}{k-j} \cdot \left\langle \input{decomp-V-2-slide} \right\rangle_N$. \vspace{.5cm}
	\item The above equations remain true if we reverse the orientation of the MOY graph or the orientation of $\mathbb{R}^2$.
	\item The above properties uniquely determine the $\mathfrak{sl}(N)$ MOY graph polynomial $\left\langle \ast\right\rangle_N$.
\end{enumerate}
\end{theorem}

Parts (1)-(8) of Theorem \ref{MOY-poly-skein} are proved in \cite{MOY}. Part (9) is essentially proved in \cite[Proof of Theorem 14.7]{Wu-color}\footnote{Part (9) of Theorem \ref{MOY-poly-skein} was known to experts before I wrote \cite{Wu-color}. But I did not find a written proof of it back then.}.

\subsection{The colored Reshetikhin-Turaev $\mathfrak{sl}(N)$ polynomial}

\begin{definition}\cite{MOY}\label{MOY-poly-def}
For a link diagram $D$ colored by non-negative integers, define $\left\langle D \right\rangle_N$ by applying the following at every crossing of $D$.
\[
\left\langle \setlength{\unitlength}{1pt}
\begin{picture}(40,40)(-20,0)

\put(-20,-20){\vector(1,1){40}}

\put(20,-20){\line(-1,1){15}}

\put(-5,5){\vector(-1,1){15}}

\put(-11,15){\tiny{$_m$}}

\put(9,15){\tiny{$_n$}}

\end{picture} \right\rangle_N = \sum_{k=\max\{0,m-n\}}^{m} (-1)^{m-k} q^{k-m}\left\langle \input{square-m-n-k-left-poly}\right\rangle_N,
\]
\[
\left\langle \setlength{\unitlength}{1pt}
\begin{picture}(40,40)(-20,0)

\put(20,-20){\vector(-1,1){40}}

\put(-20,-20){\line(1,1){15}}

\put(5,5){\vector(1,1){15}}

\put(-11,15){\tiny{$_m$}}

\put(9,15){\tiny{$_n$}}

\end{picture} \right\rangle_N = \sum_{k=\max\{0,m-n\}}^{m} (-1)^{k-m} q^{m-k}\left\langle \input{square-m-n-k-left-poly}\right\rangle_N.
\]
\vspace{.5cm}

Also, for each crossing $c$ of $D$, define the shifting factor $\mathsf{s}(c)$ of $c$ by
\[
\mathsf{s}\left(\right) = 
\begin{cases}
(-1)^{-m} q^{m(N+1-m)} & \text{if } m=n,\\
1 & \text{if } m \neq n,
\end{cases}
\]
\[
\mathsf{s}\left(\right) = 
\begin{cases}
(-1)^m q^{-m(N+1-m)} & \text{if } m=n,\\
1 & \text{if } m \neq n.
\end{cases}
\]

The re-normalized Reshetikhin-Turaev $\mathfrak{sl}(N)$-polynomial $\mathrm{RT}_D(q)$ of $D$ is defined to be
\[
\mathrm{RT}_D(q) = \left\langle D \right\rangle_N \cdot \prod_c \mathsf{s}(c),
\]
where $c$ runs through all crossings of $D$.
\end{definition}

\begin{theorem}\cite{MOY}\label{MOY-poly-inv}
$\left\langle D \right\rangle_N$ is invariant under regular Reidemeister moves. $\mathrm{RT}_D(q)$ is invariant under all Reidemeister moves and is a re-normalization of the Reshetikhin-Turaev $\mathfrak{sl}(N)$ polynomial for links colored by non-negative integers.
\end{theorem}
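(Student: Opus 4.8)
The plan is to derive everything from the MOY calculus, Theorem \ref{MOY-poly-skein}. The crossing formulas of Definition \ref{MOY-poly-def} write $\langle D\rangle_N$ as a $\zed[q,q^{-1}]$-linear combination of the MOY graph polynomials $\langle\Gamma\rangle_N$, where $\Gamma$ ranges over the finitely many closed trivalent MOY graphs obtained from $D$ by resolving every crossing. Since the relations in Theorem \ref{MOY-poly-skein} are \emph{local} (each asserts that replacing one small sub-graph by another, inside an otherwise arbitrary closed MOY graph, changes $\langle\ast\rangle_N$ only by the stated scalar), checking invariance of $\langle D\rangle_N$ under a Reidemeister move becomes a purely local computation: resolve the crossings inside the disk where the move occurs, on both sides of the move, and verify that the two resulting linear combinations of local MOY graphs are equal via repeated application of parts (1)–(8). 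These parts already contain all the relations needed, with the strands carrying arbitrary colors $m,n\in\{0,1,\dots,N\}$.

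First I would treat the regular moves R2 and R3. For each coloring of the strands involved, I expand both sides using the crossing formulas, collect the terms, and reduce using the digon and square-switch relations, i.e.\ the general-color relations of parts (3)–(8); for R3 one can first use R2-type identities to cut down the number of cases. The bookkeeping is lengthy but mechanical. At the same time I would track the shifting factors: an oriented R2 introduces one positive and one negative crossing between strands carrying the same pair of colors $(m,n)$, and when $m=n$ the two factors $\mathsf{s}(c)$ multiply to $(-1)^{-m}(-1)^m q^{m(N+1-m)-m(N+1-m)}=1$ (each factor is $1$ when $m\neq n$), while R3 only relocates three crossings without changing the colors they involve. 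Hence $\prod_c\mathsf{s}(c)$ is unchanged by R2 and R3, so $\mathrm{RT}_D(q)$ is invariant under these moves once $\langle D\rangle_N$ is.

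The Reidemeister I move is the one that forces the renormalization. I would resolve the single crossing of a positive (resp.\ negative) kink on an $m$-colored strand, then simplify the resulting combination of MOY graphs using the circle relation (1) together with the digon relations (3) and (4). The outcome is that $\langle D\rangle_N$ equals $\langle D'\rangle_N$, for $D'$ the diagram with the kink removed, times a monomial $(-1)^{\pm m}q^{\mp m(N+1-m)}$. This monomial is exactly $\mathsf{s}(c)^{-1}$ for the kink crossing $c$, so it cancels against the product of shifting factors and $\mathrm{RT}_D(q)$ is invariant under R1 as well. I expect this step to be the main obstacle: one must control the MOY weights $\mathrm{wt}(v;\sigma)=q^{\frac{\mathrm{c}(e_1)\mathrm{c}(e_2)}{2}-\pi(\sigma(e_1),\sigma(e_2))}$ and the rotation numbers $\mathrm{rot}(\sigma)$ precisely enough to pin down the exact power of $q$ and the exact sign the kink contributes, and to see that this matches the prescribed $\mathsf{s}(c)$ on the nose, including the degenerate cases $m=0$ and $m=N$.

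Finally, to identify $\mathrm{RT}_D(q)$ as a renormalization of the Reshetikhin–Turaev $\mathfrak{sl}(N)$ polynomial, I would pass to the representation theory of $U_q(\mathfrak{sl}(N;\C))$. A closed MOY graph is the planar diagram of a morphism assembled from wedge powers of the defining representation together with the canonical projections and inclusions among their tensor products; by induction on the structure of $\Gamma$, using that the relations (1)–(8) also hold on the representation side, one shows $\langle\Gamma\rangle_N$ equals the Reshetikhin–Turaev evaluation of this morphism. The crossing formulas of Definition \ref{MOY-poly-def} are precisely the decomposition of the $R$-matrix braiding into this web basis, so $\langle D\rangle_N$ computes the Reshetikhin–Turaev invariant of $D$ viewed as a framed link, and multiplication by $\prod_c\mathsf{s}(c)$ removes the framing anomaly that showed up in the R1 computation. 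All of this is carried out in \cite{MOY}; here it would only be invoked, the self-contained part of the argument being the combinatorial verification above.
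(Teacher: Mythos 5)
The paper does not prove this theorem: it is imported verbatim from \cite{MOY}, so there is no internal argument to compare against. Your outline — local verification of the regular moves via the square-switch relations of Theorem \ref{MOY-poly-skein}, the Reidemeister I kink producing exactly the factor $\mathsf{s}(c)^{-1}$ that the normalization cancels, and the identification with the Reshetikhin--Turaev evaluation deferred to the representation theory in \cite{MOY} — is a faithful sketch of the argument actually carried out in \cite{MOY}, so it is consistent with the source the paper relies on.
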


\section{Graded Matrix Factorizations}\label{sec-concept-mf}

In the following two sections, we briefly review the core algebraic concepts, matrix factorizations and symmetric polynomials, that are used in our construction of the colored $\mathfrak{sl}(N)$ link homology. For more details, see \cite[Sections 2, 3, 4]{Wu-color}.

Recall that $N$ is a fixed positive integer. In this section, $R=\C[X_1,\dots,X_k]$, where $X_1,\dots,X_k$ are homogeneous indeterminates with positive degrees.

\subsection{Graded matrix factorizations} 

\begin{definition}\label{def-mf}
Let $w$ be a homogeneous element of $R$ of degree $2N+2$. A graded matrix factorization $M$ over $R$ with potential $w$ is a collection of two free graded $R$-modules $M_0$, $M_1$ and two homogeneous $R$-module maps $d_0:M_0\rightarrow M_1$, $d_1:M_1\rightarrow M_0$ of degree $N+1$, called differential maps, such that
\[
d_1 \circ d_0=w\cdot\id_{M_0}, \hspace{1cm}  d_0 \circ d_1=w\cdot\id_{M_1}.
\]
We usually write $M$ as
\[
M_0 \xrightarrow{d_0} M_1 \xrightarrow{d_1} M_0.
\]
$M$ has two gradings: a $\zed_2$-grading that takes value $\ve$ on $M_\ve$ and a quantum grading, which is the $\zed$-grading of the underlying graded $R$-module. We denote the degree from the $\zed_2$-grading by ``$\deg_{\zed_2}$" and the degree from the quantum grading by ``$\deg$".

Following \cite{KR1}, we denote by $M\left\langle 1\right\rangle$ the matrix factorization
\[
M_1 \xrightarrow{d_1} M_0 \xrightarrow{d_0} M_1,
\]
and write $M\left\langle j\right\rangle = M \underbrace{\left\langle 1\right\rangle\cdots\left\langle 1\right\rangle}_{j \text{ times }}$.

For graded matrix factorizations $M=M_0 \xrightarrow{d_0} M_1 \xrightarrow{d_1} M_0$ and $\tilde{M}=\tilde{M}_0 \xrightarrow{\tilde{d}_0} \tilde{M}_1 \xrightarrow{\tilde{d}_1} \tilde{M}_0$ with potential $w$, $M \oplus\tilde{M}$ is the matrix factorization
\[
M \oplus\tilde{M}= M_0 \oplus\tilde{M}_0 \xrightarrow{D_0}  M_1 \oplus\tilde{M}_1 \xrightarrow{D_1} M_0 \oplus\tilde{M}_0,
\]
where
\[
D_\ve = \left(%
\begin{array}{cc}
  d_\ve & 0 \\
  0 & \tilde{d}_\ve \\
\end{array}%
\right)  \text{ for } \ve=0,1.
\]
The potential of $M \oplus\tilde{M}$ is $w$.

For graded matrix factorizations $M=M_0 \xrightarrow{d_0} M_1 \xrightarrow{d_1} M_0$ with potential $w$ and $M'=M'_0 \xrightarrow{d'_0} M'_1 \xrightarrow{d'_1} M'_0$ with potential $w'$, the tensor product $M\otimes M'$ is the graded matrix factorization with 
\begin{eqnarray*}
(M\otimes M')_0 & = & (M_0\otimes M'_0)\oplus (M_1\otimes M'_1), \\
(M\otimes M')_1 & = & (M_1\otimes M'_0)\oplus (M_1\otimes M'_0),
\end{eqnarray*}
and the differential given by signed Leibniz rule. That is, for $m\in M_\ve$ and $m'\in M'$,
\[
d(m\otimes m')=(dm)\otimes m' + (-1)^\ve m \otimes (d'm').
\]
The potential of $M\otimes M'$ is $w+w'$.
\end{definition}

\begin{definition}\label{def-grading-shift-q}
Let $M$ be a graded $R$-module. Denote by $M^i$ the $\C$-linear subspace of $M$ of homogeneous elements of degree $i$. For any $j\in \zed$, define $M\{q^j\}$ to be $M$ with the grading shifted up by $j$. That is, the grading of $M\{q^j\}$ is defined by $M\{q^j\}^{(i)}=M^{(i-j)}$. 

More generally, for 
\[
F(q)=\sum_{j=k}^l a_j q^j ~\in \zed_{\geq0}[q,q^{-1}],
\]
we define $M\{F(q)\}$ to be graded $R$-module
\[
M\{F(q)\} = \bigoplus_{j=k}^l (\underbrace{M\{q^j\}\oplus\cdots\oplus M\{q^j\}}_{a_j-\text{fold}}).
\]

If $M$ is a graded matrix factorization over $R$, then $M\{q^j\}$ means shifting the quantum grading of $M$ up by $j$.
\end{definition}

\begin{definition}\label{koszul-mf-def}
If $a_0,a_1\in R$ are homogeneous elements with $\deg a_0 +\deg a_1=2N+2$, then denote by $(a_0,a_1)_R$ the matrix factorization $R \xrightarrow{a_0} R\{q^{N+1-\deg{a_0}}\} \xrightarrow{a_1} R$, which has potential $a_0a_1$. More generally, if $a_{1,0},a_{1,1},\dots,a_{k,0},a_{k,1}\in R$ are homogeneous with $\deg a_{j,0} +\deg a_{j,1}=2N+2$, denote by 
\[
\left(%
\begin{array}{cc}
  a_{1,0}, & a_{1,1} \\
  a_{2,0}, & a_{2,1} \\
  \dots & \dots \\
  a_{k,0}, & a_{k,1}
\end{array}%
\right)_R
\]
the tenser product 
\[
(a_{1,0},a_{1,1})_R \otimes_R (a_{2,0},a_{2,1})_R \otimes_R \cdots \otimes_R (a_{k,0},a_{k,1})_R,
\] 
which is a graded matrix factorization with potential $\sum_{j=1}^k a_{j,0} a_{j,1}$, and is called the Koszul matrix factorization associated to the above matrix. We drop``$R$" from the notation when it is clear from the context. Note that the above Koszul matrix factorization is finitely generated over $R$.
\end{definition}

\subsection{Morphisms of graded matrix factorizations} Given two graded matrix factorizations $M$ with potential $w$ and $M'$ with potential $w'$ over $R$, consider the $R$-module $\Hom_R(M,M')$. It admits a $\zed_2$-grading that takes value 
\[
\left\{%
\begin{array}{l}
    0 \text{ on } \Hom^0_R(M,M')=\Hom_R(M_0,M'_0)\oplus\Hom_R(M_1,M'_1), \\ 
    1 \text{ on } \Hom^1_R(M,M')=\Hom_R(M_1,M'_0)\oplus\Hom_R(M_0,M'_1). 
\end{array}%
\right.
\]
It also admits a quantum pregrading. (See \cite[Subsections 2.1, 2.2]{Wu-color}.) induced by the quantum gradings of homogeneous elements. Moreover, $\Hom_R(M,M')$ has a differential map $d$ given by
\[
d(f)=d_{M'} \circ f -(-1)^\ve f \circ d_M \text{ for } f \in \Hom^\ve_R(M,M').
\]
Note that $d$ is homogeneous of degree $N+1$ and satisfies that 
\[
d^2=(w'-w) \cdot \id_{\Hom_R(M,M')}.
\]
Following \cite{KR1}, we write $M_\bullet = \Hom_R(M,R)$.

In general, $\Hom_R(M,M')$ is not a graded matrix factorization since $\Hom_R(M,M')$ might not be a free module and the quantum pregrading might not be a grading. 

\begin{definition}\label{def-morph-mf}
Let $M$ and $M'$ be two graded matrix factorizations over $R$ with potential $w$. Then $\Hom_R(M,M')$, with the above differential map $d$, is a cyclic chain complex. 

We say that an $R$-module map $f:M\rightarrow M'$ is a morphism of matrix factorizations if and only if $df=0$. that is, for $f\in\Hom^\ve_R(M,M')$, $f$ is a morphism of matrix factorizations if and only if $d_{M'}\circ f = (-1)^\ve f\circ d_M$. $f$ is called an isomorphism of matrix factorizations if it is a morphism of matrix factorizations and an isomorphism of the underlying $R$-modules. Two morphisms $f,g:M\rightarrow M'$ of $\zed_2$-degree $\ve$ are homotopic if $f-g$ is a boundary element in $\Hom_R(M,M')$, that is, if $\exists ~h\in \Hom^{\ve+1}_R(M,M')$ such that $f-g = d(h) = d_{M'} \circ h -(-1)^{\ve+1} h \circ d_M$.

\begin{enumerate}[(i)]
	\item We say that $M,M'$ are isomorphic, or $M\cong M'$, if and only if there is a homogeneous isomorphism $f:M\rightarrow M'$ that preserves both gradings.
	\item We say that $M,M'$ are homotopic, or $M\simeq M'$, if and only if there are homogeneous morphisms $f:M\rightarrow M'$ and $g:M'\rightarrow M$ preserving both gradings such that $g\circ f \simeq \id_M$ and $f\circ g \simeq \id_{M'}$. 
\end{enumerate}
\end{definition}

\begin{lemma}\cite[Lemma 2.9]{Wu-color}\label{morphism-sign}
Let $M,~M',~\mathcal{M}, ~\mathcal{M}'$ be graded matrix factorizations over $R$ such that $M,\mathcal{M}$ have the same potential and $M',\mathcal{M}'$ have the same potential. Assume that $f:M \rightarrow \mathcal{M}$ and $f':M'\rightarrow \mathcal{M}'$ are morphisms of matrix factorizations of $\zed_2$-degrees $j$ and $j'$. Define $F: M\otimes M' \rightarrow \mathcal{M}\otimes \mathcal{M}'$ by $F(m\otimes m') = (-1)^{i\cdot j'} f(m)\otimes f'(m')$ for $m \in M_i$ and $m'\in M'$. Then $F$ is a morphism of matrix factorizations of $\zed_2$-degree $j+j'$. In particular, if $f$ or $f'$ is homotopic to $0$, then so is $F$.

From now on, we will write $F = f \otimes f'$.
\end{lemma}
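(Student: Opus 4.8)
The plan is to verify directly from the definitions in Definition~\ref{def-mf} and Definition~\ref{def-morph-mf} that the prescribed map $F$ intertwines the differentials of $M\otimes M'$ and $\mathcal{M}\otimes\mathcal{M}'$. First I would record the relevant data: by hypothesis $d_{\mathcal M}\circ f=(-1)^{j}f\circ d_M$ and $d_{\mathcal M'}\circ f'=(-1)^{j'}f'\circ d_{M'}$, while the differential on a tensor product is the signed Leibniz rule $d(m\otimes m')=(d_Mm)\otimes m'+(-1)^{i}m\otimes(d_{M'}m')$ for $m\in M_i$. Note that $f(m)\in\mathcal M_{i+j}$ when $m\in M_i$, so the sign attached to $f(m)\otimes f'(m')$ under the target Leibniz rule is governed by $i+j$ rather than $i$; keeping track of this shift is the whole content of the computation.

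The main step is the sign bookkeeping. Applying $d_{\mathcal M\otimes\mathcal M'}$ to $F(m\otimes m')=(-1)^{ij'}f(m)\otimes f'(m')$ gives
\[
(-1)^{ij'}\bigl((d_{\mathcal M}f(m))\otimes f'(m') + (-1)^{i+j}f(m)\otimes(d_{\mathcal M'}f'(m'))\bigr),
\]
and substituting the intertwining relations turns this into
\[
(-1)^{ij'}\bigl((-1)^{j}(f d_M m)\otimes f'(m') + (-1)^{i+j+j'}f(m)\otimes(f' d_{M'} m')\bigr).
\]
On the other side, $F\circ d_{M\otimes M'}(m\otimes m') = F\bigl((d_Mm)\otimes m' + (-1)^{i}m\otimes(d_{M'}m')\bigr)$; since $d_Mm\in M_{i+1}$ and $m\in M_i$, this equals $(-1)^{(i+1)j'}(fd_Mm)\otimes f'(m') + (-1)^{i+ij'}f(m)\otimes(f'd_{M'}m')$. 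Comparing the two expressions term by term, one checks both coefficients agree up to the overall sign $(-1)^{j+j'}$, which is exactly the claim that $F$ has $\zed_2$-degree $j+j'$ and $d_{\mathcal M\otimes\mathcal M'}\circ F = (-1)^{j+j'}F\circ d_{M\otimes M'}$, i.e.\ $dF=0$ by Definition~\ref{def-morph-mf}. I would also remark that $F$ is homogeneous of quantum degree $\deg f+\deg f'$ since tensoring is additive on quantum gradings.

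For the last sentence, if say $f$ is null-homotopic, write $f=d_{\mathcal M}\circ h+(-1)^{j}h\circ d_M$ for some $h\in\Hom^{j+1}_R(M,\mathcal M)$ (up to a sign convention matching Definition~\ref{def-morph-mf}), and set $H(m\otimes m')=(-1)^{i(j'+1)}h(m)\otimes f'(m')$. Running the same Leibniz-rule computation as above — now with $h$ in place of $f$ and the degree of $h(m)$ being $i+j+1$ — shows $F$ differs from $d_{H\otimes\,\cdot} H + (\pm)H d$ by nothing, i.e.\ $F$ is a boundary in $\Hom_R(M\otimes M',\mathcal M\otimes\mathcal M')$, hence homotopic to $0$; the case where $f'$ is null-homotopic is symmetric. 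I expect the only real obstacle to be getting every sign exponent right — in particular remembering that $f$ shifts the $\zed_2$-grading, so the Leibniz sign on the target side uses the shifted degree $i+j$, not $i$ — but this is a finite and mechanical check with no conceptual difficulty.
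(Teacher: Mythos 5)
Your verification that $F$ is a morphism of $\zed_2$-degree $j+j'$ is correct and is the expected direct sign check; the paper itself only cites this lemma from \cite{Wu-color} without reproducing a proof, so there is nothing to diverge from. One small correction in the homotopy part: with the paper's conventions the null-homotopy for $F$ when $f=d(h)$ is $H=h\otimes f'$ taken with the \emph{same} sign rule as in the lemma, i.e.\ $H(m\otimes m')=(-1)^{ij'}h(m)\otimes f'(m')$ rather than $(-1)^{i(j'+1)}h(m)\otimes f'(m')$ — with that choice the cross terms cancel and one gets $d(H)=F$ exactly (and symmetrically $d(f\otimes h')=(-1)^{j}F$ when $f'=d(h')$), so $F$ is indeed a boundary.
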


\subsection{Categories of homotopically finite graded matrix factorizations}

\begin{definition}\label{homotopically-finite-def}
Let $M$ be a graded matrix factorization over $R$ with potential $w$. We say that $M$ is homotopically finite if there exists a finitely generated graded matrix factorization $\mathcal{M}$ over $R$ with potential $w$ such that $M\simeq \mathcal{M}$. 
\end{definition}

\begin{definition}\label{hom-all-gradings-def}
Let $M$ and $M'$ be any two graded matrix factorizations over $R$ with potential $w$. Denote by $d$ the differential map of $\Hom_R(M,M')$.

$\Hom_{\MF}(M,M')$ is defined to be the submodule of $\Hom_R(M,M')$ consisting of morphisms of matrix factorizations from $M$ to $M'$. Or, equivalently, $\Hom_{\MF}(M,M') := \ker d$.

$\Hom_{\mf}(M,M')$ is defined to be the $\C$-linear subspace of $\Hom_{\MF}(M,M')$ consisting of homogeneous morphisms with $\zed_2$-degree $0$ and quantum degree $0$.

$\Hom_{\HMF}(M,M')$ is defined to be the $R$-module of homotopy classes of morphisms of matrix factorizations from $M$ to $M'$. Or, equivalently,
$\Hom_{\HMF}(M,M')$ is the homology of the chain complex $(\Hom_R(M,M'),d)$.

$\Hom_{\hmf}(M,M')$ is defined to be the $\C$-linear subspace of $\Hom_{\HMF}(M,M')$ consisting of homogeneous elements with $\zed_2$-degree $0$ and quantum degree $0$.
\end{definition}

Now we introduce four categories of homotopically finite graded matrix factorizations relevant to our construction. We will be mainly concerned with the homotopy categories $\HMF_{R,w}$ and $\hmf_{R,w}$.

\begin{definition}\label{categories-def}
Let $w\in R$ be an homogeneous element of degree $2N+2$.  We define $\MF_{R,w}$, $\HMF_{R,w}$, $\mf_{R,w}$ and $\hmf_{R,w}$ by the following table. 

\begin{center}
\small{
\begin{tabular}{|c|c|c|}
\hline
Category & Objects & Morphisms \\
\hline
$\MF_{R,w}$ & all homotopically finite graded matrix factorizations over   & $\Hom_{\MF}$ \\
 &  $R$ of potential $w$ with quantum gradings bounded below &  \\
\hline
$\mf_{R,w}$ & all homotopically finite graded matrix factorizations over    & $\Hom_{\mf}$ \\
 & $R$ of potential $w$ with quantum gradings bounded below &  \\
\hline
$\HMF_{R,w}$ & all homotopically finite graded matrix factorizations over    & $\Hom_{\HMF}$ \\
 & $R$ of potential $w$ with quantum gradings bounded below &  \\
\hline
$\hmf_{R,w}$ & all homotopically finite graded matrix factorizations over    & $\Hom_{\hmf}$ \\
 & $R$ of potential $w$ with quantum gradings bounded below &  \\
\hline
\end{tabular}
}
\end{center}
\end{definition}

\begin{remark} 
\begin{enumerate}[(i)]
  \item The above categories are additive.
	\item The definitions of these categories here are slightly different from those in \cite{KR1}.
  \item The grading of a finitely generated graded matrix factorization over $R$ is bounded below. So finitely generated graded matrix factorizations are objects of the above categories.
    \item Comparing Definition \ref{categories-def} to Definition \ref{def-morph-mf}, one can see that, for any object $M$ and $M'$ of the above categories, $M \cong M'$ means they are isomorphic as objects of $\mf_{R,w}$, and $M \simeq M'$ means they are isomorphic as objects of $\hmf_{R,w}$.
\end{enumerate}
\end{remark}

\begin{lemma}\cite[Lemma 2.32]{Wu-color}\label{hom-all-gradings-homo-finite}
If $M$ and $M'$ are objects of $\HMF_{R,w}$, then the quantum pregrading of $\Hom_{\HMF}(M,M')$ is a grading.
\end{lemma}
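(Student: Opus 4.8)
The plan is to reduce to the case of finitely generated matrix factorizations and then argue that the quantum pregrading of a homology of a chain complex of free graded modules with homogeneous differentials is honestly a grading. First I would invoke Definition \ref{homotopically-finite-def}: since $M,M'$ are objects of $\HMF_{R,w}$, there are finitely generated graded matrix factorizations $\mathcal{M}\simeq M$ and $\mathcal{M}'\simeq M'$ with the same potentials. The homotopy equivalences induce isomorphisms $\Hom_{\HMF}(M,M')\cong\Hom_{\HMF}(\mathcal{M},\mathcal{M}')$ that are homogeneous with respect to the quantum pregrading (the grading-preserving morphisms witnessing $\simeq$ are homogeneous of quantum degree $0$, so conjugation by them preserves the pregrading). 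Hence it suffices to prove the statement when $M=\mathcal{M}$ and $M'=\mathcal{M}'$ are both finitely generated.

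In the finitely generated case, $\Hom_R(\mathcal{M},\mathcal{M}')$ is a finitely generated free graded $R$-module: it is a finite direct sum of modules of the form $\Hom_R(R\{q^a\},R\{q^b\})\cong R\{q^{b-a}\}$, so its quantum pregrading really is a grading in which each homogeneous piece is a finite-dimensional $\C$-vector space in each degree (because $R=\C[X_1,\dots,X_k]$ with positively graded indeterminates). The differential $d$ on $\Hom_R(\mathcal{M},\mathcal{M}')$ is homogeneous of quantum degree $N+1$ (as noted after Definition \ref{def-morph-mf}, $d(f)=d_{\mathcal{M}'}\circ f-(-1)^\ve f\circ d_{\mathcal{M}}$ and both differentials are homogeneous of degree $N+1$), and since $\mathcal{M},\mathcal{M}'$ have the same potential $w$ we have $d^2=(w-w)\cdot\id=0$, so $(\Hom_R(\mathcal{M},\mathcal{M}'),d)$ is a genuine cochain complex of graded modules with a homogeneous differential. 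Its cohomology $\Hom_{\HMF}(\mathcal{M},\mathcal{M}')=\ker d/\operatorname{im} d$ is then a subquotient of a graded module by graded submodules, hence inherits an honest $\zed$-grading, and this grading coincides with the quantum pregrading induced on homology classes by homogeneous representatives. This is exactly the assertion that the quantum pregrading of $\Hom_{\HMF}(M,M')$ is a grading.

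The main obstacle — the only point that is not completely formal — is the first step: checking that a homotopy equivalence $M\simeq\mathcal{M}$ in $\hmf_{R,w}$ induces an isomorphism on $\Hom_{\HMF}(-,-)$ that is strictly homogeneous for the quantum pregrading, and in particular that the pregrading on $\Hom_{\HMF}(M,M')$ is transported from the genuine grading on $\Hom_{\HMF}(\mathcal{M},\mathcal{M}')$ rather than merely being bounded below. This requires recalling from \cite[Subsections 2.1, 2.2]{Wu-color} the precise definition of the quantum pregrading on $\Hom_R(M,M')$ when $M$ is not free, and verifying that the composition maps $f\mapsto g'\circ f\circ g$ (with $g,g'$ the grading-preserving homotopy equivalences) send pregrading-homogeneous elements to pregrading-homogeneous elements of the same degree, and descend to homology. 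Once that bookkeeping is in place, the reduction is immediate and the finitely generated case is routine homological algebra over a positively graded polynomial ring.
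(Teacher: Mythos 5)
Your proposal is correct and follows essentially the same route as the proof in \cite[Lemma 2.32]{Wu-color} (which this survey only cites): replace $M,M'$ by finitely generated representatives using homotopical finiteness, observe that $\Hom_R(\mathcal{M},\mathcal{M}')$ is then a finitely generated free graded module with homogeneous square-zero differential so its homology is honestly graded, and transport the grading back through the degree-zero homotopy equivalences. The bookkeeping you flag at the end does go through: the conjugation maps and their inverses both preserve the pregrading degree and compose to the identity on homology, so they identify the homogeneous pieces degreewise.
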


\subsection{Homology of graded matrix factorizations over $R$}\label{homology-homotopy} Set $\mathfrak{I}=(X_1,\dots,X_k)$, the homogeneous ideal of $R$ generated by all the indeterminates of $R$. Let $w\in \mathfrak{I}$ be a homogeneous element of degree $2N+2$, and $M$ a graded matrix factorization over $R$ with potential $w$. Note that $M/\mathfrak{I} M$ is a chain complex over $\C$, and it inherits the gradings of $M$.

\begin{definition}\label{homology-matrix-factorization-def}
$H_R(M)$ is defined to be the homology of $M/\mathfrak{I} M$. It inherits the gradings of $M$. If $R$ is clear from the context, we drop it from the notations. 

Denote by $H_R^{\ve,j}(M)$ the subspace of $H_R(M)$ consisting of homogeneous elements of $\zed_2$-degree $\ve$ and quantum degree $j$. Following \cite{KR1}, we define the graded dimension of $M$ to be
\[
\gdim_R(M) = \sum_{j,\ve} \tau^{\ve} q^j \dim_\C H_R^{\ve,j}(M) ~\in ~\zed[[q]][\tau]/(\tau^2-1).
\]
Again, if $R$ is clear from the context, we drop it from the notations.
\end{definition}

\begin{proposition}\cite[Proposition 8]{KR1}\label{prop-homotopy-equal-isomorphism-on-homology}
Let $M$ and $M'$ be graded matrix factorizations over $R$ with homogeneous potential $w\in\mathfrak{I}$ of degree $2N+2$. Assume the quantum gradings of $M$ and $M'$ are bounded below. Suppose that $f:M\rightarrow M'$ is a homogeneous morphism preserving both gradings. Then $f$ is a homotopy equivalence if and only if it induces an isomorphism of the homology $f_\ast: H_R(M) \rightarrow H_R(M')$.
\end{proposition}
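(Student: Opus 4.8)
The plan is to reduce the ``if'' direction (the ``only if'' direction being immediate, since a homotopy inverse induces an inverse on homology) to a statement about the mapping cone of $f$. Set $C = \mathrm{Cone}(f)$, viewed as a graded matrix factorization over $R$ with potential $w$ and quantum grading bounded below; recall that $f$ is a homotopy equivalence if and only if $C$ is contractible, i.e.\ $C \simeq 0$. Since $f$ induces an isomorphism on $H_R$, the long exact sequence associated to $0 \to M' \to C \to M\langle 1\rangle \to 0$ on the level of the complexes $M/\mathfrak{I}M$, $M'/\mathfrak{I}M'$, $C/\mathfrak{I}C$ shows that $H_R(C) = 0$. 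So it suffices to prove: a graded matrix factorization $C$ over $R$ with potential $w \in \mathfrak{I}$, with quantum grading bounded below, satisfying $H_R(C) = 0$, is contractible.

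For this I would first reduce to the finitely generated case: by definition $C$ is homotopically finite, so $C \simeq \mathcal{C}$ for some finitely generated $\mathcal{C}$, and $H_R$ is a homotopy invariant, so $H_R(\mathcal{C}) = 0$ as well; it is enough to show $\mathcal{C}$ is contractible. Now $\mathcal{C}_0, \mathcal{C}_1$ are finitely generated free graded $R$-modules, and $\mathcal{C}/\mathfrak{I}\mathcal{C}$ is the complex $\mathcal{C}_0/\mathfrak{I}\mathcal{C}_0 \xrightarrow{\bar d_0} \mathcal{C}_1/\mathfrak{I}\mathcal{C}_1 \xrightarrow{\bar d_1} \mathcal{C}_0/\mathfrak{I}\mathcal{C}_0$ of finite-dimensional graded $\C$-vector spaces (here $\bar d_1 \bar d_0 = 0$ and $\bar d_0 \bar d_1 = 0$ because $w \in \mathfrak{I}$). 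The hypothesis $H_R(\mathcal{C}) = 0$ says this complex is exact. Then $\bar d_0$ and $\bar d_1$ have complementary images and kernels, so we may choose homogeneous $\C$-bases of $\mathcal{C}_0/\mathfrak{I}\mathcal{C}_0$ and $\mathcal{C}_1/\mathfrak{I}\mathcal{C}_1$ in which $\bar d_0$, $\bar d_1$ are block matrices built from identity blocks. Lifting these basis elements to homogeneous generators of $\mathcal{C}_0$, $\mathcal{C}_1$ over $R$ (possible because the quantum grading is bounded below, so Nakayama applies degreewise), I obtain a new homogeneous free basis of $\mathcal{C}$ in which $d_0$, $d_1$ are, modulo $\mathfrak{I}$, the above standard block form. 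A standard change-of-basis / inductive argument on the $\mathfrak{I}$-adic filtration then puts $\mathcal{C}$ into the form of a direct sum of ``trivial'' matrix factorizations $R \xrightarrow{1} R\{\ast\} \xrightarrow{w} R$ and $R \xrightarrow{w} R\{\ast\} \xrightarrow{1} R$, each of which is visibly contractible; hence $\mathcal{C} \simeq 0$, and therefore $C \simeq 0$ and $f$ is a homotopy equivalence.

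The main obstacle is the last step: promoting the contractibility of $\mathcal{C}/\mathfrak{I}\mathcal{C}$ over $\C$ to contractibility of $\mathcal{C}$ over $R$. The subtlety is that the identity blocks appearing in $\bar d_\ve$ are only correct modulo $\mathfrak{I}$; one must argue that the ``off-diagonal'' corrections of positive $\mathfrak{I}$-adic order can be absorbed by a further automorphism of the free modules. Because $R$ is a graded polynomial ring and all gradings are bounded below, the $\mathfrak{I}$-adic filtration is exhaustive and separated on each homogeneous component (each graded piece of $\mathcal{C}_\ve$ is finite-dimensional), so the usual successive-approximation argument converges in each fixed degree, and one gets an honest homogeneous $R$-automorphism putting $d_0$, $d_1$ into standard block form. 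This is exactly the argument of \cite[Proposition 8]{KR1}; I would simply cite it after making the reduction to the mapping cone, noting that the hypotheses on $R$, $w$, and the boundedness of the quantum grading are precisely what make that argument go through verbatim in the graded setting used here.
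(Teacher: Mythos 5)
The paper does not actually prove this proposition: it is quoted verbatim from \cite[Proposition 8]{KR1}, so there is no internal argument to compare against. Your route --- the ``only if'' direction is formal; for the ``if'' direction pass to the mapping cone $C=\mathrm{Cone}(f)$, use the short exact sequence $0\to M'/\mathfrak{I}M'\to C/\mathfrak{I}C\to (M/\mathfrak{I}M)\langle 1\rangle\to 0$ and its long exact homology sequence to conclude $H_R(C)=0$ from the bijectivity of $f_\ast$, and then invoke the fact that an acyclic graded matrix factorization with potential in $\mathfrak{I}$ and quantum grading bounded below is contractible --- is exactly the standard Khovanov--Rozansky argument. Moreover, that last fact is available in the paper as Proposition \ref{prop-homology-detects-homotopy}(i) (itself quoted from \cite[Proposition 7]{KR1}), so citing it for $C$ would finish the proof immediately.

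The one step that does not hold as written is the claim that ``by definition $C$ is homotopically finite.'' Homotopical finiteness (Definition \ref{homotopically-finite-def}) is an additional property, not part of the definition of a graded matrix factorization; neither the hypotheses of the proposition (which concern arbitrary graded matrix factorizations with bounded-below quantum grading) nor the cone construction hand it to you, and asserting that $C$ is homotopy equivalent to a finitely generated factorization is uncomfortably close in strength to what you are trying to establish. The repair is easy: either apply Proposition \ref{prop-homology-detects-homotopy}(i) directly to $C$, whose hypotheses ($w\in\mathfrak{I}$ homogeneous of degree $2N+2$, quantum grading bounded below) are inherited from $M$ and $M'$, or run your basis-lifting and successive-approximation argument on $C$ itself, observing that what makes the $\mathfrak{I}$-adic corrections terminate in each fixed quantum degree is the bounded-below grading together with the positivity of the degrees of the generators of $\mathfrak{I}$, not finite generation. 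With that substitution the argument is complete and agrees with the proof the paper is implicitly relying on.
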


\begin{proposition}\cite[Proposition 7]{KR1}\label{prop-homology-detects-homotopy}
Let $M$ be a graded matrix factorization over $R$ with homogeneous potential $w\in\mathfrak{I}$ of degree $2N+2$. Assume the quantum grading of $M$ is bounded below. Then 
\begin{enumerate}[(i)]
	\item $M\simeq 0$ if and only if $H_R(M)=0$ or, equivalently, $\gdim_R (M)=0$;
	\item $M$ is homotopically finite if and only if $H_R(M)$ is finite dimensional over $\C$ or, equivalently, $\gdim_R (M) \in \zed[q,\tau]/(\tau^2-1)$.
\end{enumerate}
\end{proposition}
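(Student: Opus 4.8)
The plan is to reduce the statement to two facts: a structural decomposition of graded matrix factorizations (``Gaussian elimination'') and a graded Nakayama lemma for $R$. First I would prove that any graded matrix factorization $M$ over $R$ with potential $w\in\mathfrak I$ and quantum grading bounded below is isomorphic, \emph{as a matrix factorization}, to a direct sum $M\cong M^{\min}\oplus M^{\mathrm{tr}}$ in which $M^{\mathrm{tr}}$ is a (possibly infinite) direct sum of quantum-degree-shifted copies of the contractible Koszul factorizations $(1,w)_R$ and $(w,1)_R$, while $M^{\min}$ is \emph{reduced} in the sense that every matrix entry of each differential of $M^{\min}$ lies in $\mathfrak I$. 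The mechanism is to reduce $d_M$ modulo $\mathfrak I$: since $w\mapsto 0$, one gets an honest $\zed_2$-graded complex $\overline M=(M/\mathfrak I M,\overline{d_M})$ of graded $\C$-vector spaces, which splits as the direct sum of a complex with vanishing differential and a direct sum of rank-$\le 2$ acyclic pieces; lifting the first summand to a free graded $R$-submodule and the acyclic pieces to the corresponding $(1,w)$- and $(w,1)$-factorizations produces a splitting of the underlying $R$-modules of $M$, which one then makes compatible with $d_M$ by an $R$-linear change of basis that successively clears the unit entries of $d_M$. I expect this to be the main obstacle: the only genuinely delicate point is the bookkeeping when there are infinitely many trivial summands, and it is precisely the bounded-below hypothesis that makes the clearing procedure well defined (locally finite) and the resulting automorphism invertible.

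Granting this, the rest is formal. Because $M^{\mathrm{tr}}$ is contractible we have $M\simeq M^{\min}$, and reducing a contracting homotopy of $M^{\mathrm{tr}}$ modulo $\mathfrak I$ shows $H_R(M^{\mathrm{tr}})=0$, so $H_R(M)\cong H_R(M^{\min})$; and since $\overline{d_{M^{\min}}}=0$, in fact $H_R(M)\cong H_R(M^{\min})=M^{\min}/\mathfrak I M^{\min}$ as $\zed_2\oplus\zed$-graded vector spaces. Now $M^{\min}$ is a free graded $R$-module whose grading is bounded below, so graded Nakayama applies: $M^{\min}=0$ iff $M^{\min}/\mathfrak I M^{\min}=0$, and $M^{\min}$ is finitely generated (equivalently of finite rank) iff $M^{\min}/\mathfrak I M^{\min}$ is finite dimensional. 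For (i): if $H_R(M)=0$ then $M^{\min}=0$, hence $M\cong M^{\mathrm{tr}}\simeq 0$; conversely if $M\simeq 0$ then $\id_M$ is a boundary in $\Hom_R(M,M)$, and reducing that homotopy modulo $\mathfrak I$ gives $H_R(M)=0$. (Alternatively, this converse can be done without Step 1: lift a contracting homotopy of $\overline M$ to an $R$-linear $h$ on $M$; then $d_M h+h d_M$ commutes with $d_M$ and differs from $\id_M$ by a locally nilpotent operator, which one inverts to build an honest homotopy.) For (ii), ``$\Leftarrow$'': if $H_R(M)$ is finite dimensional then $M^{\min}$ is a finitely generated graded matrix factorization with potential $w$ and $M\simeq M^{\min}$, so $M$ is homotopically finite; ``$\Rightarrow$'': if $M\simeq\mathcal M$ with $\mathcal M$ finitely generated then $H_R(M)\cong H_R(\mathcal M)$ by Proposition \ref{prop-homotopy-equal-isomorphism-on-homology} (or by the same modulo-$\mathfrak I$ reduction of homotopies), and $H_R(\mathcal M)$ is a subquotient of $\mathcal M/\mathfrak I\mathcal M$, which is finite dimensional since $\mathcal M$ is finitely generated over $R$.

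Finally, the two reformulations in terms of $\gdim_R$ are immediate from $\gdim_R(M)=\sum_{\ve,j}\tau^\ve q^j\dim_\C H_R^{\ve,j}(M)$: the coefficients are non-negative integers, so $\gdim_R(M)=0$ iff all of them vanish iff $H_R(M)=0$; and, the grading of $H_R(M)$ being bounded below, $\gdim_R(M)$ has only finitely many nonzero terms, i.e.\ lies in $\zed[q,\tau]/(\tau^2-1)$, iff $H_R(M)$ is finite dimensional.
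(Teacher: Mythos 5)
The paper offers no proof of this proposition, only the citation to \cite[Proposition 7]{KR1}, and your argument is correct and is essentially the standard proof given there: split $M$ as a matrix factorization into a reduced summand (all differential entries in $\mathfrak{I}$) plus contractible Koszul pieces $(1,w)$ and $(w,1)$, identify $H_R(M)$ with the reduction of the minimal part, and apply graded Nakayama. As you note, the bounded-below hypothesis on the quantum grading is precisely what makes the elimination of unit entries a locally finite, convergent procedure.
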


\subsection{Categories of chain complexes} Now we introduce our notations for categories of chain complexes.

\begin{definition}\label{categories-of-complexes}
Let $\mathcal{C}$ be an additive category. We denote by $\ch(\mathcal{C})$ the category of bounded chain complexes over $\mathcal{C}$. More precisely,
\begin{itemize}
	\item An object of $\ch(\mathcal{C})$ is a chain complex 
	\begin{equation}\label{chain-complex-form}
	\xymatrix{
	\cdots  \ar[r]^{d_{i-1}} & A_i \ar[r]^{d_{i}} & A_{i+1} \ar[r]^{d_{i+1}} & A_{i+2} \ar[r]^{d_{i+2}} & \cdots 
	}
	\end{equation}
	where $A_i$'s are objects of $\mathcal{C}$, $d_i$'s are morphisms of $\mathcal{C}$ such that $d_{i+1} \circ d_i =0$ for $i\in \zed$, and there exists integers $k\leq K$ such that $A_i =0$ if $i>K$ or $i<k$.
	\item A morphism $f$ of $\ch(\mathcal{C})$ is a commutative diagram
	\[
	\xymatrix{
	\cdots  \ar[r]^{d_{i-1}} & A_i \ar[r]^{d_{i}} \ar[d]_{f_{i}} & A_{i+1} \ar[r]^{d_{i+1}} \ar[d]_{f_{i+1}} & A_{i+2} \ar[r]^{d_{i+2}} \ar[d]_{f_{i+2}} & \cdots \\
		\cdots  \ar[r]^{d'_{i-1}} & A'_i \ar[r]^{d'_{i}} & A'_{i+1} \ar[r]^{d'_{i+1}} & A'_{i+2} \ar[r]^{d'_{i+2}}  & \cdots
	},
	\]
	where each row is an object of $\ch(\mathcal{C})$ and vertical arrows are morphisms of $\mathcal{C}$.
\end{itemize}
Chain homotopy in $\ch(\mathcal{C})$ is defined the usual way.

We denote by $\hch(\mathcal{C})$ the homotopy category of bounded chain complexes over $\mathcal{C}$, or simply the homotopy category of $\mathcal{C}$. $\hch(\mathcal{C})$ is defined by 
\begin{itemize}
	\item An object of $\hch(\mathcal{C})$ is an object of $\ch(\mathcal{C})$.
	\item For any two objects $A$ and $B$ of $\hch(\mathcal{C})$ $\Hom_{\hch(\mathcal{C})}(A,B)$ is $\Hom_{\ch(\mathcal{C})}(A,B)$ modulo the subgroup of null homotopic morphisms.
\end{itemize}

An isomorphism in $\ch(\mathcal{C})$ is denoted by ``$\cong$". An isomorphism in $\hch(\mathcal{C})$ is commonly known as a homotopy equivalence and denoted by ``$\simeq$". 

Let $A$ be the object of $\ch(\mathcal{C})$ (and $\hch(\mathcal{C})$) given in \eqref{chain-complex-form}. Then $A$ admits an obvious bounded homological grading $\deg_h$ with $\deg_h A_i =i$.  Morphisms of $\ch(\mathcal{C})$ and $\hch(\mathcal{C})$ preserve this grading. Denote by $A\| k \|$ the object of $\ch(\mathcal{C})$ obtained by shifting the homological grading of $A$ up by $k$. That is, $A\| k \|$ is the same chain complex as $A$ except that $\deg_h A_i =i+k$ in $A\| k \|$.
\end{definition}

\subsection{The Krull-Schmidt property}

\begin{definition}\cite{Drozd}\label{Krull-Schmidt-category-def}
An additive category $\mathcal{C}$ is called a $\C$-category if all morphism sets $\Hom_\mathcal{C}(A,B)$ are $\C$-linear spaces and the composition of morphisms is $\C$-bilinear.

A $\C$-category $\mathcal{C}$ is called Krull-Schmidt if 
\begin{itemize}
	\item every object of $\mathcal{C}$ is isomorphic to a finite direct sum $A_1\oplus\cdots\oplus A_n$ of indecomposable objects of $\mathcal{C}$;
	\item and, if $A_1\oplus\cdots\oplus A_n \cong A'_1\oplus\cdots\oplus A'_l$, where $A_1,\dots\,A_n, A'_1,\dots,A'_l$ are indecomposable objects of $\mathcal{C}$, then $n=l$ and there is a permutation $\sigma$ of $\{1,\dots,n\}$ such that $A_i \cong A'_{\sigma(i)}$ for $i=1,\dots,n$.
\end{itemize}
\end{definition}

One can prove that $\mathcal{C}$ and $\hch(\mathcal{C})$ are Krull-Schmidt by showing $\mathcal{C}$ is fully additive and locally finite dimensional. (See for example \cite{Drozd}.) This was done for $\hmf_{R,w}$ in \cite{KR1}.

\begin{lemma}\cite[Propositions 6 and 24]{KR1}\label{Krull-Schmidt-hmf}
Let $w$ be a homogeneous element of $R$ of degree $2N+2$. Then $\hmf_{R,w}$ and $\hch(\hmf_{R,w})$ are Krull-Schmidt.
\end{lemma}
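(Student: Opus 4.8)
The statement to prove is Lemma~\ref{Krull-Schmidt-hmf}: that $\hmf_{R,w}$ and $\hch(\hmf_{R,w})$ are Krull-Schmidt categories, for $w$ a homogeneous element of $R$ of degree $2N+2$.

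The plan is to follow the standard criterion: an additive $\C$-category in which every object has a local endomorphism ring decomposition -- equivalently, a category that is idempotent-complete (fully additive) and locally finite-dimensional, i.e., Hom-finite over $\C$ -- is automatically Krull-Schmidt. So the proof splits into three tasks. First I would verify that $\hmf_{R,w}$ is a $\C$-category: the morphism spaces $\Hom_{\hmf}(M,M')$ are $\C$-linear by Definition~\ref{hom-all-gradings-def} (they are the degree-$(0,0)$ part of the homology $\Hom_{\HMF}(M,M')$ of a complex of $\C$-vector spaces), and composition is visibly $\C$-bilinear. Second, I would establish local finiteness: for homotopically finite $M, M'$ one may replace them up to homotopy by finitely generated matrix factorizations (Definition~\ref{homotopically-finite-def}), and then invoke Proposition~\ref{prop-homology-detects-homotopy}(ii) applied to the internal Hom -- or more directly, one shows $\Hom_{\hmf}(M,M')$ is finite-dimensional because it is a graded piece of $H_R$ of a homotopically finite factorization, hence finite-dimensional over $\C$. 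In particular $\mathrm{End}_{\hmf}(M)$ is a finite-dimensional $\C$-algebra for every object $M$. Third, I would check idempotent-completeness (fully additive): any idempotent $e \in \mathrm{End}_{\hmf}(M)$ splits, so that $\hmf_{R,w}$ has images of idempotents; this is where one uses that finitely generated graded matrix factorizations over $R$ form a category in which idempotents split -- concretely, lift $e$ and split it using that $R$-modules involved are projective/free, as is done in \cite{KR1}.

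Once these structural facts are in hand, the Krull-Schmidt property for $\hmf_{R,w}$ follows from the classical theorem (cf.\ \cite{Drozd}): in a fully additive, Hom-finite $\C$-category, every object decomposes into a finite direct sum of indecomposables (finiteness of $\gdim$ bounds the length of any decomposition, so the process terminates), and uniqueness follows because the endomorphism ring of an indecomposable is local -- a finite-dimensional $\C$-algebra with no nontrivial idempotents is local -- and one then runs the usual exchange/Azumaya argument. For the second assertion, that $\hch(\hmf_{R,w})$ is Krull-Schmidt, I would appeal to the general principle that the bounded homotopy category $\hch(\mathcal{C})$ of a Krull-Schmidt $\C$-category $\mathcal{C}$ is again Krull-Schmidt: the chain groups live in $\mathcal{C}$, the homological grading is bounded, and Hom-spaces in $\hch(\mathcal{C})$ are subquotients of finite products of Hom-spaces in $\mathcal{C}$, hence still finite-dimensional; idempotents still split because $\mathcal{C}$ is idempotent-complete; so the same classical theorem applies. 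This is exactly \cite[Propositions 6 and 24]{KR1}, so I would cite those for the details.

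The main obstacle is the idempotent-completeness (fully additive) step. Hom-finiteness is comparatively soft -- it is essentially a repackaging of Proposition~\ref{prop-homology-detects-homotopy} and Lemma~\ref{hom-all-gradings-homo-finite} -- but splitting idempotents in $\hmf_{R,w}$ requires genuine input: one must show that the image of an idempotent endomorphism, a priori only a homotopy-theoretic object, is again homotopy-equivalent to a (finitely generated) matrix factorization over $R$, which relies on the specific algebraic structure of Koszul matrix factorizations and the Noetherian, graded nature of $R$. This is the heart of what \cite{KR1} proves in the uncolored setting, and since the objects here are of the same local nature (finitely generated Koszul-type factorizations over a graded polynomial ring), the argument transfers verbatim; accordingly the cleanest route is simply to note that the proofs of \cite[Propositions 6 and 24]{KR1} apply without change.
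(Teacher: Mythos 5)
Your proposal is correct and follows the same route as the paper, which simply invokes the criterion that a fully additive, locally finite-dimensional $\C$-category (and its bounded homotopy category) is Krull-Schmidt, citing \cite{Drozd} for the criterion and \cite[Propositions 6 and 24]{KR1} for the verification in the case of $\hmf_{R,w}$. Your additional remarks on where Hom-finiteness and idempotent splitting come from are consistent with what those references establish.
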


The Krull-Schmidt property of $\hmf_{R,w}$ and $\hch(\hmf_{R,w})$ makes the construction of the colored $\mathfrak{sl}(N)$ link homology much easier. Let us briefly recall some useful facts about Krull-Schmidt categories.

\begin{lemma}\cite[Lemma 3.17]{Wu-color}\label{KS-oplus-cancel}
Let $\mathcal{C}$ be a Krull-Schmidt category, and $A,B,C$ objects of $\mathcal{C}$. If $A \oplus C \cong B \oplus C$, then $A \cong B$.
\end{lemma}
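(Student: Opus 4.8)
Let $\mathcal{C}$ be a Krull-Schmidt category, and $A,B,C$ objects of $\mathcal{C}$. If $A \oplus C \cong B \oplus C$, then $A \cong B$.

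The plan is to reduce everything to the uniqueness-of-decomposition axiom in Definition \ref{Krull-Schmidt-category-def} by decomposing all three objects into indecomposables and counting. First I would use the existence part of the Krull-Schmidt property to write $A \cong A_1 \oplus \cdots \oplus A_m$, $B \cong B_1 \oplus \cdots \oplus B_n$, and $C \cong C_1 \oplus \cdots \oplus C_p$, where all the summands are indecomposable objects of $\mathcal{C}$. Substituting these into the hypothesis $A \oplus C \cong B \oplus C$ gives an isomorphism
\[
A_1 \oplus \cdots \oplus A_m \oplus C_1 \oplus \cdots \oplus C_p \;\cong\; B_1 \oplus \cdots \oplus B_n \oplus C_1 \oplus \cdots \oplus C_p
\]
between two direct sums of indecomposables. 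By the uniqueness part of the Krull-Schmidt property, $m + p = n + p$, so $m = n$, and there is a permutation $\sigma$ of the index set $\{1,\dots,m+p\}$ matching the two lists of indecomposable summands up to isomorphism.

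The main point is then a combinatorial bookkeeping argument: I want to show that $\sigma$ can be chosen (or modified) so that it carries the "$C$-part" of the left list onto the "$C$-part" of the right list, hence restricts to a matching between the $A_i$'s and the $B_j$'s. The clean way to phrase this is in terms of multisets: the hypothesis says that the multiset $\{[A_1],\dots,[A_m]\} \uplus \{[C_1],\dots,[C_p]\}$ of isomorphism classes equals the multiset $\{[B_1],\dots,[B_n]\} \uplus \{[C_1],\dots,[C_p]\}$. Since multiset addition is cancellative, removing the common summand $\{[C_1],\dots,[C_p]\}$ from both sides yields $\{[A_1],\dots,[A_m]\} = \{[B_1],\dots,[B_n]\}$ as multisets of isomorphism classes. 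This immediately produces a bijection $\rho$ between $\{1,\dots,m\}$ and $\{1,\dots,n\}$ (in particular $m=n$) with $A_i \cong B_{\rho(i)}$ for all $i$, and taking the direct sum of these isomorphisms gives $A \cong A_1 \oplus \cdots \oplus A_m \cong B_{\rho(1)} \oplus \cdots \oplus B_{\rho(m)} \cong B$, as desired.

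I expect the only subtlety — and it is minor — to be the justification that one may cancel the common $C$-summands at the level of the matching permutation $\sigma$; this is exactly the statement that multiset cancellation is valid, which is elementary, but one should be slightly careful if some $C_i$ is isomorphic to some $A_j$ or $B_j$, so that the "$C$-part" is not literally a distinguished block. Phrasing the argument in terms of multiplicities of isomorphism classes, as above, sidesteps this entirely: for each isomorphism class $[X]$ of indecomposable objects, the hypothesis gives $\mu_A([X]) + \mu_C([X]) = \mu_B([X]) + \mu_C([X])$ where $\mu_{(-)}([X])$ denotes the multiplicity of $[X]$ in the relevant decomposition, and these are finite non-negative integers, so $\mu_A([X]) = \mu_B([X])$ for every $[X]$. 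This is just cancellation in $\mathbb{Z}_{\geq 0}$, and it gives the required isomorphism of decompositions termwise.
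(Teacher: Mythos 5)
Your proof is correct and is the standard argument for cancellation in a Krull-Schmidt category, which is the same route the cited reference takes: decompose $A$, $B$, $C$ into indecomposables, apply the uniqueness clause of Definition \ref{Krull-Schmidt-category-def} to the two decompositions of $A\oplus C\cong B\oplus C$, and cancel multiplicities of each isomorphism class in $\zed_{\geq 0}$. Your reformulation in terms of multiplicity functions on isomorphism classes correctly handles the only delicate point (a $C_i$ possibly isomorphic to some $A_j$ or $B_j$), so there is no gap.
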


\begin{definition}\label{strongly-non-periodic-def}
Let $\mathcal{C}$ be an additive category, and $F:\mathcal{C} \rightarrow \mathcal{C}$ an autofunctor with inverse functor $F^{-1}$. We say that $F$ is strongly non-periodic if, for any object $A$ of $\mathcal{C}$ and $k \in \zed$, $F^k(A) \cong A$ implies that either $A\cong 0$ or $k=0$.

Denote by $\zed_{\geq 0}[F,F^{-1}]$ the ring of formal Laurent polynomials of $F$ whose coefficients are non-negative integers. Each $G=\sum_{i=k}^l b_i F^i \in \zed_{\geq 0}[F,F^{-1}]$ admits a natural interpretation as an endofunctor on $\mathcal{C}$, that is, for any object $A$ of $\mathcal{C}$, 
\[
G(A) = \bigoplus_{i=k}^l (\underbrace{F^i(A)\oplus\cdots\oplus F^i(A)}_{b_i \text{ fold}}).
\] 
\end{definition}

I learned the following Lemma from Yonezawa.

\begin{lemma}\cite[Lemma 3.19]{Wu-color}\label{yonezawa-lemma}
Let $\mathcal{C}$ be a Krull-Schmidt category, and $F:\mathcal{C} \rightarrow \mathcal{C}$ a strongly non-periodic autofunctor. Suppose that $A$, $B$ are objects of $\mathcal{C}$, and there exists a $G \in \zed_{\geq 0}[F,F^{-1}]$ such that $G\neq 0$ and $G(A) \cong G(B)$. Then $A \cong B$.
\end{lemma}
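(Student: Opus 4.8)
\textbf{Proof proposal for Lemma \ref{yonezawa-lemma}.}

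The plan is to reduce the statement to the cancellation property of Krull-Schmidt categories (Lemma \ref{KS-oplus-cancel}) by a careful bookkeeping of indecomposable summands, using strong non-periodicity of $F$ to rule out ``accidental'' matches between summands in different $F$-degrees. First I would decompose $A$ and $B$ into indecomposables in $\mathcal{C}$, say $A \cong \bigoplus_{p} A_p^{\oplus \alpha_p}$ and $B \cong \bigoplus_{p} B_p^{\oplus \beta_p}$, where the $A_p$ (resp.\ $B_p$) are pairwise non-isomorphic indecomposables. Since $F$ is an autofunctor, $F^i$ sends indecomposables to indecomposables and preserves direct sum decompositions, so for $G = \sum_{i=k}^{l} b_i F^i$ with $b_i \geq 0$ we get
\[
G(A) \cong \bigoplus_{i=k}^{l}\bigoplus_{p} (F^i A_p)^{\oplus b_i \alpha_p}, \qquad G(B) \cong \bigoplus_{i=k}^{l}\bigoplus_{q} (F^i B_q)^{\oplus b_i \beta_q},
\]
both of which are decompositions into indecomposables. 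By the uniqueness clause in the definition of Krull-Schmidt, these two multisets of indecomposables (counted with multiplicity) must coincide.

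Next I would exploit strong non-periodicity to organize the $F^i A_p$ into $F$-orbits. Call two indecomposables $C, C'$ of $\mathcal{C}$ $F$-equivalent if $C' \cong F^m C$ for some $m \in \zed$. On each nonzero $F$-orbit, strong non-periodicity guarantees that $F^m C \cong C$ forces $m = 0$, so the orbit is a ``free'' $\zed$-set: the map $m \mapsto F^m C$ is injective. Pick $G \neq 0$, so the support $S = \{ i : b_i \neq 0\}$ is a nonempty finite set; let $i_{\min} = \min S$ and $i_{\max} = \max S$. Now fix an $F$-orbit $\mathcal{O}$ and look at which elements of $\mathcal{O}$ appear in $G(A)$ and with what multiplicity. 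Writing, for a representative $C \in \mathcal{O}$, the multiplicity of $F^i C$ in $A$ as $a_i^{\mathcal{O}}$ (a finitely supported sequence of non-negative integers), the multiplicity of $F^j C$ in $G(A)$ is the convolution $\sum_{i} b_i a_{j-i}^{\mathcal{O}}$, i.e.\ the coefficients of the polynomial product $\big(\sum_i b_i x^i\big)\big(\sum_i a_i^{\mathcal{O}} x^i\big)$. The same holds for $B$ with a sequence $b_i^{\mathcal{O}}$ (note: the orbit could be represented via an $A$-summand even though $B$ contributes to it, so one chooses a common set of orbit representatives covering all indecomposable summands of both $A$ and $B$). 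Equality of $G(A)$ and $G(B)$ as multisets of indecomposables says exactly that, orbit by orbit, $\big(\sum_i b_i x^i\big)\big(\sum_i a_i^{\mathcal{O}} x^i\big) = \big(\sum_i b_i x^i\big)\big(\sum_i b_i^{\mathcal{O}} x^i\big)$ in $\zed[x, x^{-1}]$. Since $\zed[x,x^{-1}]$ is an integral domain and $\sum_i b_i x^i \neq 0$, we may cancel it and conclude $a_i^{\mathcal{O}} = b_i^{\mathcal{O}}$ for all $i$ and all orbits $\mathcal{O}$. That is precisely the statement that $A$ and $B$ have the same indecomposable summands with the same multiplicities, hence $A \cong B$ by the uniqueness part of Krull-Schmidt.

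Alternatively, and perhaps more in the spirit of the cited Lemma \ref{KS-oplus-cancel}, one can avoid the polynomial-ring language and argue by induction on $\sum_i b_i \geq 1$: if some $b_{i_0} \geq 1$, write $G = F^{i_0} + G'$ with $G' \in \zed_{\geq 0}[F,F^{-1}]$ having strictly smaller coefficient sum; then $F^{i_0}(A) \oplus G'(A) \cong F^{i_0}(B) \oplus G'(B)$, and one wants to cancel to get $G'(A) \oplus F^{i_0}A \cong G'(B) \oplus F^{i_0}B$ and then peel off one more copy — but this naive cancellation is exactly where the subtlety lies, because an indecomposable summand of $F^{i_0}A$ might be isomorphic to one coming from $G'(B)$ rather than from $F^{i_0}B$. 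This is the main obstacle, and it is resolved precisely by the orbit/convolution analysis above: strong non-periodicity is what forbids summands in different ``$F$-levels'' of the same orbit from being confused, converting the cancellation problem into cancellation of a nonzero element in the domain $\zed[x,x^{-1}]$. I would therefore present the polynomial argument as the clean route, invoking Lemma \ref{KS-oplus-cancel} only implicitly through the uniqueness axiom of Krull-Schmidt categories that underlies it.
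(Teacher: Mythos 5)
Your argument is correct and follows essentially the same route as the proof of \cite[Lemma 3.19]{Wu-color} that the paper cites: reduce to uniqueness of indecomposable decompositions in a Krull--Schmidt category, use strong non-periodicity to make each $F$-orbit of (nonzero) indecomposables a free $\zed$-set, and then observe that $G(A)\cong G(B)$ amounts, orbit by orbit, to cancelling the nonzero Laurent polynomial $\sum_i b_i x^i$ in the integral domain $\zed[x,x^{-1}]$. Your closing paragraph also correctly identifies why a direct appeal to Lemma \ref{KS-oplus-cancel} alone would not suffice and why the orbit analysis is the essential point.
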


\section{Symmetric Polynomials}\label{sec-sym-poly}

\subsection{Alphabets} An alphabet is a finite set of homogeneous indeterminates of degree $2$. (Note that, the degree of a polynomial in this paper is twice its usual degree.) We denote alphabets by $\mathbb{A},~\mathbb{B}, \dots,~\mathbb{X},~\mathbb{Y}$. Of course, we avoid using letters $\mathbb{C},~\mathbb{N},~\mathbb{Q},~\mathbb{R},~\mathbb{Z}$ to represent alphabets. For an alphabet $\mathbb{X}=\{x_1,\dots,x_m\}$, denote by $\Sym(\mathbb{X})$ the graded ring of symmetric polynomials over $\C$ in $\mathbb{X}=\{x_1,\dots,x_m\}$. (Again, note that our grading of $\Sym(\mathbb{X})$ is twice the usual grading.)

Given a collection $\{\mathbb{X}_1,\dots,\mathbb{X}_l\}$ of pairwise disjoint alphabets, we denote by $\Sym(\mathbb{X}_1|\cdots|\mathbb{X}_l)$ the ring of polynomials in $\mathbb{X}_1\cup\cdots\cup\mathbb{X}_l$ over $\C$ that are symmetric in each $\mathbb{X}_i$, which is a graded free $\Sym(\mathbb{X}_1\cup\cdots\cup\mathbb{X}_l)$-module. Clearly,
\[
\Sym(\mathbb{X}_1|\cdots|\mathbb{X}_l) \cong \Sym(\mathbb{X}_1) \otimes_{\C} \cdots \otimes_{\C} \Sym(\mathbb{X}_l).
\]

\subsection{Basic symmetric polynomials}\label{subsec-basic-sym-poly} For an alphabet $\mathbb{X}=\{x_1,\dots,x_m\}$, we use following notations for the elementary, complete and power sum symmetric polynomials in $\mathbb{X}$:
\begin{eqnarray*}
X_k & = & \begin{cases}
\sum_{1\leq i_1<i_2<\cdots<i_k\leq m} x_{i_1}x_{i_2}\cdots x_{i_k} & \text{if } 1\leq k\leq m, \\
1 & \text{if } k=0, \\
0 & \text{if } k<0 \text{ or } k>m.
\end{cases} \\
h_k(\mathbb{X}) & = & 
\left\{%
\begin{array}{ll}
    \sum_{1\leq i_1\leq i_2 \leq \cdots \leq i_k\leq m} x_{i_1}x_{i_2}\cdots x_{i_k} & \text{if } k>0, \\
    1 & \text{if } k=0, \\
    0 & \text{if } k<0. 
\end{array}%
\right. \\
p_k(\mathbb{X}) & = & 
\left\{%
\begin{array}{ll}
    \sum_{i=1}^{m} x_i^k & \text{if } k\geq0, \\
    0 & \text{if } k<0, 
\end{array}%
\right.
\end{eqnarray*}
$X_k$, $p_k(\mathbb{X})$ and $h_k(\mathbb{X})$ are homogeneous elements of $\Sym(\mathbb{X})$ of degree $2k$. It is well known that $\Sym(\mathbb{X})=\C[X_1,\dots,X_m]$. So $p_k(\mathbb{X})$ and $h_k(\mathbb{X})$ can be uniquely expressed as polynomials in $X_1,\cdots,X_m$.

\begin{lemma}\cite[Lemma 4.1]{Wu-color}\label{power-derive}
Write 
\begin{eqnarray*}
p_k(\mathbb{X}) & = & p_{m,k}(X_1,\dots,X_m) \\
h_k(\mathbb{X}) & = & h_{m,k}(X_1,\dots,X_m).
\end{eqnarray*}
Then
\[
\frac{\partial}{\partial X_j} p_{m,l}(X_1,\dots,X_m) = (-1)^{j+1} l h_{m,l-j}(X_1,\dots,X_m).
\]
\end{lemma}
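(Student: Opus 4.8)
The identity to be proved is an identity in the polynomial ring $\C[X_1,\dots,X_m]$ obtained by differentiating the power sum $p_l(\mathbb{X})$, re-expressed via the elementary symmetric polynomials $X_1,\dots,X_m$, with respect to one of these generators. The plan is to work with generating functions. Let $E(t) = \sum_{j\geq 0} X_j t^j = \prod_{i=1}^m (1+x_i t)$ and $P(t) = \sum_{k\geq 1} p_k(\mathbb{X}) t^{k}$, and recall the classical Newton-type identity linking them; the cleanest form here is $P(-t) = -t \frac{d}{dt}\log E(t) = -t \frac{E'(t)}{E(t)}$, equivalently $E'(t)\cdot\bigl(\sum_{k\geq 0} p_{k}(\mathbb{X})(-t)^{k}\bigr)$-type relations. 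The point is that $p_l$ becomes a rational expression in the $X_j$ once one passes to generating functions, so $\partial/\partial X_j$ can be computed by differentiating that rational expression and extracting the coefficient of $t^l$.

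First I would set up the formal identity $\sum_{l\geq 1} p_l(\mathbb{X})\, t^l = t\,\frac{-E'(-t)}{E(-t)}$ (choosing signs so that $E(-t)=\prod(1-x_it)$), which holds at the level of symmetric functions and hence as an identity of polynomials in $X_1,\dots,X_m$ once each $p_l$ is written as $p_{m,l}(X_1,\dots,X_m)$. Then, treating $X_1,\dots,X_m$ as independent variables, apply $\partial/\partial X_j$ to both sides. On the right-hand side $E(-t) = \sum_i X_i(-t)^i$ depends on $X_j$ only through the single monomial $X_j(-t)^j$, so $\frac{\partial}{\partial X_j} E(-t) = (-t)^j$ and $\frac{\partial}{\partial X_j} E'(-t) = j(-1)^{j-1}t^{j-1}$ (here $E'$ denotes derivative in $t$). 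A short quotient-rule computation gives
\[
\frac{\partial}{\partial X_j}\left(\frac{-t\,E'(-t)}{E(-t)}\right) = \frac{(-1)^{j+1}j\, t^{j}}{E(-t)} + (-t)^{j}\cdot\frac{t\,E'(-t)}{E(-t)^2}.
\]
The second term I would rewrite using $\frac{-t E'(-t)}{E(-t)} = \sum_{l\geq 1} p_l(\mathbb{X}) t^l$ again, so that the whole right-hand side becomes $(-1)^{j+1} t^j \cdot \frac{1}{E(-t)}\left(j + \sum_{l\geq 1} p_l(\mathbb{X})(-1) t^l \cdot(-1)\right)$ — I will need to be careful collecting the signs, but the upshot is that everything is expressed in terms of $1/E(-t) = \sum_{k\geq 0} h_k(\mathbb{X}) t^k$, which is the generating function for the complete homogeneous symmetric polynomials.

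Then the final step is coefficient extraction: the coefficient of $t^l$ on the left is $\frac{\partial}{\partial X_j} p_{m,l}$, and on the right, after the simplifications above, it is a sum of products of $h_k(\mathbb{X})$ and $p_a(\mathbb{X})$; the combinatorial identity $\sum_{a} (\text{sign}) h_{l-j-a}\, p_a$ should telescope, via the Newton identity relating $h$ and $p$, to the single term $(-1)^{j+1} l\, h_{l-j}(\mathbb{X}) = (-1)^{j+1} l\, h_{m,l-j}(X_1,\dots,X_m)$, which is exactly the claimed formula. I expect the main obstacle to be bookkeeping of signs — the paper's doubled-degree convention plays no role here, but the interplay of the $(-t)^j$ factors, the sign in $E(-t)$ versus $E(t)$, and the Newton identity's own signs is the kind of thing that is easy to get off by a sign. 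An alternative, perhaps safer, route that avoids the telescoping is induction on $m$: one checks the base case $m=1$ (where $p_l = X_1^l$ and the formula reads $\partial_{X_1} X_1^l = l X_1^{l-1} = l h_{1,l-1}$) and then uses the recursions $p_{m,l} = p_{m-1,l} + (\text{contribution of } x_m)$ together with the standard relations expressing $X_j^{(m)}$ in terms of $X_j^{(m-1)}, X_{j-1}^{(m-1)}$; but I would first attempt the generating-function argument since it is the most transparent.
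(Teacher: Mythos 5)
Your generating-function strategy is correct and does close up; the survey does not reproduce a proof (it only cites \cite[Lemma 4.1]{Wu-color}), and what you outline is the standard argument for this identity. Since you left the last step as ``should telescope,'' here is the confirmation that it does. Write $\tilde E(t):=E(-t)=\sum_{i=0}^m(-1)^iX_it^i$ and $Q(t):=\sum_{l\ge1}p_l(\mathbb{X})t^l=-t\,\tilde E'(t)/\tilde E(t)$, where $\tilde E'=\frac{d}{dt}\tilde E$. Since $\partial_{X_j}\tilde E(t)=(-1)^jt^j$ and $\partial_{X_j}\tilde E'(t)=(-1)^jjt^{j-1}$, the quotient rule gives
\[
\frac{\partial}{\partial X_j}Q(t)=(-1)^{j+1}jt^j\,\frac{1}{\tilde E(t)}+(-1)^{j+1}t^j\,\frac{Q(t)}{\tilde E(t)}
=(-1)^{j+1}t^j\Bigl(\sum_{k\ge0}h_k(\mathbb{X})t^k\Bigr)\Bigl(j+\sum_{a\ge1}p_a(\mathbb{X})t^a\Bigr),
\]
using $1/\tilde E(t)=\sum_{k\ge0}h_k(\mathbb{X})t^k$. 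The coefficient of $t^l$ is $(-1)^{j+1}\bigl(jh_{l-j}+\sum_{a=1}^{l-j}p_ah_{l-j-a}\bigr)$, and Newton's identity $nh_n=\sum_{a=1}^np_ah_{n-a}$ with $n=l-j$ turns this into $(-1)^{j+1}\bigl(j+(l-j)\bigr)h_{l-j}=(-1)^{j+1}lh_{l-j}$, as claimed (and the case $l<j$ comes out as $0$ automatically). The one thing to repair in your write-up is exactly the sign bookkeeping you flagged: with $E'(-t)$ meaning $(E')(-t)$, the correct identity is $\sum_lp_lt^l=+tE'(-t)/E(-t)$, not $-tE'(-t)/E(-t)$, and in your displayed quotient-rule line the second term should carry an extra minus sign; as written the two halves of that display are not consistent with a single sign convention, though neither slip affects the viability of the method. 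Your fallback induction on $m$ would also work but is not needed.
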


\subsection{Partitions and Schur polynomials} A partition $\lambda=(\lambda_1\geq\dots\geq\lambda_m)$ is a finite non-increasing sequence of non-negative integers. We denote by $\Lambda_{m,n}$ the set of partitions 
\[
\Lambda_{m,n}=\{(\lambda_1\geq\dots\geq\lambda_m)~|~\lambda_1\leq n\}.
\]

For a partition $\lambda=(\lambda_1\geq\dots\geq\lambda_m)$ with $\lambda_1=n$, define $\lambda'_i=\#\{j~|~\lambda_j\geq i\}$. Then $\lambda':= (\lambda'_1\geq\dots\geq\lambda'_n)$ is also a partition and is called the conjugate of $\lambda$. Clearly, $(\lambda')'=\lambda$.

For an alphabet $\mathbb{X}=\{x_1,\dots,x_m\}$ and a partition $\lambda=(\lambda_1\geq\dots\geq\lambda_m)$, denote by $S_\lambda(\mathbb{X})$ the Schur polynomial in $\mathbb{X}$ associated to the partition $\lambda$. That is, 
\begin{equation}\label{schur-complete}
S_\lambda(\mathbb{X}) = \det (h_{\lambda_i -i +j}(\mathbb{X})) = \left|%
\begin{array}{llll}
h_{\lambda_1}(\mathbb{X}) & h_{\lambda_1+1}(\mathbb{X}) & \dots & h_{\lambda_1+m-1}(\mathbb{X}) \\
h_{\lambda_2-1}(\mathbb{X}) & h_{\lambda_2}(\mathbb{X}) & \dots & h_{\lambda_2+m-2}(\mathbb{X}) \\
\dots & \dots & \dots & \dots \\
h_{\lambda_m-m+1}(\mathbb{X}) & h_{\lambda_m-m+2}(\mathbb{X}) & \dots & h_{\lambda_m}(\mathbb{X}) 
\end{array}%
\right|.
\end{equation}
The elementary and complete symmetric polynomials are special examples of the Schur polynomial.
\begin{eqnarray*}
h_j(\mathbb{X}) & = & S_{(j)}(\mathbb{X}), \\
X_j & = & S_{(\underbrace{1\geq1\geq\cdots\geq1}_{j \text{ parts}})}(\mathbb{X}).
\end{eqnarray*}

There is also a notion of $S_\lambda(-\mathbb{X})$. First, for any $j\in \zed$, 
\[
h_j(-\mathbb{X}):=(-1)^j X_j. 
\]
Then, for any partition $\lambda=(\lambda_1\geq\cdots\geq\lambda_n)$ with $\lambda_1\leq m$, 
\begin{equation}\label{negative-schur-complete}
S_\lambda(-\mathbb{X}) = \det (h_{\lambda_i -i +j}(-\mathbb{X})) = \left|%
\begin{array}{llll}
h_{\lambda_1}(-\mathbb{X}) & h_{\lambda_1+1}(-\mathbb{X}) & \dots & h_{\lambda_1+n-1}(-\mathbb{X}) \\
h_{\lambda_2-1}(-\mathbb{X}) & h_{\lambda_2}(-\mathbb{X}) & \dots & h_{\lambda_2+n-2}(-\mathbb{X}) \\
\dots & \dots & \dots & \dots \\
h_{\lambda_n-n+1}(-\mathbb{X}) & h_{\lambda_n-n+2}(-\mathbb{X}) & \dots & h_{\lambda_n}(-\mathbb{X}) 
\end{array}%
\right|.
\end{equation}

If we write the Schur polynomials in $\mathbb{X}$ as $S_{\lambda}(\mathbb{X})=S_{\lambda}(x_1,\dots,x_m)$, then the Schur polynomials in $-\mathbb{X}$ are given by 
\[
S_\lambda(-\mathbb{X}) = S_{\lambda'}(-x_1,\dots,-x_m),
\]
where $\lambda'$ is the conjugate of $\lambda$.

\begin{theorem}\cite[Proposition \emph{Gr}5]{Lascoux-notes}\label{part-symm-str}
Let $\mathbb{X}=\{x_1,\dots,x_m\}$ and $\mathbb{Y}=\{y_1,\dots,y_n\}$ be two disjoint alphabets. Then $\Sym(\mathbb{X}|\mathbb{Y})$ is a graded free $\Sym(\mathbb{X}\cup\mathbb{Y})$-module. 
 
$\{S_\lambda(\mathbb{X})~|~ \lambda \in \Lambda_{m,n}\}$ and $\{S_\lambda(-\mathbb{Y})~|~ \lambda \in \Lambda_{m,n}\}$ are two homogeneous bases for the $\Sym(\mathbb{X}\cup\mathbb{Y})$-module $\Sym(\mathbb{X}|\mathbb{Y})$.

Moreover, there is a unique $\Sym(\mathbb{X}\cup\mathbb{Y})$-module homomorphism 
\[
\zeta:\Sym(\mathbb{X}|\mathbb{Y}) \rightarrow \Sym(\mathbb{X}\cup\mathbb{Y}),
\] 
called the Sylvester operator, such that, for $\lambda,\mu \in \Lambda_{m,n}$,
\[
\zeta(S_\lambda(\mathbb{X}) \cdot S_\mu(-\mathbb{Y})) = \left\{%
\begin{array}{ll}
    1 & \text{if } \lambda_j + \mu_{m+1-j} =n ~\forall j=1,\dots,m, \\
    0 & \text{otherwise.}  \\
\end{array}%
\right.
\]
\end{theorem}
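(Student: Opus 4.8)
The plan is to build everything on top of the two standard facts about the ring $\Sym(\mathbb{X}|\mathbb{Y})$, viewed via the isomorphism $\Sym(\mathbb{X}|\mathbb{Y}) \cong \Sym(\mathbb{X}) \otimes_{\C} \Sym(\mathbb{Y})$. First I would recall the classical statement that $\Sym(\mathbb{X}\cup\mathbb{Y})$-module $\Sym(\mathbb{X}|\mathbb{Y})$ is free of rank $\binom{m+n}{m}$: this is the cohomology of the partial flag manifold / Grassmannian $Gr(m,m+n)$, and a homogeneous basis is given by $\{S_\lambda(\mathbb{X}) \mid \lambda \in \Lambda_{m,n}\}$. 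The freeness and this particular basis follow from the Pieri rule together with the fact that the $S_\lambda(\mathbb{X})$ with $\lambda$ ranging over \emph{all} partitions in at most $m$ rows form a $\Sym(\mathbb{X}\cup\mathbb{Y})$-module basis of $\Sym(\mathbb{X})$ after one reduces modulo the relations $h_{m+1}(\mathbb{X}\cup\mathbb{Y})=\cdots$; concretely, one checks that the $S_\lambda(\mathbb{X})$, $\lambda\in\Lambda_{m,n}$, span by repeatedly using $h_k(\mathbb{X})$-identities to lower any index exceeding $n$, and that the count $\#\Lambda_{m,n}=\binom{m+n}{m}$ matches the rank computed from the Poincaré series $\prod_{i=1}^{m}\frac{1-q^{2(n+i)}}{1-q^{2i}}$, forcing linear independence. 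The dual basis statement for $\{S_\lambda(-\mathbb{Y}) \mid \lambda\in\Lambda_{m,n}\}$ is obtained symmetrically: by the negative-variable convention $h_j(-\mathbb{Y}) = (-1)^j Y_j$ and the identity $S_\lambda(-\mathbb{Y}) = S_{\lambda'}(-y_1,\dots,-y_n)$, the same Pieri-style reduction (now in the $\mathbb{Y}$ variables) shows these $\binom{m+n}{m}$ elements span, hence form a basis.

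Next I would construct the Sylvester operator $\zeta$. Define $\zeta$ on the basis $\{S_\lambda(\mathbb{X})\}_{\lambda\in\Lambda_{m,n}}$ by declaring $\zeta(S_\lambda(\mathbb{X}))$ to be the unique element of $\Sym(\mathbb{X}\cup\mathbb{Y})$ whose expansion in terms of the \emph{other} basis is dual: more precisely, I would \emph{first} establish the pairing identity $\zeta_0(S_\lambda(\mathbb{X})\cdot S_\mu(-\mathbb{Y})) = \delta_{\lambda^\vee,\mu}$, where $\lambda^\vee=(n-\lambda_m \geq \cdots \geq n-\lambda_1)$ is the complementary partition inside the $m\times n$ box, and $\zeta_0$ is the $\Sym(\mathbb{X}\cup\mathbb{Y})$-linear map sending the top-degree basis element $S_{(n\geq\cdots\geq n)}(\mathbb{X}) = Y_n^{?}\cdots$ — rather, sending $S_\lambda(\mathbb{X})$ with $\lambda$ the full box to $1$ and all lower $S_\lambda(\mathbb{X})$ to $0$. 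Equivalently, since $\{S_\mu(-\mathbb{Y})\}$ is also a basis, there is a well-defined $\Sym(\mathbb{X}\cup\mathbb{Y})$-linear functional $\zeta$ characterized by $\zeta(S_\mu(-\mathbb{Y}))= [\mu = \varnothing]$ on that second basis; I then need to compute its values on products $S_\lambda(\mathbb{X})\cdot S_\mu(-\mathbb{Y})$. The key algebraic input is the \emph{resultant / Cauchy-type expansion}: in $\Sym(\mathbb{X}|\mathbb{Y})$ one has $\prod_{i=1}^m\prod_{j=1}^n (x_i - y_j) = \sum_{\lambda\in\Lambda_{m,n}} (-1)^{|\lambda|} S_\lambda(\mathbb{X}) \, S_{\hat\lambda}(-\mathbb{Y})$ (a form of the Cauchy identity adapted to finite alphabets, where $\hat\lambda$ denotes the box-complement), and the Sylvester operator is — up to normalization — the coefficient extraction against this top class. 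Using the Pieri rule to multiply $S_\lambda(\mathbb{X})\cdot S_\mu(-\mathbb{Y})$ back into the first basis and tracking which term hits the distinguished top basis vector $S_{\text{box}}(\mathbb{X})$ gives precisely the stated condition $\lambda_j + \mu_{m+1-j} = n$ for all $j$.

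The main obstacle is the combinatorial bookkeeping in this last step: one must show that when $S_\lambda(\mathbb{X})\cdot S_\mu(-\mathbb{Y})$ is re-expanded in the basis $\{S_\nu(\mathbb{X})\}_{\nu\in\Lambda_{m,n}}$, the coefficient of the top class $S_{(n,\dots,n)}(\mathbb{X})$ equals $1$ exactly when $\mu$ is the box-complement $\lambda^\vee$ of $\lambda$ and $0$ otherwise. I would handle this by the Littlewood–Richardson / Pieri expansion of $S_\mu(-\mathbb{Y})$ against $S_\lambda(\mathbb{X})$ inside the quotient ring $\Sym(\mathbb{X}|\mathbb{Y})/\Sym^{+}(\mathbb{X}\cup\mathbb{Y})\cdot\Sym(\mathbb{X}|\mathbb{Y}) \cong H^*(Gr(m,m+n))$, where the pairing $(\lambda,\mu)\mapsto \langle S_\lambda, S_\mu\rangle$ is the classical Poincaré duality pairing on the Grassmannian, known to be $\delta_{\mu,\lambda^\vee}$; lifting this back to $\Sym(\mathbb{X}|\mathbb{Y})$ and using freeness over $\Sym(\mathbb{X}\cup\mathbb{Y})$ to deduce the $\Sym(\mathbb{X}\cup\mathbb{Y})$-linear (not just $\C$-linear) statement completes the argument. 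Uniqueness of $\zeta$ is immediate once existence is known, since $\{S_\lambda(\mathbb{X})\cdot S_\mu(-\mathbb{Y})\}$ — or even just $\{S_\lambda(\mathbb{X})\}$ — spans $\Sym(\mathbb{X}|\mathbb{Y})$ over $\Sym(\mathbb{X}\cup\mathbb{Y})$, so a $\Sym(\mathbb{X}\cup\mathbb{Y})$-linear map is determined by its values there.
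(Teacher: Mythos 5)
First, a point of comparison: the paper does not prove this statement at all --- it is quoted verbatim from Lascoux's notes (Proposition \emph{Gr}5) and used as a black box, with only the remark that it amounts to the $GL(N;\C)$-equivariant cohomology of the Grassmannian. So there is no in-paper argument to measure you against; your proposal has to stand on its own. Its overall architecture (freeness plus the two Schur bases, then $\zeta$ defined as coefficient extraction against the top class $S_{(n,\dots,n)}(\mathbb{X})$, then orthogonality) is indeed the standard route, and the parts on freeness and on the two bases are fine in outline.

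There are, however, two problems in the part that actually carries the content. The smaller one: your two proposed characterizations of $\zeta$ contradict each other. If $\zeta$ is the coefficient of $S_{(n,\dots,n)}(\mathbb{X})$ in the first basis (the correct choice), then taking $\lambda=\varnothing$ in the orthogonality formula forces $\zeta(S_\mu(-\mathbb{Y}))=1$ exactly when $\mu$ is the \emph{full box}, not when $\mu=\varnothing$; already for $m=n=1$ your ``equivalent'' normalization $\zeta(1)=1$, $\zeta(-y)=0$ is the wrong functional (the correct one has $\zeta(1)=0$, $\zeta(x)=\zeta(-y)=1$). The more serious gap is in deducing the orthogonality from Poincar\'e duality on $H^\ast(G_{m,m+n})$. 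Passing to the quotient $\Sym(\mathbb{X}|\mathbb{Y})/\Sym^{+}(\mathbb{X}\cup\mathbb{Y})\cdot\Sym(\mathbb{X}|\mathbb{Y})$ only determines the coefficients $c^{\lambda\mu}_{\nu}\in\Sym(\mathbb{X}\cup\mathbb{Y})$ of the expansion of $S_\lambda(\mathbb{X})S_\mu(-\mathbb{Y})$ modulo the augmentation ideal, i.e.\ their constant terms. Since $\zeta(S_\lambda(\mathbb{X})S_\mu(-\mathbb{Y}))$ is homogeneous of degree $|\lambda|+|\mu|-mn$, this settles the case $|\lambda|+|\mu|\le mn$ (negative degree forces $0$; degree $0$ forces a constant, computed by the Grassmannian pairing after noting $S_\mu(-\mathbb{Y})\equiv S_\mu(\mathbb{X})$ in the quotient). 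But when $|\lambda|+|\mu|>mn$ the value is a homogeneous element of \emph{positive} degree, its constant term vanishes trivially, and the Grassmannian computation says nothing about whether the element itself is zero --- which is exactly what the theorem asserts. ``Freeness'' does not close this; you need a genuinely integral (non-reduced) argument here, e.g.\ the explicit Lagrange--Sylvester formula $\zeta(f)=\sum_{w\in S_{m+n}/(S_m\times S_n)}w\bigl(f/\prod_{i,j}(x_i-y_j)\bigr)$ together with a degree count, or a direct Pieri/Littlewood--Richardson computation of $c^{\lambda\mu}_{(n,\dots,n)}$ over $\Sym(\mathbb{X}\cup\mathbb{Y})$ rather than over $\C$.
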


It is well known that 
\begin{equation}\label{compute-quantum-binary}
\qb{m+n}{n} =  q^{-mn}\sum_{\lambda\in\Lambda_{m,n}} q^{2|\lambda|}.
\end{equation}
So Theorem \ref{part-symm-str} implies the following corollary.

\begin{corollary}\label{part-symm-grade}
Let $\mathbb{X}=\{x_1,\dots,x_m\}$ and $\mathbb{Y}=\{y_1,\dots,y_n\}$ be two disjoint alphabets. Then, as graded $\Sym(\mathbb{X}\cup\mathbb{Y})$-modules, 
\[
\Sym(\mathbb{X}|\mathbb{Y}) \cong \Sym(\mathbb{X}\cup\mathbb{Y})\{\qb{m+n}{n}\cdot q^{mn}\}.
\]
\end{corollary}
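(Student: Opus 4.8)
The plan is to deduce Corollary~\ref{part-symm-grade} from Theorem~\ref{part-symm-str} by a direct graded-dimension count, using the basis $\{S_\lambda(\mathbb{X})~|~\lambda\in\Lambda_{m,n}\}$. The point is that Theorem~\ref{part-symm-str} already tells us that $\Sym(\mathbb{X}|\mathbb{Y})$ is a graded free $\Sym(\mathbb{X}\cup\mathbb{Y})$-module with a homogeneous basis indexed by $\Lambda_{m,n}$; so as a graded module it is isomorphic to $\bigoplus_{\lambda\in\Lambda_{m,n}} \Sym(\mathbb{X}\cup\mathbb{Y})\{q^{d_\lambda}\}$, where $d_\lambda = \deg S_\lambda(\mathbb{X})$ is the degree of the corresponding basis element. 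Since the grading convention here doubles the usual degree, and $S_\lambda(\mathbb{X})$ has usual degree $|\lambda| = \lambda_1+\cdots+\lambda_m$, we have $d_\lambda = 2|\lambda|$.

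First I would record that $\Sym(\mathbb{X}|\mathbb{Y})$ is graded free over $\Sym(\mathbb{X}\cup\mathbb{Y})$ with homogeneous basis $\{S_\lambda(\mathbb{X}) : \lambda\in\Lambda_{m,n}\}$, citing Theorem~\ref{part-symm-str}; this gives the graded module isomorphism
\[
\Sym(\mathbb{X}|\mathbb{Y}) \cong \bigoplus_{\lambda\in\Lambda_{m,n}} \Sym(\mathbb{X}\cup\mathbb{Y})\{q^{2|\lambda|}\}.
\]
Second, I would rewrite the right-hand side in the grading-shift notation of Definition~\ref{def-grading-shift-q} as $\Sym(\mathbb{X}\cup\mathbb{Y})\{\sum_{\lambda\in\Lambda_{m,n}} q^{2|\lambda|}\}$. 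Third, I would invoke the well-known identity \eqref{compute-quantum-binary}, namely $\qb{m+n}{n} = q^{-mn}\sum_{\lambda\in\Lambda_{m,n}} q^{2|\lambda|}$, to conclude that $\sum_{\lambda\in\Lambda_{m,n}} q^{2|\lambda|} = q^{mn}\qb{m+n}{n}$, which is exactly the polynomial appearing in the statement. Substituting finishes the proof.

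The only genuinely substantive input is Theorem~\ref{part-symm-str} (freeness and the explicit homogeneous basis) and the combinatorial identity \eqref{compute-quantum-binary}, both of which we are entitled to assume; so there is no real obstacle. The one mild subtlety to be careful about is bookkeeping on the grading conventions: one must check that $q^{mn}\qb{m+n}{n}$ indeed lies in $\zed_{\geq 0}[q,q^{-1}]$ (in fact in $\zed_{\geq 0}[q]$), so that the shift $\{q^{mn}\qb{m+n}{n}\}$ makes sense in the sense of Definition~\ref{def-grading-shift-q}; this is immediate from \eqref{compute-quantum-binary} since the right-hand side there, multiplied by $q^{mn}$, is a sum of non-negative powers of $q^2$ with coefficient $1$ each. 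Everything else is a direct substitution, so I would keep the write-up to a few lines.
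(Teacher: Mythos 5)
Your argument is exactly the one the paper intends: the corollary is stated immediately after the identity \eqref{compute-quantum-binary} with the remark that it follows from Theorem \ref{part-symm-str}, and your degree count on the basis $\{S_\lambda(\mathbb{X})\mid\lambda\in\Lambda_{m,n}\}$ together with that identity is precisely that deduction. The proof is correct and complete.
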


Let $G_{m,N}$ be the complex $(m,N)$-Grassmannian, that is, the manifold of all complex $m$-dimensional subspaces of $\C^N$. Theorem \ref{part-symm-str} is essentially a computation of the $GL(N;\C)$-equivariant cohomology of $G_{m,N}$, where the Sylvester operator is essentially the Poincar\'{e} duality. (See for example \cite[Lecture 6]{Fulton-notes}.) The following is a similar computation of the usual cohomology of $G_{m,N}$.

\begin{theorem}\cite[Lecture 6]{Fulton-notes}\label{grassmannian}
Let $\mathbb{X}$ be an alphabet of $m$ independent indeterminates. Then 
\[
H^\ast(G_{m,N};\C) \cong \Sym(\mathbb{X})/(h_{N+1-m}(\mathbb{X}),h_{N+2-m}(\mathbb{X}),\dots,h_{N}(\mathbb{X}))
\] 
as graded $\C$-algebras. 

As a graded $\C$-linear space, $H^\ast(G_{m,N};\C)$ has a homogeneous basis 
\[
\{S_\lambda(\mathbb{X})~|~ \lambda \in \Lambda_{m,N-m}\}.
\]

Under the above basis, the Poincar\'{e} duality of $H^\ast(G_{m,N};\C)$ is given by a $\C$-linear trace map 
\[
\Tr:\Sym(\mathbb{X})/(h_{N+1-m}(\mathbb{X}),h_{N+2-m}(\mathbb{X}),\dots,h_{N}(\mathbb{X})) \rightarrow \C
\] 
satisfying
\[
\Tr(S_\lambda(\mathbb{X}) \cdot S_\mu(\mathbb{X})) = \left\{%
\begin{array}{ll}
    1 & \text{if } \lambda_j + \mu_{m+1-j} =N-m ~\forall j=1,\dots,m, \\
    0 & \text{otherwise.}  \\
\end{array}%
\right.
\]
\end{theorem}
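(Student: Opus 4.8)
\emph{Proof idea.} The plan is to realize the right-hand side as $H^\ast(G_{m,N};\C)$ by combining the geometry of the tautological bundles with the purely algebraic input of Theorem~\ref{part-symm-str}; write $n=N-m$ and let $\mathbb{Y}$ be an auxiliary alphabet of cardinality $n$. First I would use the tautological short exact sequence $0\to S\to\underline{\C^N}\to Q\to 0$ on $G_{m,N}$, with $S$ the rank-$m$ tautological subbundle and $Q$ the rank-$n$ quotient, and identify $\Sym(\mathbb{X})=\C[X_1,\dots,X_m]$ with the algebra generated by the Chern classes of $S^\vee$, so that $x_1,\dots,x_m$ are the Chern roots of $S^\vee$ (this is the normalization of \cite{Fulton-notes}). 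The standard fact that $H^\ast(G_{m,N};\C)$ is generated as a $\C$-algebra by these Chern classes --- via Leray--Hirsch for the associated full flag bundle, or from the Schubert cell decomposition --- then gives a surjective graded algebra homomorphism $\Sym(\mathbb{X})\twoheadrightarrow H^\ast(G_{m,N};\C)$. By the Whitney sum formula $c_k(Q)=h_k(\mathbb{X})$, and $c_k(Q)=0$ for $k>n$ since $Q$ has rank $n$; hence $h_{N+1-m}(\mathbb{X}),\dots,h_N(\mathbb{X})$ are all killed. The remaining vanishings $h_k(\mathbb{X})=0$ for $k>N$ follow automatically from the recursion $h_k(\mathbb{X})=\sum_{i=1}^m(-1)^{i+1}X_i\,h_{k-i}(\mathbb{X})$, so the surjection factors through $\Sym(\mathbb{X})/(h_{N+1-m}(\mathbb{X}),\dots,h_N(\mathbb{X}))$.

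Next I would check that this quotient has the right dimension. Theorem~\ref{part-symm-str}, applied with the alphabet $\mathbb{Y}$ of size $n$, says $\Sym(\mathbb{X}|\mathbb{Y})$ is graded free over $\Sym(\mathbb{X}\cup\mathbb{Y})$ with basis $\{S_\lambda(\mathbb{X}):\lambda\in\Lambda_{m,n}\}$. Reducing modulo the augmentation ideal $\mathfrak{m}$ of $\Sym(\mathbb{X}\cup\mathbb{Y})$ (generated by the elementary symmetric polynomials of $\mathbb{X}\cup\mathbb{Y}$, so $\Sym(\mathbb{X}\cup\mathbb{Y})/\mathfrak{m}=\C$), freeness gives that $\Sym(\mathbb{X}|\mathbb{Y})/\mathfrak{m}\,\Sym(\mathbb{X}|\mathbb{Y})$ is $\C$-free on $\{\overline{S_\lambda(\mathbb{X})}:\lambda\in\Lambda_{m,n}\}$, hence of dimension $\#\Lambda_{m,n}=\binom{N}{m}$. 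On the other hand, solving the relations $e_k(\mathbb{X}\cup\mathbb{Y})\equiv0$ recursively in this quotient yields $e_k(\mathbb{Y})\equiv(-1)^k h_k(\mathbb{X})$ for $1\le k\le n$ together with $h_k(\mathbb{X})\equiv0$ for $n<k\le N$; eliminating $\mathbb{Y}$ then identifies $\Sym(\mathbb{X}|\mathbb{Y})/\mathfrak{m}\,\Sym(\mathbb{X}|\mathbb{Y})\cong\Sym(\mathbb{X})/(h_{N+1-m}(\mathbb{X}),\dots,h_N(\mathbb{X}))$ as graded $\C$-algebras, carrying $\overline{S_\lambda(\mathbb{X})}$ to $\overline{S_\lambda(\mathbb{X})}$. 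Since $\dim_\C H^\ast(G_{m,N};\C)=\binom{N}{m}$ (again from the cell decomposition), the surjection above is an isomorphism of graded $\C$-algebras and $\{S_\lambda(\mathbb{X}):\lambda\in\Lambda_{m,n}\}$ is a homogeneous basis, which settles the first two assertions.

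For the trace map I would push the Sylvester operator through the same quotient. Since $\zeta$ is $\Sym(\mathbb{X}\cup\mathbb{Y})$-linear it maps $\mathfrak{m}\,\Sym(\mathbb{X}|\mathbb{Y})$ into $\mathfrak{m}$, hence descends to a $\C$-linear map $\Tr:\Sym(\mathbb{X})/(h_{N+1-m}(\mathbb{X}),\dots,h_N(\mathbb{X}))\to\C$. From $h_k(-\mathbb{Y})=(-1)^k e_k(\mathbb{Y})\equiv h_k(\mathbb{X})\pmod{\mathfrak{m}}$ one gets $S_\mu(-\mathbb{Y})\equiv S_\mu(\mathbb{X})$ for every partition $\mu$ (apply the Jacobi--Trudi determinant \eqref{negative-schur-complete}), so the defining identity of $\zeta$ in Theorem~\ref{part-symm-str} (with $n=N-m$) becomes exactly the asserted formula for $\Tr(S_\lambda(\mathbb{X})\cdot S_\mu(\mathbb{X}))$. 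This formula shows the matrix of the cup product in the basis $\{S_\lambda(\mathbb{X})\}$ is a permutation matrix pairing each $\lambda$ with its complement in the $m\times n$ box; in particular the pairing is nondegenerate, $\Tr$ vanishes below the top degree $2mn$ and is an isomorphism there. Identifying the top class $S_{(n^m)}(\mathbb{X})$ with the class of a point then identifies $\Tr$ with integration over $G_{m,N}$, i.e.\ with Poincar\'e duality.

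The algebra above is essentially bookkeeping around Theorem~\ref{part-symm-str}; the genuine content, which I would take as known, consists of the classical topological facts that $H^\ast(G_{m,N};\C)$ is generated by the Chern classes of the tautological bundle and has total dimension $\binom{N}{m}$ (Schubert cells / Leray--Hirsch), together with the assertion that the algebraically defined $\Tr$ agrees with honest integration over $G_{m,N}$ --- that is, that the ordinary Poincar\'e duality of $G_{m,N}$ is the non-equivariant shadow of the equivariant pairing carried by the Sylvester operator. For the latter I would appeal directly to \cite[Lecture 6]{Fulton-notes}; the one point one has to be careful about in a self-contained treatment is reconciling the normalization of Schur functions used here ($\mathbb{X}$ being the Chern roots of $S^\vee$, the special classes being $h_k(\mathbb{X})$) with the geometric Giambelli identification $\sigma_\lambda=S_\lambda(\mathbb{X})$ and the intersection numbers $\int_{G_{m,N}}\sigma_\lambda\cdot\sigma_\mu=\delta_{\mu,\hat\lambda}$.
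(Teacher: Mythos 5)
The paper gives no proof of this statement---it is quoted as a classical fact with a citation to \cite[Lecture 6]{Fulton-notes}---and your sketch is exactly the standard argument from that source (presentation via the tautological sequence and Whitney formula, dimension count from the Schubert cells, Giambelli plus duality of Schubert classes for the trace), correctly adapted to the paper's normalization and with the algebraic bookkeeping correctly routed through Theorem \ref{part-symm-str} by specializing the equivariant statement at the augmentation ideal. I see no gaps beyond the classical topological inputs you explicitly flag as assumed, which is appropriate for a cited result.
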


\begin{corollary}\label{grassmannian-grade}
As graded $\C$-linear spaces,
\[
H^\ast(G_{m,N};\C) \cong \C \{\qb{N}{m}\cdot q^{m(N-m)}\},
\]
where $\C$ on the right hand side has grading $0$.
\end{corollary}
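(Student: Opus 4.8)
The plan is to deduce Corollary \ref{grassmannian-grade} directly from Theorem \ref{grassmannian} by tracking the graded dimension of the stated basis. Since $H^\ast(G_{m,N};\C)$ is asserted to have the homogeneous basis $\{S_\lambda(\mathbb{X})~|~\lambda \in \Lambda_{m,N-m}\}$, and $S_\lambda(\mathbb{X})$ is a homogeneous element of degree $2|\lambda|$ (as it is a polynomial built from the $h_k(\mathbb{X})$, each of degree $2k$, and the Jacobi--Trudi determinant in \eqref{schur-complete} is homogeneous of total degree $2|\lambda|$ in our doubled grading), the graded Poincar\'e series of $H^\ast(G_{m,N};\C)$ as a graded $\C$-vector space is
\[
\sum_{\lambda \in \Lambda_{m,N-m}} q^{2|\lambda|}.
\]
So the whole statement amounts to the identity $\sum_{\lambda \in \Lambda_{m,N-m}} q^{2|\lambda|} = \qb{N}{m} q^{m(N-m)}$, which is exactly formula \eqref{compute-quantum-binary} with $n$ replaced by $N-m$: indeed $\qb{(N-m)+m}{m} = q^{-m(N-m)} \sum_{\lambda \in \Lambda_{m,N-m}} q^{2|\lambda|}$, and multiplying both sides by $q^{m(N-m)}$ gives the claim.

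First I would note that in the notation of Definition \ref{def-grading-shift-q}, saying $H^\ast(G_{m,N};\C) \cong \C\{F(q)\}$ for a polynomial $F(q) \in \zed_{\geq 0}[q,q^{-1}]$ with $\C$ placed in degree $0$ means precisely that, for each $i$, $\dim_\C H^i(G_{m,N};\C)$ equals the coefficient of $q^i$ in $F(q)$; equivalently, the graded Poincar\'e series of $H^\ast(G_{m,N};\C)$ equals $F(q)$. Thus the corollary is equivalent to the statement that this Poincar\'e series equals $\qb{N}{m} q^{m(N-m)}$. Next I would invoke Theorem \ref{grassmannian} to identify a homogeneous $\C$-basis, namely $\{S_\lambda(\mathbb{X}) \mid \lambda \in \Lambda_{m,N-m}\}$, and record that $\deg S_\lambda(\mathbb{X}) = 2|\lambda|$ where $|\lambda| = \lambda_1 + \cdots + \lambda_m$; this is where one uses that the Schur polynomial is homogeneous, which follows from \eqref{schur-complete} together with $\deg h_k(\mathbb{X}) = 2k$. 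Summing $q^{\deg}$ over this basis yields $\sum_{\lambda \in \Lambda_{m,N-m}} q^{2|\lambda|}$ for the Poincar\'e series.

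Finally I would close the gap by applying \eqref{compute-quantum-binary}. Setting $n = N-m$ there gives $\qb{N}{m} = q^{-m(N-m)} \sum_{\lambda \in \Lambda_{m,N-m}} q^{2|\lambda|}$, so $\sum_{\lambda \in \Lambda_{m,N-m}} q^{2|\lambda|} = q^{m(N-m)} \qb{N}{m}$, which is the asserted Poincar\'e series. This completes the proof.

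There is no serious obstacle here: the only point requiring a moment of care is the bookkeeping of gradings --- making sure that ``our grading is twice the usual grading'' is applied consistently, so that $S_\lambda(\mathbb{X})$ has degree $2|\lambda|$ rather than $|\lambda|$, and hence that \eqref{compute-quantum-binary} (which already contains the factor $q^{2|\lambda|}$ in the doubled convention) matches exactly. Everything else is a direct substitution into the already-stated Theorem \ref{grassmannian} and the known formula \eqref{compute-quantum-binary}.
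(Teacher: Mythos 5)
Your proof is correct and matches the paper's (implicit) argument exactly: the corollary is deduced from the homogeneous basis $\{S_\lambda(\mathbb{X})\mid\lambda\in\Lambda_{m,N-m}\}$ of Theorem \ref{grassmannian} together with the identity \eqref{compute-quantum-binary} specialized to $n=N-m$, using $\deg S_\lambda(\mathbb{X})=2|\lambda|$. Nothing further is needed.
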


\section{Matrix Factorizations Associated to MOY Graphs}\label{sec-mf-MOY}

In this section, we define the matrix factorizations associated to MOY graphs and discuss their basic properties.

\subsection{Markings} To define the matrix factorization associated to a MOY graph, we need to put a marking on this MOY graph.

\begin{definition}\label{MOY-marking-def}
A marking of a MOY graph $\Gamma$ consists the following:
\begin{enumerate}
	\item A finite collection of marked points on $\Gamma$ such that
	\begin{itemize}
	\item every edge of $\Gamma$ contains at least one marked point;
	\item all the end points (vertices of valence $1$) are marked;
	\item none of the internal vertices (vertices of valence at least $2$) is marked.
  \end{itemize}
  \item An assignment of pairwise disjoint alphabets to the marked points such that the alphabet associated to a marked point on an edge of color $m$ has $m$ independent indeterminates. (Recall that an alphabet is a finite collection of homogeneous indeterminates of degree $2$.)
\end{enumerate}
\end{definition}

\begin{figure}[ht]

\setlength{\unitlength}{1pt}

\begin{picture}(360,80)(-180,-40)


\put(0,0){\vector(-1,1){15}}

\put(-15,15){\line(-1,1){15}}

\put(-23,25){\tiny{$i_1$}}

\put(-33,32){\small{$\mathbb{X}_1$}}

\put(0,0){\vector(-1,2){7.5}}

\put(-7.5,15){\line(-1,2){7.5}}

\put(-11,25){\tiny{$i_2$}}

\put(-18,32){\small{$\mathbb{X}_2$}}

\put(3,25){$\cdots$}

\put(0,0){\vector(1,1){15}}

\put(15,15){\line(1,1){15}}

\put(31,25){\tiny{$i_k$}}

\put(27,32){\small{$\mathbb{X}_k$}}


\put(4,-2){$v$}

\multiput(-50,0)(5,0){19}{\line(1,0){3}}

\put(-70,0){$L_v$}

\put(45,0){\tiny{$i_1+i_2+\cdots +i_k = j_1+j_2+\cdots +j_l$}}


\put(-30,-30){\vector(1,1){15}}

\put(-15,-15){\line(1,1){15}}

\put(-26,-30){\tiny{$j_1$}}

\put(-33,-40){\small{$\mathbb{Y}_1$}}

\put(-15,-30){\vector(1,2){7.5}}

\put(-7.5,-15){\line(1,2){7.5}}

\put(-13,-30){\tiny{$j_2$}}

\put(-18,-40){\small{$\mathbb{Y}_2$}}

\put(3,-30){$\cdots$}

\put(30,-30){\vector(-1,1){15}}

\put(15,-15){\line(-1,1){15}}

\put(31,-30){\tiny{$j_l$}}

\put(27,-40){\small{$\mathbb{Y}_l$}}

\end{picture}

\caption{A marking of a neighborhood of an internal vertex}\label{general-MOY-vertex}

\end{figure}
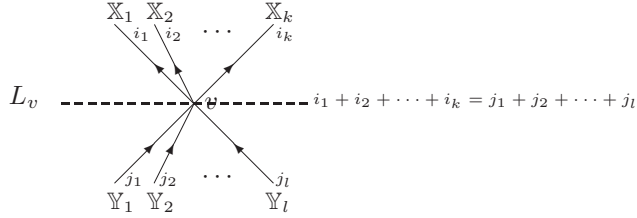

\subsection{The matrix factorization associated to a MOY graph} Recall that $N$ is a fixed positive integer. For a MOY graph $\Gamma$ with a marking, cut it at the marked points. This gives a collection of marked MOY graphs, each of which is a star-shaped neighborhood of a vertex in $\Gamma$ and is marked only at its end points. (If an edge of $\Gamma$ has two or more marked points, then some of these pieces may be oriented arcs from one marked point to another. In this case, we consider such an arc as a neighborhood of an additional vertex of valence $2$ in the middle of that arc.)

Let $v$ be a vertex of $\Gamma$ with coloring and marking around it given as in Figure \ref{general-MOY-vertex}. Set $m=i_1+i_2+\cdots +i_k = j_1+j_2+\cdots +j_l$ (the width of $v$.) Define 
\[
R=\Sym(\mathbb{X}_1|\dots|\mathbb{X}_k|\mathbb{Y}_1|\dots|\mathbb{Y}_l).
\] 
Write $\mathbb{X}=\mathbb{X}_1\cup\cdots\cup \mathbb{X}_k$ and $\mathbb{Y}=\mathbb{Y}_1\cup\cdots\cup \mathbb{Y}_l$. Denote by $X_j$ the $j$-th elementary symmetric polynomial in $\mathbb{X}$ and by $Y_j$ the $j$-th elementary symmetric polynomial in $\mathbb{Y}$. For $j=1,\dots,m$, define
\[
U_j = \frac{p_{m,N+1}(Y_1,\dots,Y_{j-1},X_j,\dots,X_m) - p_{m,N+1}(Y_1,\dots,Y_j,X_{j+1},\dots,X_m)}{X_j-Y_j},
\]
where $p_{m,N+1}(X_1,\dots,X_m)=p_{N+1}(\mathbb{X})$ is the $(N+1)$-th power sum symmetric polynomial in $\mathbb{X}$. (See Subsection \ref{subsec-basic-sym-poly}.)

The matrix factorization associated to the vertex $v$ is
\[
C(v)=\left(%
\begin{array}{cc}
  U_1 & X_1-Y_1 \\
  U_2 & X_2-Y_2 \\
  \dots & \dots \\
  U_m & X_m-Y_m
\end{array}%
\right)_R
\{q^{-\sum_{1\leq s<t \leq k} i_si_t}\},
\]
whose potential is $\sum_{j=1}^m (X_j-Y_j)U_j = p_{N+1}(\mathbb{X})-p_{N+1}(\mathbb{Y})$, where $p_{N+1}(\mathbb{X})$ and $p_{N+1}(\mathbb{Y})$ are the $(N+1)$-th power sum symmetric polynomials in $\mathbb{X}$ and $\mathbb{Y}$. (Again, see Subsection \ref{subsec-basic-sym-poly} for the definition.)

\begin{remark}\label{MOY-freedom}
Since 
\[
\Sym(\mathbb{X}|\mathbb{Y})=\C[X_1,\dots,X_m,Y_1,\dots,Y_m]=\C[X_1-Y_1,\dots,X_m-Y_m,Y_1,\dots,Y_m],
\] 
it is clear that $\{X_1-Y_1,\dots,X_m-Y_m\}$ is $\Sym(\mathbb{X}|\mathbb{Y})$-regular. (See \cite[Definition 2.17]{Wu-color} for the definition.) By Theorem \ref{part-symm-str}, $R$ is a free $\Sym(\mathbb{X}|\mathbb{Y})$-module. So $\{X_1-Y_1,\dots,X_m-Y_m\}$ is also $R$-regular. Thus, by \cite[Theorem 2.1]{KR3}, the isomorphism type of $C(v)$ does not depend on the particular choice of $U_1,\dots,U_m$ as long as they are homogeneous with the right degrees and the potential of $C(v)$ remains $\sum_{j=1}^m (X_j-Y_j)U_j = p_{N+1}(\mathbb{X})-p_{N+1}(\mathbb{Y})$. 

This observation is useful since it allows us to
\begin{enumerate}[(i)]
	\item choose more convenient $U_j$'s in some computations,
	\item not keep track of what $U_j$'s are in most computations. (In this case, we denote $U_j$'s simply by $\ast$'s.)
\end{enumerate} 
\end{remark}

\begin{definition}\label{MOY-mf-def}
\[
C(\Gamma) := \bigotimes_{v} C(v),
\]
where $v$ runs through all the internal vertices of $\Gamma$ (including those additional $2$-valent vertices.) Here, the tensor product is done over the common end points. More precisely, for two sub-MOY graphs $\Gamma_1$ and $\Gamma_2$ of $\Gamma$ intersecting only at (some of) their end points, let $\mathbb{W}_1,\dots,\mathbb{W}_n$ be the alphabets associated to these common end points. Then, in the above tensor product, $C(\Gamma_1)\otimes C(\Gamma_2)$ is the tensor product $C(\Gamma_1)\otimes_{\Sym(\mathbb{W}_1|\dots|\mathbb{W}_n)} C(\Gamma_2)$.

$C(\Gamma)$ has a $\zed_2$-grading and a quantum grading.

If $\Gamma$ is closed, i.e. has no end points, then $C(\Gamma)$ is an object of $\hmf_{\C,0}$. (See Lemma \ref{MOY-object-of-hmf} below.)

Assume $\Gamma$ has end points. Let $\mathbb{E}_1,\dots,\mathbb{E}_n$ be the alphabets assigned to all end points of $\Gamma$, among which $\mathbb{E}_1,\dots,\mathbb{E}_k$ are assigned to exits and $\mathbb{E}_{k+1},\dots,\mathbb{E}_n$ are assigned to entrances. Then the potential of $C(\Gamma)$ is  
\[
w= \sum_{i=1}^k p_{N+1}(\mathbb{E}_i) - \sum_{j=k+1}^n p_{N+1}(\mathbb{E}_j).
\]
Let $R=\Sym(\mathbb{E}_1|\cdots|\mathbb{E}_n)$. Although the alphabets assigned to all marked points on $\Gamma$ are used in its construction, $C(\Gamma)$ is viewed as an object of $\hmf_{R,w}$. (See Lemma \ref{MOY-object-of-hmf} below.) Note that, in this case, $w$ is a non-degenerate element of $R$.

We allow the MOY graph to be empty. In this case, we define 
\[
C(\emptyset)=\C\rightarrow 0 \rightarrow \C,
\]
where the $\zed_2$-grading and the quantum grading of $\C$ are both $0$.
\end{definition}

\begin{definition}\label{homology-MOY-def}
Let $\Gamma$ be a MOY graph with a marking. 
\begin{enumerate}[(i)]
	\item If $\Gamma$ is closed, i.e. has no end points, then $C(\Gamma)$ is a chain complex. Denote by $H(\Gamma)$ the homology of $C(\Gamma)$. Note that $H(\Gamma)$ inherits both gradings of $C(\Gamma)$.
	\item If $\Gamma$ has end points, let $\mathbb{E}_1,\dots,\mathbb{E}_n$ be the alphabets assigned to all end points of $\Gamma$, and $R=\Sym(\mathbb{E}_1|\cdots|\mathbb{E}_n)$. Denote by $E_{i,j}$ the $j$-th elementary symmetric polynomial in $\mathbb{E}_i$ and by $\mathfrak{I}$ the maximal homogeneous ideal of $R$ generated by $\{E_{i,j}\}$. Then $H(\Gamma)$ is defined to be $H_R(C(\Gamma))$, that is the homology of the chain complex $C(\Gamma)/\mathfrak{I}\cdot C(\Gamma)$. $H(\Gamma)$ inherits both gradings of $C(\Gamma)$.
\end{enumerate}
Note that (i) is a special case of (ii).
\end{definition}

\subsection{Basic properties}

\begin{lemma}\cite[Lemma 5.6]{Wu-color}\label{marking-independence}
If $\Gamma$ is a MOY graph, then the homotopy type of $C(\Gamma)$ does not depend on the choice of the marking. Therefore, the isomorphism type of $H(\Gamma)$ does not depend on the choice of the marking.
\end{lemma}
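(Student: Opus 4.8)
The plan is to reduce the statement to a local comparison between markings that differ in the simplest possible way, and then verify that such a local change induces a homotopy equivalence of matrix factorizations. Any two markings of a fixed MOY graph $\Gamma$ can be connected by a finite sequence of elementary moves of two types: (a) adding or deleting a marked point in the interior of an edge that already carries at least one marked point (this is where the ``additional $2$-valent vertex'' convention becomes relevant), and (b) changing the alphabet assigned to a given marked point to a different alphabet of the same size. Move (b) is essentially a relabeling of indeterminates and visibly gives an isomorphism of matrix factorizations respecting both gradings, so the real content is move (a). Thus the first step is to set up this reduction carefully, noting that $C(\Gamma)$ is built as a tensor product (over shared alphabets) of the local pieces $C(v)$, so a local change at one marked point only affects the tensor factors meeting that point.

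Next I would analyze move (a) in detail. Suppose an edge $e$ of color $m$ carries marked points with alphabets $\mathbb{X}$ and $\mathbb{Z}$, and we wish to delete an intermediate marked point with alphabet $\mathbb{Y}$ (all of size $m$). Before deletion, $C(\Gamma)$ contains a tensor factor that, restricted to this edge, looks like the Koszul factorization of the $2$-valent vertex with incoming $\mathbb{X}$ and outgoing $\mathbb{Y}$ tensored with the $2$-valent vertex with incoming $\mathbb{Y}$ and outgoing $\mathbb{Z}$, i.e. a factor of the form
\[
\left(\begin{array}{cc} \ast & X_1-Y_1 \\ \dots & \dots \\ \ast & X_m-Y_m \end{array}\right)_R \otimes_{\Sym(\mathbb{Y})} \left(\begin{array}{cc} \ast & Y_1-Z_1 \\ \dots & \dots \\ \ast & Y_m-Z_m \end{array}\right)_R.
\]
After deletion it is replaced by the single $2$-valent vertex factor with rows $(\ast, X_j - Z_j)$. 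The key step is to excise the variables $Y_1,\dots,Y_m$ using the standard Gaussian-elimination / excess-variable lemma for Koszul matrix factorizations: since each $Y_j$ appears linearly (in the entry $X_j - Y_j$, up to adjusting by the regular sequence), one can eliminate $Y_j$ against that entry and identify the result, up to homotopy, with the contracted factorization in the remaining variables. One must check that the grading shifts match — here I would use that the $2$-valent vertex factorizations carry no quantum shift of the Koszul type beyond the built-in $\{q^{N+1-\deg}\}$ in the rows, so the bookkeeping is clean — and that this elimination is compatible with the tensor product over the rest of $\Gamma$, which holds because $\{Y_1,\dots,Y_m\}$ is a regular sequence in the ambient polynomial ring and the elimination is an isomorphism onto a direct summand in the homotopy category. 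Adding a marked point is the inverse of this, so both directions of move (a) give homotopy equivalences preserving both gradings.

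Finally I would assemble the pieces: composing the homotopy equivalences from moves (a) and the isomorphisms from moves (b) along a path connecting two arbitrary markings shows $C(\Gamma)$ is well-defined up to homotopy equivalence in $\hmf_{R,w}$ (or in $\hmf_{\C,0}$ when $\Gamma$ is closed), and in particular $H(\Gamma) = H_R(C(\Gamma))$ is well-defined up to isomorphism, since by Proposition \ref{prop-homotopy-equal-isomorphism-on-homology} a homotopy equivalence induces an isomorphism on $H_R$. I expect the main obstacle to be the careful bookkeeping in move (a): verifying that the excess-variable elimination can genuinely be performed over the full ring $\Sym$ of all the alphabets of $\Gamma$ (not just locally), that the $U_j$'s can be chosen compatibly on both sides so Remark \ref{MOY-freedom} applies, and that both the $\zed_2$-grading and the quantum grading are preserved under the elimination. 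The conceptual input — regularity of $\{X_j - Y_j\}$ and of $\{Y_j\}$, plus \cite[Theorem 2.1]{KR3} — is already available in the excerpt, so the difficulty is organizational rather than deep.
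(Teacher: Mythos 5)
Your proposal is correct and follows essentially the same route as the paper: the paper's proof reduces to adding/removing a single marked point and then invokes the variable-exclusion result \cite[Proposition 2.19]{Wu-color}, which is exactly the Gaussian-elimination step you describe for removing the intermediate alphabet $\mathbb{Y}$ from the two concatenated $2$-valent vertex factorizations. Your additional move (b) and the grading/compatibility bookkeeping are just explicit elaborations of what the paper leaves implicit.
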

\begin{proof}
We only need to show that adding or removing an extra marked point corresponds to a homotopy equivalence of matrix factorizations preserving both gradings. This follows easily from \cite[Proposition 2.19]{Wu-color}.
\end{proof}

\begin{lemma}\cite[Lemma 5.8]{Wu-color}\label{width-cap}
If $\Gamma$ is a MOY graph with a vertex of width greater than $N$, then $C(\Gamma)\simeq 0$.
\end{lemma}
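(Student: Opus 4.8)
The plan is to reduce the statement to a computation of the homology $H(\Gamma)$ and invoke part (i) of Proposition \ref{prop-homology-detects-homotopy}, which says that $C(\Gamma) \simeq 0$ if and only if $H(\Gamma) = 0$. Thus it suffices to show that the vertex $v$ of width $m > N$ already forces $H(C(v))$, hence $H(\Gamma)$, to vanish. By Lemma \ref{marking-independence} I am free to choose any convenient marking, so I will mark the edges around $v$ as in Figure \ref{general-MOY-vertex} and work with the local Koszul factorization $C(v)$ over $R = \Sym(\mathbb{X}_1|\cdots|\mathbb{X}_k|\mathbb{Y}_1|\cdots|\mathbb{Y}_l)$ with entries $(U_j, X_j - Y_j)$, $j = 1,\dots,m$.

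The key point is that $C(v)$ is a Koszul factorization built from the sequence $\{X_1 - Y_1, \dots, X_m - Y_m\}$ together with the potential entries $U_j$, and by Remark \ref{MOY-freedom} this sequence is $R$-regular and the isomorphism type of $C(v)$ is independent of the particular choice of the $U_j$. The strategy is therefore to exhibit, for one admissible choice of the $U_j$, a relation showing that the homology is killed. Concretely, the potential of $C(v)$ is $p_{N+1}(\mathbb{X}) - p_{N+1}(\mathbb{Y})$, which is a degree-$(2N+2)$ element; since $m > N$, I want to choose the $U_j$ so that one of them lies in the maximal ideal in a way that makes the corresponding Koszul one-step factorization $(U_j, X_j - Y_j)$ contractible after passing to $H_R$. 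More robustly, I would argue directly on $H(C(v)) = H_R(C(v)) = C(v)/\mathfrak{I}C(v)$ with its $d_{mf}$-homology: reducing mod $\mathfrak{I}$ sets all the $X_i, Y_i$ (hence all $X_j - Y_j$) to zero, so the reduced complex is the exterior-algebra-type complex with differential determined by the images $\overline{U_j}$ of the $U_j$ in $\C$. Because $U_j$ is homogeneous of positive degree $2N$ and $m > N$, a degree/regularity count — combined with the fact that $p_{m,N+1}$ in $m > N$ variables can be written so that the relevant $U_j$ has nonzero constant term is impossible, so instead one uses the standard fact (as in \cite{KR1}, \cite{Wu-color}) that a Koszul factorization of a regular sequence of length exceeding the "capacity" $N$ is contractible — yields $H_R(C(v)) = 0$, and hence $H(\Gamma) = 0$ by the Künneth-type behavior of $H_R$ under the tensor product defining $C(\Gamma)$.

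In more detail, the chain of reductions I would carry out is: (1) choose a marking so that $v$ carries the full Koszul factorization above; (2) show $H_R(C(v)) = 0$ for width $m > N$ by the explicit description of the $U_j$ via Lemma \ref{power-derive}, noting that in $m > N$ indeterminates $p_{m,N+1}$ is not an independent generator and the associated ``excess'' makes the Koszul complex of $\{X_j - Y_j\}$ with these potential data exact after reduction mod $\mathfrak I$ — this is essentially the statement that $H^\ast(G_{m,N};\C) = 0$ when $m > N$, which follows from Theorem \ref{grassmannian} since $\Lambda_{m,N-m}$ is empty; (3) use that $C(\Gamma) = \bigotimes_v C(v)$ over the common end-point rings, so a vanishing factor forces $H(\Gamma) = 0$; (4) conclude $C(\Gamma) \simeq 0$ by Proposition \ref{prop-homology-detects-homotopy}(i).

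The main obstacle is step (2): one must pin down precisely why width $m > N$ makes the local factorization acyclic. The cleanest route is to specialize — set the $\mathbb{Y}$-alphabets appropriately and identify $H_R(C(v))$, up to grading shift, with the cohomology of a Grassmannian $G_{m,N}$ (or a product thereof), which is zero precisely because there is no $m$-dimensional subspace of $\C^N$ when $m > N$; Corollary \ref{grassmannian-grade} and Theorem \ref{grassmannian} then give the vanishing directly, since the proposed basis $\{S_\lambda(\mathbb{X}) \mid \lambda \in \Lambda_{m,N-m}\}$ is empty. Making this identification rigorous — tracking the Koszul differential through the reduction mod $\mathfrak{I}$ and matching it with the presentation $\Sym(\mathbb{X})/(h_{N+1-m},\dots,h_N)$ — is where the real work lies, and it is exactly the kind of computation already set up in \cite[Sections 5, 6]{Wu-color}.
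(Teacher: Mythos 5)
Your overall skeleton (reduce to showing $H_R(C(v))=0$ for the wide vertex $v$, then invoke Proposition \ref{prop-homology-detects-homotopy}(i) and the tensor-product structure of $C(\Gamma)$) is the right one and matches the paper. But the crucial step --- \emph{why} the local factorization dies when the width $m$ exceeds $N$ --- is where your argument goes wrong, and in fact you assert the opposite of the key fact. The row $(U_j,\,X_j-Y_j)$ of the Koszul factorization has $\deg U_j = 2N+2-2j$, not $2N$; in particular, when $m>N$ there is a row with $j=N+1$, and $U_{N+1}$ is homogeneous of degree $0$, i.e.\ a \emph{constant}. By Lemma \ref{power-derive} (equivalently, Newton's identity) this constant is $(-1)^{N}(N+1)\,h_{m,0}=(-1)^N(N+1)\neq 0$. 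A Koszul factorization containing a row whose first entry is a unit is contractible (apply \cite[Corollary 2.25]{Wu-color} to that entry, or note that after reducing mod $\mathfrak I$ the tensor factor $(\,\overline{U_{N+1}},\,0\,)$ is an exact two-periodic complex), so $H(C(v))=0$ and the lemma follows. This is exactly the paper's proof. Your text instead claims that ``the relevant $U_j$ has nonzero constant term is impossible,'' which inverts the mechanism, and then substitutes an appeal to a ``standard fact that a Koszul factorization of a regular sequence of length exceeding the capacity $N$ is contractible'' --- no such general principle exists; the contractibility here comes precisely from the unit entry you ruled out.

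Your fallback via the Grassmannian is a reasonable heuristic but is not a proof as cited: Theorem \ref{grassmannian} concerns the actual manifold $G_{m,N}$ and is only available for $m\leq N$, and Proposition \ref{circle-module} identifies $H(\bigcirc_m)$ with $\Sym(\mathbb{X})/(h_{N+1-m},\dots,h_{N})$ only for a circle, using a regularity statement proved in that range. (It is true that for $m>N$ the ideal $(h_{N+1-m},\dots,h_N)$ contains $h_0=1$, so the quotient vanishes --- but this is once again the statement that $U_{N+1}$ is a unit, which is the computation you need to do directly rather than route through a Grassmannian that does not exist.) So the gap is concrete and fillable in one line, but as written the proposal does not close it.
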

\begin{proof}
Suppose the vertex $v$ of $\Gamma$ has width $m>N$. Using Newton's Identity, it is easy to check that, in the construction of $C(v)$, $U_{N+1}= (-1)^{N} (N+1)$ is a non-zero scalar. Apply \cite[Corollary 2.25]{Wu-color} to the entry $U_{N+1}$ in $C(v)$. One can see that $H(v)= 0$. By Proposition \ref{prop-homology-detects-homotopy}, this implies $C(v)\simeq 0$ and therefore $C(\Gamma)\simeq 0$.
\end{proof}

The proofs of the next two lemmas are slightly more technical. We refer the reader to \cite{Wu-color} for these proofs.

\begin{lemma}\cite[Lemma 5.10]{Wu-color}\label{schur-null-homotopic}
Let $\Gamma$ be a MOY graph, and $\mathbb{X}=\{x_1,\dots,x_m\}$ an alphabet associated to a marked point on an edge of $\Gamma$ of color $m$.  
Suppose that $\mu=(\mu_1\geq\cdots\geq\mu_m)$ is a partition such that $\mu_1>(N-m)$. Then multiplication by $S_{\mu}(\mathbb{X})$ is a null-homotopic endomorphism of $C(\Gamma)$.
\end{lemma}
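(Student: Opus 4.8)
The plan is to reduce the statement to a local computation near the marked point carrying $\mathbb{X}$, and then to exploit the freedom in choosing the matrix factorization $C(v)$ at the adjacent vertex together with the fact that Schur polynomials $S_\mu(\mathbb{X})$ with $\mu_1 > N-m$ lie in an ideal that becomes "trivial" modulo the relations built into the potential $p_{N+1}$. Concretely, let $v$ be an internal vertex adjacent to the marked point on the color-$m$ edge, so that $\mathbb{X}$ is one of the alphabets $\mathbb{X}_1,\dots,\mathbb{X}_k,\mathbb{Y}_1,\dots,\mathbb{Y}_l$ appearing in the construction of $C(v)$; after possibly inserting an additional $2$-valent vertex (which changes nothing by Lemma \ref{marking-independence}), we may assume $v$ has width exactly $m$ and $\mathbb{X}$ is the full input (or output) alphabet, say $\mathbb{X}=\mathbb{X}_1$ with $k=1$, so that $C(v)$ is a Koszul matrix factorization
\[
C(v) = \left(
\begin{array}{cc}
  U_1 & x_1-y_1 \\
  \vdots & \vdots \\
  U_m & x_m-y_m
\end{array}
\right)_R,
\qquad R = \Sym(\mathbb{X}|\mathbb{Y}),
\]
where $X_j,Y_j$ are the elementary symmetric polynomials and the $U_j$ are determined up to the freedom noted in Remark \ref{MOY-freedom}. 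Since $C(\Gamma)$ is a tensor product of $C(v)$ with the remaining factors over the $\Sym$-ring generated by $\mathbb{X}$ (and the other common alphabets), and since multiplication by an element of the commutative ground ring commutes with all the tensor-product differentials, it suffices to show that multiplication by $S_\mu(\mathbb{X})$ is null-homotopic as an endomorphism of $C(v)$.

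Next I would bring in the key algebraic input. By Lemma \ref{power-derive}, the entry $U_j$ can be chosen to be (up to a scalar) the partial derivative $\partial_{Y_j} p_{m,N+1}$ type expression, and more relevantly, one has standard identities expressing the complete homogeneous symmetric polynomials $h_{N+1-m+i}(\mathbb{X})$, $i=1,\dots,m$, as $R$-linear combinations of the $U_j$'s modulo the ideal generated by $\{X_j - Y_j\}$ — indeed the $U_j$ arise precisely as a Koszul-type presentation of the Jacobian ideal of $p_{N+1}$. The crucial point is the identity $h_j(-\mathbb{X}) = (-1)^j X_j$ together with the determinantal formula \eqref{schur-complete}: when $\mu_1 > N-m$, the Schur polynomial $S_\mu(\mathbb{X})$ can be written, by Giambelli/Jacobi–Trudi expansion along the first row, as an $R$-linear combination of the complete symmetric polynomials $h_{N+1-m}(\mathbb{X}),\dots,h_N(\mathbb{X})$. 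This is exactly the ideal that, by Theorem \ref{grassmannian}, cuts $\Sym(\mathbb{X})$ down to $H^\ast(G_{m,N};\C)$. So I would first prove the sub-claim: \emph{for each $i=1,\dots,m$, multiplication by $h_{N+1-m+i}(\mathbb{X})$ is null-homotopic on $C(v)$} — this is a direct Koszul-complex computation, producing an explicit homotopy $h$ built from the "divided-difference" pieces of the $U_j$'s, and is essentially \cite[Corollary 2.25]{Wu-color} or the Koszul-contraction lemma applied appropriately. Granting the sub-claim, and since $S_\mu(\mathbb{X}) = \sum_{i=1}^m a_i\, h_{N+1-m+i}(\mathbb{X})$ for some $a_i \in \Sym(\mathbb{X})$ when $\mu_1 > N-m$, one gets $S_\mu(\mathbb{X}) \cdot \id_{C(v)} = \sum_i a_i \cdot (h_{N+1-m+i}(\mathbb{X})\cdot\id) \simeq 0$, because null-homotopic endomorphisms form an ideal in the endomorphism ring (composing a null-homotopy with multiplication by $a_i$ again gives a null-homotopy, all maps being $R$-linear).

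The main obstacle I anticipate is the sub-claim itself: verifying that each $h_{N+1-m+i}(\mathbb{X})$ acts null-homotopically on the Koszul factorization $C(v)$. The heart of it is a symmetric-function identity showing that $h_{N+1-m+i}(\mathbb{X})$ lies in the ideal $(X_1-Y_1,\dots,X_m-Y_m, U_1,\dots,U_m)$ in a way compatible with the $\zed_2$-grading and quantum grading — i.e. that it is a "diagonal" combination realizing it as $d_{mf}$ applied to some degree-shifted element. One clean way is to use the alternative generators for $C(v)$ afforded by Remark \ref{MOY-freedom}: choose the $U_j$ so that the identity $p_{N+1}(\mathbb{X}) - p_{N+1}(\mathbb{Y}) = \sum_j (X_j-Y_j)U_j$ is visibly the expansion coming from $p_{N+1}(\mathbb{X}) = \sum_{i} (-1)^{?} X_? \, h_?(\mathbb{X}) + \cdots$ via Newton's identities, so that the $h_{N+1-m+i}(\mathbb{X})$ appear among the $U_j$ up to unit scalars and lower-order corrections. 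Then the null-homotopy is just contraction against the corresponding Koszul generator, exactly as in the proof of Lemma \ref{width-cap} where $U_{N+1}$ being a unit forced $C(v)\simeq 0$; here instead the relevant entries are the $h$'s rather than a unit, and contracting against them kills multiplication by those entries. The bookkeeping of gradings and signs in the tensor factors, and the precise form of the homotopy, is where the real work lies, and is carried out in detail in \cite{Wu-color}; I would cite \cite[Corollary 2.25]{Wu-color} and the symmetric-function lemmas of \cite[Section 4]{Wu-color} to close the argument.
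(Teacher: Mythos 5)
Your proposal follows essentially the same route as the proof in \cite[Lemma 5.10]{Wu-color} (which this survey omits): insert an extra marked point so that $\mathbb{X}$ sits at one end of an arc whose Koszul factorization has rows $(U_j,\,X_j-Y_j)$, note that multiplication by either entry of a Koszul row is null-homotopic by the obvious contraction (and remains so after tensoring with the rest of $C(\Gamma)$, by Lemma \ref{morphism-sign}), use Lemma \ref{power-derive} to identify $U_j$ with $(-1)^{j+1}(N+1)h_{N+1-j}(\mathbb{X})$ modulo $(X_1-Y_1,\dots,X_m-Y_m)$, and conclude via Jacobi--Trudi that $S_\mu(\mathbb{X})$ lies in the ideal of null-homotopic multiplications. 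The one detail to make explicit (beyond an off-by-one in your indexing of the $h$'s) is that the first-row entries $h_{\mu_1},\dots,h_{\mu_1+m-1}$ of the Jacobi--Trudi determinant may have index exceeding $N$, so you also need the standard recursion $h_k=\sum_{i=1}^m(-1)^{i+1}X_i\,h_{k-i}$ to see that $h_k\in(h_{N+1-m}(\mathbb{X}),\dots,h_N(\mathbb{X}))$ for every $k\geq N+1-m$; with that observation your argument closes.
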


\begin{lemma}\cite[Lemma 5.11]{Wu-color}\label{MOY-object-of-hmf} 
Let $\Gamma$ be a MOY graph, and $\mathbb{E}_1,\dots,\mathbb{E}_n$ the alphabets assigned to all end points of $\Gamma$, among which $\mathbb{E}_1,\dots,\mathbb{E}_k$ are assigned to exits and $\mathbb{E}_{k+1},\dots,\mathbb{E}_n$ are assigned to entrances. (Here we allow $n=0$, i.e. $\Gamma$ to be closed.) Write $R=\Sym(\mathbb{E}_1|\cdots|\mathbb{E}_n)$ and $w= \sum_{i=1}^k p_{N+1}(\mathbb{E}_i) - \sum_{j=k+1}^n p_{N+1}(\mathbb{E}_j)$. Then $C(\Gamma)$ is an object of $\hmf_{R,w}$.
\end{lemma}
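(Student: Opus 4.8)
The statement to prove is Lemma \ref{MOY-object-of-hmf}: that $C(\Gamma)$ is an object of $\hmf_{R,w}$, where $R=\Sym(\mathbb{E}_1|\cdots|\mathbb{E}_n)$ and $w$ is the indicated alternating sum of power sums at the end points. Recall that by Definition \ref{categories-def}, being an object of $\hmf_{R,w}$ means $C(\Gamma)$ is a homotopically finite graded matrix factorization over $R$ with potential $w$ whose quantum grading is bounded below. The plan is to verify these properties one at a time, using the building blocks $C(v)$ and the tensor-product structure from Definition \ref{MOY-mf-def}.

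First I would check that $C(\Gamma)$ is a graded matrix factorization over $R$ with potential $w$. Each vertex factorization $C(v)$ is, by construction, a Koszul matrix factorization over $\Sym(\mathbb{X}_1|\cdots|\mathbb{X}_k|\mathbb{Y}_1|\cdots|\mathbb{Y}_l)$ with potential $p_{N+1}(\mathbb{X})-p_{N+1}(\mathbb{Y})$, hence finitely generated and graded. Taking the tensor product over the common end-point alphabets and using that the tensor product of matrix factorizations with potentials $w_1$ and $w_2$ has potential $w_1+w_2$ (Definition \ref{def-mf}), all the interior alphabets appear with opposite signs at the two vertices they join and cancel, leaving exactly $w=\sum_{i=1}^k p_{N+1}(\mathbb{E}_i)-\sum_{j=k+1}^n p_{N+1}(\mathbb{E}_j)$. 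One should note here that the tensor product is being taken over $\Sym(\mathbb{W}_1|\cdots|\mathbb{W}_n)$ at each gluing, and that the resulting module is free and graded over $R$; this uses Theorem \ref{part-symm-str}, which guarantees that $\Sym$ of a partitioned alphabet is free over $\Sym$ of the union, so iterated tensoring preserves freeness. Since each $C(v)$ is finitely generated and there are finitely many vertices, $C(\Gamma)$ is itself finitely generated over $R$.

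Next, the quantum grading: each $C(v)$ is built from a Koszul factorization of homogeneous elements with a finite overall grading shift $\{q^{-\sum i_si_t}\}$, so its quantum grading is bounded below; a finite tensor product of such again has quantum grading bounded below. Since $C(\Gamma)$ is finitely generated over $R$ with potential $w$ and $R$ has indeterminates of positive degree, $C(\Gamma)$ is in particular homotopically finite — indeed it is itself finitely generated, so one may take $\mathcal{M}=C(\Gamma)$ in Definition \ref{homotopically-finite-def}. This already places $C(\Gamma)$ in $\MF_{R,w}$ and hence, viewed up to homotopy, in $\hmf_{R,w}$.

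The main obstacle, I expect, is purely bookkeeping rather than conceptual: one must be careful that the gluing of $C(v)$'s along shared end points is well-defined, i.e. that the interior alphabets genuinely cancel in the potential and that no consistency issue arises when an edge carries several marked points (handled by the convention of inserting $2$-valent vertices). The subtlety is entirely in organizing the tensor product over the correct subrings and tracking signs of power sums; once that is set up, freeness over $R$ and boundedness of the grading are immediate from Theorem \ref{part-symm-str} and the explicit grading shifts. I would present the argument by first treating a single $C(v)$, then the two-vertex gluing, and then invoking induction on the number of internal vertices.
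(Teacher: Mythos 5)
There is a genuine gap, and it sits exactly at the one point of the lemma that carries real content. You assert that ``since each $C(v)$ is finitely generated and there are finitely many vertices, $C(\Gamma)$ is itself finitely generated over $R$,'' and you then take $\mathcal{M}=C(\Gamma)$ in Definition \ref{homotopically-finite-def} to conclude homotopical finiteness. This is false. Each $C(v)$ is finitely generated over the ring $\Sym(\mathbb{X}_1|\cdots|\mathbb{Y}_l)$ built from \emph{all} the alphabets adjacent to $v$, and hence $C(\Gamma)$ is finitely generated over the ring generated by \emph{all} marked alphabets of $\Gamma$ --- but the base ring $R=\Sym(\mathbb{E}_1|\cdots|\mathbb{E}_n)$ in the lemma involves only the alphabets at the \emph{end points}. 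The internal alphabets survive as genuinely new variables, so $C(\Gamma)$ has infinite rank over $R$. The extreme case makes this vivid: for a closed $\Gamma$ one has $R=\C$ and $w=0$, yet $C(\bigcirc_m)$ is a free module over $\Sym(\mathbb{X})$ and is infinite-dimensional over $\C$. So your $\mathcal{M}=C(\Gamma)$ does not witness homotopical finiteness, and this property --- the whole point of the lemma --- is left unproved.

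The correct route (and the one taken in \cite{Wu-color}) is to verify homotopical finiteness via Proposition \ref{prop-homology-detects-homotopy}(ii): one must show that $H_R(C(\Gamma))$ is finite-dimensional over $\C$. Since $H_R(C(\Gamma))$ is finitely generated over the quotient of the big ring by $\mathfrak{I}$, which is still a polynomial ring in the internal alphabets, finite-dimensionality is not automatic; it follows from Lemma \ref{schur-null-homotopic}, which says that for an internal alphabet $\mathbb{X}$ on an edge of color $m$, multiplication by $S_\mu(\mathbb{X})$ with $\mu_1>N-m$ is null-homotopic, hence acts by zero on homology. Thus the action of each $\Sym(\mathbb{X})$ on $H_R(C(\Gamma))$ factors through the finite-dimensional Grassmannian cohomology quotient of Theorem \ref{grassmannian}, and finite-dimensionality follows. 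The remaining points you address --- freeness of $C(\Gamma)$ over $R$ (via Theorem \ref{part-symm-str}), the cancellation of internal potentials leaving $w$, and boundedness below of the quantum grading --- are correct and are indeed the routine part; but without the argument above you have not established membership in $\hmf_{R,w}$.
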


\begin{figure}[ht]

\begin{picture}(360,90)(-180,-50)


\put(-130,0){\vector(-3,2){22.5}}

\put(-152.5,15){\line(-3,2){22.5}}

\put(-178,25){\tiny{$i_1$}}

\put(-178,32){\small{$\mathbb{X}_1$}}

\put(-152,15){$\cdots$}

\put(-150,25){\tiny{$i_s$}}

\put(-148,32){\small{$\mathbb{X}_s$}}

\put(-115,25){\tiny{$i_{s+1}$}}

\put(-117,32){\small{$\mathbb{X}_{s+1}$}}

\put(-130,0){\vector(0,1){7.5}}

\put(-130,7.5){\line(0,1){7.5}}

\put(-130,15){\vector(-1,1){7.5}}

\put(-137.5,22.5){\line(-1,1){7.5}}

\put(-130,15){\vector(1,1){7.5}}

\put(-122.5,22.5){\line(1,1){7.5}}

\put(-131,10){\line(1,0){2}}

\put(-129,6){\small{$\mathbb{A}$}}

\put(-119,15){$\cdots$}

\put(-130,0){\vector(3,2){22.5}}

\put(-107.5,15){\line(3,2){22.5}}

\put(-84,25){\tiny{$i_k$}}

\put(-87,32){\small{$\mathbb{X}_k$}}


\put(-123,-2){$v$}


\put(-152.5,-15){\line(3,2){22.5}}

\put(-175,-30){\vector(3,2){22.5}}

\put(-178,-27){\tiny{$j_1$}}

\put(-178,-38){\small{$\mathbb{Y}_1$}}

\put(-152,-20){$\cdots$}

\put(-137.5,-15){\line(1,2){7.5}}

\put(-145,-30){\vector(1,2){7.5}}

\put(-150,-27){\tiny{$j_t$}}

\put(-148,-38){\small{$\mathbb{Y}_t$}}

\put(-122.5,-15){\line(-1,2){7.5}}

\put(-115,-30){\vector(-1,2){7.5}}

\put(-115,-27){\tiny{$j_{t+1}$}}

\put(-117,-38){\small{$\mathbb{Y}_{t+1}$}}

\put(-119,-20){$\cdots$}

\put(-107.5,-15){\line(-3,2){22.5}}

\put(-85,-30){\vector(-3,2){22.5}}

\put(-84,-27){\tiny{$j_l$}}

\put(-87,-38){\small{$\mathbb{Y}_l$}}

\put(-130,-50){$\Gamma_1$}


\put(0,0){\vector(-3,2){22.5}}

\put(-22.5,15){\line(-3,2){22.5}}

\put(-48,25){\tiny{$i_1$}}

\put(-48,32){\small{$\mathbb{X}_1$}}

\put(-22,15){$\cdots$}

\put(0,0){\vector(-1,2){7.5}}

\put(-7.5,15){\line(-1,2){7.5}}

\put(-20,25){\tiny{$i_s$}}

\put(-18,32){\small{$\mathbb{X}_s$}}

\put(0,0){\vector(1,2){7.5}}

\put(7.5,15){\line(1,2){7.5}}

\put(15,25){\tiny{$i_{s+1}$}}

\put(13,32){\small{$\mathbb{X}_{s+1}$}}

\put(11,15){$\cdots$}

\put(0,0){\vector(3,2){22.5}}

\put(22.5,15){\line(3,2){22.5}}

\put(46,25){\tiny{$i_k$}}

\put(43,32){\small{$\mathbb{X}_k$}}


\put(7,-2){$v$}


\put(-22.5,-15){\line(3,2){22.5}}

\put(-45,-30){\vector(3,2){22.5}}

\put(-48,-27){\tiny{$j_1$}}

\put(-48,-38){\small{$\mathbb{Y}_1$}}

\put(-22,-20){$\cdots$}

\put(-7.5,-15){\line(1,2){7.5}}

\put(-15,-30){\vector(1,2){7.5}}

\put(-20,-27){\tiny{$j_t$}}

\put(-18,-38){\small{$\mathbb{Y}_t$}}

\put(7.5,-15){\line(-1,2){7.5}}

\put(15,-30){\vector(-1,2){7.5}}

\put(15,-27){\tiny{$j_{t+1}$}}

\put(13,-38){\small{$\mathbb{Y}_{t+1}$}}

\put(11,-20){$\cdots$}

\put(22.5,-15){\line(-3,2){22.5}}

\put(45,-30){\vector(-3,2){22.5}}

\put(46,-27){\tiny{$j_l$}}

\put(43,-38){\small{$\mathbb{Y}_l$}}

\put(0,-50){$\Gamma$}


\put(130,0){\vector(-3,2){22.5}}

\put(107.5,15){\line(-3,2){22.5}}

\put(82,25){\tiny{$i_1$}}

\put(82,32){\small{$\mathbb{X}_1$}}

\put(108,15){$\cdots$}

\put(130,0){\vector(-1,2){7.5}}

\put(122.5,15){\line(-1,2){7.5}}

\put(110,25){\tiny{$i_s$}}

\put(112,32){\small{$\mathbb{X}_s$}}

\put(130,0){\vector(1,2){7.5}}

\put(137.5,15){\line(1,2){7.5}}

\put(145,25){\tiny{$i_{s+1}$}}

\put(143,32){\small{$\mathbb{X}_{s+1}$}}

\put(141,15){$\cdots$}

\put(130,0){\vector(3,2){22.5}}

\put(152.5,15){\line(3,2){22.5}}

\put(176,25){\tiny{$i_k$}}

\put(173,32){\small{$\mathbb{X}_k$}}


\put(137,-2){$v$}


\put(107.5,-15){\line(3,2){22.5}}

\put(85,-30){\vector(3,2){22.5}}

\put(82,-27){\tiny{$j_1$}}

\put(82,-38){\small{$\mathbb{Y}_1$}}

\put(108,-20){$\cdots$}

\put(115,-30){\vector(1,1){7.5}}

\put(122.5,-22.5){\line(1,1){7.5}}

\put(110,-27){\tiny{$j_t$}}

\put(112,-38){\small{$\mathbb{Y}_t$}}

\put(145,-30){\vector(-1,1){7.5}}

\put(137.5,-22.5){\line(-1,1){7.5}}

\put(145,-27){\tiny{$j_{t+1}$}}

\put(143,-38){\small{$\mathbb{Y}_{t+1}$}}

\put(130,-7.5){\line(0,1){7.5}}

\put(130,-15){\vector(0,1){7.5}}

\put(129,-5){\line(1,0){2}}

\put(132,-12){\small{$\mathbb{B}$}}

\put(141,-20){$\cdots$}

\put(152.5,-15){\line(-3,2){22.5}}

\put(175,-30){\vector(-3,2){22.5}}

\put(176,-27){\tiny{$j_l$}}

\put(173,-38){\small{$\mathbb{Y}_l$}}

\put(130,-50){$\Gamma_2$}
\end{picture}

\caption{}\label{edge-contraction-figure}

\end{figure}

\begin{lemma}\cite[Lemma 5.12]{Wu-color}\label{edge-contraction}
Let $\Gamma$, $\Gamma_1$ and $\Gamma_2$ be MOY graphs shown in Figure \ref{edge-contraction-figure}. Then $C(\Gamma_1) \simeq C(\Gamma_2) \simeq C(\Gamma)$.
\end{lemma}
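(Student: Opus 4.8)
The plan is to reduce the lemma to a purely local statement about the vertex $v$ and then to verify that statement by manipulating Koszul matrix factorizations. Since $\Gamma$, $\Gamma_1$ and $\Gamma_2$ agree outside a small disk around $v$, the matrix factorizations $C(\Gamma)$, $C(\Gamma_1)$, $C(\Gamma_2)$ are obtained by tensoring the matrix factorization of the common part outside that disk (over the alphabets assigned to the marked points on its boundary) with, respectively, $C(v)$, $C(v_1)\otimes_{\Sym(\mathbb{A})}C(v_2)$ and $C(v_3)\otimes_{\Sym(\mathbb{B})}C(v_4)$, where $v_1$ is the vertex of $\Gamma_1$ incident to $\mathbb{Y}_1,\dots,\mathbb{Y}_l$ and $v_2$ the trivalent vertex at which the edge carrying $\mathbb{A}$ splits into the edges carrying $\mathbb{X}_s$ and $\mathbb{X}_{s+1}$ (and $v_3,v_4$ are the analogous vertices of $\Gamma_2$). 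Since tensor products of matrix factorizations preserve homotopy equivalence (Lemma \ref{morphism-sign}), it suffices to prove $C(v)\simeq C(v_1)\otimes_{\Sym(\mathbb{A})}C(v_2)$ and $C(v)\simeq C(v_3)\otimes_{\Sym(\mathbb{B})}C(v_4)$. The second is the first applied to $\Gamma$ with its orientation reversed, which interchanges the incoming and outgoing edges at $v$, so I will only treat the first. If the width $m$ of $v$ exceeds $N$, both sides are homotopic to $0$ by Lemma \ref{width-cap}, so I may assume $m\le N$.

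For the local equivalence I will use the explicit Koszul presentations of Definition \ref{MOY-mf-def}. Let $A_j$ be the $j$-th elementary symmetric polynomial of $\mathbb{A}$ and $Z_j$ the $j$-th elementary symmetric polynomial of $\mathbb{X}_s\cup\mathbb{X}_{s+1}$. Then, up to grading shifts accounted for below, $C(v_2)$ is the Koszul matrix factorization with rows $(\ast,\,Z_j-A_j)$, $j=1,\dots,i_s+i_{s+1}$, and $C(v_1)$ is the Koszul matrix factorization with rows $(\ast,\,X'_j-Y_j)$, $j=1,\dots,m$, where $X'_j$ is the $j$-th elementary symmetric polynomial of $\mathbb{X}':=\mathbb{X}_1\cup\cdots\cup\mathbb{A}\cup\cdots\cup\mathbb{X}_k$. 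Forming the tensor product over $\Sym(\mathbb{A})$ merely concatenates these two lists of Koszul rows over $\Sym(\mathbb{X}_1|\cdots|\mathbb{X}_k|\mathbb{A}|\mathbb{Y}_1|\cdots|\mathbb{Y}_l)$. The sequence $\{Z_j-A_j\}_j$ is regular there --- the $A_j$ are free polynomial generators of $\Sym(\mathbb{A})$, each $Z_j$ involves only $\mathbb{X}_s\cup\mathbb{X}_{s+1}$, and the ring is a free module over its subring with $\mathbb{A}$ removed by Theorem \ref{part-symm-str} --- so I will apply the standard method of excluding a variable from a Koszul matrix factorization (the tool used in the proof of Lemma \ref{marking-independence}; see \cite[Proposition 2.19]{Wu-color}) once for each row $(\ast,\,Z_j-A_j)$. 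Each such step deletes one of these rows and substitutes $A_j=Z_j$ in every remaining entry, and after all $i_s+i_{s+1}$ steps one is left with a Koszul matrix factorization over $\Sym(\mathbb{X}_1|\cdots|\mathbb{X}_k|\mathbb{Y}_1|\cdots|\mathbb{Y}_l)$ whose rows are $(\ast,\,X_j-Y_j)$, $j=1,\dots,m$, because the elementary symmetric polynomials of $\mathbb{X}'$ specialize, under the substitutions $A_j=Z_j$, to those of $\mathbb{X}=\mathbb{X}_1\cup\cdots\cup\mathbb{X}_k$.

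It will then remain to check the bookkeeping. The potential of the concatenated factorization is $p_{N+1}(\mathbb{X})-p_{N+1}(\mathbb{Y})$, since $p_{N+1}$ is additive over disjoint unions, so that $p_{N+1}(\mathbb{X}')=\sum_{r\ne s,s+1}p_{N+1}(\mathbb{X}_r)+p_{N+1}(\mathbb{A})$ and the two $p_{N+1}(\mathbb{A})$-terms in the $v_1$- and $v_2$-potentials cancel. The $q$-grading shifts match as well: the shift of $C(v_1)$ records the pairs among the colors $i_1,\dots,(i_s+i_{s+1}),\dots,i_k$, that of $C(v_2)$ contributes the single term $i_si_{s+1}$, and using $(i_s+i_{s+1})i_r=i_si_r+i_{s+1}i_r$ these rebuild $-\sum_{1\le p<q\le k}i_pi_q$, which is the shift in $C(v)$. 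Finally, the unspecified first entries $\ast$ of the Koszul rows need not be tracked: by Remark \ref{MOY-freedom} any homogeneous choice of the correct degrees giving the correct potential produces an isomorphic matrix factorization. Hence the factorization surviving the eliminations is isomorphic to $C(v)$, giving $C(v)\simeq C(v_1)\otimes_{\Sym(\mathbb{A})}C(v_2)$, and the lemma follows.

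The step I expect to be the main obstacle is making the ``excluding a variable'' reductions rigorous in the present setting: the indeterminates $A_j$ being eliminated occur not only in their defining rows $(\ast,\,Z_j-A_j)$ but also inside the second entries $X'_j-Y_j$ of the other rows (and, implicitly, inside the unspecified first entries), so I will need to make sure that the cited reduction really covers this case --- it does, because it substitutes the solved value of the eliminated variable into every remaining entry --- and that performing the eliminations in succession is legitimate, which it is because each $Z_j$ is free of the $A_i$, so the rows for $A_2,\dots$ keep the required shape after $A_1$ has been eliminated. Everything else is the routine potential-and-grading accounting indicated above.
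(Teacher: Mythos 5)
Your proposal is correct and follows essentially the same route as the paper: the paper's entire proof is ``This follows easily from \cite[Proposition 2.19]{Wu-color}'', i.e.\ exactly the variable-exclusion argument on the concatenated Koszul rows that you carry out in detail, and your potential- and grading-bookkeeping checks out.
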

\begin{proof}
This follows easily from \cite[Proposition 2.19]{Wu-color}.
\end{proof}

\section{Homological MOY Calculus, Part One}\label{sec-MOY-decomps-part1}

The MOY calculus consists of seven graphical recursive relations (Parts (1)-(7) of Theorem \ref{MOY-poly-skein}.) In this section, we prove homological versions of Parts (1)-(4) of Theorem \ref{MOY-poly-skein}. The homological versions of the remaining three relations are harder to establish. We will sketch proofs of these in Section \ref{sec-MOY-decomps-part2} after we introduce the morphisms induced by certain local changes of MOY graphs in Section \ref{sec-morph}.

Note that the homological versions of the MOY relations remain true if we reverse the orientation of the MOY graph or the orientation of $\mathbb{R}^2$.

\subsection{MOY Relation (1)}

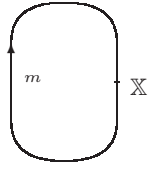
\begin{figure}[ht]

\setlength{\unitlength}{1pt}

\begin{picture}(360,60)(-180,0)


\qbezier(0,60)(-20,60)(-20,45)

\qbezier(0,60)(20,60)(20,45)

\put(-20,15){\vector(0,1){30}}

\put(20,15){\line(0,1){30}}

\qbezier(0,0)(-20,0)(-20,15)

\qbezier(0,0)(20,0)(20,15)

\put(19,30){\line(1,0){2}}

\put(-15,30){\tiny{$m$}}

\put(25,25){\small{$\mathbb{X}$}}
 
\end{picture}

\caption{$m$-colored circle with a single marked point}\label{circle-module-figure}

\end{figure}

\begin{proposition}\cite[Proposition 6.3]{Wu-color}\label{circle-module}
Let $\bigcirc_m$ be the circle colored by $m$ with a single marked point in Figure \ref{circle-module-figure}. Then, as graded matrix factorizations over $\C$,
\[
C(\bigcirc_m) \simeq (C(\emptyset)\{q^{-m(N-m)}\} \left\langle m \right\rangle) \otimes_\C \Sym(\mathbb{X})/(h_N(\mathbb{X}),h_{N-1}(\mathbb{X}),\dots,h_{N+1-m}(\mathbb{X})) ,
\] 
where $C(\emptyset)$ is the matrix factorization $\C\rightarrow 0 \rightarrow \C$, $\mathbb{X}$ is an alphabet of $m$ indeterminates marking $\bigcirc_m$ and $\Sym(\mathbb{X})/(h_N(\mathbb{X}),h_{N-1}(\mathbb{X}),\dots,h_{N+1-m}(\mathbb{X}))$ has $\zed_2$-grading $0$. 

Consequently, as $\zed$-graded modules over $\Sym(\mathbb{X})$, 
\[
H(\bigcirc_m) \cong H^\ast(G_{m,N})\{q^{-m(N-m)}\},
\] 
where $G_{m,N}$ is the complex $(m,N)$-Grassmannian.
\end{proposition}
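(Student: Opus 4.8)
The plan is to compute $C(\bigcirc_m)$ directly from its definition as a Koszul matrix factorization, simplify it using the freedom in choosing the entries (Remark \ref{MOY-freedom}), and then identify the resulting chain complex with the cohomology ring of the Grassmannian via Theorem \ref{grassmannian}.

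First I would set up the matrix factorization explicitly. Marking $\bigcirc_m$ with a single alphabet $\mathbb{X}$ of $m$ indeterminates, and recalling that an edge with two endpoints identified is treated as a $2$-valent vertex, the graph $\bigcirc_m$ has one vertex whose incoming and outgoing alphabets are \emph{both} $\mathbb{X}$. So in the notation of Section \ref{sec-mf-MOY} we have $\mathbb{Y}_j = \mathbb{X}$, hence $X_j = Y_j$ for all $j$, and the base ring is just $\Sym(\mathbb{X})$. Thus $C(\bigcirc_m)$ is the Koszul matrix factorization
\[
C(\bigcirc_m) = \left(
\begin{array}{cc}
U_1 & 0 \\
U_2 & 0 \\
\dots & \dots \\
U_m & 0
\end{array}
\right)_{\Sym(\mathbb{X})},
\]
since each difference $X_j - Y_j$ vanishes. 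The quantum shift coming from the vertex is trivial here (there is only one incoming edge), but there is a grading shift to track: a closed component colored $m$ contributes the shift $\{q^{-m(N-m)}\}\langle m\rangle$ as in the statement. By Remark \ref{MOY-freedom}, the isomorphism type of $C(\bigcirc_m)$ is insensitive to the precise choice of the $U_j$, provided they are homogeneous of the right degree; but since the second column entries are all $0$, the potential is $0$ regardless, and I am free to pick the most convenient $U_j$. Lemma \ref{power-derive} tells us that when $X_j = Y_j$ each $U_j$ is, up to sign and the factor $N+1$, the complete symmetric polynomial $h_{N+1-j}(\mathbb{X})$; concretely $U_j = \pm(N+1) h_{N+1-j}(\mathbb{X})$ (this is the degenerate limit of the defining difference quotient). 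Rescaling (a unit in $\C$), I may take $U_j = h_{N+1-j}(\mathbb{X})$.

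Next I would compute the homology. The matrix factorization $\left(\begin{array}{cc} h_{N+1-j}(\mathbb{X}) & 0 \end{array}\right)_{j=1}^m$ over $\Sym(\mathbb{X})$, having all second-column entries zero, is literally (a grading-shifted copy of) the Koszul complex on the regular sequence $h_N(\mathbb{X}), h_{N-1}(\mathbb{X}), \dots, h_{N+1-m}(\mathbb{X})$ tensored with the trivial factorization $C(\emptyset)$. That these $m$ complete symmetric polynomials form a regular sequence in $\Sym(\mathbb{X}) = \C[X_1,\dots,X_m]$ is standard — it is exactly the statement underlying Theorem \ref{grassmannian}, since the quotient is the finite-dimensional ring $H^\ast(G_{m,N};\C)$. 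Therefore the Koszul homology is concentrated in a single degree and equals $\Sym(\mathbb{X})/(h_N(\mathbb{X}),\dots,h_{N+1-m}(\mathbb{X}))$. Assembling the $\zed_2$- and quantum-grading bookkeeping (the Koszul differentials have degree $N+1$, the entries $h_{N+1-j}$ have degree $2(N+1-j)$, so each Koszul factor carries an internal shift $\{q^{-(N-j)}\}$; these combine with the overall $\{q^{-m(N-m)}\}\langle m\rangle$), one gets precisely the claimed homotopy equivalence
\[
C(\bigcirc_m) \simeq (C(\emptyset)\{q^{-m(N-m)}\}\langle m\rangle) \otimes_\C \Sym(\mathbb{X})/(h_N(\mathbb{X}),\dots,h_{N+1-m}(\mathbb{X})),
\]
using Proposition \ref{prop-homotopy-equal-isomorphism-on-homology} to upgrade the homology computation to a homotopy equivalence of graded matrix factorizations. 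The second assertion then follows by taking $H_R$ of both sides and invoking the explicit description of $H^\ast(G_{m,N};\C)$ in Theorem \ref{grassmannian}, noting that the $\Sym(\mathbb{X})$-module structure is manifest.

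I expect the main obstacle to be purely a matter of \textbf{careful grading bookkeeping} rather than any conceptual difficulty: one must verify that the various quantum-grading shifts (from the Koszul factor normalizations $\{q^{N+1-\deg a_0}\}$ in Definition \ref{koszul-mf-def}, from the reordering/degeneration of the difference quotients $U_j$, and from the closed-component shift $\{q^{-m(N-m)}\}\langle m\rangle$) assemble correctly so that the right-hand side carries the stated grading, with $\Sym(\mathbb{X})/(\dots)$ sitting in $\zed_2$-degree $0$. A secondary point worth stating cleanly — though it is already essentially Theorem \ref{grassmannian} — is the regularity of the sequence $h_N(\mathbb{X}),\dots,h_{N+1-m}(\mathbb{X})$ in $\Sym(\mathbb{X})$, which is what makes the Koszul complex have homology in a single homological degree and is what lets Proposition \ref{prop-homotopy-equal-isomorphism-on-homology} apply.
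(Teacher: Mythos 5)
Your proof is correct and follows essentially the same route as the paper: identify $U_j$ via Lemma \ref{power-derive} as $(-1)^{j+1}(N+1)h_{N+1-j}(\mathbb{X})$, observe that this sequence is $\Sym(\mathbb{X})$-regular so the Koszul rows collapse onto the quotient ring, and upgrade the homology isomorphism to a homotopy equivalence via Proposition \ref{prop-homotopy-equal-isomorphism-on-homology}, finishing with Theorem \ref{grassmannian}. One caution: Remark \ref{MOY-freedom} does \emph{not} license arbitrary choices of the $U_j$ here, since the second-column sequence $X_j-Y_j=0$ is not regular (otherwise one could take $U_j$ to be a unit and wrongly conclude $C(\bigcirc_m)\simeq 0$) --- but your argument never actually relies on that freedom beyond rescaling a row by the nonzero scalar $\pm(N+1)$, which is a genuine isomorphism when the partner entry is $0$.
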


The following is a straightforward corollary of Proposition \ref{circle-module} and Corollary \ref{grassmannian-grade}.

\begin{corollary}\cite[Corollary 6.1]{Wu-color}\label{circle-dimension}
$C(\bigcirc_m) \simeq C(\emptyset)\{\qb{N}{m}\}\left\langle m \right\rangle$ and $H(\bigcirc_m) \cong H(\emptyset)\{\qb{N}{m}\}\left\langle m \right\rangle$.
\end{corollary}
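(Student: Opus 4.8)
The plan is to deduce Corollary \ref{circle-dimension} directly from Proposition \ref{circle-module} together with Corollary \ref{grassmannian-grade}, treating it as a purely bookkeeping exercise in grading shifts. The content of Proposition \ref{circle-module} identifies $C(\bigcirc_m)$, up to homotopy equivalence, with the tensor product of $C(\emptyset)\{q^{-m(N-m)}\}\langle m\rangle$ and the graded $\C$-algebra $\Sym(\mathbb{X})/(h_N(\mathbb{X}),\dots,h_{N+1-m}(\mathbb{X}))$, which by Theorem \ref{grassmannian} is $H^\ast(G_{m,N};\C)$ with $\zed_2$-grading $0$. So the only thing to verify is that tensoring with this algebra amounts, at the level of graded dimensions, to multiplying by $\qb{N}{m}\cdot q^{m(N-m)}$, since then the shift $q^{-m(N-m)}$ in Proposition \ref{circle-module} cancels the $q^{m(N-m)}$ to leave exactly $\qb{N}{m}$.

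First I would invoke Corollary \ref{grassmannian-grade}, which states precisely that $H^\ast(G_{m,N};\C)\cong \C\{\qb{N}{m}\cdot q^{m(N-m)}\}$ as graded $\C$-linear spaces. Since $C(\emptyset)$ is the rank-one matrix factorization $\C \to 0 \to \C$, tensoring $C(\emptyset)\{q^{-m(N-m)}\}\langle m\rangle$ over $\C$ with a finite-dimensional graded vector space $V$ simply replaces the underlying $\C$ in each slot by $V$, with gradings shifted additively; concretely, $C(\emptyset)\otimes_\C V \cong C(\emptyset)\{\gdim V\}$ where $\gdim V$ is the graded dimension of $V$. Applying this with $V = H^\ast(G_{m,N};\C)$ (which carries $\zed_2$-grading $0$, so it does not disturb the $\zed_2$-structure), the right-hand side of Proposition \ref{circle-module} becomes $C(\emptyset)\{q^{-m(N-m)} \cdot \qb{N}{m}\cdot q^{m(N-m)}\}\langle m\rangle = C(\emptyset)\{\qb{N}{m}\}\langle m\rangle$. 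This gives the first homotopy equivalence.

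The second assertion, $H(\bigcirc_m)\cong H(\emptyset)\{\qb{N}{m}\}\langle m\rangle$, then follows by applying the homology functor $H(-)$ of Definition \ref{homology-MOY-def} to the homotopy equivalence just obtained, using that $H$ is invariant under homotopy equivalence (as it is the homology of a matrix factorization and homotopy equivalences of matrix factorizations induce isomorphisms on homology, per Proposition \ref{prop-homotopy-equal-isomorphism-on-homology}) and that $H$ commutes with the grading shifts $\{-\}$ and $\langle-\rangle$ in the obvious way, since these shifts act the same way on $C(\bigcirc_m)$ and on the quotient $C(\bigcirc_m)/\mathfrak I\cdot C(\bigcirc_m)$ whose homology defines $H(\bigcirc_m)$. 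Alternatively, one can simply observe that $H(\emptyset) = \C$ sitting in $\zed_2$-degree $0$, quantum degree $0$, homological degree $0$, so the stated formula is just the graded-dimension reformulation of $H(\bigcirc_m)\cong H^\ast(G_{m,N})\{q^{-m(N-m)}\}$ combined with Corollary \ref{grassmannian-grade}, exactly as in the matrix-factorization case.

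There is essentially no obstacle here: the corollary is a formal consequence of the two cited results, and the only care needed is to track the three gradings ($\zed_2$, quantum, homological) consistently and to note that $H^\ast(G_{m,N};\C)$ contributes only in $\zed_2$-degree $0$ and homological degree $0$, so that the shifts $\langle m\rangle$ and $\{q^{-m(N-m)}\}$ are unaffected except for the clean cancellation in the quantum grading. The mildest point worth a sentence in the write-up is the identity $C(\emptyset)\otimes_\C V\cong C(\emptyset)\{\gdim V\}$ for a finite-dimensional graded $\C$-vector space $V$ concentrated in $\zed_2$-degree $0$, which is immediate from the definitions of the tensor product and grading shifts in Definitions \ref{def-mf} and \ref{def-grading-shift-q}.
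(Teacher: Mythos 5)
Your proposal is correct and is exactly the argument the paper intends: the corollary is stated as an immediate consequence of Proposition \ref{circle-module} and Corollary \ref{grassmannian-grade}, and your verification of the cancellation of the shifts $q^{-m(N-m)}$ and $q^{m(N-m)}$, together with the observation that $H^\ast(G_{m,N};\C)$ sits in $\zed_2$-degree $0$, is all that is needed.
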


\begin{proof}[Proof of Proposition \ref{circle-module}]
By definition,
\[
C(\bigcirc_m) =
\left(%
\begin{array}{cc}
  U_1 & 0 \\
  \dots & \dots \\
  U_m & 0 
\end{array}%
\right)_{\Sym(\mathbb{X})}
\]
where $U_j=\frac{\partial}{\partial X_j}p_{m,N+1}(X_1,\dots,X_m)$ and $X_j$ is the $j$-th elementary symmetric polynomial in $\mathbb{X}$. From Lemma \ref{power-derive}, we know that
\[
U_j=(-1)^{j+1} (N+1) h_{N+1-j}(\mathbb{X}).
\]
By \cite[Proposition 6.2]{Wu-color}, $(U_1,\dots,U_{m})$ is $\Sym(\mathbb{X})$-regular. Thus, we can apply \cite[Corollary 2.25]{Wu-color} successively to the rows of $C(\bigcirc_m)$ from top to bottom to get a homogeneous morphism  
\[
C(\bigcirc_m) \rightarrow (C(\emptyset)\{q^{-m(N-m)}\} \left\langle m \right\rangle) \otimes_\C \Sym(\mathbb{X})/(h_N(\mathbb{X}),h_{N-1}(\mathbb{X}),\dots,h_{N+1-m}(\mathbb{X}))
\]
that preserves both gradings and induces an isomorphism
\[
H(\bigcirc_m) \xrightarrow{\cong} (H(\emptyset)\{q^{-m(N-m)}\} \left\langle m \right\rangle) \otimes_\C \Sym(\mathbb{X})/(h_N(\mathbb{X}),h_{N-1}(\mathbb{X}),\dots,h_{N+1-m}(\mathbb{X})).
\] 
By Proposition \ref{prop-homotopy-equal-isomorphism-on-homology}, this implies 
\[
C(\bigcirc_m) \simeq (C(\emptyset)\{q^{-m(N-m)}\} \left\langle m \right\rangle) \otimes_\C \Sym(\mathbb{X})/(h_N(\mathbb{X}),h_{N-1}(\mathbb{X}),\dots,h_{N+1-m}(\mathbb{X})).
\] 

It then follows from Theorem \ref{grassmannian} that 
\[
H(\bigcirc_m) \cong H^\ast(G_{m,N})\{q^{-m(N-m)}\}.
\] 
\end{proof}

\subsection{MOY Relation (2)} Lemma \ref{edge-contraction} implies that the matrix factorization associated to any MOY graph is homotopic to that associated to a trivalent MOY graph. Thus, we do not lose any information by considering only the trivalent MOY graphs. But, in some cases, it is more convenient to use vertices of higher valence. Moreover, applying Lemma \ref{edge-contraction} to trivalent MOY graphs, we get the following simple corollary.

\begin{corollary}\cite[Corollary 5.13]{Wu-color}\label{contract-expand} \vspace{-2pc}
\[
C() \simeq C(). \vspace{2pc}
\]
\end{corollary}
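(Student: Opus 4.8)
The plan is to derive this directly from the edge-contraction lemma, Lemma \ref{edge-contraction}. The two trivalent MOY graphs on the two sides of the claimed equivalence are precisely the two ways of resolving a single internal vertex of valence $4$ --- where an edge of color $i+j+k$ meets three edges of colors $i$, $j$, $k$ --- into a pair of trivalent vertices joined by an internal edge: in one resolution the internal edge carries color $j+k$, in the other it carries color $i+j$. Call these graphs $\Gamma_L$ and $\Gamma_R$, and let $\Gamma$ be the MOY graph obtained from either by contracting the internal edge, so that $\Gamma$ has a single valence-$4$ vertex at which the edge of color $i+j+k$ meets the three edges of colors $i$, $j$, $k$.

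First I would check that, relative to $\Gamma$, the graph $\Gamma_L$ is of the form of $\Gamma_1$ in Figure \ref{edge-contraction-figure}: contracting the internal edge of color $j+k$ merges the two trivalent vertices of $\Gamma_L$ into the valence-$4$ vertex of $\Gamma$. Lemma \ref{edge-contraction} then gives a homotopy equivalence $C(\Gamma_L)\simeq C(\Gamma)$ of graded matrix factorizations respecting both the $\zed_2$-grading and the quantum grading. The same reasoning, with $\Gamma_R$ playing the role of $\Gamma_2$ and the internal edge of color $i+j$ being contracted, gives $C(\Gamma_R)\simeq C(\Gamma)$. Composing these through the common object $C(\Gamma)$ yields $C(\Gamma_L)\simeq C(\Gamma_R)$, which is the assertion; since $\Gamma$, $\Gamma_L$, $\Gamma_R$ have the same end points with the same assigned alphabets, all three lie in the same category $\hmf_{R,w}$, so the composite is an isomorphism there.

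I do not expect a genuine obstacle here --- this is why it is labeled a corollary. The only point requiring care is the grading bookkeeping, namely that the shift factors $q^{-\sum_{1\le s<t\le k} i_s i_t}$ built into the definition of each $C(v)$ match up on the two sides; but this is already internal to the statement of Lemma \ref{edge-contraction}, which asserts a grading-preserving homotopy equivalence, so nothing extra needs to be verified beyond confirming that each side of the corollary is obtained from $\Gamma$ by a single edge expansion of the type covered by Figure \ref{edge-contraction-figure}.
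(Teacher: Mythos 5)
Your argument is exactly the paper's: both trivalent resolutions are obtained from the single four-valent-vertex graph $\Gamma$ by the edge expansion of Lemma \ref{edge-contraction}, so $C(\Gamma_L)\simeq C(\Gamma)\simeq C(\Gamma_R)$. This matches the paper's (one-line) derivation of the corollary, and your grading remarks are consistent with what Lemma \ref{edge-contraction} already guarantees.
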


\subsection{MOY Relation (3)}

\begin{proposition}\cite[Theorem 5.14]{Wu-color}\label{decomp-II}

$~$ \vspace{-2pc}
\[
C(\input{v-vector-m+n-bubble-slide}) \simeq C()\{\qb{m+n}{m}\}. \vspace{3pc}
\]
\end{proposition}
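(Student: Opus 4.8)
The plan is to present both sides as explicit Koszul matrix factorizations and to exploit the fact that the two middle edges of $\Gamma_1$ (colored $m$ and $n$) enter the construction only through the symmetric functions of their union. Here $\Gamma_1$ denotes the left-hand MOY graph of part (3) of Theorem \ref{MOY-poly-skein} (an edge colored $m+n$ splitting into edges colored $m$ and $n$ and then merging back) and $\Gamma_2$ the right-hand one (a single edge colored $m+n$).

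First I would choose, via Lemma \ref{marking-independence}, a marking of $\Gamma_1$ with alphabets $\mathbb{W}$ at the bottom endpoint, $\mathbb{X}$ (of $m$ indeterminates) on the $m$-colored edge, $\mathbb{Y}$ (of $n$ indeterminates) on the $n$-colored edge, and $\mathbb{W}'$ at the top endpoint. Cutting at the marked points exhibits $C(\Gamma_1) = C(v_1) \otimes_{\Sym(\mathbb{X}|\mathbb{Y})} C(v_2)$, where $v_1$ has $\mathbb{W}$ entering and $\mathbb{X},\mathbb{Y}$ leaving, and $v_2$ has $\mathbb{X},\mathbb{Y}$ entering and $\mathbb{W}'$ leaving. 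Writing $Z_j$ for the $j$-th elementary symmetric polynomial in $\mathbb{X}\cup\mathbb{Y}$, the formula defining $C(v)$ gives that $C(v_1)$ is the Koszul matrix factorization with right column $(W_1-Z_1,\dots,W_{m+n}-Z_{m+n})$ and shift $q^{0}$ (only one edge enters $v_1$), and that $C(v_2)$ is the one with right column $(Z_1-W_1',\dots,Z_{m+n}-W_{m+n}')$ and shift $q^{-\sum_{s<t} i_s i_t} = q^{-mn}$; in both cases the left-column entries are polynomials in the $W_j,Z_j,W_j'$ alone. The point to record is that neither factor sees $\mathbb{X},\mathbb{Y}$ except through $\Sym(\mathbb{X}\cup\mathbb{Y})$.

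Consequently, introducing a fresh alphabet $\mathbb{Z}$ of $m+n$ indeterminates and letting $M$ be the stacked Koszul matrix factorization with right column $(W_1-Z_1,\dots,W_{m+n}-Z_{m+n},Z_1-W_1',\dots,Z_{m+n}-W_{m+n}')$ over $\Sym(\mathbb{W}|\mathbb{Z}|\mathbb{W}')$, base change along $\Sym(\mathbb{X}\cup\mathbb{Y}) \hookrightarrow \Sym(\mathbb{X}|\mathbb{Y})$ (sending $Z_j$ to the $j$-th elementary symmetric polynomial in $\mathbb{X}\cup\mathbb{Y}$) identifies $C(\Gamma_1) \cong M\{q^{-mn}\} \otimes_{\Sym(\mathbb{Z})} \Sym(\mathbb{X}|\mathbb{Y})$. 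Now Corollary \ref{part-symm-grade} gives $\Sym(\mathbb{X}|\mathbb{Y}) \cong \Sym(\mathbb{X}\cup\mathbb{Y})\{\qb{m+n}{n}\,q^{mn}\}$ as graded $\Sym(\mathbb{X}\cup\mathbb{Y})$-modules, and since this isomorphism is $\Sym(\mathbb{X}\cup\mathbb{Y})$-linear it is compatible with the differential of $M$, so $C(\Gamma_1) \cong M\{q^{-mn}\}\{\qb{m+n}{n}\,q^{mn}\} = M\{\qb{m+n}{m}\}$ (using $\qb{m+n}{n}=\qb{m+n}{m}$). Finally $M$ is exactly $C$ of the single edge colored $m+n$ equipped with one extra interior marked point carrying $\mathbb{Z}$, i.e.\ $C(\mathbb{W}\to\mathbb{Z}\to\mathbb{W}')$; deleting that marked point is a grading-preserving homotopy equivalence by Lemma \ref{marking-independence}, so $M \simeq C(\Gamma_2)$ and hence $C(\Gamma_1) \simeq C(\Gamma_2)\{\qb{m+n}{m}\}$. (Throughout, the equivalences are read after restricting scalars to $\Sym(\mathbb{W}|\mathbb{W}')$ as in Definition \ref{MOY-mf-def}; the larger polynomial rings being free over $\Sym(\mathbb{W}|\mathbb{W}')$, restriction is exact and preserves isomorphisms, homotopy equivalences and grading shifts.)

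The step I expect to demand the most care is the grading bookkeeping: the shift $q^{-mn}$ forced by the two edges entering $v_2$ must cancel precisely against the $q^{mn}$ in Corollary \ref{part-symm-grade}, and one must verify that each module identification above really is a map of matrix factorizations (it is, because each is $\Sym(\mathbb{X}\cup\mathbb{Y})$-linear and every differential in sight is already defined over $\Sym(\mathbb{X}\cup\mathbb{Y})$). A more computational variant, parallel to the proof of Lemma \ref{edge-contraction}, avoids $\mathbb{Z}$: realize $C(\Gamma_1)$ as the stacked Koszul matrix factorization over $\Sym(\mathbb{W}|\mathbb{X}|\mathbb{Y}|\mathbb{W}')$, perform the row operation replacing its second block by right-column entries $W_j-W_j'$, excise the first block via \cite[Proposition 2.19]{Wu-color} (valid since $\{W_j-Z_j\}$ is a regular sequence), and recognize the surviving block as $C(\Gamma_2)$ using Remark \ref{MOY-freedom} — there the freedom quantified by Theorem \ref{part-symm-str} reappears as the graded rank of the contraction.
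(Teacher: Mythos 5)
Your proof is correct and is essentially the paper's own argument: both hinge on Corollary \ref{part-symm-grade} (the graded free decomposition $\Sym(\mathbb{X}|\mathbb{Y})\cong\Sym(\mathbb{X}\cup\mathbb{Y})\{\qb{m+n}{n}q^{mn}\}$, which produces the factor $\qb{m+n}{m}$ after cancelling the $q^{-mn}$ normalization) followed by excision of the regular sequence via \cite[Proposition 2.19]{Wu-color}, and indeed your ``computational variant'' at the end is verbatim the paper's proof, of which your $\mathbb{Z}$-base-change phrasing is only a cosmetic repackaging. One small bookkeeping slip: by Definition \ref{MOY-mf-def} the shift $q^{-\sum_{s<t}i_si_t}$ is governed by the colors of the edges \emph{leaving} a vertex, so the $q^{-mn}$ sits on the splitting vertex $v_1$ rather than the merging vertex $v_2$; the total shift is the same, so nothing downstream is affected.
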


\begin{proof}
Mark the MOY graphs as in Figure \ref{decomp-II-figure}. Denote by $X_j$ be $j$-th elementary symmetric polynomial in $\mathbb{X}$, and use similar notations for the other alphabets. Let $\mathbb{W}=\mathbb{A}\cup\mathbb{B}$. Then the $k$-th elementary symmetric polynomial in $\mathbb{W}$ is
\[
W_k=\sum_{i+j=k}A_iB_j.
\] 

\begin{figure}[ht]

\setlength{\unitlength}{1pt}

\begin{picture}(360,70)(-180,-10)


\put(-60,0){\vector(0,1){15}}

\qbezier(-60,15)(-70,15)(-70,25)

\put(-70,25){\vector(0,1){10}}

\qbezier(-70,35)(-70,45)(-60,45)

\put(-71,26){\line(1,0){2}}

\qbezier(-60,15)(-50,15)(-50,25)

\put(-50,25){\vector(0,1){10}}

\qbezier(-50,35)(-50,45)(-60,45)

\put(-51,26){\line(1,0){2}}

\put(-60,45){\vector(0,1){15}}

\put(-85,55){\tiny{$m+n$}}

\put(-58,54){\small{$\mathbb{Y}$}}

\put(-85,0){\tiny{$m+n$}}

\put(-58,0){\small{$\mathbb{X}$}}

\put(-77,30){\tiny{$m$}}

\put(-77,22){\small{$\mathbb{A}$}}

\put(-47,30){\tiny{$n$}}

\put(-48,22){\small{$\mathbb{B}$}}

\put(-63,-10){$\Gamma$}


\put(60,0){\vector(0,1){60}}

\put(62,54){\small{$\mathbb{Y}$}}

\put(65,30){\tiny{$m+n$}}

\put(62,0){\small{$\mathbb{X}$}}

\put(57,-10){$\Gamma_1$}

\end{picture}

\caption{}\label{decomp-II-figure}

\end{figure}

By Theorem \ref{part-symm-str} and Corollary \ref{part-symm-grade}, 
\[
\Sym(\mathbb{X}|\mathbb{Y}|\mathbb{A}|\mathbb{B}) = \Sym(\mathbb{X}|\mathbb{Y}|\mathbb{W})\{q^{mn}\qb{m+n}{m}\}.
\]
So
\begin{eqnarray*}
C(\Gamma) & \cong & 
\left(%
\begin{array}{ll}
  \ast & Y_1 - W_1 \\
  \dots & \dots \\
  \ast & Y_n - W_n \\
  \ast & W_1 - X_1 \\
  \dots & \dots \\
  \ast & W_n - X_m 
\end{array}%
\right)_{\Sym(\mathbb{X}|\mathbb{Y}|\mathbb{A}|\mathbb{B})}
\{q^{-mn}\}, \\
& \cong & 
\left(%
\begin{array}{ll}
  \ast & Y_1 - W_1 \\
  \dots & \dots \\
  \ast & Y_n - W_n \\
  \ast & W_1 - X_1 \\
  \dots & \dots \\
  \ast & W_n - X_m 
\end{array}%
\right)_{\Sym(\mathbb{X}|\mathbb{Y}|\mathbb{W})}
\{\qb{m+n}{m}\}, \\
& \simeq & 
\left(%
\begin{array}{ll}
  \ast & Y_1 - X_1 \\
  \dots & \dots \\
  \ast & Y_n - X_n
\end{array}%
\right)_{\Sym(\mathbb{X}|\mathbb{Y})}
\{\qb{m+n}{m}\}, \\
& \cong & C(\Gamma_1)\{\qb{m+n}{m}\}.
\end{eqnarray*}
where the homotopy is given by \cite[Proposition 2.19]{Wu-color}. 
\end{proof}

\subsection{MOY Relation (4)}

\begin{proposition}\cite[Theorem 5.16]{Wu-color}\label{decomp-I}
$~$ \vspace{-2pc}
\[
C() \simeq C()\{ \qb{N-m}{n}\}\left\langle n \right\rangle. \vspace{2pc}
\]
\end{proposition}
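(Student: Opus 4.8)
The plan is to treat this statement as the ``relative'' analogue of the circle relation in Proposition \ref{circle-module}: the $n$-colored loop sitting inside the $m$-colored strand ought to behave like a circle in an ambient space of dimension $N-m$, which is exactly what the factor $\{\qb{N-m}{n}\}\left\langle n\right\rangle$ records. Write $\Gamma$ for the MOY graph on the left and $\Gamma_1$ for the single $m$-colored strand on the right. First I would mark both graphs so that their two end points carry disjoint alphabets $\mathbb{X}$ and $\mathbb{Y}$ of $m$ indeterminates each; then $C(\Gamma)$ and $C(\Gamma_1)$ are both objects of $\hmf_{R,w}$ with $R=\Sym(\mathbb{X}|\mathbb{Y})$ and $w=p_{N+1}(\mathbb{Y})-p_{N+1}(\mathbb{X})$, so the comparison makes sense. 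Using Lemma \ref{edge-contraction} I would contract the $(m+n)$-colored middle edge, so that $\Gamma$ becomes, up to homotopy, a graph with a single $4$-valent internal vertex $v$ (having $\mathbb{X}$ and a size-$n$ alphabet $\mathbb{A}$ as inputs and $\mathbb{Y}$ and a size-$n$ alphabet $\mathbb{A}'$ as outputs) together with a short oriented arc running from $\mathbb{A}'$ back to $\mathbb{A}$. Then $C(\Gamma)$ is homotopic to $C(v)\otimes C(\mathrm{arc})$, a tensor product of Koszul matrix factorizations, whose second columns are $\{e_j(\mathbb{Y}\cup\mathbb{A}')-e_j(\mathbb{X}\cup\mathbb{A})\}_{j=1}^{m+n}$ and $\{A_j-A'_j\}_{j=1}^{n}$.

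Next I would reduce. Using \cite[Proposition 2.19]{Wu-color} and the rows $(\ast,A_j-A'_j)$ of $C(\mathrm{arc})$, I would eliminate the alphabet $\mathbb{A}'$, i.e.\ substitute $\mathbb{A}'\mapsto\mathbb{A}$; what remains is a Koszul matrix factorization over $\Sym(\mathbb{X}|\mathbb{Y}|\mathbb{A})$ with $m+n$ rows whose second-column entries are $e_j(\mathbb{Y}\cup\mathbb{A})-e_j(\mathbb{X}\cup\mathbb{A})$. Since
\[
e_j(\mathbb{Y}\cup\mathbb{A})-e_j(\mathbb{X}\cup\mathbb{A})=\sum_{a=1}^{\min(j,m)}(Y_a-X_a)\,e_{j-a}(\mathbb{A}),
\]
the first $m$ of these second entries are carried to $Y_1-X_1,\dots,Y_m-X_m$ by a lower-unitriangular (hence invertible over $\Sym(\mathbb{A})$) change of basis, and the remaining $n$ lie in the ideal they generate and can therefore be cleared. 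Performing the corresponding Koszul row operations (which, as usual, modify the first column so as to keep the potential fixed), I would exhibit $C(\Gamma)$, up to grading shifts, as the tensor product of a copy of $C(\Gamma_1)$ with a Koszul matrix factorization $M_{\mathbb{A}}=(c_1,0)\otimes\cdots\otimes(c_n,0)$ of zero potential, the $c_i$ being produced from the differentials of $C(v)$ by this reduction; by Lemma \ref{power-derive} (Newton's identity), $c_i$ is, up to sign and correction terms, the multiple $(N+1)\,h_{N-m-i+1}(\mathbb{A})$ of a complete symmetric polynomial.

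It then remains to identify $M_{\mathbb{A}}$. I would show that, after choosing the differentials of $C(v)$ conveniently via Remark \ref{MOY-freedom}, $M_{\mathbb{A}}$ can be taken over $\Sym(\mathbb{A})$ with its first column generating the ideal $(h_{N-m}(\mathbb{A}),h_{N-m-1}(\mathbb{A}),\dots,h_{N-m-n+1}(\mathbb{A}))$, which is $\Sym(\mathbb{A})$-regular by the argument of \cite[Proposition 6.2]{Wu-color} with $N$ replaced by $N-m$. Then \cite[Corollary 2.25]{Wu-color} gives
\[
M_{\mathbb{A}}\simeq \bigl(C(\emptyset)\{q^{-n(N-m-n)}\}\left\langle n\right\rangle\bigr)\otimes_{\C}\Sym(\mathbb{A})/(h_{N-m}(\mathbb{A}),\dots,h_{N-m-n+1}(\mathbb{A})),
\]
and by Theorem \ref{grassmannian} the last tensor factor is $H^\ast(G_{n,N-m};\C)$, which by Corollary \ref{grassmannian-grade} has graded dimension $\qb{N-m}{n}\,q^{n(N-m-n)}$. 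Hence $M_{\mathbb{A}}\simeq C(\emptyset)\{\qb{N-m}{n}\}\left\langle n\right\rangle$, and therefore $C(\Gamma)\simeq C(\Gamma_1)\{\qb{N-m}{n}\}\left\langle n\right\rangle$ once the remaining quantum-degree bookkeeping---the shift $\{q^{-mn}\}$ built into $C(v)$, the shifts produced by \cite[Proposition 2.19]{Wu-color}, and the shift from Corollary \ref{part-symm-grade}---is checked to cancel, exactly as in the proof of Proposition \ref{decomp-II}. Throughout, Proposition \ref{prop-homotopy-equal-isomorphism-on-homology} is what lets me promote the maps produced by these reductions to honest homotopy equivalences.

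The hard part will be the second step: arranging the Koszul row operations so that the splitting $C(\Gamma_1)\otimes M_{\mathbb{A}}$ is legitimate, i.e.\ so that the first-column entries $c_i$ can be taken not to carry $\mathbb{X}$- and $\mathbb{Y}$-dependent correction terms (or so that whatever corrections survive can be removed by further row operations without disturbing the ideal $(h_{N-m}(\mathbb{A}),\dots,h_{N-m-n+1}(\mathbb{A}))$). This is precisely where the freedom in Remark \ref{MOY-freedom} to redefine the differentials of $C(v)$ must be exploited with care. By contrast, once the splitting is in hand, extracting the Grassmannian factor and tallying the grading shifts is routine, being essentially identical to the corresponding steps in the proof of Proposition \ref{circle-module}.
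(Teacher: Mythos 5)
Your route is genuinely different from the paper's. The paper does \emph{not} prove this relation by a direct Koszul computation in general: it first establishes the special case $n=N-m$ (where $\qb{N-m}{n}=1$ and no Grassmannian factor appears) by a direct matrix-factorization computation \cite[Lemma 5.15]{Wu-color}, and then bootstraps the general case by inserting an $(N-m-n)$-colored loop, regrouping with bouquet moves (Corollary \ref{contract-expand}), extracting the factor $\qb{N-m}{n}$ from the already-proved Proposition \ref{decomp-II}, and closing up with the special case again. Your plan instead attacks the general case head-on, as a relative version of Proposition \ref{circle-module}. That is a legitimate strategy (it is closer to how the $n=1$ case is handled in \cite{KR1}), and it has the merit of producing the Grassmannian factor $H^\ast(G_{n,N-m})$ explicitly rather than implicitly through MOY Relation (3); the cost is that you must do the regular-sequence and freeness bookkeeping yourself rather than inheriting it from Proposition \ref{decomp-II}.

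There is, however, a gap exactly where you flag it, and your proposed fix does not work as stated. You cannot invoke Remark \ref{MOY-freedom} to normalize the first-column entries $c_i$ of the rows whose second entries have been cleared to $0$: the freedom asserted there rests on \cite[Theorem 2.1]{KR3}, which requires the \emph{second} column to be a regular sequence, and that hypothesis fails the moment a second-column entry is $0$. So the $c_i$ are genuinely determined by the reduction, and they do in general involve $\mathbb{X}$ and $\mathbb{Y}$ (already for $m=n=1$, $N=2$ one finds $c_1=-3(a+y)$ rather than $-3a$). The correct repair is to notice two things. First, the row operations that clear a second-column entry only alter the first-column entries of the \emph{pivot} rows, so the cleared rows keep their original entries $c_i=U_{m+i}$; reducing modulo the maximal homogeneous ideal $\mathfrak{I}$ of $R=\Sym(\mathbb{X}|\mathbb{Y})$ and using Lemma \ref{power-derive}, these become exactly $\pm(N+1)h_{N-m+1-i}(\mathbb{A})$, with no surviving correction terms (any correction would have to carry a coefficient of negative degree). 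Second, you do not actually need $c_i\in\Sym(\mathbb{A})$: it suffices that $(c_1,\dots,c_n)$ be regular in $\Sym(\mathbb{X}|\mathbb{Y}|\mathbb{A})$ with quotient a free graded $R$-module of graded rank $\qb{N-m}{n}q^{n(N-m-n)}$, and this follows (by the graded local criterion for flatness, or directly by Proposition \ref{prop-homotopy-equal-isomorphism-on-homology} applied to the map produced by \cite[Corollary 2.25]{Wu-color}) from the fact that the reductions $\bar c_i$ form the regular sequence cutting out $H^\ast(G_{n,N-m})$ in $\Sym(\mathbb{A})$. With that substitution for your second step, and after also ruling out the higher Koszul homology contributed by the first $m$ rows via \cite[Theorem 2.1]{KR3} over the quotient ring, the rest of your outline (Theorem \ref{grassmannian}, Corollary \ref{part-symm-grade}, and the grading tally) goes through.
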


\begin{proof}
By a direct computation of matrix factorizations, one can show that the proposition is true if $n=N-m$. This is done in \cite[Lemma 5.15]{Wu-color}. 

Now consider the general situation. By \cite[Lemma 5.15]{Wu-color}, we have \vspace{-4pc}
\[
C()\simeq C(\input{decomp-II-proof-1})\left\langle N-m-n\right\rangle. \vspace{2.5pc}
\] 
By Corollary \ref{contract-expand}, \vspace{-4pc}
\[
C(\input{decomp-II-proof-1})\simeq C(\input{decomp-II-proof-2}). \vspace{2.5pc}
\]
By Proposition \ref{decomp-II},
\[
C(\input{decomp-II-proof-2})\simeq C(\input{decomp-II-proof-3})\qb{N-m}{n}. \vspace{2.5pc}
\]  
Using \cite[Lemma 5.15]{Wu-color} again, we get \vspace{-2pc}
\[
C(\input{decomp-II-proof-3}\simeq C()\left\langle N-m\right\rangle. \vspace{2pc}
\]
And the proposition follows.
\end{proof}

\section{Morphisms Induced by Local Changes of MOY Graphs}\label{sec-morph}

In this section, we introduce morphisms of matrix factorizations induced by certain local changes of MOY graphs. These morphisms are building blocks of the more complex morphisms in the homological versions of MOY Relations (5)-(7) and in the chain complexes of link diagrams. 

\subsection{Terminology} Most morphisms defined in the rest of this paper are defined only up to homotopy and scaling by a non-zero scalar. To simplify our exposition, we introduce the following notations.

\begin{definition}
If $M,M'$ are matrix factorizations of the same potential over a graded commutative unital $\C$-algebra and $f,g:M\rightarrow M'$ are morphisms of matrix factorizations, we write $f \approx g$ if $\exists ~c\in \C\setminus \{0\}$ such that $f \simeq c \cdot g$.
\end{definition}

\begin{definition}
Let $\Gamma_1,\Gamma_2$ be two colored MOY graphs with a one-to-one correspondence $F$ between their end points such that
\begin{itemize}
	\item every exit corresponds to an exit, and every entrance corresponds to an entrance,
	\item edges adjacent to corresponding end points have the same color.
\end{itemize}
Mark $\Gamma_1,\Gamma_2$ so that every pair of corresponding end points are assigned the same alphabet, and alphabets associated to internal marked points are pairwise disjoint. Let $\mathbb{X}_1,\mathbb{X}_2,\dots,\mathbb{X}_n$ be the alphabets assigned to the end points of $\Gamma_1,\Gamma_2$.

Define $\Hom_F(C(\Gamma_1),C(\Gamma_2)) := \Hom_{\Sym(\mathbb{X}_1|\mathbb{X}_2|\cdots|\mathbb{X}_n)}(C(\Gamma_1),C(\Gamma_2))$, which is a $\zed_2$-graded chain complex, where the $\zed_2$-grading is induced by the $\zed_2$-gradings of $C(\Gamma_1), ~C(\Gamma_2)$. The quantum gradings of $C(\Gamma_1), ~C(\Gamma_2)$ induce a quantum pregrading on $\Hom_F(C(\Gamma_1),C(\Gamma_2))$.

Denote by $\Hom_{\HMF,F}(C(\Gamma_1),C(\Gamma_2))$ the homology of $\Hom_F(C(\Gamma_1),C(\Gamma_2))$, i.e. the module of homotopy classes of morphisms from $C(\Gamma_1)$ to $C(\Gamma_2)$. It inherits the $\zed_2$-grading from $\Hom_F(C(\Gamma_1),C(\Gamma_2))$. And the quantum pregrading of $\Hom_F(C(\Gamma_1),C(\Gamma_2))$ induces a quantum grading on $\Hom_{\HMF,F}(C(\Gamma_1),C(\Gamma_2))$. (See Lemmas \ref{hom-all-gradings-homo-finite} and \ref{MOY-object-of-hmf}.)

We drop $F$ from the above notations if it is clear from the context.
\end{definition}

\subsection{Bouquet moves}\label{subsec-bouquet} We call the change \vspace{-2pc}
\[
\xymatrix{
 \ar@{~>}[rr]<-5ex> && } \vspace{2pc}
\]
a bouquet move. From Corollary \ref{contract-expand}, we know that there is a homotopy equivalence \vspace{-2pc}
\[
\xymatrix{
C() \ar@<-5ex>[rr]^{\simeq} && C()} \vspace{2pc}
\]
that preserves both gradings. By \cite[Lemma 7.4]{Wu-color}, up to homotopy and scaling, such morphism is unique. Thus, up to homotopy and scaling, a bouquet move induces a unique homotopy equivalence. In this paper, we usually denote such a homotopy equivalence by $h$.

\subsection{Circle creations and annihilations}\label{subsec-circle-creation}

Let $\bigcirc_m$ be a circle colored by $m$. From \cite[Lemma 7.6]{Wu-color}, we know that, as $\zed_2\oplus\zed$-graded vector spaces over $\C$,
\[
\Hom_{\HMF}(C(\bigcirc_m),C(\emptyset)) \cong \Hom_{\HMF}(C(\emptyset),C(\bigcirc_m)) \cong C(\emptyset)\{\qb{N}{m}\}\left\langle m \right\rangle,
\]
where $C(\emptyset)$ is the matrix factorization $\C\rightarrow 0 \rightarrow \C$. In particular, the subspaces of $\Hom_{\HMF}(C(\emptyset),C(\bigcirc_m))$ and $\Hom_{\HMF}(C(\bigcirc_m),C(\emptyset))$ of elements of quantum degree $-m(N-m)$ are $1$-dimensional.

Thus, up to homotopy and scaling,
\begin{enumerate} [(i)]
	\item the circle creation $\emptyset\leadsto \bigcirc_m$ induces a unique homogeneous morphism
	\[
	\iota: C(\emptyset)(\cong \C) \rightarrow C(\bigcirc_m)
	\] 
	of quantum degree $-m(N-m)$ not homotopic to $0$,
  \item the circle annihilation $\bigcirc_m\leadsto \emptyset$ induces a unique homogeneous morphism 
  \[
  \epsilon:C(\bigcirc_m) \rightarrow C(\emptyset) (\cong \C)
  \]
  of quantum degree $-m(N-m)$ not homotopic to $0$.
\end{enumerate}
Note that both of $\iota$ and $\epsilon$ have $\zed_2$-degree $m$. 

\begin{lemma}\cite[Corollary 7.8]{Wu-color}\label{iota-epsilon-composition}
Denote by $\mathfrak{m}(S_{\lambda}(\mathbb{X}))$ the morphism $C(\bigcirc_m)\rightarrow C(\bigcirc_m)$ induced by the multiplication by $S_{\lambda}(\mathbb{X})$. Then, for any $\lambda, \mu \in \Lambda_{m,N-m}$,
\[
\epsilon \circ \mathfrak{m}(S_{\lambda}(\mathbb{X})) \circ \mathfrak{m}(S_{\mu}(\mathbb{X})) \circ \iota \approx
\begin{cases}
\id_{C(\emptyset)} & \text{if } \lambda_j + \mu_{m+1-j} =N-m ~\forall j=1,\dots,m, \\
0 & \text{otherwise.} 
\end{cases}
\]
\end{lemma}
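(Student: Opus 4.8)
The plan is to reduce the computation to the trace formula of Theorem~\ref{grassmannian} by passing from matrix factorizations to homology. Since $\bigcirc_m$ is closed, its base ring is $\C$ and its potential is $0$, so $C(\bigcirc_m)$ is just a $\zed_2$-periodic complex of $\C$-vector spaces with homology $H(\bigcirc_m)$, and $C(\emptyset)=(\C\to 0\to\C)$ has $H(C(\emptyset))\cong\C$ concentrated in $\zed_2$-degree $0$. First I would observe that, because $C(\emptyset)_0=\C$ is free of rank one and $C(\emptyset)_1=0$, for any object $M$ of $\hmf_{\C,0}$ the chain complex $(\Hom_\C(C(\emptyset),M),d)$ is canonically $M$ itself, while $(\Hom_\C(M,C(\emptyset)),d)$ is canonically the dual complex $M_\bullet$. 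Taking homology over the field $\C$ yields natural isomorphisms
\[
\Hom_{\HMF}(C(\emptyset),M)\cong H(M),\qquad\Hom_{\HMF}(M,C(\emptyset))\cong H(M)^{\vee},
\]
compatible with composition, with the $\Sym(\mathbb{X})$-action when $M=C(\bigcirc_m)$, and with the $\zed_2$- and quantum gradings. In particular $\Hom_{\HMF}(C(\emptyset),C(\emptyset))\cong\C$ with $\id_{C(\emptyset)}$ going to $1$, so the assertion ``$\approx\id_{C(\emptyset)}$'' just says ``nonzero'' and ``$\approx 0$'' just says ``zero''.

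Next I would pin down $\iota$ and $\epsilon$ under these identifications. By Proposition~\ref{circle-module} there is a $\Sym(\mathbb{X})$-linear isomorphism $H(\bigcirc_m)\cong H^{\ast}(G_{m,N})\{q^{-m(N-m)}\}$ (the homotopy equivalence in its proof being built from $\Sym(\mathbb{X})$-linear maps), and by Theorem~\ref{grassmannian} $H^{\ast}(G_{m,N})\cong\Sym(\mathbb{X})/(h_N(\mathbb{X}),\dots,h_{N+1-m}(\mathbb{X}))$ with homogeneous basis $\{S_\lambda(\mathbb{X})\}_{\lambda\in\Lambda_{m,N-m}}$ and trace map $\Tr$. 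As noted in Subsection~\ref{subsec-circle-creation}, the quantum-degree-$(-m(N-m))$, $\zed_2$-degree-$m$ parts of $\Hom_{\HMF}(C(\emptyset),C(\bigcirc_m))$ and of $\Hom_{\HMF}(C(\bigcirc_m),C(\emptyset))$ are each one-dimensional; translated through the isomorphisms above, these lines are the degree-$0$ part of $H^{\ast}(G_{m,N})$, spanned by the unit $1$, and the line in $H(\bigcirc_m)^{\vee}$ dual to the top cohomology $H^{2m(N-m)}(G_{m,N})$. Since $\iota$ and $\epsilon$ are by construction nonzero elements of exactly these lines, and since $\Tr$ is a nonzero functional on the top cohomology (it sends $S_{(N-m,\dots,N-m)}(\mathbb{X})$ to $1$), we get $\iota\approx 1$ and $\epsilon\approx\Tr$.

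It then remains to assemble the computation. Under $\Hom_{\HMF}(C(\emptyset),C(\bigcirc_m))\cong H(\bigcirc_m)$, post-composition with $\mathfrak{m}(S_\nu(\mathbb{X}))$ corresponds to multiplication by $S_\nu(\mathbb{X})$ in $H^{\ast}(G_{m,N})$, so $\mathfrak{m}(S_\lambda(\mathbb{X}))\circ\mathfrak{m}(S_\mu(\mathbb{X}))\circ\iota$ corresponds, up to a nonzero scalar, to $S_\lambda(\mathbb{X})\,S_\mu(\mathbb{X})\in H^{\ast}(G_{m,N})$, and composing with $\epsilon\approx\Tr$ pairs this against the trace functional. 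Hence
\[
\epsilon\circ\mathfrak{m}(S_\lambda(\mathbb{X}))\circ\mathfrak{m}(S_\mu(\mathbb{X}))\circ\iota\approx\Tr\big(S_\lambda(\mathbb{X})\,S_\mu(\mathbb{X})\big)\cdot\id_{C(\emptyset)},
\]
and by Theorem~\ref{grassmannian} the scalar $\Tr(S_\lambda(\mathbb{X})\,S_\mu(\mathbb{X}))$ equals $1$ when $\lambda_j+\mu_{m+1-j}=N-m$ for all $j$ and $0$ otherwise, which is exactly the claim. The main obstacle will be the middle step: making the grading bookkeeping genuinely force $\iota$ and $\epsilon$ to be scalar multiples of the fundamental class and of $\Tr$, and verifying that the identification of the relevant $\Hom$-spaces with (co)homology is natural in the way needed to turn composition into the Poincar\'e-duality pairing and $\mathfrak{m}(S_\nu(\mathbb{X}))$ into multiplication; granting Proposition~\ref{circle-module} and Theorem~\ref{grassmannian}, the remainder is formal.
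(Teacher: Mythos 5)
Your argument is correct and is essentially the route the paper (via the cited \cite[Corollary 7.8]{Wu-color}) takes: identify $\Hom_{\HMF}(C(\emptyset),C(\bigcirc_m))$ and $\Hom_{\HMF}(C(\bigcirc_m),C(\emptyset))$ with $H(\bigcirc_m)$ and its dual, use the quantum-degree count to force $\iota$ and $\epsilon$ to be the unit class and the trace functional up to nonzero scalars, and then read off $\Tr(S_\lambda(\mathbb{X})S_\mu(\mathbb{X}))$ from Theorem \ref{grassmannian} via Proposition \ref{circle-module}. The remaining verifications you flag (naturality of the identifications under composition and under the $\Sym(\mathbb{X})$-action) are indeed routine.
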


\subsection{Edge splitting and merging}\label{subsec-edge-splitting} We call the following changes edge splitting and merging. \vspace{-2pc}
\[
\xymatrix{
 \ar@{~>}[rr]<-5ex>^{\text{edge splitting}} &&  \input{v-vector-m+n-bubble-slide} \ar@{~>}[ll]<7ex>^{\text{edge merging}}
}\vspace{2pc}
\]
By \cite[Lemma 7.9]{Wu-color}, we have \vspace{-2pc}
\[
\Hom_{\HMF}(C(),C(\input{v-vector-m+n-bubble-slide})) \cong C(\emptyset) \{q^{(N-m-n)(m+n)}\qb{N}{m+n} \qb{m+n}{m}\},
\] 
\[
\Hom_{\HMF}(C(\input{v-vector-m+n-bubble-slide}),C()) \cong C(\emptyset) \{q^{(N-m-n)(m+n)}\qb{N}{m+n} \qb{m+n}{m}\}. \vspace{2pc}
\]
In particular, the lowest quantum gradings of the above spaces are both $-mn$, and the subspaces of these spaces of homogeneous elements of quantum grading $-mn$ are both $1$-dimensional. Therefore, up to homotopy and scaling,
\begin{enumerate}[(i)]
	\item the edge splitting induces a unique homogeneous morphism \vspace{-2pc}
	\[
	\phi: C() \rightarrow C(\input{v-vector-m+n-bubble-slide}) \vspace{2pc}
	\] 
	of quantum degree $-mn$ not homotopic to $0$,
  \item the edge merging induces a unique homogeneous morphism \vspace{-2pc}
	\[
	\overline{\phi}: C(\input{v-vector-m+n-bubble-slide}) \rightarrow  C() \vspace{2pc}
	\] 
	of quantum degree $-mn$ not homotopic to $0$,
\end{enumerate}
Both of $\phi$ and $\overline{\phi}$ have $\zed_2$-grading $0$.

\begin{figure}[ht]

\setlength{\unitlength}{1pt}

\begin{picture}(360,75)(-180,-90)


\put(-67,-45){\tiny{$m+n$}}

\put(-70,-75){\vector(0,1){50}}

\put(-71,-50){\line(1,0){2}}

\put(-67,-30){\small{$\mathbb{X}$}}

\put(-95,-53){\small{$\mathbb{A}\cup\mathbb{B}$}}

\put(-67,-75){\small{$\mathbb{Y}$}}

\put(-75,-90){$\Gamma_0$}


\put(-25,-50){\vector(1,0){50}}

\put(25,-60){\vector(-1,0){50}}

\put(-5,-47){\small{$\phi$}}

\put(-5,-70){\small{$\overline{\phi}$}}


\put(70,-75){\vector(0,1){10}}

\put(70,-35){\vector(0,1){10}}

\qbezier(70,-65)(60,-65)(60,-55)

\qbezier(70,-35)(60,-35)(60,-45)

\put(60,-55){\vector(0,1){10}}

\put(59,-55){\line(1,0){2}}

\qbezier(70,-65)(80,-65)(80,-55)

\qbezier(70,-35)(80,-35)(80,-45)

\put(80,-55){\vector(0,1){10}}

\put(79,-55){\line(1,0){2}}

\put(73,-30){\tiny{$m+n$}}

\put(73,-70){\tiny{$m+n$}}

\put(83,-50){\tiny{$n$}}

\put(51,-50){\tiny{$m$}}

\put(60,-30){\small{$\mathbb{X}$}}

\put(60,-75){\small{$\mathbb{Y}$}}

\put(50,-58){\small{$\mathbb{A}$}}

\put(83,-58){\small{$\mathbb{B}$}}

\put(65,-90){$\Gamma_1$}

\end{picture}

\caption{}\label{edge-splitting}

\end{figure}

\begin{lemma}\cite[Lemma 7.11]{Wu-color}\label{phibar-compose-phi}
With the markings in Figure \ref{edge-splitting}, we have
\[
\overline{\phi} \circ \mathfrak{m}(S_{\lambda}(\mathbb{A})\cdot S_{\mu}(-\mathbb{B})) \circ \phi \approx \left\{%
\begin{array}{ll}
    \id_{\Gamma_0} & \text{if } \lambda_j + \mu_{m+1-j} = n ~\forall j=1,\dots,m, \\ 
    0 & \text{otherwise,}
\end{array}%
\right. 
\]
where $\lambda,\mu\in \Lambda_{m,n}$ and $\mathfrak{m}(S_{\lambda}(\mathbb{A})\cdot S_{\mu}(-\mathbb{B}))$ is the morphism induced by the multiplication of $S_{\lambda}(\mathbb{A})\cdot S_{\mu}(-\mathbb{B})$.
\end{lemma}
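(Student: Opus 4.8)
The plan is to reduce the composition $\overline{\phi} \circ \mathfrak{m}(S_{\lambda}(\mathbb{A})\cdot S_{\mu}(-\mathbb{B})) \circ \phi$ to an endomorphism of the matrix factorization $C(\Gamma_0)$ and then identify this endomorphism using the rigidity results already available. First I would recall from Subsection \ref{subsec-edge-splitting} that $\Hom_{\HMF}(C(\Gamma_0),C(\Gamma_0))$, or rather the relevant portion of it in $\zed_2$-degree $0$, is controlled by $H(\Gamma_0)$, which by Proposition \ref{circle-module}-type reasoning (or directly by the computations behind \cite[Lemma 7.9]{Wu-color}) is a free module over $\Sym(\mathbb{X})=\Sym(\mathbb{Y})$ with a basis indexed by partitions, and that the space of degree-$0$ endomorphisms of $C(\Gamma_0)$ commuting appropriately is one-dimensional, spanned by $\id_{\Gamma_0}$. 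Hence it suffices to compute the ``coefficient'' of $\id_{\Gamma_0}$ in the composition, which can be extracted by composing further with $\phi$ and $\overline{\phi}$ or, more efficiently, by passing to homology and using the trace/Sylvester pairing.

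The key computational step is to observe that the composition $\overline{\phi}\circ\mathfrak{m}(S_\lambda(\mathbb{A})S_\mu(-\mathbb{B}))\circ\phi$ is, up to homotopy and a nonzero scalar, exactly the operation of multiplying by $S_\lambda(\mathbb{A})S_\mu(-\mathbb{B})$ inside $C(\Gamma_1)$ and then applying the Sylvester operator $\zeta\colon \Sym(\mathbb{A}|\mathbb{B})\to\Sym(\mathbb{A}\cup\mathbb{B})=\Sym(\mathbb{W})$ of Theorem \ref{part-symm-str}. This is because $\phi$ is, up to scaling, the unique lowest-degree map $C(\Gamma_0)\to C(\Gamma_1)$, $\overline\phi$ the unique lowest-degree map back, and under the identifications $C(\Gamma_1)\simeq C(\Gamma_0)\otimes_{\Sym(\mathbb{W})}\Sym(\mathbb{A}|\mathbb{B})$ (from Proposition \ref{decomp-II} applied locally) these maps become the unit $1\mapsto 1$ and the Sylvester operator $\zeta$ respectively — a matching of normalizations that one pins down by a degree count. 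Granting this, Theorem \ref{part-symm-str} gives $\zeta(S_\lambda(\mathbb{A})S_\mu(-\mathbb{B}))=1$ precisely when $\lambda_j+\mu_{m+1-j}=n$ for all $j$, and $0$ otherwise, which is exactly the claimed formula (with ``$\approx$'' absorbing the scalars from the non-canonical normalizations of $\phi$ and $\overline\phi$). Note the structural parallel with Lemma \ref{iota-epsilon-composition}, whose proof via the trace map of Theorem \ref{grassmannian} is the model to imitate here.

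The main obstacle is the identification in the previous paragraph: showing that $\overline\phi\circ\mathfrak{m}(\,\cdot\,)\circ\phi$ really is multiplication followed by $\zeta$, rather than some other $\Sym(\mathbb{W})$-linear functional. The cleanest way I would do this is \emph{not} to compute $\phi$ and $\overline\phi$ explicitly, but to argue by rigidity and a normalization check: (1) both sides of the asserted formula are $\Sym(\mathbb{X})$-linear in an appropriate sense and bilinear in the Schur-basis indices $\lambda,\mu$, so it suffices to verify it on a spanning set; (2) by $\zed_2$-degree and quantum-degree bookkeeping, $\overline\phi\circ\mathfrak{m}(S_\lambda S_\mu)\circ\phi$ lands in the one-dimensional space $\C\cdot\id_{\Gamma_0}$ exactly when $|\lambda|+|\mu|$ hits the top degree $mn$, forcing vanishing outside the diagonal pairing condition and leaving only the single normalization constant to fix; (3) that constant is nonzero by the non-degeneracy of the pairing $\overline\phi\circ\mathfrak{m}(-)\circ\phi$, which itself follows from the fact that $C(\Gamma_0)$ is a retract of $C(\Gamma_1)$ up to the grading shift $\qb{m+n}{m}$ (Proposition \ref{decomp-II}), so at least one pairing value must be a unit. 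This is essentially the argument in \cite[Lemma 7.11]{Wu-color}, and I would refer there for the bookkeeping details while presenting the conceptual skeleton above.
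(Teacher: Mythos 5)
Your proposal follows essentially the same route as the proof in \cite[Lemma 7.11]{Wu-color} that this paper cites: use the one-dimensionality of the degree-$(-mn)$ part of the $\Hom_{\HMF}$ spaces (quoted in Subsection \ref{subsec-edge-splitting} from \cite[Lemma 7.9]{Wu-color}) to identify $\phi$, up to homotopy and scaling, with the unit inclusion $x\mapsto x\otimes 1$ and $\overline{\phi}$ with $\id\otimes\zeta$ under the identification $C(\Gamma_1)\cong C(\Gamma_0)\otimes_{\Sym(\mathbb{A}\cup\mathbb{B})}\Sym(\mathbb{A}|\mathbb{B})\{q^{-mn}\}$ coming from the proof of Proposition \ref{decomp-II}, and then read the answer off from Theorem \ref{part-symm-str}. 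The identification is legitimate: both candidates are homogeneous morphisms of quantum degree $-mn$, neither is null-homotopic since their composite is $\id_{C(\Gamma_0)}$, and uniqueness does the rest. Your second paragraph is therefore a complete proof, matching the original.

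The caveat concerns step (2) of your final paragraph: quantum-degree bookkeeping does \emph{not} force the vanishing for non-complementary pairs. If $|\lambda|+|\mu|<mn$ the composite has negative degree and must vanish; but if $|\lambda|+|\mu|=mn$ with $\lambda,\mu$ non-complementary (e.g.\ $m=n=2$, $\lambda=(2,0)$, $\mu=(1,1)$) the composite lies in the one-dimensional degree-zero space $\C\cdot\id_{\Gamma_0}$ and could a priori be a nonzero multiple of the identity, and if $|\lambda|+|\mu|>mn$ it has positive degree, where $\Hom_{\HMF}(C(\Gamma_0),C(\Gamma_0))$ is far from zero (it contains multiplication by positive-degree symmetric polynomials). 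In all these cases the vanishing genuinely requires the Sylvester identity $\zeta\bigl(S_\lambda(\mathbb{A})S_\mu(-\mathbb{B})\bigr)=0$ of Theorem \ref{part-symm-str}, which your second paragraph already supplies; so the proof stands, but the ``rigidity-only'' variant sketched in your closing paragraph would not suffice on its own.
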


\subsection{$\chi$-morphisms}\label{subsec-chi-morphisms}

\begin{figure}[ht]

\setlength{\unitlength}{1pt}

\begin{picture}(360,75)(-180,-15)


\put(-120,0){\vector(1,1){20}}

\put(-100,20){\vector(1,-1){20}}

\put(-100,40){\vector(0,-1){20}}

\put(-100,40){\vector(1,1){20}}

\put(-120,60){\vector(1,-1){20}}

\put(-101,30){\line(1,0){2}}

\put(-132,45){\tiny{$_{m+n-l}$}}

\put(-115,15){\tiny{$_l$}}

\put(-90,45){\tiny{$_m$}}

\put(-90,15){\tiny{$_n$}}

\put(-95,28){\tiny{$_{n-l}$}}

\put(-130,55){\small{$\mathbb{A}$}}

\put(-75,0){\small{$\mathbb{Y}$}}

\put(-113,27){\small{$\mathbb{D}$}}

\put(-130,0){\small{$\mathbb{B}$}}

\put(-75,55){\small{$\mathbb{X}$}}

\put(-102,-15){$\Gamma_0$}


\put(-30,35){\vector(1,0){60}}

\put(30,25){\vector(-1,0){60}}

\put(-3,40){\small{$\chi^0$}}

\put(-3,15){\small{$\chi^1$}}


\put(60,10){\vector(1,1){20}}

\put(60,50){\vector(1,-1){20}}

\put(80,30){\vector(1,0){20}}

\put(100,30){\vector(1,1){20}}

\put(100,30){\vector(1,-1){20}}

\put(68,45){\tiny{$_{m+n-l}$}}

\put(70,15){\tiny{$_{l}$}}

\put(108,45){\tiny{$_m$}}

\put(106,15){\tiny{$_n$}}

\put(81,32){\tiny{$_{m+n}$}}

\put(50,45){\small{$\mathbb{A}$}}

\put(122,10){\small{$\mathbb{Y}$}}

\put(50,10){\small{$\mathbb{B}$}}

\put(122,45){\small{$\mathbb{X}$}}

\put(88,-15){$\Gamma_1$}

\end{picture}

\caption{}\label{general-general-chi-maps-figure}

\end{figure}

\begin{proposition}\cite[Propositions 7.20 and 7.29]{Wu-color}\label{general-general-chi-maps}
Let $\Gamma_0$ and $\Gamma_1$ be the MOY graphs in Figure \ref{general-general-chi-maps-figure}, where $1\leq l\leq n <m+n \leq N$. There exist homogeneous morphisms $\chi^0:C(\Gamma_0) \rightarrow C(\Gamma_1)$ and $\chi^1:C(\Gamma_1) \rightarrow C(\Gamma_0)$ satisfying
\begin{enumerate}[(i)]
	\item Both $\chi^0$ and $\chi^1$ have $\zed_2$-grading $0$ and quantum grading $ml$.
	\item \begin{eqnarray*}
	\chi^1 \circ \chi^0 & \simeq & (\sum_{\lambda\in\Lambda_{l,m}} (-1)^{|\lambda|} S_{\lambda'}(\mathbb{X}) S_{\lambda^c}(\mathbb{B})) \cdot \id_{C(\Gamma_0)}, \\
	\chi^0 \circ \chi^1 & \simeq & (\sum_{\lambda\in\Lambda_{l,m}} (-1)^{|\lambda|} S_{\lambda'}(\mathbb{X}) S_{\lambda^c}(\mathbb{B})) \cdot \id_{C(\Gamma_1)}, \\
	\end{eqnarray*}
	where $\Lambda_{l,m}  \{\mu=(\mu_1\geq\cdots\geq\mu_l) ~|~ \mu_1 \leq m\}$, $\lambda'\in \Lambda_{m,l}$ is the conjugate of $\lambda$, and $\lambda^c$ is the complement of $\lambda$ in $\Lambda_{l,m}$, i.e., if $\lambda=(\lambda_1\geq\cdots\geq\lambda_l)\in \Lambda_{l,m}$, then $\lambda^c = (m-\lambda_l\geq\cdots\geq m-\lambda_1)$.
	\item $\chi^0$ and $\chi^1$ are homotopically non-trivial.
	\item Up to homotopy and scaling, $\chi^0$ (resp. $\chi^1$) is the unique homotopically non-trivial homogeneous morphism of quantum degree $ml$ from $C(\Gamma_0)$ to $C(\Gamma_1)$ (resp. from $C(\Gamma_1)$ to $C(\Gamma_0)$.)
\end{enumerate}
\end{proposition}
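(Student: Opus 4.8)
\emph{Proof plan.} The strategy parallels the construction of the ``zip'' and ``unzip'' maps in the uncolored theory, carried out for Koszul matrix factorizations of arbitrary width.

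\textbf{Construction and part (i).} Mark $\Gamma_0$ and $\Gamma_1$ as in Figure \ref{general-general-chi-maps-figure}. By Lemma \ref{MOY-object-of-hmf}, $C(\Gamma_0)$ and $C(\Gamma_1)$ are objects of $\hmf_{R,w}$ with $R=\Sym(\mathbb{A}|\mathbb{B}|\mathbb{X}|\mathbb{Y})$ and $w=p_{N+1}(\mathbb{X})+p_{N+1}(\mathbb{Y})-p_{N+1}(\mathbb{A})-p_{N+1}(\mathbb{B})$. I would first use the edge contraction Lemma \ref{edge-contraction} and Corollary \ref{contract-expand} to reduce to a rigid local model in which $C(\Gamma_0)$ and $C(\Gamma_1)$ are presented as explicit Koszul matrix factorizations over $R$ sharing the potential $w$, and then write down explicit $R$-linear maps $\chi^0:C(\Gamma_0)\to C(\Gamma_1)$ and $\chi^1:C(\Gamma_1)\to C(\Gamma_0)$; Remark \ref{MOY-freedom} lets us choose the entries of these Koszul factorizations conveniently, after which the verification $d\chi^i=0$ is a direct row-by-row computation. (Equivalently, $\chi^0,\chi^1$ can be assembled from the edge-splitting and edge-merging morphisms $\phi,\overline{\phi}$ of Subsection \ref{subsec-edge-splitting} together with the bouquet move.) Part (i) is then bookkeeping of gradings: tracking the shifts $q^{-\sum_{s<t}i_si_t}$ attached to each $C(v)$ through the construction gives $\deg_{\zed_2}\chi^0=\deg_{\zed_2}\chi^1=0$ and $\deg\chi^0=\deg\chi^1=ml$.

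\textbf{Part (ii), the main step.} In the rigid model, each of $\chi^1\circ\chi^0$ and $\chi^0\circ\chi^1$ is multiplication by a single homogeneous element of $R$ of degree $2ml$, obtained by composing the two explicit comparison maps. The heart of the proof is to identify this element with $P:=\sum_{\lambda\in\Lambda_{l,m}}(-1)^{|\lambda|}S_{\lambda'}(\mathbb{X})S_{\lambda^c}(\mathbb{B})$. I would do this by expanding the resulting product/determinant via the Jacobi--Trudi formulas \eqref{schur-complete} and \eqref{negative-schur-complete} and the Schur-polynomial bases supplied by Theorem \ref{part-symm-str}; the required identity is a Cauchy-type expansion comparing the ``square'' and ``thick-edge'' presentations of the same splitting. \emph{This is the step I expect to be the main obstacle}: it carries essentially all the genuinely new content of the proposition and demands careful control of signs and of the grading shifts that accumulate in the tensor products of Koszul factorizations. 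That the two sides are only homotopic, rather than equal, is forced by the use of Gaussian elimination for matrix factorizations (\cite[Proposition 2.19]{Wu-color}) in the reduction.

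\textbf{Parts (iii) and (iv).} Part (iii) follows from (ii): if $\chi^0\simeq 0$, then multiplication by $P$ is a null-homotopic endomorphism of $C(\Gamma_0)$; but $P$ is a nonzero element of $\Sym(\mathbb{X}|\mathbb{B})\subseteq R$ (the Schur polynomials appearing are linearly independent over $\C$), and the hypotheses $l\le n$ and $m+n\le N$ guarantee, via the analysis of null-homotopic endomorphisms of a Koszul matrix factorization also used in the proof of Lemma \ref{schur-null-homotopic}, that multiplication by $P$ is not null-homotopic on $C(\Gamma_0)$ --- a contradiction; the symmetric argument applied to $\chi^0\circ\chi^1$ on $C(\Gamma_1)$ shows $\chi^1\not\simeq 0$. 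Finally, for (iv) I would compute the bigraded space $\Hom_{\HMF,F}(C(\Gamma_0),C(\Gamma_1))$. Since $C(\Gamma_0)$ and $C(\Gamma_1)$ have the same boundary, this space is the homology of $C$ of the closed MOY graph obtained by gluing the orientation-reversal of $\Gamma_0$ to $\Gamma_1$ along the four end points; evaluating its graded dimension by the homological MOY calculus (Propositions \ref{circle-module}, \ref{decomp-II}, \ref{decomp-I} and their consequences, together with Lemma \ref{hom-all-gradings-homo-finite}) one finds that the summand of $\zed_2$-degree $0$ and quantum degree $ml$ is one-dimensional over $\C$. By (iii) it is spanned by $\chi^0$; hence any homotopically non-trivial homogeneous morphism $C(\Gamma_0)\to C(\Gamma_1)$ of quantum degree $ml$ is a nonzero scalar multiple of $\chi^0$, and likewise for $\chi^1$, which gives (iv).
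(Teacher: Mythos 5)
First, a caveat: this survey does not actually prove Proposition \ref{general-general-chi-maps} --- it states explicitly that the construction of $\chi^0,\chi^1$ and the proof are ``rather technical'' and defers entirely to \cite{Wu-color}. So there is no in-paper proof to compare against, and I am judging your plan against the methodology the paper uses for its other morphism lemmas and against the structure of the cited argument. At that level your architecture is the right one: explicit morphisms between Koszul presentations for existence, grading bookkeeping for (i), and a Hom-space computation obtained by gluing the orientation-reversal of $\Gamma_0$ to $\Gamma_1$ and evaluating with the homological MOY calculus for (iv). That last step is exactly how the paper handles all of its uniqueness statements (compare Lemmas \ref{iota-epsilon-composition}, \ref{phibar-compose-phi}, \ref{colored-crossing-res-HMF}).

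There are, however, three concrete soft spots. (a) The construction is genuinely two-staged in a way your sketch blurs: a ``rigid local model'' yields $\chi^0,\chi^1$ directly only in the base case $l=n$, where the middle edge of $\Gamma_0$ has color $0$ and $\Gamma_0$ degenerates to two parallel strands; for $l<n$ the maps are built by sandwiching the base-case $\chi$ between $\phi$, $\overline{\phi}$ and bouquet moves, and the composition formula is then propagated using identities of the type in Lemma \ref{phibar-compose-phi}. Your parenthetical claim that $\chi^0,\chi^1$ can be assembled from $\phi,\overline{\phi}$ and bouquet moves alone cannot be correct, since those morphisms never change the underlying abstract graph, whereas $\Gamma_0$ and $\Gamma_1$ are different graphs; the base-case $\chi$ is an irreducible ingredient. (b) The identification of $\chi^1\circ\chi^0$ with $\sum_{\lambda}(-1)^{|\lambda|}S_{\lambda'}(\mathbb{X})S_{\lambda^c}(\mathbb{B})$ is where essentially all of the content of the proposition lives, and you have deferred it; honestly flagged, but as written the proposal contains no proof of (ii). (c) Your deduction of (iii) from (ii) is incomplete as stated: Lemma \ref{schur-null-homotopic} gives only \emph{sufficient} conditions for an endomorphism to be null-homotopic, so to certify that multiplication by $P$ is \emph{not} null-homotopic you need a lower bound on $\Hom_{\HMF}(C(\Gamma_0),C(\Gamma_0))$ --- which is the same gluing computation you invoke for (iv). The clean logical order is to compute the Hom spaces first, observe that the classes $S_{\lambda'}(\mathbb{X})S_{\mu}(\mathbb{B})$ remain linearly independent there (so $P\neq 0$) and that the degree-$ml$, $\zed_2$-degree-$0$ part of $\Hom_{\HMF}(C(\Gamma_0),C(\Gamma_1))$ is one-dimensional, and only then deduce (iii) and (iv).
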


The construction of $\chi^0$, $\chi^1$ and the proof of Proposition \ref{general-general-chi-maps} are rather technical and are omitted here. We refer the reader to \cite[Subsections 7.5--7.5]{Wu-color} for the details.

\subsection{Saddle moves}\label{subsec-saddle-move} Next we define the morphism $\eta$ induced by the saddle move in Figure \ref{saddle-move-figure}.

\begin{figure}[ht]

\setlength{\unitlength}{1pt}

\begin{picture}(360,80)(-180,-15)


\put(-5,35){$\eta$}

\put(-25,30){\vector(1,0){50}}


\qbezier(-140,10)(-120,30)(-140,50)

\put(-140,50){\vector(-1,1){0}}

\qbezier(-100,10)(-120,30)(-100,50)

\put(-100,10){\vector(1,-1){0}}

\multiput(-130,30)(4.5,0){5}{\line(1,0){2}}

\put(-150,50){\small{$\mathbb{X}$}}

\put(-140,30){\tiny{$m$}}

\put(-150,10){\small{$\mathbb{A}$}}

\put(-105,30){\tiny{$m$}}

\put(-95,50){\small{$\mathbb{Y}$}}

\put(-95,10){\small{$\mathbb{B}$}}

\put(-125,-15){$\Gamma_0$}


\qbezier(100,10)(120,30)(140,10)

\put(140,10){\vector(1,-1){0}}

\qbezier(100,50)(120,30)(140,50)

\put(100,50){\vector(-1,1){0}}

\put(90,50){\small{$\mathbb{X}$}}

\put(120,45){\tiny{$m$}}

\put(90,10){\small{$\mathbb{A}$}}

\put(145,50){\small{$\mathbb{Y}$}}

\put(120,13){\tiny{$m$}}

\put(145,10){\small{$\mathbb{B}$}}

\put(115,-15){$\Gamma_1$}

\end{picture}

\caption{}\label{saddle-move-figure}

\end{figure}

By \cite[Lemma 7.33]{Wu-color},
\[
\Hom_{HMF}(C(\Gamma_0),C(\Gamma_1)) \cong C(\emptyset) \{\qb{N}{m} q^{2m(N-m)}\} \left\langle m \right\rangle.
\]
In particular, the subspace of $\Hom_{HMF}(C(\Gamma_0),C(\Gamma_1))$ of homogeneous elements of quantum degree $m(N-m)$ is $1$-dimensional. Therefore, up to homotopy and scaling, the saddle move in Figure \ref{saddle-move-figure} induces a unique homogeneous morphism
\[
\eta : C(\Gamma_0) \rightarrow C(\Gamma_1)
\]
of quantum degree $m(N-m)$ not homotopic to $0$. The $\zed_2$-degree of $\eta$ is $m$. 

The following propositions show that, as expected, a pair of canceling $0$- and $1$-handles (or $1$- and $2$-handles) induce the identity morphism. The proofs of these propositions are very technical and omitted here. Please see \cite[Subsections 7.9--7.10]{Wu-color} for their proofs.

\begin{figure}[ht]

\setlength{\unitlength}{1pt}

\begin{picture}(360,70)(-180,-10)
\put(-110,30){\tiny{$m$}}

\put(-100,0){\vector(0,1){60}}

\put(-65,35){$\iota$}

\put(-80,30){\vector(1,0){40}}

\put(-103,-10){$\Gamma$}


\put(-20,30){\tiny{$m$}}

\put(-10,0){\vector(0,1){60}}

\put(12,48){\tiny{$m$}}

\put(15,30){\oval(20,30)}

\put(25,35){\vector(0,1){0}}

\multiput(-10,30)(5,0){3}{\line(1,0){3}}

\put(0,-10){$\Gamma_1$}

\put(45,35){$\eta$}

\put(30,30){\vector(1,0){40}}


\put(90,55){\vector(0,1){5}}

\qbezier(90,55)(90,45)(100,45)

\qbezier(100,45)(110,45)(110,30)

\qbezier(110,30)(110,15)(100,15)

\qbezier(100,15)(90,15)(90,5)

\put(90,0){\line(0,1){5}}

\put(115,30){\tiny{$m$}}

\put(90,-10){$\Gamma$}

\end{picture}

\caption{}\label{creation+saddle+figure}

\end{figure}

\begin{proposition}\cite[Proposition 7.36]{Wu-color}\label{creation+compose+saddle}
Let $\Gamma$ and $\Gamma_1$ be the colored MOY graphs in Figure \ref{creation+saddle+figure}, $\iota:C(\Gamma)\rightarrow C(\Gamma_1)$ the morphism associated to the circle creation and $\eta:C(\Gamma_1)\rightarrow C(\Gamma)$ the morphism associated to the saddle move. Then $\eta\circ \iota \approx \id_{C(\Gamma)}$.
\end{proposition}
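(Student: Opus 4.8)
The plan is to reduce the statement to a computation inside the one-dimensional space of homotopy classes of morphisms $C(\Gamma) \to C(\Gamma)$ of the appropriate bidegree, and then pin down the scalar by a local model computation. First I would observe that $\Gamma$ is a single $m$-colored arc, so $C(\Gamma)$ is (homotopy equivalent to) the Koszul matrix factorization over $\Sym(\mathbb{X}|\mathbb{A})$ with entries $(\ast, X_j - A_j)$; in particular $C(\Gamma)$ is an indecomposable object of $\hmf_{R,w}$ with $R = \Sym(\mathbb{X}|\mathbb{A})$ and $w = p_{N+1}(\mathbb{X}) - p_{N+1}(\mathbb{A})$. Both $\iota$ and $\eta$ are homogeneous morphisms whose $\zed_2$-degrees and quantum degrees are recorded in Sections \ref{subsec-circle-creation} and \ref{subsec-saddle-move}: $\iota$ has $\zed_2$-degree $m$ and quantum degree $-m(N-m)$, while $\eta$ has $\zed_2$-degree $m$ and quantum degree $m(N-m)$. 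Hence $\eta \circ \iota$ is a homogeneous endomorphism of $C(\Gamma)$ of $\zed_2$-degree $0$ (since $m + m \equiv 0$ is false in general — here I must be careful: $m+m = 2m$, which is $0$ in $\zed_2$) and quantum degree $0$, i.e. $\eta\circ\iota \in \Hom_{\hmf}(C(\Gamma),C(\Gamma))$.

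Next I would identify this endomorphism space. Since $C(\Gamma)$ is the matrix factorization of a single arc, a standard computation (of the type underlying Proposition \ref{circle-module} and the Hom-space computations in \cite[Section 7]{Wu-color}) shows that $\Hom_{\HMF}(C(\Gamma),C(\Gamma))$, as a $\zed_2\oplus\zed$-graded vector space, is a free module of graded rank $\qb{N}{m}$ over $\Sym(\mathbb{X})$ concentrated in $\zed_2$-degree $0$ — essentially the cohomology $H^\ast(G_{m,N})$ tensored with the polynomial ring at the end points — and in particular its degree-$(0,0)$ part is one-dimensional, spanned by $\id_{C(\Gamma)}$. Therefore $\eta \circ \iota \approx \id_{C(\Gamma)}$ \emph{provided} we can show $\eta\circ\iota$ is not null-homotopic; the remaining content is to exclude the scalar $0$.

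To see $\eta \circ \iota \not\simeq 0$, I would pass to a situation where the composite can be computed explicitly. The cleanest route is to note that $\eta\circ\iota$ is, up to homotopy, a scalar $c$ times the identity, and to evaluate both sides after applying a morphism that detects $\id_{C(\Gamma)}$. Concretely, compose further with the circle annihilation and creation, or — better — specialize the alphabet $\mathbb{A}$ to $\mathbb{X}$ (i.e. work over the ring where the entries $X_j - A_j$ become $0$) so that $C(\Gamma)$ becomes the Koszul complex $(\ast,0)^{\otimes m}$ and $H(\Gamma) \cong H^\ast(G_{m,N})$; under this specialization $\iota$ and $\eta$ become the unit and counit maps of the Frobenius algebra $H^\ast(G_{m,N})$ associated to the elementary cobordisms (birth and death), whose composition is visibly the identity because $\eta \circ \iota$ is multiplication by $\Tr$-dual of $1$ against $1$, i.e. the unit. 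This is exactly Lemma \ref{iota-epsilon-composition} read off at the level of $H(\Gamma)$ together with the normalization of $\eta$ coming from \cite[Lemma 7.33]{Wu-color}. An alternative, perhaps cleaner in the paper's framework, is to use that $\eta$ and $\iota$ were \emph{defined} as the unique-up-to-scaling nonzero morphisms in their bidegree, and that the composite $\eta\circ\iota$ manifestly factors the identity on $C(\emptyset)$-component after restricting to the lowest quantum degree summand of $C(\Gamma)$ (the $S_\emptyset$-isotypic piece), where the saddle and creation act by genuinely invertible maps.

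The main obstacle, and the step I would expect to absorb most of the technical effort, is the \emph{nonvanishing} of the scalar $c$: the abstract bidegree count gives $\eta\circ\iota \approx c\cdot\id$ for some $c \in \C$ immediately, but ruling out $c=0$ requires an honest local model computation of the composite of the explicit chain maps representing $\eta$ and $\iota$ (or, equivalently, tracking the Frobenius-algebra structure on $H(\Gamma) \cong H^\ast(G_{m,N})$ and checking the birth-then-death composite hits a unit). In \cite{Wu-color} this is handled in \cite[Subsection 7.9]{Wu-color}, and I would simply carry out the analogous bookkeeping: write down $\iota$ as the inclusion of the generator in lowest quantum degree, write $\eta$ via the Koszul-matrix description in \cite[Lemma 7.33]{Wu-color}, and verify the composite sends $1 \mapsto 1$ modulo the relevant ideal. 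Everything else is formal given Proposition \ref{prop-homology-detects-homotopy}, the Krull--Schmidt property (Lemma \ref{Krull-Schmidt-hmf}), and the one-dimensionality of $\Hom_{\hmf}(C(\Gamma),C(\Gamma))$.
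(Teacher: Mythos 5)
The paper itself does not reprove this statement --- it defers entirely to \cite[Subsection 7.9]{Wu-color} --- so your proposal can only be judged on its own terms. Your first step is correct and is indeed the standard opening move: $\eta\circ\iota$ is homogeneous of $\zed_2$-degree $2m=0$ and quantum degree $0$, and once one knows that $\Hom_{\hmf}(C(\Gamma),C(\Gamma))$ is one-dimensional (a Lemma-7.33-type computation for the $m$-colored arc), it follows that $\eta\circ\iota\simeq c\cdot\id$ for some $c\in\C$. But this reduction is essentially formal; the entire content of the proposition is $c\neq 0$, and that is exactly the step you do not carry out --- you explicitly defer it back to \cite[Subsection 7.9]{Wu-color}. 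So the proposal has a genuine gap rather than being a proof.

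Moreover, the heuristic you offer in place of that computation is wrong as stated. You say that after specializing, ``$\iota$ and $\eta$ become the unit and counit maps of the Frobenius algebra,'' and that the composite is ``visibly the identity.'' The counit is the circle \emph{annihilation} $\epsilon$, not the saddle $\eta$ (note $\epsilon$ has quantum degree $-m(N-m)$ while $\eta$ has quantum degree $+m(N-m)$), and by Lemma \ref{iota-epsilon-composition} with $\lambda=\mu=\emptyset$ the literal unit--counit composite $\epsilon\circ\iota$ is null-homotopic whenever $m<N$. So the argument as written would prove $c=0$, not $c\neq 0$. The correct topological picture is birth followed by \emph{merge} (a cancelling $0$--$1$ handle pair), i.e.\ $\eta$ should act as multiplication by $\iota(1)\approx 1$; but to use this you must first show that the abstractly-defined $\eta$ really is the multiplication map up to a nonzero scalar and that $\iota(1)$ is a unit, and both of those require the explicit chain-level models of $\iota$ and $\eta$ (or a computation of the induced map on $H_R(C(\Gamma))$ followed by Proposition \ref{prop-homotopy-equal-isomorphism-on-homology}). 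That is precisely the technical content of \cite[Subsection 7.9]{Wu-color}, and it is missing here.
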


\begin{figure}[ht]

\setlength{\unitlength}{1pt}

\begin{picture}(360,70)(-180,-10)


\put(-110,55){\vector(0,1){5}}

\qbezier(-110,55)(-110,45)(-100,45)

\qbezier(-100,45)(-90,45)(-90,30)

\qbezier(-90,30)(-90,15)(-100,15)

\qbezier(-100,15)(-110,15)(-110,5)

\put(-110,0){\line(0,1){5}}

\multiput(-100,15)(0,5){6}{\line(0,1){3}}

\put(-85,30){\tiny{$m$}}

\put(-55,35){$\eta$}

\put(-70,30){\vector(1,0){40}}

\put(-103,-10){$\Gamma$}


\put(-20,30){\tiny{$m$}}

\put(-10,0){\vector(0,1){60}}

\put(2,48){\tiny{$m$}}

\put(5,30){\oval(20,30)}

\put(45,35){$\varepsilon$}

\put(30,30){\vector(1,0){40}}

\put(-5,-10){$\Gamma_1$}


\put(90,30){\tiny{$m$}}

\put(100,0){\vector(0,1){60}}

\put(97,-10){$\Gamma$}

\end{picture}

\caption{}\label{saddle+annihilation+figure}

\end{figure}

\begin{proposition}\cite[Proposition 7.41]{Wu-color}\label{saddle+compose+annihilation}
Let $\Gamma$ and $\Gamma_1$ be the colored MOY graphs in Figure \ref{saddle+annihilation+figure}, $\eta:C(\Gamma)\rightarrow C(\Gamma_1)$ the morphism associated to the saddle move and $\epsilon:C(\Gamma_1)\rightarrow C(\Gamma)$ the morphism associated to circle annihilation. Then $\epsilon \circ \eta \approx \id_{C(\Gamma)}$.
\end{proposition}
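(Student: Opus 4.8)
The composite $\epsilon\circ\eta$ is a homogeneous endomorphism of $C(\Gamma)$ of $\zed_2$-degree $2m\equiv 0$ and quantum degree $m(N-m)+(-m(N-m))=0$, so it represents an element of $\Hom_{\hmf}(C(\Gamma),C(\Gamma))$. The plan is to first observe that this endomorphism space is as small as possible, and then to rule out the trivial case. As an abstract MOY graph, $\Gamma$ is a single $m$-colored edge; hence, after choosing the markings in Figure \ref{saddle+annihilation+figure}, $C(\Gamma)$ is the Koszul matrix factorization with rows $(U_j,X_j-Y_j)$, $j=1,\dots,m$, over $R=\Sym(\mathbb{X}|\mathbb{Y})$. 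Using the regularity of $\{X_1-Y_1,\dots,X_m-Y_m\}$ (Remark \ref{MOY-freedom}) and the Krull--Schmidt property of $\hmf_{R,w}$ (Lemma \ref{Krull-Schmidt-hmf}), one checks that $C(\Gamma)$ is indecomposable and that its bidegree-$(0,0)$ endomorphisms are, up to homotopy, only the scalar multiples of $\id_{C(\Gamma)}$; that is, $\Hom_{\hmf}(C(\Gamma),C(\Gamma))\cong\C$. Consequently $\epsilon\circ\eta\simeq c\cdot\id_{C(\Gamma)}$ for a unique $c\in\C$, and the entire content of the proposition is the assertion $c\neq 0$.

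To prove $c\neq 0$, I would pass to homology. By Proposition \ref{prop-homotopy-equal-isomorphism-on-homology}, $c\neq 0$ if and only if the induced map $(\epsilon\circ\eta)_\ast$ on $H_R(C(\Gamma))=H(\Gamma)$ is an isomorphism. Since the circle component of $\Gamma_1$ is disjoint, $C(\Gamma_1)\cong C(\Gamma)\otimes_\C C(\bigcirc_m)$, and under this identification $\epsilon$ is $\id_{C(\Gamma)}\otimes\,\epsilon_{\bigcirc_m}$ followed by the canonical isomorphism $C(\Gamma)\otimes_\C C(\emptyset)\cong C(\Gamma)$, where $\epsilon_{\bigcirc_m}:C(\bigcirc_m)\to C(\emptyset)$ is the circle annihilation of Subsection \ref{subsec-circle-creation}. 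By Proposition \ref{circle-module} and Corollary \ref{grassmannian-grade}, $H(\bigcirc_m)\cong H^\ast(G_{m,N})\{q^{-m(N-m)}\}$, whose top class sits in quantum degree $m(N-m)$; and by Lemma \ref{iota-epsilon-composition} (applied, say, with $\mu=\emptyset$ and $\lambda=(N-m,\dots,N-m)$), the map $(\epsilon_{\bigcirc_m})_\ast$ is, up to a non-zero scalar, the trace map $\Tr$ of Theorem \ref{grassmannian}, i.e.\ it is the projection onto the top class of $H(\bigcirc_m)$ (which is the only class it can hit, by degree reasons). Thus the claim reduces to a single statement: the component of $\eta_\ast$ landing in $H(\Gamma)\otimes(\text{top class of }H(\bigcirc_m))$ is an isomorphism $H(\Gamma)\to H(\Gamma)$. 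Degree reasons force this component to be grading-preserving, and its non-degeneracy is read off from the explicit local description of the saddle morphism $\eta$ constructed in \cite{Wu-color}: in the lowest quantum degree $\eta_\ast$ is manifestly non-zero, and on each graded piece it is a unit multiple of the relevant identification.

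The main obstacle is exactly this last point --- controlling the saddle morphism $\eta$ well enough to see that the indicated component of $\eta_\ast$ is an isomorphism; the explicit form of $\eta$, together with the homotopies witnessing its characterization in Subsection \ref{subsec-saddle-move}, is where the technical work lies. A secondary, milder obstacle is the bookkeeping needed to establish $\Hom_{\hmf}(C(\Gamma),C(\Gamma))\cong\C$ in the first place. Alternatively, one can bypass the homological argument entirely and compute $\epsilon\circ\eta$ on the nose: choosing convenient entries via Remark \ref{MOY-freedom}, writing $C(\Gamma)$ and $C(\Gamma_1)$ as explicit Koszul matrix factorizations, substituting the explicit formulas for $\eta$ and for $\epsilon=\id_{C(\Gamma)}\otimes\epsilon_{\bigcirc_m}$, and then repeatedly applying Gaussian elimination (\cite[Proposition 2.19]{Wu-color}); this simplifies $\epsilon\circ\eta$ to a morphism that is visibly $\approx\id_{C(\Gamma)}$, and re-proves the endomorphism computation along the way.
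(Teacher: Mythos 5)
Your reduction is sound and, as far as the architecture goes, it is the natural one: $\epsilon\circ\eta$ is a bidegree-$(0,0)$ endomorphism; $\Hom_{\hmf}(C(\Gamma),C(\Gamma))\cong\C$ for an $m$-colored arc (this follows from the same duality computation as Lemma \ref{colored-crossing-res-HMF}, whose degree-$0$ part is $1$-dimensional and spanned by $\id$); hence $\epsilon\circ\eta\simeq c\cdot\id$ and everything hinges on $c\neq 0$. Your identifications $C(\Gamma_1)\cong C(\Gamma)\otimes_\C C(\bigcirc_m)$, $\epsilon=\id\otimes\epsilon_{\bigcirc_m}$, and the fact that $(\epsilon_{\bigcirc_m})_\ast$ is, up to a non-zero scalar, the projection onto the top class (forced by degrees, and non-zero by Lemma \ref{iota-epsilon-composition}) are all correct. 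Note also that you do not need the full strength of ``the top component of $\eta_\ast$ is an isomorphism'': since $\epsilon\circ\eta\simeq c\cdot\id$ is already known, a single non-zero matrix entry of that component suffices.

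The genuine gap is exactly the step you flag: the assertion that the component of $\eta_\ast$ landing in $H(\Gamma)\otimes\langle\text{top class}\rangle$ is ``manifestly non-zero'' is not something that can be read off from the characterization of $\eta$ used in this paper (uniqueness up to homotopy and scaling of a non-nullhomotopic morphism in quantum degree $m(N-m)$). Uniqueness-up-to-scaling tells you nothing about which matrix entries of $\eta_\ast$ vanish, and a composite of two non-nullhomotopic morphisms can perfectly well be nullhomotopic. To make this step honest one must actually unwind the construction of $\eta$ (as an explicit composition of $\chi$-morphisms and edge merge/split morphisms between Koszul matrix factorizations) and compute. This is precisely the content that the present paper omits, deferring to \cite[Subsections 7.9--7.10]{Wu-color}, and the proof given there is a direct computation with explicit Koszul factorizations and Gaussian elimination --- in other words, it is essentially your ``alternative'' route at the end, not the homological reduction. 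So your proposal is a correct skeleton with the load-bearing computation left undone.
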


\section{Homological MOY Calculus, Part Two}\label{sec-MOY-decomps-part2}

In this section, we sketch the proofs of homological versions of MOY relations (5)-(7). Again, the homological versions of the MOY relations remain true if we reverse the orientation of the MOY graph or the orientation of $\mathbb{R}^2$.

\subsection{MOY Relation (5)} Theorem \ref{decomp-III} below is the homological version of MOY Relation (5). It turns out that Theorem \ref{decomp-III} is not explicitly used in the construction of the colored $\mathfrak{sl}(N)$ link homology or the proof of its invariance.\footnote{The $m=1$ case of Theorem \ref{decomp-III} is established in \cite{KR1} and used there to prove the invariance of the uncolored $\mathfrak{sl}(N)$ link homology under the Reidemeister move II$_b$.} Also the proof of this theorem is somewhat similar to that of Theorem \ref{decomp-IV} in the next subsection. So we omit the proof of Theorem \ref{decomp-III} here and refer the reader to \cite[Section 8]{Wu-color}.

\begin{theorem}\cite[Theorem 8.1]{Wu-color}\label{decomp-III}
Assume $m\leq N-1$. Then \vspace{-1.5pc}
\[
C(\input{decomp-III-1-slide}) \simeq C() \oplus C()\{[N-m-1]\} \left\langle 1 \right\rangle. \vspace{2pc}
\]
\end{theorem}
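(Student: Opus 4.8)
The plan is to realize the decomposition explicitly as a split idempotent: I will construct a morphism $F\colon C(\Gamma')\oplus C(\Gamma'')\{[N-m-1]\}\langle 1\rangle\to C(\Gamma)$ and a morphism $G$ in the opposite direction in $\hmf_{R,w}$, show that $G\circ F\simeq\id$, and then eliminate the residual summand by a graded-dimension count. Here $\Gamma$ is the ``square'' with a single internal edge of color $m+1$, $\Gamma'$ is the pair of parallel edges of colors $1$ and $m$, and $\Gamma''$ is the ``digon'' with a single internal edge of color $m-1$; I mark all three so that corresponding end points carry the same alphabets, write $R=\Sym(\mathbb{A}_1|\mathbb{A}_m|\mathbb{B}_1|\mathbb{B}_m)$ for the symmetric polynomial ring on the four boundary alphabets, and let $w$ be the common potential. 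Since $[0]=0$, the case $m=N-1$ degenerates to $C(\Gamma)\simeq C(\Gamma')$, but the uniform argument covers it as well.

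First I would assemble $F$ and $G$ from the elementary morphisms of Section~\ref{sec-morph}. The summand $C(\Gamma')$ is matched with $C(\Gamma)$ by a composition of an edge splitting $\phi$, a $\chi^0$-morphism across the square, and bouquet equivalences $h$ (and $\chi^1,\overline{\phi}$ for the map back); the summand $C(\Gamma'')$ is matched with $C(\Gamma)$ by a parallel composition, now decorated on the internal $(m-1)$-colored edge of $\Gamma''$ (resp.\ the $(m+1)$-colored edge of $\Gamma$) by multiplications $\mathfrak{m}(S_\lambda)$ by Schur polynomials, one for each grading-shifted copy packaged into the shift $\{[N-m-1]\}\langle 1\rangle$. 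The Schur decorations and the shift $\langle 1\rangle$ are forced by requiring $F$ and $G$ to be homogeneous of quantum degree $0$, and their homotopy non-triviality is guaranteed by the uniqueness statements for $\phi,\overline{\phi},\chi^0,\chi^1$ (Subsection~\ref{subsec-edge-splitting} and Proposition~\ref{general-general-chi-maps}(iii)--(iv)).

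The substance of the proof is the verification $G\circ F\simeq\id$, which splits into the diagonal identities $\overline{\phi}\circ\chi^1\circ(\text{decoration})\circ\chi^0\circ\phi\simeq\id$ on each summand, together with the vanishing of the off-diagonal compositions $C(\Gamma')\to C(\Gamma)\to C(\Gamma'')$ and $C(\Gamma'')\to C(\Gamma)\to C(\Gamma')$. The diagonal identities reduce, via Lemma~\ref{phibar-compose-phi} and the composition formula $\chi^1\circ\chi^0\simeq\bigl(\sum_\lambda(-1)^{|\lambda|}S_{\lambda'}(\mathbb{X})S_{\lambda^c}(\mathbb{B})\bigr)\cdot\id$ of Proposition~\ref{general-general-chi-maps}(ii), to an identity among symmetric polynomials: because one of the interacting colors is $1$, this sum is a short alternating sum of products of Schur polynomials in the boundary alphabets, and the required identities come down to this sum pairing correctly against the Sylvester operator and the trace map of Theorems~\ref{part-symm-str} and~\ref{grassmannian}, modulo the defining relations of $H^\ast(G_{m,N})$ that enter through Proposition~\ref{circle-module}. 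The off-diagonal compositions vanish either for degree reasons (there is no homotopically non-trivial morphism of the relevant quantum degree, again by the uniqueness statements of Section~\ref{sec-morph}) or because they factor through a multiplication by a Schur polynomial $S_\mu$ with $\mu_1>N-m$, which is null-homotopic by Lemma~\ref{schur-null-homotopic}. I expect this Schur-polynomial bookkeeping — matching the $\chi$-composition sum to the grading shifts and to the Grassmannian relations, and checking that every off-diagonal term dies — to be the main obstacle, just as it is the technical core of Proposition~\ref{general-general-chi-maps} and Lemma~\ref{phibar-compose-phi} themselves.

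Finally, $G\circ F\simeq\id$ exhibits $C(\Gamma')\oplus C(\Gamma'')\{[N-m-1]\}\langle 1\rangle$ as a homotopy direct summand of $C(\Gamma)$, so $C(\Gamma)\simeq C(\Gamma')\oplus C(\Gamma'')\{[N-m-1]\}\langle 1\rangle\oplus M$ for some $M\in\hmf_{R,w}$, using the idempotent-completeness of $\hmf_{R,w}$ (Lemma~\ref{Krull-Schmidt-hmf}). To see $M\simeq 0$, I would compute $\gdim_R C(\Gamma)$, $\gdim_R C(\Gamma')$ and $\gdim_R C(\Gamma'')$ by repeatedly applying the already-established homological MOY relations~(1)--(4) (Proposition~\ref{circle-module}, Corollary~\ref{contract-expand}, Propositions~\ref{decomp-II} and~\ref{decomp-I}), reducing each to a trivial factorization with an explicit Laurent-polynomial shift; comparison with Part~(5) of Theorem~\ref{MOY-poly-skein} then yields $\gdim_R C(\Gamma)=\gdim_R C(\Gamma')+[N-m-1]\cdot\gdim_R C(\Gamma'')$, hence $\gdim_R M=0$ and $M\simeq 0$ by Proposition~\ref{prop-homology-detects-homotopy}(i) (together with Lemma~\ref{KS-oplus-cancel}). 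The $m=1$ specialization reproduces the decomposition used in \cite{KR1} to prove invariance under Reidemeister move~II$_b$.
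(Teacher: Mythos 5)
Your proposal follows essentially the same route as the paper's proof (which is only cited here and modeled on the sketch of Theorem \ref{decomp-IV}): build $F$ and $G$ from $\phi$, $\overline{\phi}$, $\chi^0$, $\chi^1$, bouquet moves and Schur-polynomial decorations, show the composite is an equivalence onto a direct summand via Lemma \ref{phibar-compose-phi} and Proposition \ref{general-general-chi-maps}(ii), split using the Krull--Schmidt property, and kill the residual summand by a graded-dimension count with Proposition \ref{prop-homology-detects-homotopy}. The only cosmetic difference is that you assert the off-diagonal terms of $G\circ F$ vanish outright, whereas the paper's template settles for an upper-triangular composite with identities on the diagonal (plus a nilpotency argument), which suffices equally well.
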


\subsection{MOY Relation (6)} In this subsection, we sketch the proof of Theorem \ref{decomp-IV}, the homological version of MOY Relation (6). See \cite[Section 9]{Wu-color} for full details.

\begin{theorem}\cite[Theorem 9.1]{Wu-color}\label{decomp-IV}
Assume $l,m,n$ are integers satisfying $0\leq n \leq m \leq N$ and $0\leq l, m+l-1 \leq N$. Then \vspace{-2pc}
\[
C(\input{decomp-IV-1-slide}) \simeq C()\{\qb{m-1}{n}\} \oplus C()\{\qb{m-1}{n-1}\}. \vspace{2pc}
\]
\end{theorem}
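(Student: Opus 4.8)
Denote by $\Gamma$, $\Gamma_1$, $\Gamma_2$ the three MOY graphs occurring, from left to right, in the statement of the theorem; up to markings these are the graphs appearing in part~(6) of Theorem~\ref{MOY-poly-skein}. The plan is to realize $C(\Gamma_1)$ and $C(\Gamma_2)$ as orthogonal direct summands of $C(\Gamma)$ by explicit families of morphisms of matrix factorizations, to verify the relevant composition identities using the machinery of Section~\ref{sec-morph}, and finally to argue that these summands fill up all of $C(\Gamma)$. First I would fix compatible markings on $\Gamma,\Gamma_1,\Gamma_2$ (corresponding end points receiving the same alphabets, internal alphabets pairwise disjoint) and record that, by \eqref{compute-quantum-binary}, $\qb{m-1}{n}$ is the $q$-graded cardinality of the partition set $\Lambda_{m-1-n,\,n}$ and $\qb{m-1}{n-1}$ that of $\Lambda_{m-n,\,n-1}$; these two sets will index the two families of morphisms.

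Next I would, for each $\lambda\in\Lambda_{m-1-n,n}$, assemble homogeneous morphisms $\alpha_\lambda\colon C(\Gamma_1)\to C(\Gamma)$ and $\beta_\lambda\colon C(\Gamma)\to C(\Gamma_1)$ out of the building blocks of Section~\ref{sec-morph} --- the $\chi$-morphisms of Proposition~\ref{general-general-chi-maps}, the bouquet-move equivalence, the edge splitting/merging morphisms $\phi,\overline{\phi}$, the circle creation/annihilation morphisms $\iota,\epsilon$, and multiplication by the Schur polynomial $S_\lambda$ of the alphabet on a suitable thin edge --- together with the homotopy equivalences supplied by the already-established homological MOY relations (3)--(4) (Propositions~\ref{decomp-II} and~\ref{decomp-I}); likewise, for each $\mu\in\Lambda_{m-n,n-1}$, I would build $\alpha'_\mu\colon C(\Gamma_2)\to C(\Gamma)$ and $\beta'_\mu\colon C(\Gamma)\to C(\Gamma_2)$. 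The grading shifts $\{\qb{m-1}{n}\}$ and $\{\qb{m-1}{n-1}\}$ are exactly what make $\sum_\lambda\alpha_\lambda$, $\sum_\lambda\beta_\lambda$, $\sum_\mu\alpha'_\mu$, $\sum_\mu\beta'_\mu$ homogeneous of quantum degree $0$. The core of the argument is the bi-orthonormality
\[
\beta_{\lambda}\circ\alpha_{\lambda'}\approx\delta_{\lambda\lambda'}\,\id_{C(\Gamma_1)},\quad
\beta'_{\mu}\circ\alpha'_{\mu'}\approx\delta_{\mu\mu'}\,\id_{C(\Gamma_2)},\quad
\beta_{\lambda}\circ\alpha'_{\mu}\simeq 0,\quad
\beta'_{\mu}\circ\alpha_{\lambda}\simeq 0,
\]
which I expect to reduce, after substituting the composition formula of Proposition~\ref{general-general-chi-maps}(ii) and commuting the Schur-polynomial multiplications past the $\chi$-morphisms, to instances of the complementary-Schur-polynomial orthogonality already packaged in Lemmas~\ref{iota-epsilon-composition} and~\ref{phibar-compose-phi} (ultimately the Sylvester and trace pairings of Theorems~\ref{part-symm-str} and~\ref{grassmannian}). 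Granting these relations, the homotopy idempotent $e=\sum_\lambda\alpha_\lambda\beta_\lambda+\sum_\mu\alpha'_\mu\beta'_\mu$ on $C(\Gamma)$ exhibits $C(\Gamma_1)\{\qb{m-1}{n}\}\oplus C(\Gamma_2)\{\qb{m-1}{n-1}\}$ as a direct summand of $C(\Gamma)$, using the Krull--Schmidt property of $\hmf_{R,w}$ (Lemma~\ref{Krull-Schmidt-hmf}).

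The hard part will be the completeness step: showing that the complementary summand $C'=(\id_{C(\Gamma)}-e)\,C(\Gamma)$ is contractible, equivalently that $e\simeq\id_{C(\Gamma)}$. I would handle this by induction on $(n,l)$: in the degenerate cases (such as $n=0$, $n=m$, or $l$ small) the graph $\Gamma$ collapses --- via bouquet moves, Lemma~\ref{edge-contraction}, and the already-established homological MOY relations (3)--(5) (Propositions~\ref{decomp-II} and~\ref{decomp-I}, Theorem~\ref{decomp-III}) --- directly onto $\Gamma_1$ or $\Gamma_2$, so that $e\simeq\id_{C(\Gamma)}$ is immediate there, while for the inductive step I would expand one of the thick edges of $\Gamma$ using homological MOY relation (3) or a lower instance of (6), match the resulting summands against the right-hand side, and cancel common summands by Lemma~\ref{KS-oplus-cancel}. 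An alternative endgame, once $\gdim_R C(-)$ is known to satisfy the lower homological MOY relations, is to compute $\gdim_R C'=\langle\Gamma\rangle_N-\qb{m-1}{n}\,\langle\Gamma_1\rangle_N-\qb{m-1}{n-1}\,\langle\Gamma_2\rangle_N=0$ from Theorem~\ref{MOY-poly-skein}(6) and deduce $C'\simeq 0$ from Proposition~\ref{prop-homology-detects-homotopy}(i). Either way the theorem follows; the full argument is carried out in \cite[Section~9]{Wu-color}.
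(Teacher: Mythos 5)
Your overall architecture is the paper's: the paper likewise builds morphisms $f,g$ (and $\vec{\alpha},\vec{\beta}$) out of the $\chi$-morphisms, bouquet moves, and edge splitting/merging of Section \ref{sec-morph}, decorates them with multiplication by Schur polynomials indexed by partition sets of graded cardinality $\qb{m-1}{n}$ and $\qb{m-1}{n-1}$ (it uses $\Lambda_{n,m-n-1}$ and decorations $S_\lambda(\mathbb{A})$, $S_{\lambda^c}(-\mathbb{Y})$; your conjugate index sets are equivalent), assembles them into $F,G,\vec{\alpha},\vec{\beta}$, exhibits the right-hand side as a direct summand via Krull--Schmidt, and kills the complementary summand $M$ by computing $\gdim(M)=0$ and invoking Proposition \ref{prop-homology-detects-homotopy}. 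So your ``alternative endgame'' is the paper's actual endgame; the inductive completeness strategy you propose first is the one the paper reserves for MOY relation (7), not (6).

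The step that would fail as stated is the exact bi-orthonormality. By Proposition \ref{general-general-chi-maps}(ii), $\chi^1\circ\chi^0$ is multiplication by a \emph{sum} $\sum_\lambda(-1)^{|\lambda|}S_{\lambda'}(\mathbb{X})S_{\lambda^c}(\mathbb{B})$ rather than by a single Schur polynomial, so after commuting the Schur decorations through and applying the Sylvester/trace pairings one gets, besides the expected delta term, genuinely nonvanishing lower-order contributions. Accordingly the paper proves only that $G\circ F$ and $\vec{\beta}\circ\vec{\alpha}$ are \emph{upper triangular with identity along the main diagonal} (\cite[Lemmas 9.8 and 9.12]{Wu-color}), and that the cross-compositions $\vec{\beta}\circ F\circ G\circ\vec{\alpha}$ and $G\circ\vec{\alpha}\circ\vec{\beta}\circ F$ are merely \emph{homotopically nilpotent} (\cite[Lemma 9.15]{Wu-color}), not null-homotopic. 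It then must modify $F,G,\vec{\alpha},\vec{\beta}$ into $\Phi,\Psi$ with $\Psi\circ\Phi\simeq\id$ (\cite[Lemma 9.17]{Wu-color}). Your idempotent $e$ therefore does not exist for the natural morphisms; you need this triangularity-plus-nilpotence correction (a standard but nontrivial linear-algebra step in the homotopy category) before Krull--Schmidt applies. With that repair, your argument coincides with the paper's.
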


\begin{proof}[Sketch of Proof]
First, define morphisms \vspace{-2pc}
\[
f: C() \rightarrow C(\input{decomp-IV-1-slide}) \vspace{2pc}
\]
and\vspace{-2pc}
\[
g: C(\input{decomp-IV-1-slide}) \rightarrow C() \vspace{2.5pc}
\]
by the compositions of the morphisms in the diagram in Figure \ref{decomposition-IV-f-g-def}, where $\phi$ and $\overline{\phi}$ are the morphisms associated to the apparent edge splitting and merging, $h_0$ and $h_1$ are the homotopy equivalence induced by the apparent bouquet moves, $\chi^0$ and $\chi^1$ are the $\chi$-morphisms from the change of the left half of MOY graph. (See Section \ref{sec-morph} for the definitions of these morphisms.) It is easy to check that $f$ and $g$ are both homogeneous morphisms of quantum degree $-n(m-n-1)$ and $\zed_2$-degree $0$.

\begin{figure}[ht]

\setlength{\unitlength}{1pt}

\begin{picture}(360,255)(-180,-15)


\put(-110,150){\vector(0,1){25}}

\put(-110,215){\vector(0,1){25}}

\put(-110,175){\vector(0,1){40}}

\put(-110,215){\vector(1,0){40}}

\put(-70,175){\vector(0,1){40}}

\put(-70,175){\vector(-1,0){40}}

\put(-70,157){\vector(0,1){25}}

\put(-70,215){\vector(0,1){25}}

\put(-90,214){\line(0,1){2}}

\put(-90,174){\line(0,1){2}}

\put(-111,190){\line(1,0){2}}

\put(-71,190){\line(1,0){2}}

\put(-117,163){\tiny{$1$}}

\put(-117,222){\tiny{$l$}}

\put(-127,198){\tiny{$l+n$}}

\put(-67,163){\tiny{$m+l-1$}}

\put(-67,222){\tiny{$m$}}

\put(-67,198){\tiny{$m-n$}}

\put(-103,178){\tiny{$l+n-1$}}

\put(-92,208){\tiny{$n$}}

\put(-67,232){\small{$\mathbb{X}$}}

\put(-67,187){\small{$\mathbb{Y}$}}

\put(-67,150){\small{$\mathbb{W}$}}

\put(-120,232){\small{$\mathbb{T}$}}

\put(-120,187){\small{$\mathbb{S}$}}

\put(-125,150){\small{$\{r\}$}}

\put(-93,217){\small{$\mathbb{A}$}}

\put(-93,165){\small{$\mathbb{B}$}}

\put(-100,140){\vector(-1,-1){30}}

\put(-125,125){\small{$\chi^1$}}

\put(-120,110){\vector(1,1){30}}

\put(-100,125){\small{$\chi^0$}}

\put(-30,190){\vector(1,0){60}}

\put(30,200){\vector(-1,0){60}}

\put(-3,204){\small{$f$}}

\put(-3,180){\small{$g$}}


\put(70,150){\vector(0,1){45}}

\put(70,195){\vector(0,1){45}}

\put(110,150){\vector(0,1){45}}

\put(110,195){\vector(0,1){45}}

\put(110,195){\vector(-1,0){40}}

\put(63,170){\tiny{$1$}}

\put(113,170){\tiny{$m+l-1$}}

\put(63,215){\tiny{$l$}}

\put(113,215){\tiny{$m$}}

\put(85,188){\tiny{$l-1$}}

\put(113,232){\small{$\mathbb{X}$}}

\put(113,150){\small{$\mathbb{W}$}}

\put(60,232){\small{$\mathbb{T}$}}

\put(55,150){\small{$\{r\}$}}

\put(90,140){\vector(1,-1){30}}

\put(125,125){\small{$\overline{\phi}$}}

\put(130,110){\vector(-1,1){30}}

\put(90,125){\small{$\phi$}}


\put(-160,0){\vector(0,1){45}}

\put(-160,45){\vector(0,1){45}}

\put(-130,45){\vector(-1,0){30}}

\put(-130,45){\vector(2,1){30}}

\put(-100,30){\vector(-2,1){30}}

\put(-100,0){\vector(0,1){30}}

\put(-100,30){\vector(0,1){30}}

\put(-100,60){\vector(0,1){30}}

\put(-101,45){\line(1,0){2}}

\put(-115,51.5){\line(0,1){2}}

\put(-115,36.5){\line(0,1){2}}

\put(-167,13){\tiny{$1$}}

\put(-167,72){\tiny{$l$}}

\put(-97,13){\tiny{$m+l-1$}}

\put(-97,72){\tiny{$m$}}

\put(-97,48){\tiny{$m-n$}}

\put(-137,30){\tiny{$l+n-1$}}

\put(-127,51){\tiny{$n$}}

\put(-150,38){\tiny{$l-1$}}

\put(-97,82){\small{$\mathbb{X}$}}

\put(-97,37){\small{$\mathbb{Y}$}}

\put(-97,0){\small{$\mathbb{W}$}}

\put(-170,82){\small{$\mathbb{T}$}}

\put(-175,0){\small{$\{r\}$}}

\put(-115,57){\small{$\mathbb{A}$}}

\put(-115,40){\small{$\mathbb{B}$}}

\put(-40,40){\vector(1,0){80}}

\put(40,50){\vector(-1,0){80}}

\put(-3,54){\small{$h_0$}}

\put(-3,30){\small{$h_1$}}


\put(80,0){\vector(0,1){30}}

\put(80,30){\vector(0,1){60}}

\put(140,30){\vector(-1,0){60}}

\put(140,75){\vector(0,1){15}}

\put(140,30){\vector(0,1){15}}

\put(130,50){\vector(0,1){20}}

\put(150,50){\vector(0,1){20}}

\put(140,0){\vector(0,1){30}}

\qbezier(140,75)(130,75)(130,70)

\qbezier(140,75)(150,75)(150,70)

\qbezier(140,45)(130,45)(130,50)

\qbezier(140,45)(150,45)(150,50)

\put(129,58){\line(1,0){2}}

\put(149,58){\line(1,0){2}}

\put(73,13){\tiny{$1$}}

\put(73,72){\tiny{$l$}}

\put(153,62){\tiny{$m-n$}}

\put(120,62){\tiny{$n$}}

\put(143,13){\tiny{$m+l-1$}}

\put(130,80){\tiny{$m$}}

\put(143,35){\tiny{$m$}}

\put(105,23){\tiny{$l-1$}}

\put(143,82){\small{$\mathbb{X}$}}

\put(143,0){\small{$\mathbb{W}$}}

\put(153,52){\small{$\mathbb{Y}$}}

\put(70,82){\small{$\mathbb{T}$}}

\put(65,0){\small{$\{r\}$}}

\put(120,52){\small{$\mathbb{A}$}}

\end{picture}

\caption{}\label{decomposition-IV-f-g-def}

\end{figure}

Let $\Lambda=\Lambda_{n,m-n-1} =\{\lambda=(\lambda_1\geq\cdots\geq\lambda_n)~|~\lambda_1\leq m-n-1\}$. For $\lambda=(\lambda_1\geq\cdots\geq\lambda_n) \in \Lambda$, define $\lambda^c=(\lambda^c_1\geq\cdots\geq\lambda^c_n) \in \Lambda$ by $\lambda^c_j =m-n-1-\lambda_{n+1-j}$ for $j=1,\dots,n$. For $\lambda \in \Lambda$, define $f_{\lambda} = \mathfrak{m}(S_{\lambda}(\mathbb{A})) \circ f$ and $g_{\lambda} = g \circ \mathfrak{m}(S_{\lambda^c}(-\mathbb{Y}))$, where $\mathfrak{m}(\ast)$ is the endomorphism given by the multiplication by $\ast$.

Note that \vspace{-2pc}
\[
C()\{\qb{m-1}{n}\} = \bigoplus_{\lambda \in \Lambda} C(\Gamma_0)\{q^{2|\lambda|-n(m-n-1)}\}. \vspace{2pc}
\]
Moreover, \vspace{-2pc}
\[
f_\lambda: C()\{q^{2|\lambda|-n(m-n-1)}\} \rightarrow C(\input{decomp-IV-1-slide}) \vspace{2pc}
\]
and\vspace{-2pc}
\[
g_\lambda: C(\input{decomp-IV-1-slide}) \rightarrow C()\{q^{2|\lambda|-n(m-n-1)}\} \vspace{2.5pc}
\]
are homogeneous morphisms preserving both gradings. 

Define \vspace{-2pc}
\[
F = \sum_{\lambda\in\Lambda} f_{\lambda}: C()\{\qb{m-1}{n}\} \rightarrow C(\input{decomp-IV-1-slide}) \vspace{2pc}
\]
and \vspace{-2pc}
\[
G = \sum_{\lambda\in\Lambda} g_{\lambda}:C(\input{decomp-IV-1-slide}) \rightarrow C()\{\qb{m-1}{n}\}. \vspace{2pc}
\] 
Then $F$ and $G$ are homogeneous morphisms preserving both gradings. By \cite[Lemma 9.8]{Wu-color}, \vspace{-2pc}
\[
G \circ F : C()\{\qb{m-1}{n}\} \rightarrow C()\{\qb{m-1}{n}\} \vspace{2pc}
\]
is an upper triangular matrix with identity along the main diagonal.

\begin{figure}[ht]

\setlength{\unitlength}{1pt}

\begin{picture}(360,255)(-180,-15)


\put(-110,150){\vector(0,1){25}}

\put(-110,215){\vector(0,1){25}}

\put(-110,175){\vector(0,1){40}}

\put(-110,215){\vector(1,0){40}}

\put(-70,175){\vector(0,1){40}}

\put(-70,175){\vector(-1,0){40}}

\put(-70,157){\vector(0,1){25}}

\put(-70,215){\vector(0,1){25}}

\put(-90,214){\line(0,1){2}}

\put(-90,174){\line(0,1){2}}

\put(-111,190){\line(1,0){2}}

\put(-71,190){\line(1,0){2}}

\put(-117,163){\tiny{$1$}}

\put(-117,222){\tiny{$l$}}

\put(-127,198){\tiny{$l+n$}}

\put(-67,163){\tiny{$m+l-1$}}

\put(-67,222){\tiny{$m$}}

\put(-67,198){\tiny{$m-n$}}

\put(-103,178){\tiny{$l+n-1$}}

\put(-92,208){\tiny{$n$}}

\put(-67,232){\small{$\mathbb{X}$}}

\put(-67,187){\small{$\mathbb{Y}$}}

\put(-67,150){\small{$\mathbb{W}$}}

\put(-120,232){\small{$\mathbb{T}$}}

\put(-120,187){\small{$\mathbb{S}$}}

\put(-125,150){\small{$\{r\}$}}

\put(-93,217){\small{$\mathbb{A}$}}

\put(-93,165){\small{$\mathbb{B}$}}

\put(-100,140){\vector(-1,-1){30}}

\put(-125,125){\small{$\chi^0$}}

\put(-120,110){\vector(1,1){30}}

\put(-100,125){\small{$\chi^1$}}

\put(-30,190){\vector(1,0){60}}

\put(30,200){\vector(-1,0){60}}

\put(-3,204){\small{$\alpha$}}

\put(-3,180){\small{$\beta$}}


\put(70,150){\vector(2,3){20}}

\put(110,150){\vector(-2,3){20}}

\put(90,210){\vector(-2,3){20}}

\put(90,210){\vector(2,3){20}}

\put(90,180){\vector(0,1){30}}

\put(73,170){\tiny{$1$}}

\put(103,170){\tiny{$m+l-1$}}

\put(73,215){\tiny{$l$}}

\put(103,215){\tiny{$m$}}

\put(95,188){\tiny{$m+l$}}

\put(113,232){\small{$\mathbb{X}$}}

\put(113,150){\small{$\mathbb{W}$}}

\put(60,232){\small{$\mathbb{T}$}}

\put(55,150){\small{$\{r\}$}}

\put(90,140){\vector(1,-1){30}}

\put(125,125){\small{$\overline{\phi}$}}

\put(130,110){\vector(-1,1){30}}

\put(90,125){\small{$\phi$}}


\put(-160,0){\vector(3,2){30}}

\put(-160,60){\vector(0,1){30}}

\put(-160,60){\vector(1,0){50}}

\put(-110,60){\line(1,0){10}}

\put(-130,20){\vector(0,1){20}}

\put(-130,40){\vector(3,2){30}}

\put(-130,40){\vector(-3,2){30}}

\put(-100,0){\vector(-3,2){30}}

\put(-100,60){\vector(0,1){30}}

\put(-130,59){\line(0,1){2}}

\put(-115,49.5){\line(0,1){2}}

\put(-147,13){\tiny{$1$}}

\put(-167,72){\tiny{$l$}}

\put(-107,13){\tiny{$m+l-1$}}

\put(-97,72){\tiny{$m$}}

\put(-107,48){\tiny{$m-n$}}

\put(-127,30){\tiny{$m+l$}}

\put(-131,53){\tiny{$n$}}

\put(-165,48){\tiny{$n+l$}}

\put(-97,82){\small{$\mathbb{X}$}}

\put(-115,40){\small{$\mathbb{Y}$}}

\put(-97,0){\small{$\mathbb{W}$}}

\put(-170,82){\small{$\mathbb{T}$}}

\put(-175,0){\small{$\{r\}$}}

\put(-132,63){\small{$\mathbb{A}$}}

\put(-40,40){\vector(1,0){80}}

\put(40,50){\vector(-1,0){80}}

\put(-3,54){\small{$h_1$}}

\put(-3,30){\small{$h_0$}}


\put(80,0){\vector(3,2){30}}

\put(80,55){\vector(0,1){35}}

\put(110,20){\vector(0,1){15}}

\put(110,35){\line(3,1){30}}

\put(140,45){\vector(0,1){10}}

\put(110,35){\line(-3,2){30}}

\put(140,75){\vector(0,1){15}}

\put(130,60){\vector(0,1){10}}

\put(150,60){\vector(0,1){10}}

\put(140,0){\vector(-3,2){30}}

\qbezier(140,75)(130,75)(130,70)

\qbezier(140,75)(150,75)(150,70)

\qbezier(140,55)(130,55)(130,60)

\qbezier(140,55)(150,55)(150,60)

\put(129,63){\line(1,0){2}}

\put(149,63){\line(1,0){2}}

\put(93,12){\tiny{$1$}}

\put(73,72){\tiny{$l$}}

\put(153,67){\tiny{$m-n$}}

\put(120,67){\tiny{$n$}}

\put(124,12){\tiny{$m+l-1$}}

\put(130,80){\tiny{$m$}}

\put(140,40){\tiny{$m$}}

\put(113,23){\tiny{$m+l$}}

\put(143,82){\small{$\mathbb{X}$}}

\put(143,0){\small{$\mathbb{W}$}}

\put(153,57){\small{$\mathbb{Y}$}}

\put(70,82){\small{$\mathbb{T}$}}

\put(65,0){\small{$\{r\}$}}

\put(120,57){\small{$\mathbb{A}$}}

\end{picture}

\caption{}\label{decomposition-IV-alpha-beta-def}

\end{figure}

Similarly, using the compositions in the diagram in Figure \ref{decomposition-IV-alpha-beta-def}, we construct homogeneous morphisms \vspace{-2pc}
\[
\vec{\alpha}:C()\{\qb{m-1}{n-1}\}\rightarrow C(\input{decomp-IV-1-slide}) \vspace{2pc}
\] 
and \vspace{-2pc}
\[
\vec{\beta}:C(\input{decomp-IV-1-slide})\rightarrow C()\{\qb{m-1}{n-1}\} \vspace{2pc}
\] 
preserving both gradings such that \vspace{-2pc}
\[
\vec{\beta} \circ \vec{\alpha}: C()\{\qb{m-1}{n-1}\}\rightarrow C()\{\qb{m-1}{n-1}\} \vspace{2pc}
\]
is an upper triangular matrix with identity along the main diagonal. (See \cite[Lemma 9.12]{Wu-color}.) 

From \cite[Lemma 9.15]{Wu-color} we know that $\vec{\beta} \circ F \circ G \circ \vec{\alpha}$ and $G \circ \vec{\alpha} \circ \vec{\beta} \circ F$ are both homotopically nilpotent. 

Using all the above, it is not hard to modify $F$, $G$, $\vec{\alpha}$ and $\vec{\beta}$ to construct homogeneous morphisms \vspace{-2pc}
\[
\Phi: C()\{\qb{m-1}{n}\} \oplus C()\{\qb{m-1}{n-1}\} \rightarrow C(\input{decomp-IV-1-slide}) \vspace{2pc}
\]
and \vspace{-2pc}
\[
\Psi: C(\input{decomp-IV-1-slide}) \rightarrow C()\{\qb{m-1}{n}\} \oplus C()\{\qb{m-1}{n-1}\} \vspace{2pc}
\]
preserving both gradings such that $\Psi \circ \Phi \simeq \id$. (See \cite[Lemma 9.17]{Wu-color}.)

By Lemma \ref{Krull-Schmidt-hmf}, the category $\hmf$ involved here is Krull-Schmidt. Thus, the above implies that \vspace{-2pc}
\[
C(\input{decomp-IV-1-slide}) \simeq C()\{\qb{m-1}{n}\} \oplus C()\{\qb{m-1}{n-1}\} \oplus M, \vspace{2pc}
\]
where $M$ is a graded matrix factorization. A direct computation shows that the graded dimension of $M$ is $0$. (See \cite[Lemma 9.16]{Wu-color}.) So, by Proposition \ref{prop-homology-detects-homotopy}, $M\simeq 0$. This proves the Theorem \ref{decomp-IV}.
\end{proof}

\subsection{MOY Relation (7)} Theorem \ref{decomp-V} below is the homological version of MOY Relation (7) and a generalization of Theorem \ref{decomp-IV}. It is much harder to explicitly write down the homotopy equivalence in Theorem \ref{decomp-V} than to write down those in homological versions of MOY Relations (1)-(6). Fortunately, we can use the Krull-Schmidt property of matrix factorizations to establish the homotopy equivalence implicitly. 

\begin{theorem}\cite[Theorem 10.1]{Wu-color}\label{decomp-V}
Let $m,n,l$ be non-negative integers satisfying $N\geq n+l,m+l$. Then, for $\max\{m-n,0\}\leq k \leq m+l$, 
\[
\label{decomp-V-1} C(\input{decomp-V-1-slide}) \simeq \bigoplus_{j=\max\{m-n,0\}}^m C(\input{decomp-V-2-slide}) \{\qb{l}{k-j}\}, \vspace{2pc}
\]
where we use the convention $\qb{a}{b}=0$ if $b<0$ or $b>a$.
\end{theorem}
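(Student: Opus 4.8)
The plan is to follow the pattern of the proof of Theorem \ref{decomp-IV}, but on a larger scale: build enough explicit morphisms between the two sides out of the building blocks of Section \ref{sec-morph}, show that a suitable assembly of them is, up to homotopy, block upper-triangular with identity along the diagonal, and then use the Krull--Schmidt property (Lemma \ref{Krull-Schmidt-hmf}) together with a graded-dimension count to conclude --- never actually exhibiting a full homotopy inverse. This is the sense in which the homotopy equivalence is established ``implicitly''.

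\emph{Construction of the morphisms.} Fix $k$, write $\Gamma$ for the left-hand MOY graph and $\Gamma_j$ (for $\max\{m-n,0\}\le j\le m$) for the graphs on the right. For each admissible $j$ I would build homogeneous morphisms $f^{(j)}\colon C(\Gamma)\to C(\Gamma_j)$ and $g^{(j)}\colon C(\Gamma_j)\to C(\Gamma)$ as compositions of $\chi^0$- and $\chi^1$-morphisms on the ``box'' part of the graph (Proposition \ref{general-general-chi-maps}), edge splitting and merging maps $\phi,\overline{\phi}$ (Subsection \ref{subsec-edge-splitting}), and bouquet homotopy equivalences (Subsection \ref{subsec-bouquet}), with quantum degrees matching the shift $\qb{l}{k-j}$. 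Using $\qb{l}{k-j}=q^{-(k-j)(l-k+j)}\sum_{\lambda\in\Lambda_{k-j,\,l-k+j}}q^{2|\lambda|}$ as in \eqref{compute-quantum-binary}, I would twist $f^{(j)}$ and $g^{(j)}$ by Schur-polynomial multiplications $\mathfrak{m}(S_\lambda(\mathbb{A}))$ and $\mathfrak{m}(S_{\lambda^c}(-\mathbb{Y}))$ in suitable alphabets to get grading-preserving maps $f^{(j)}_\lambda$, $g^{(j)}_\lambda$ between the individual graded summands, and assemble $\Phi\colon\bigoplus_j C(\Gamma_j)\{\qb{l}{k-j}\}\to C(\Gamma)$ out of the $g^{(j)}_\lambda$ and $\Psi\colon C(\Gamma)\to\bigoplus_j C(\Gamma_j)\{\qb{l}{k-j}\}$ out of the $f^{(j)}_\lambda$.

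\emph{Triangularity and Krull--Schmidt.} The core of the argument is to show that, for a suitable total order on the pairs $(j,\lambda)$, the composition $\Psi\circ\Phi$ is block upper-triangular with identity blocks on the diagonal. The diagonal blocks reduce to the composition formulas of item (ii) of Proposition \ref{general-general-chi-maps} and of Lemma \ref{phibar-compose-phi}, together with the Sylvester-operator orthogonality of Theorem \ref{part-symm-str}; each strictly off-diagonal block either vanishes outright or factors through a multiplication by a Schur polynomial $S_\mu(\mathbb{X})$ whose first part exceeds the permitted range, hence is null-homotopic by Lemma \ref{schur-null-homotopic}. After a routine modification of $\Phi$ and $\Psi$ this yields $\Psi\circ\Phi\simeq\id$, so by Lemma \ref{Krull-Schmidt-hmf} we get $C(\Gamma)\simeq\bigoplus_j C(\Gamma_j)\{\qb{l}{k-j}\}\oplus M$ for some graded matrix factorization $M$. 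By Proposition \ref{prop-homology-detects-homotopy} it remains to verify $\gdim C(\Gamma)=\sum_j\qb{l}{k-j}\,\gdim C(\Gamma_j)$; one does this by reducing both sides to direct sums of shifted copies of $C(\emptyset)$ with the already-established homological Relations (1)--(6) (Proposition \ref{circle-module}, Corollary \ref{contract-expand}, Propositions \ref{decomp-II} and \ref{decomp-I}, Theorems \ref{decomp-III} and \ref{decomp-IV}), the resulting polynomial identity being exactly Part (7) of Theorem \ref{MOY-poly-skein}. One could alternatively organise everything as an induction on $l$: treat the small cases via Theorem \ref{decomp-IV} and the earlier relations, then in the step peel off one colour-$1$ strand from an edge adjacent to the box, apply Theorem \ref{decomp-IV} locally, re-merge via $\overline{\phi}$ and bouquet moves, and regroup using Pascal's identity $\qb{l}{k-j}=\qb{l-1}{k-j}+\qb{l-1}{k-j-1}$, invoking Lemma \ref{KS-oplus-cancel} to discard parasitic summands.

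\emph{Main obstacle.} The hardest step will be proving the triangularity of $\Psi\circ\Phi$: this requires precise control of how $\chi$-morphisms compose across different values of $j$ and of how Schur polynomials in the several alphabets propagate through bouquet moves and edge merges, in order to pin down exactly which off-diagonal blocks are zero and which are merely homotopically nilpotent. This $\chi$-morphism and Schur-polynomial combinatorics is substantially heavier than in the $l=0$ situation treated in Theorem \ref{decomp-IV}, which is precisely why the homotopy equivalence is set up implicitly rather than written out; the complete argument occupies \cite[Section 10]{Wu-color}.
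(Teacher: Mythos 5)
Your primary plan --- building explicit $\Phi,\Psi$ from $\chi$-morphisms and Schur twists, proving block triangularity, and finishing with a graded-dimension count --- is a genuinely different route from the paper's, and it is precisely the route the paper deliberately avoids: for Relation (7) no morphisms are constructed at all. The actual proof is an induction on the rung color $k$ (not on $l$). For $k=1$ one applies Theorem \ref{decomp-IV} twice, to the left and to the right rectangle of a single auxiliary two-rung graph, obtaining two different direct-sum decompositions of the same object; cancelling the common summand via Lemma \ref{KS-oplus-cancel} and using the identity $[m]\cdot[m+l-1]-[m-1]\cdot[m+l]=[l]$ yields the $k=1$ case. The inductive step applies the hypothesis to an upper rectangle of another auxiliary graph (two cases, $k\le l$ and $k>l$) and concludes with Lemmas \ref{Krull-Schmidt-hmf}, \ref{KS-oplus-cancel} and \ref{yonezawa-lemma}. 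So your ``alternative'' closing paragraph is much nearer the truth than your main plan, except that the induction runs on $k$ rather than $l$ and the base case is the double-decomposition trick rather than a direct appeal to the earlier relations. Two concrete weaknesses in the main plan are worth flagging. First, the cross-$j$ triangularity of $\Psi\circ\Phi$ is not delivered by the available composition formulas (Proposition \ref{general-general-chi-maps}(ii), Lemma \ref{phibar-compose-phi}), because the various $\Gamma_j$ are not related to $\Gamma$ by a single edge splitting; you correctly name this as the obstacle but it remains unresolved. Second, the closing step is essentially circular as stated: the two-rung graphs of Relation (7) cannot be reduced to $C(\emptyset)$ by Relations (1)--(6) alone, and the polynomial identity $\gdim C(\Gamma)=\sum_j\qb{l}{k-j}\gdim C(\Gamma_j)$ you want to verify \emph{is} MOY Relation (7); its derivation from Relation (6) is exactly the double-decomposition-and-cancel argument above, after which the explicit $\Phi$ and $\Psi$ are superfluous --- which is the whole point of establishing the equivalence implicitly.
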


\begin{proof}[Sketch of Proof]
We prove Theorem \ref{decomp-V} by an induction on $k$. For $k=1$, consider the following MOY graph. 
\[
\input{two-squares-m-n-l-1} \vspace{3pc}
\]
Applying Theorem \ref{decomp-IV} to the left and the right rectangles in this MOY graph, we get
\[
C(\input{two-squares-m-n-l-1}) \simeq C(\input{square-m-n-l-right-1-low-slide}) \oplus C(\setlength{\unitlength}{1pt}
\begin{picture}(60,30)(-30,30)

\put(-15,0){\vector(0,1){20}}
\put(-15,20){\vector(0,1){40}}

\put(15,0){\vector(0,1){20}}
\put(15,20){\vector(0,1){40}}

\put(-15,20){\vector(1,0){30}}

\put(-25,5){\tiny{$_{n}$}}
\put(-25,55){\tiny{$_{m}$}}

\put(-8,15){\tiny{$_{n-m}$}}

\put(18,5){\tiny{$_{m+l}$}}
\put(18,55){\tiny{$_{n+l}$}}
\end{picture})\{[m-1][m+l]\}, \vspace{4pc}
\]
\[
C(\input{two-squares-m-n-l-1}) \simeq C(\input{square-m-n-l-right-1-high-slide}) \oplus C()\{[m][m+l-1]\}. \vspace{3pc}
\]
Note that $[m]\cdot [m+l-1] - [m-1]\cdot [m+l] =[l]$. By the Krull-Schmidt property (Lemmas \ref{Krull-Schmidt-hmf} and \ref{KS-oplus-cancel},) it follows that  
\[
C(\input{square-m-n-l-right-1-low-slide}) \simeq C(\input{square-m-n-l-right-1-high-slide}) \oplus C()\{[l]\}. \vspace{3pc}
\]
So Theorem \ref{decomp-V} is true for $k=1$. (See \cite[Lemma 10.2]{Wu-color} for more details.)

Now assume Theorem \ref{decomp-V} is true for $1,2,\dots,k$. Consider the $k+1$ case of Theorem \ref{decomp-V}. If $k \leq l$, then apply the induction hypothesis to the upper rectangle in the MOY graph 
\[
\input{square-m-n-l-right-k+1-low-tilde-slide}.
\]
If $k>l$, then apply the induction hypothesis to the upper rectangle in the MOY graph 
\[
\input{square-m-n-l-right-k-low-hat-slide}.
\]
In either case, we can use the Krull-Schmidt property (Lemmas \ref{Krull-Schmidt-hmf}, \ref{KS-oplus-cancel} and \ref{yonezawa-lemma},) to deduce that Theorem \ref{decomp-V} is true for $k+1$. (See \cite[Proof of Theorem 10.1]{Wu-color} for more details.)
\end{proof}

\section{Chain Complexes Associated to Knotted MOY Graphs}\label{sec-chain-complex-def}

In this section, we define the chain complex associated to a colored link diagram. In fact, we define the chain complex associated to a knotted MOY graph, which is slightly more general and is useful in our proof of the invariance of the colored $\mathfrak{sl}(N)$ link homology.

\begin{figure}[h]

\setlength{\unitlength}{1pt}

\begin{picture}(360,50)(-180,-30)

\linethickness{.5pt}


\put(-100,-20){\vector(1,1){40}}

\put(-60,-20){\line(-1,1){15}}

\put(-85,5){\vector(-1,1){15}}

\put(-84,-30){$+$}


\put(100,-20){\vector(-1,1){40}}

\put(60,-20){\line(1,1){15}}

\put(85,5){\vector(1,1){15}}

\put(76,-30){$-$}

\end{picture}

\caption{}\label{crossing-sign-figure}

\end{figure}

\subsection{Knotted MOY graphs}

\begin{definition}\label{knotted-MOY-def}
A knotted MOY graph is an immersion of an abstract MOY graph into $\mathbb{R}^2$ such that:
\begin{itemize}
	\item The only singularities are finitely many transversal double points in the interior of edges (that is, away from the vertices.)
	\item At each of these transversal double points, one of the two edges is specified as ``upper", and the other as ``lower".
\end{itemize}
Each transversal double point in a knotted MOY graph is called a crossing. We follow the usual sign convention for crossings given in Figure \ref{crossing-sign-figure}.

If there are crossings in an edge, these crossing divide the edge into several parts. We call each part a segment of the edge.
\end{definition}

Note that colored oriented link/tangle diagrams and (embedded) MOY graphs are knotted MOY graphs.

\begin{definition}\label{knotted-MOY-marking-def}
A marking of a knotted MOY graph $D$ consists the following:
\begin{enumerate}
	\item A finite collection of marked points on $D$ such that
	\begin{itemize}
	\item every segment of every edge of $D$ contains at least one marked point;
	\item all the end points (vertices of valence $1$) are marked;
	\item none of the crossings and internal vertices (vertices of valence at least $2$) is marked.
  \end{itemize}
  \item An assignment of pairwise disjoint alphabets to the marked points such that the alphabet associated to a marked point on an edge of color $m$ has $m$ independent indeterminates. (Recall that an alphabet is a finite collection of homogeneous indeterminates of degree $2$.)
\end{enumerate}
\end{definition}

\subsection{The chain complex associated to a knotted MOY graph}

Given a knotted MOY graph $D$ with a marking, we cut $D$ at the marked points. This produces a collection $\{D_1,\dots,D_m\}$ of simple knotted MOY graphs marked only at their end points. We call each $D_i$ a piece of $D$. It is easy to see that each $D_i$ is one of the following:
\begin{enumerate}[(i)]
	\item an oriented arc from one marked point to another,
	\item a star-shaped neighborhood of a vertex in an (embedded) MOY graph,
	\item a crossing with colored branches.
\end{enumerate}

For a given $D_i$, let $\mathbb{X}_1,\dots,\mathbb{X}_{n_i}$ be the alphabets assigned to all end points of $D_i$, among which $\mathbb{X}_1,\dots,\mathbb{X}_{k_i}$ are assigned to exits and $\mathbb{X}_{k_i+1},\dots,\mathbb{X}_{n_i}$ are assigned to entrances. Let $R_i=\Sym(\mathbb{X}_1|\cdots|\mathbb{X}_{n_i})$ and $w_i= \sum_{j=1}^{k_i} p_{N+1}(\mathbb{X}_j) - \sum_{j=k_i+1}^{n_i} p_{N+1}(\mathbb{X}_j)$. Then the unnormalized and normalized chain complexes $\hat{C}(D_i)$ and $C(D_i)$ associated to $D_i$ are objects of the category $\hch(\hmf_{R_i,w_i})$ differing from each other by a grading shift.

\begin{definition}\label{def-complex-embedded}
If $D_i$ is of type (i) or (ii), then it is an (embedded) MOY graph, and its matrix factorization $C(D_i)$ is an object of $\hmf_{R_i,w_i}$. In this case, the unnormalized and normalized chain complexes $\hat{C}(D_i)$ and $C(D_i)$ associated to $D_i$ are both defined to be the chain complex
\[
0 \rightarrow C(D_i) \rightarrow 0,
\]
where $C(D_i)$ is the matrix factorization associated to $D_i$ and is assigned the homological grading $0$. (The abuse of notations here should not be confusing.)
\end{definition}

For pieces of type (iii) (that is, colored crossings,) the definitions of these chain complexes are much more complex. We defer these definitions to the next subsection.

\begin{definition}\label{complex-knotted-MOY-def}
Let $D$ and $\{D_1,\dots,D_m\}$ be as above. Then
\begin{eqnarray*}
\hat{C}(D) & := & \bigotimes_{i=1}^m \hat{C}(D_i), \\
C(D) & := & \bigotimes_{i=1}^m C(D_i),
\end{eqnarray*}
where the tensor product is done over the common end points. For example, for two pieces $D_{i_1}$ and $D_{i_2}$ of $D$, let $\mathbb{W}_1,\dots,\mathbb{W}_l$ be the alphabets associated to their common end points. Then, in the above tensor product, 
\[
C(D_{i_1}) \otimes C(D_{i_2}) = C(D_{i_1}) \otimes_{\Sym(\mathbb{W}_1|\cdots|\mathbb{W}_l)} C(D_{i_2}).
\]

If $D$ is closed, i.e. has no end points, then $C(D)$ is an object of $\hch(\hmf_{\C,0})$. 

Assume $D$ has end points. Let $\mathbb{E}_1,\dots,\mathbb{E}_n$ be the alphabets assigned to all end points of $D$, among which $\mathbb{E}_1,\dots,\mathbb{E}_k$ are assigned to exits and $\mathbb{E}_{k+1},\dots,\mathbb{E}_n$ are assigned to entrances. Let $R=\Sym(\mathbb{E}_1|\cdots|\mathbb{E}_n)$ and $w= \sum_{i=1}^k p_{N+1}(\mathbb{E}_i) - \sum_{j=k+1}^n p_{N+1}(\mathbb{E}_j)$. In this case, $C(D)$ is an object of $\hch(\hmf_{R,w})$.

Note that, as an object of $\hch(\hmf_{R,w})$, $C(D)$ has a $\zed_2$-grading, a quantum grading and a homological grading.
\end{definition}

\begin{figure}[h]

\setlength{\unitlength}{1pt}

\begin{picture}(360,50)(-180,-30)

\linethickness{.5pt}


\put(-100,-20){\vector(1,1){40}}

\put(-60,-20){\line(-1,1){15}}

\put(-85,5){\vector(-1,1){15}}

\put(-84,-30){$c_{m,n}^+$}

\put(-92,16){\tiny{$_m$}}

\put(-70,16){\tiny{$_n$}}

\put(-105,-20){\tiny{$\mathbb{A}$}}
\put(-105,15){\tiny{$\mathbb{X}$}}

\put(-55,-20){\tiny{$\mathbb{B}$}}
\put(-55,15){\tiny{$\mathbb{Y}$}}


\put(100,-20){\vector(-1,1){40}}

\put(60,-20){\line(1,1){15}}

\put(85,5){\vector(1,1){15}}

\put(76,-30){$c_{m,n}^-$}

\put(68,16){\tiny{$_m$}}

\put(90,16){\tiny{$_n$}}

\put(55,-20){\tiny{$\mathbb{A}$}}
\put(55,15){\tiny{$\mathbb{X}$}}

\put(105,-20){\tiny{$\mathbb{B}$}}
\put(105,15){\tiny{$\mathbb{Y}$}}

\end{picture}

\caption{}\label{colored-crossing-sign-figure}

\end{figure}

\subsection{The chain complex associated to a colored crossing} Let $c_{m,n}^+$ and $c_{m,n}^-$ be the colored crossings with marked end points in Figure \ref{colored-crossing-sign-figure}. In this subsection, we define the chain complexes associated to them, which completes the definition of chain complexes associated to knotted MOY graphs.

\begin{figure}[ht]
\[
\input{square-m-n-k-left}
\]
\caption{}\label{decomp-V-special-1-figure}

\end{figure}

Modeling on Definition \ref{MOY-poly-def}, we would like to resolve $c_{m,n}^\pm$ into MOY graphs of the form in Figure \ref{decomp-V-special-1-figure} and define $\hat{C}(c_{m,n}^\pm)$ and $C(c_{m,n}^\pm)$ to be complexes such that, at each homological grading, the term of these chain complexes is of the form $C(\Gamma_k^L)$. To do this, we need the following lemma.

\begin{lemma}\cite[Lemma 11.11]{Wu-color}\label{colored-crossing-res-HMF}
Let $m,n$ be integers such that $0\leq m,n \leq N$. For $\max\{m-n,0\} \leq j,k \leq m$, 
\begin{eqnarray*}
& & \Hom_\HMF (C(\Gamma_j^L), C(\Gamma_k^L)) \\
& \cong & C(\emptyset) \{\qb{_{n+j+k-m}}{_k}\qb{_{n+j+k-m}}{_j} \qb{_{N+m-n-j-k}}{_{m-k}} \qb{_{N+m-n-j-k}}{_{m-j}} \qb{_N}{_{n+j+k-m}} q^{(m+n)N -n^2-m^2}\}.
\end{eqnarray*}

In particular, the lowest non-vanishing quantum grading of $\Hom_\HMF (C(\Gamma_j^L), C(\Gamma_k^L))$ is $(k-j)^2$. And the subspace of homogeneous elements of quantum degree $(k-j)^2$ of $\Hom_\HMF (C(\Gamma_j^L), C(\Gamma_k^L))$ is $1$-dimensional and have $\zed_2$-grading $0$.
\end{lemma}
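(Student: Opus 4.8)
The plan is to reduce the computation of $\Hom_{\HMF}(C(\Gamma_j^L), C(\Gamma_k^L))$ to a graded-dimension computation of a single closed MOY graph, and then evaluate that dimension via the MOY calculus we have already established. First I would use the standard duality for matrix factorizations: for objects $M, M'$ of $\hmf_{R,w}$ over a polynomial ring $R$ with $w$ in the maximal ideal, one has $\Hom_{\HMF}(M, M') \cong H_R(M_\bullet \otimes_R M')$ up to a grading shift, where $M_\bullet = \Hom_R(M, R)$. Since $C(\Gamma_j^L)$ and $C(\Gamma_k^L)$ are matrix factorizations associated to MOY graphs, the dual $C(\Gamma_j^L)_\bullet$ is (up to a known shift) the matrix factorization of the MOY graph $\overline{\Gamma_j^L}$ obtained by reversing all orientations, and $C(\Gamma_j^L)_\bullet \otimes C(\Gamma_k^L) \simeq C(\Gamma)$ for the closed MOY graph $\Gamma$ obtained by gluing $\overline{\Gamma_j^L}$ to $\Gamma_k^L$ along their common boundary (the entrances $\mathbb{A},\mathbb{B}$ and exits $\mathbb{X},\mathbb{Y}$). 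This identification, together with the precise grading shift (which I would extract from the grading-shift conventions in Definitions \ref{def-mf}, \ref{def-grading-shift-q} and the definition of $C(v)$), is the content of the isomorphism to be proved, once we know $\gdim C(\Gamma)$.

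Next I would compute $\gdim_R C(\Gamma) = \left\langle \Gamma \right\rangle_N$ by repeatedly applying the homological MOY relations. Here $\Gamma$ is a closed trivalent MOY graph built from two copies of the "square" tangle pattern of Figure \ref{decomp-V-special-1-figure}. Using Proposition \ref{decomp-II}, Proposition \ref{decomp-I}, Theorem \ref{decomp-III}, and especially Theorem \ref{decomp-IV} and Theorem \ref{decomp-V} to simplify the two glued squares, then Proposition \ref{circle-module} and Corollary \ref{circle-dimension} to evaluate the resulting colored circles, one obtains $\left\langle \Gamma \right\rangle_N$ as a product of quantum binomial coefficients. Organizing this carefully should reproduce the claimed expression
\[
\qb{n+j+k-m}{k}\qb{n+j+k-m}{j}\qb{N+m-n-j-k}{m-k}\qb{N+m-n-j-k}{m-j}\qb{N}{n+j+k-m} q^{(m+n)N-n^2-m^2}.
\]
Alternatively — and perhaps more cleanly — one can bypass the graph-gluing and instead reduce $C(\Gamma_j^L)$ and $C(\Gamma_k^L)$ themselves to standard forms using Theorem \ref{decomp-IV}, so that each becomes a direct sum of Koszul matrix factorizations on a common "ladder" graph; then $\Hom_{\HMF}$ between them decomposes as a direct sum of Hom-spaces between simpler pieces, each computed by Corollary \ref{part-symm-grade} and the Grassmannian cohomology computation in Theorem \ref{grassmannian}.

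For the "in particular" clauses: once the full graded dimension is known, the lowest non-vanishing quantum degree is read off as the minimal exponent of $q$ appearing with a nonzero coefficient. Each quantum binomial $\qb{a}{b}$ contributes its lowest term $q^{-b(a-b)}$, so I would sum these lowest exponents together with the overall shift $(m+n)N - n^2 - m^2$ and check by direct algebra that the total collapses to $(k-j)^2$; the coefficient of that lowest term is the product of the (all equal to $1$) leading coefficients of the $\qb{a}{b}$'s, hence $1$, giving the claimed $1$-dimensionality. The $\zed_2$-grading being $0$ follows because all the grading shifts $\left\langle \cdot \right\rangle$ accumulated in the reduction cancel in parity — this I would track through the homotopy equivalences used.

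The main obstacle I anticipate is bookkeeping: getting the grading shifts exactly right through the chain of MOY reductions, and verifying the nontrivial identity that the sum of the $-b(a-b)$ terms plus $(m+n)N - n^2 - m^2$ equals $(k-j)^2$. The algebraic simplification showing the lowest $q$-power is $(k-j)^2$ — rather than something depending on $m,n,N$ — is where a sign or index error would most easily creep in, so I would do that computation twice and also sanity-check it against the known uncolored case ($m=n=1$, treated in \cite{KR1}) and against the diagonal case $j=k$, where the lowest degree must be $0$ and the space must contain the identity morphism.
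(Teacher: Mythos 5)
Your proposal follows essentially the same route as the proof the paper defers to (\cite[Section 11]{Wu-color}): dualize, glue $\overline{\Gamma_j^L}$ to $\Gamma_k^L$ to obtain a closed MOY graph, and evaluate its graded dimension by repeatedly applying the already-established homological MOY decompositions, exactly as the other $\Hom_{\HMF}$ computations quoted in Section \ref{sec-morph} are obtained. The numerical identity you flag as the main risk does check out: summing the lowest exponents $-b(a-b)$ of the five quantum binomials with the shift $(m+n)N-n^2-m^2$, all terms involving $N$ cancel and the remainder collapses to $(k-j)^2$.
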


The proof of the above lemma is long and technical, we refer the reader to \cite[Section 11]{Wu-color} for the details.

\begin{corollary}\label{complex-colored-crossing-well-defined}
Let $m,n$ be integers such that $0\leq m,n \leq N$. 
\begin{enumerate}[(i)]
	\item For $\max\{m-n,0\}+1 \leq k \leq m$ and up to homotopy and scaling, there is a unique homogeneous morphism
\[
d_k^+ : C(\Gamma_k^L) \rightarrow C(\Gamma_{k-1}^L), 
\]
that has $\zed_2$-degree $0$ and quantum degree $1$, and is not homotopic to $0$. Moreover, $d_{k-1}^+ \circ d_k^+ \simeq 0$.
  \item For $\max\{m-n,0\} \leq k \leq m-1$ and up to homotopy and scaling, there is a unique homogeneous morphism
\[
d_k^- : C(\Gamma_k^L) \rightarrow C(\Gamma_{k+1}^L), 
\]
that has $\zed_2$-degree $0$ and quantum degree $1$, and is not homotopic to $0$. Moreover, $d_{k+1}^- \circ d_k^- \simeq 0$.
\end{enumerate}
\end{corollary}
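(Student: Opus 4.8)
## Proof Proposal

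The plan is to derive Corollary \ref{complex-colored-crossing-well-defined} directly from Lemma \ref{colored-crossing-res-HMF}, using the fact that we are working in the Krull-Schmidt category $\hmf_{R,w}$ and that the relevant $\Hom$-spaces have a one-dimensional lowest graded piece. First I would set up the grading bookkeeping: the matrix factorization $C(\Gamma_k^L)$ carries a quantum grading that depends on $k$ (coming from the grading shifts in the definition of $C(v)$ and the MOY decompositions), and I would record the quantum degree of $C(\Gamma_k^L)$ relative to $C(\Gamma_{k-1}^L)$ so that a morphism of quantum degree $1$ between them corresponds, after untwisting the shifts, to a homogeneous element of $\Hom_\HMF(C(\Gamma_k^L),C(\Gamma_{k-1}^L))$ of quantum degree equal to $(k-(k-1))^2=1$. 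By Lemma \ref{colored-crossing-res-HMF} (with $j=k-1$), this subspace is exactly one-dimensional and sits in $\zed_2$-degree $0$, and moreover it is the \emph{lowest} non-vanishing quantum degree. Hence up to homotopy and scaling there is a unique nonzero such morphism $d_k^+$; this gives existence and uniqueness for part (i), and the analogous argument with the roles of $j,k$ reversed gives part (ii).

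The one remaining point — and I expect this to be the main obstacle — is showing $d_{k-1}^+\circ d_k^+\simeq 0$ (and dually $d_{k+1}^-\circ d_k^-\simeq 0$). The composition $d_{k-1}^+\circ d_k^+$ is a morphism $C(\Gamma_k^L)\to C(\Gamma_{k-2}^L)$, and after untwisting shifts it is a homogeneous element of $\Hom_\HMF(C(\Gamma_k^L),C(\Gamma_{k-2}^L))$ of quantum degree $2$. The key observation is that Lemma \ref{colored-crossing-res-HMF} tells us the lowest non-vanishing quantum degree of $\Hom_\HMF(C(\Gamma_k^L),C(\Gamma_{k-2}^L))$ is $(k-(k-2))^2=4$. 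Since $2<4$, the degree-$2$ part of this $\Hom$-space is zero, so $d_{k-1}^+\circ d_k^+$ is homotopic to $0$ for purely degree-counting reasons. The same gap argument ($1<4$ would not be needed; rather $2<4$) handles the $d^-$ case.

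Thus the concrete steps are: (1) compute the quantum grading shift of $C(\Gamma_k^L)$ as a function of $k$, so that "quantum degree $1$" for $d_k^\pm$ translates correctly into the normalization of Lemma \ref{colored-crossing-res-HMF}; (2) invoke the one-dimensionality of the lowest graded piece of $\Hom_\HMF(C(\Gamma_k^L),C(\Gamma_{k\mp1}^L))$ to get existence and uniqueness up to homotopy and scaling of $d_k^\pm$, noting the $\zed_2$-degree is forced to be $0$; (3) observe that $d_{k\mp1}^\pm\circ d_k^\pm$ lands in a quantum degree ($=2$) strictly below the lowest non-vanishing degree ($=4$) of $\Hom_\HMF(C(\Gamma_k^L),C(\Gamma_{k\mp2}^L))$, hence is null-homotopic. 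Step (1) is the only place where a genuine (if routine) computation is needed; everything else is a formal consequence of Lemma \ref{colored-crossing-res-HMF} and the structure of graded $\Hom$-spaces in $\hmf_{R,w}$.
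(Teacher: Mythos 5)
Your proposal is correct and follows essentially the same route as the paper: existence, uniqueness, and the $\zed_2$-degree of $d_k^\pm$ come from the one-dimensionality of the quantum-degree-$(k-j)^2$ piece of $\Hom_{\HMF}(C(\Gamma_j^L),C(\Gamma_k^L))$ in Lemma \ref{colored-crossing-res-HMF} with $|k-j|=1$, and $d_{k\mp 1}^\pm\circ d_k^\pm\simeq 0$ follows because the composition has quantum degree $2$ while the lowest non-vanishing degree of the target $\Hom$-space is $4$. The "untwisting of shifts" in your step (1) is not actually needed, since both the corollary and Lemma \ref{colored-crossing-res-HMF} are stated for the unshifted $C(\Gamma_k^L)$; the shifts $\{q^{-(m-k)}\}$ only enter later in Definition \ref{complex-colored-crossing-def}.
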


\begin{proof}
The existence and uniqueness of $d_k^\pm$ follows easily from Lemma \ref{colored-crossing-res-HMF}. To prove that $d_{k-1}^+ \circ d_k^+ \simeq 0$, one just need to note that $d_{k-1}^+ \circ d_k^+$ has quantum degree $2$ and, by Lemma \ref{colored-crossing-res-HMF}, the least non-vanishing quantum degree of $\Hom_\HMF (C(\Gamma_k^L), C(\Gamma_{k-2}^L))$ is $4$. The same argument applies to show that $d_{k+1}^- \circ d_k^- \simeq 0$.
\end{proof}

\begin{definition}\label{complex-colored-crossing-def}
Let $c^\pm_{m,n}$ be the colored crossings in Figure \ref{colored-crossing-sign-figure}, $\hat{R}=\Sym(\mathbb{X}|\mathbb{Y}|\mathbb{A}|\mathbb{B})$ and $w= p_{N+1}(\mathbb{X}) +p_{N+1}(\mathbb{Y}) -p_{N+1}(\mathbb{A}) - p_{N+1}(\mathbb{B})$. 

We define the unnormalized chain complexes $\hat{C}(c^\pm_{m,n})$ first. 

If $m \leq n$, then $\hat{C}(c^+_{m,n})$ is defined to be the object
\[
0\rightarrow C(\Gamma^L_m) \xrightarrow{d^{+}_m} C(\Gamma^L_{m-1})\{q^{-1}\} \xrightarrow{d^{+}_{m-1}} \cdots \xrightarrow{d^{+}_1} C(\Gamma^L_0)\{q^{-m}\} \rightarrow 0
\]
of $\hch(\hmf_{\hat{R},w})$, where the homological grading on $\hat{C}(c^+_{m,n})$ is defined so that the term $C(\Gamma^L_{k})\{q^{-(m-k)}\}$ have homological grading $m-k$.

If $m>n$ , then $\hat{C}(c^+_{m,n})$ is defined to be the object
\[
0\rightarrow C(\Gamma^L_m) \xrightarrow{d^{+}_m} C(\Gamma^L_{m-1})\{q^{-1}\} \xrightarrow{d^{+}_{m-1}} \cdots \xrightarrow{d^{+}_{m-n+1}} C(\Gamma^L_{m-n})\{q^{-n}\} \rightarrow 0
\]
of $\hch(\hmf_{\hat{R},w})$, where the homological grading on $\hat{C}(c^+_{m,n})$ is defined so that the term $C(\Gamma^L_{k})\{q^{-(m-k)}\}$ has homological grading $m-k$.

If $m \leq n$, then $\hat{C}(c^-_{m,n})$ is defined to be the object
\[
0\rightarrow C(\Gamma^L_0)\{q^{m}\} \xrightarrow{d^{-}_0} \cdots \xrightarrow{d^{-}_{m-2}} C(\Gamma^L_{m-1}) \{ q \} \xrightarrow{d^{-}_{m-1}} C(\Gamma^L_m) \rightarrow 0
\]
of $\hch(\hmf_{\hat{R},w})$, where the homological grading on $\hat{C}(c^-_{m,n})$ is defined so that the term $C(\Gamma^L_{k})\{q^{m-k}\}$ has homological grading $k-m$.

If $m>n$ , then $\hat{C}(c^-_{m,n})$ is defined to be the object
\[
0\rightarrow C(\Gamma_{m-n})\{q^{n}\} \xrightarrow{d^{-}_{m-n}} \cdots \xrightarrow{d^{-}_{m-2}} C(\Gamma^L_{m-1})\{q\} \xrightarrow{d^{-}_{m-1}} C(\Gamma^L_{m}) \rightarrow 0
\]
of $\hch(\hmf_{\hat{R},w})$, where the homological grading on $\hat{C}(c^-_{m,n})$ is defined so that the term $C(\Gamma^L_{k})\{q^{m-k}\}$ has homological grading $k-m$.

The normalized chain complex $C(c^\pm_{m,n})$ is defined to be
\begin{eqnarray*}
C(c^+_{m,n}) & = & \begin{cases}
\hat{C}(c^+_{m,m})\left\langle m \right\rangle\|-m\| \{q^{m(N+1-m)}\} & \text{if } m=n, \\
\hat{C}(c^+_{m,n}) & \text{if } m\neq n,
\end{cases} \\
C(c^-_{m,n}) & = & \begin{cases}
\hat{C}(c^-_{m,m})\left\langle m \right\rangle\|m\| \{q^{-m(N+1-m)}\} & \text{if } m=n, \\
\hat{C}(c^-_{m,n}) & \text{if } m\neq n.
\end{cases}
\end{eqnarray*}
(Recall that $\|m\|$ means shifting the homological grading up by $m$. See Definition \ref{categories-of-complexes}.)
\end{definition}

Using Lemma \ref{marking-independence} and the uniqueness of $d_k^\pm$, it is easy to see that $\hat{C}(D)$ and $C(D)$ are independent of the choice of the marking. Please see \cite[Corollary 11.18]{Wu-color}for the proof of the following corollary.

\begin{corollary}\cite[Corollary 11.18]{Wu-color}\label{complex-knotted-MOY-marking-independence}
The isomorphism type of the chain complexes $\hat{C}(D)$ and $C(D)$ associated to a knotted MOY graph $D$ (see Definitions \ref{def-complex-embedded}, \ref{complex-knotted-MOY-def} and \ref{complex-colored-crossing-def}) is independent of the choice of the marking of $D$.
\end{corollary}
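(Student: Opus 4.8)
The plan is to reduce the claim to a single elementary change of marking and then close the argument with the uniqueness statements already in hand. First I would observe that any two markings of $D$ are joined by a finite sequence of moves of two types: (a) replacing the alphabet assigned to one marked point by another alphabet with the same number of indeterminates, disjoint from the others; and (b) adjoining one new marked point in the interior of a segment of an edge, with a fresh alphabet of the correct size, or the inverse of such a move. (To see this, compare the two markings with the common refinement built from the union of their marked-point sets, using (a) to reconcile shared points.) Since $\cong$ and $\simeq$ are equivalence relations preserved by $\bigotimes$, and the grading shifts relating $\hat{C}(D)$ and $C(D)$ do not involve the marking, it suffices to handle one move at a time on $\hat{C}$. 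A move of type (a) only relabels indeterminates and induces a tautological grading-preserving isomorphism of chain complexes, so all the work is in type (b).

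Next I would localize. Adjoining a marked point $p$ on a segment $s$ affects only the piece $D_i$ of $D$ containing $s$: it is replaced by a piece $D_i'$ (namely $D_i$ with an extra $2$-valent vertex at $p$, carrying a new alphabet $\mathbb{X}'$) together with an arc-piece $A$ meeting $D_i'$ precisely at $p$. Because the tensor products in Definition \ref{complex-knotted-MOY-def} are taken termwise over common end points and $\bigotimes$ is functorial (cf. Lemma \ref{morphism-sign}), it is enough to produce a grading-preserving homotopy equivalence
\[
\hat{C}(D_i')\otimes_{\Sym(\mathbb{X}')}\hat{C}(A)\ \xrightarrow{\ \simeq\ }\ \hat{C}(D_i)
\]
in $\hch(\hmf_{R_i,w_i})$; tensoring with the identities on the remaining pieces then gives $\hat{C}(D')\simeq\hat{C}(D)$. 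When $D_i$ is of type (i) or (ii), all three of $\hat{C}(D_i),\hat{C}(D_i'),\hat{C}(A)$ are matrix factorizations in homological degree $0$, the left side is exactly the matrix factorization of the embedded MOY graph $D_i$ equipped with the refined marking, and the equivalence is immediate from Lemma \ref{marking-independence}.

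The substantive case is $D_i=c^{\pm}_{m,n}$. Here $\hat{C}(D_i)$ has terms $C(\Gamma^L_k)\{q^{\bullet}\}$ with differentials $d^{\pm}_k$ (Definition \ref{complex-colored-crossing-def}), and since $\hat{C}(A)$ is concentrated in homological degree $0$, the complex $\hat{C}(D_i')\otimes_{\Sym(\mathbb{X}')}\hat{C}(A)$ has terms $C((\Gamma^L_k)')\otimes_{\Sym(\mathbb{X}')}C(A)$ and differentials $d^{\pm}_k\otimes\id$. By Lemma \ref{marking-independence} each such term is grading-preservingly homotopy equivalent to $C(\Gamma^L_k)$, and the functor $-\otimes_{\Sym(\mathbb{X}')}C(A)$ is a homotopy equivalence (a quasi-inverse is given by the oppositely oriented arc), hence reflects null-homotopy; so transporting $d^{\pm}_k\otimes\id$ along these equivalences yields a homogeneous morphism $\tilde{d}_k\colon C(\Gamma^L_k)\to C(\Gamma^L_{k\mp1})$ of $\zed_2$-degree $0$ and quantum degree $1$ that is not null-homotopic. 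By the uniqueness clause of Corollary \ref{complex-colored-crossing-well-defined} --- which rests on the $1$-dimensionality of the extremal $\Hom_{\HMF}$-spaces in Lemma \ref{colored-crossing-res-HMF} --- one gets $\tilde{d}_k\approx d^{\pm}_k$. Absorbing the nonzero scalars into the termwise equivalences and correcting the remaining null-homotopic discrepancies, I would then build the sought chain homotopy equivalence by induction on homological grading.

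The hard part will be precisely this last assembly: promoting the termwise equivalences of Lemma \ref{marking-independence} to a genuine morphism of complexes, and verifying that the differentials --- which a priori agree only up to a nonzero scalar and a null-homotopy, and only satisfy $d^2\simeq 0$ --- can be corrected coherently so that the homotopy type is unchanged. The scalar ambiguity is harmless, and the rigidity that makes the inductive correction go through is exactly the $1$-dimensionality of the lowest-degree morphism spaces supplied by Lemma \ref{colored-crossing-res-HMF}; carrying this out is routine bookkeeping but is where the actual content sits.
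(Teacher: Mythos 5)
Your proposal is correct and follows the same route the paper takes: reduce to adding/removing a single marked point, invoke Lemma \ref{marking-independence} termwise, and use the uniqueness clause of Corollary \ref{complex-colored-crossing-well-defined} (resting on Lemma \ref{colored-crossing-res-HMF}) to identify the transported differentials with $d_k^\pm$ up to scalar. The final assembly you flag as the hard part is in fact easier than you fear, since the complexes live over the homotopy category $\hmf$, where ``commuting up to homotopy'' is literal commutativity, so only the recursive rescaling of the termwise equivalences is needed.
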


By the uniqueness of $d_k^\pm$, it is also easy to see that, for uncolored link diagrams, $C(D)$ is isomorphic to the Khovanov-Rozansky chain complex given in \cite{KR1}. See \cite[Corollary 11.28]{Wu-color} for a proof of the following corollary.

\begin{corollary}\cite[Corollary 11.28]{Wu-color}\label{explicit-differential-1-n-crossings--res}
If $D$ is a link diagram colored completely by $1$, then $C(D)$ is the Khovanov-Rozansky chain complex of $D$ given in \cite{KR1}.
\end{corollary}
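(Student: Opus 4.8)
The plan is to identify $C(D)$ with the Khovanov--Rozansky complex one piece at a time, using the tensor-product description of Definition \ref{complex-knotted-MOY-def}, and then assemble the local comparisons. First I would observe that cutting a $1$-colored link diagram $D$ at its marked points yields only two kinds of pieces: oriented arcs colored $1$, and colored crossings $c_{1,1}^{+}$ or $c_{1,1}^{-}$ (a link diagram has no internal vertices and no edges of color $\neq 1$). Since both $C(D)$ and the complex of \cite{KR1} are, by construction, tensor products of the objects attached to the pieces, taken over the common endpoint alphabets, it suffices to match these objects compatibly with the endpoint alphabets.

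For an arc colored $1$ running from a marked point carrying $\mathbb{X}=\{x\}$ to one carrying $\mathbb{Y}=\{y\}$, the associated object, placed in homological degree $0$, is the Koszul matrix factorization $(U,\,x-y)_{\C[x,y]}$ with potential $x^{N+1}-y^{N+1}$. By Remark \ref{MOY-freedom} its isomorphism type is independent of the homogeneous choice of $U$ of the correct degree, so one may take $U$ to be the polynomial $(x^{N+1}-y^{N+1})/(x-y)$ used in \cite{KR1}; together with the grading shift built into Definition \ref{koszul-mf-def} and the convention that degrees here are twice the usual ones, this is exactly the arc factorization of \cite{KR1}.

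For the crossings, since $m=n=1$ the unnormalized complex $\hat C(c_{1,1}^{+})$ is the two-term complex
\[
0\rightarrow C(\Gamma_1^L)\xrightarrow{\,d_1^{+}\,} C(\Gamma_0^L)\{q^{-1}\}\rightarrow 0
\]
with $C(\Gamma_1^L)$ in homological degree $0$, and $\hat C(c_{1,1}^{-})$ is $0\rightarrow C(\Gamma_0^L)\{q\}\xrightarrow{\,d_0^{-}\,}C(\Gamma_1^L)\rightarrow 0$. When $m=n=k=1$, the graph $\Gamma_1^L$ is (after deleting its $0$-colored edge) a merge vertex and a split vertex joined by a $2$-colored edge, i.e.\ the singular resolution of the crossing in \cite{KR1}, while $\Gamma_0^L$ is the oriented resolution (two parallel $1$-colored arcs); hence $C(\Gamma_1^L)$ and $C(\Gamma_0^L)$ are precisely the matrix factorizations assigned by \cite{KR1} to these two resolutions. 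By Corollary \ref{complex-colored-crossing-well-defined} (whose proof invokes the $m=n=1$ case of Lemma \ref{colored-crossing-res-HMF}), $d_1^{+}$ is the unique, up to homotopy and a nonzero scalar, homotopically nontrivial homogeneous morphism $C(\Gamma_1^L)\to C(\Gamma_0^L)$ of $\zed_2$-degree $0$ and quantum degree $1$; the corresponding differential of \cite{KR1} (one of the $\chi$-type maps) is a morphism of exactly this kind and is homotopically nontrivial, so it agrees with $d_1^{+}$ up to homotopy and rescaling, and rescaling the terms produces an isomorphism of complexes. The same argument applies to $c_{1,1}^{-}$. Finally, the normalization shifts in Definition \ref{complex-colored-crossing-def} specialize at $m=1$ to $\langle 1\rangle\,\|-1\|\,\{q^{N}\}$ for $c_{1,1}^{+}$ and to $\langle 1\rangle\,\|1\|\,\{q^{-N}\}$ for $c_{1,1}^{-}$, which I would check coincide with the shifts used in \cite{KR1}.

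The step I expect to be the main obstacle is the identification of the crossing differentials: one must confirm that the uniqueness clause of Corollary \ref{complex-colored-crossing-well-defined} genuinely applies --- that $\Hom_{\HMF}(C(\Gamma_1^L),C(\Gamma_0^L))$ is one-dimensional in quantum degree $1$, which is the $m=n=1$ specialization of Lemma \ref{colored-crossing-res-HMF} --- and that the differential of \cite{KR1} lands in that one-dimensional space and is nonzero there. This, together with careful bookkeeping of the $\zed_2$-, quantum, and homological grading shifts, is the only genuinely delicate point; once each piece is matched compatibly on the overlapping endpoint alphabets, the tensor-product formulas for $C(D)$ and for the complex of \cite{KR1} agree term by term and as complexes, which gives the corollary.
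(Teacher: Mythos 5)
Your proposal is correct and follows essentially the same route the paper indicates: decompose $D$ into $1$-colored arcs and crossings $c_{1,1}^{\pm}$, identify $\Gamma_0^L$ and $\Gamma_1^L$ with the oriented and singular resolutions of \cite{KR1}, and invoke the uniqueness (up to homotopy and scaling) of the degree-$1$ morphism from Corollary \ref{complex-colored-crossing-well-defined} to match the differentials with the Khovanov--Rozansky $\chi$-maps, then check the normalization shifts. This is precisely the argument the paper summarizes by ``by the uniqueness of $d_k^\pm$, it is easy to see\dots''.
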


\subsection{The Euler Characteristic and the $\zed_2$-grading}\label{subsec-euler-char} In this subsection, we sketch a proof of Theorem \ref{euler-char-main}.

\begin{figure}[ht]
$
\xymatrix{
\input{tri-vertex-s-split} & \text{or} & \input{tri-vertex-m-split}
} 
$
\caption{}\label{tri-vertex-split} 

\end{figure}

\begin{definition}
Let $\Gamma$ be a closed trivalent MOY graph. Replace each edge of $\Gamma$ of color $m$ by $m$ parallel edges colored by $1$ and replace each vertex of $\Gamma$, as depicted in Figure \ref{tri-vertex}, to the corresponding configuration in Figure \ref{tri-vertex-split}, where each strand is an edge colored by $1$. This changes $\Gamma$ into a collection of disjoint embedded circles in the plane. 

The colored rotation number $\mathrm{cr}(\Gamma)$ of $\Gamma$ is defined to be the sum of the rotation numbers of these circles.
\end{definition}

Recall that the homology $H(\Gamma)$ of a MOY graph $\Gamma$ is defined in Definition \ref{homology-MOY-def}, and the graded dimension $\gdim(C(\Gamma))$ is defined to be
\[
\gdim(C(\Gamma)) = \sum_{\ve, i} \tau^\ve q^i H^{\ve,i}(\Gamma) \in \C[\tau,q]/(\tau^2-1),
\]
where $H^{\ve,i}(\Gamma)$ is the subspace of $H(\Gamma)$ of homogeneous elements of $\zed_2$-degree $\ve$ and quantum degree $i$. 

The following lemma implies Theorem \ref{euler-char-main}.

\begin{lemma}\cite[Theorem 14.7]{Wu-color}\label{MOY-gdim-rt}
Let $\Gamma$ be a closed trivalent MOY graph. Then
\begin{enumerate}
	\item $\gdim(C(\Gamma))|_{\tau=1} = \left\langle \Gamma \right\rangle_N$,
	\item $H^{\ve,i}(\Gamma)=0$ if $\ve-\mathrm{cr}(\Gamma)=1$.
\end{enumerate}
\end{lemma}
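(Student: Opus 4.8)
The plan is to prove both statements simultaneously by induction on the complexity of $\Gamma$, using the MOY calculus (Theorem \ref{MOY-poly-skein}) as the skeleton of the argument. The key observation is that every homological version of a MOY relation established in Sections \ref{sec-MOY-decomps-part1} and \ref{sec-MOY-decomps-part2} (Propositions \ref{circle-module}, \ref{decomp-II}, \ref{decomp-I} and Theorems \ref{decomp-III}, \ref{decomp-IV}, \ref{decomp-V}) is a homotopy equivalence $C(\Gamma) \simeq \bigoplus_i C(\Gamma_i)\{F_i(q)\}$ in which each $\Gamma_i$ is a ``simpler'' closed trivalent MOY graph. Since $H(\Gamma)$ is functorial with respect to such homotopy equivalences and $\gdim$ is additive over direct sums and multiplicative under grading shifts, part (1) is essentially a line-by-line translation of the corresponding polynomial identity in Theorem \ref{MOY-poly-skein}, once one checks the base case. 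First I would verify the base case: for a disjoint union of circles $\bigcirc_{m_1}\sqcup\cdots\sqcup\bigcirc_{m_r}$, Corollary \ref{circle-dimension} gives $\gdim(C(\bigcirc_m))|_{\tau=1} = \qb{N}{m} = \left\langle\bigcirc_m\right\rangle_N$, and the colored rotation number records $\zed_2$-degrees correctly via Proposition \ref{circle-module} (the $\langle m\rangle$ shift contributes $m \bmod 2$ to the $\zed_2$-grading, matching $\mathrm{rot}(C)=\pm1$ weighted by color). The inductive step then checks that each homological MOY relation, when specialized at $\tau=1$, recovers the corresponding part of Theorem \ref{MOY-poly-skein}; part (9) of that theorem guarantees the relations (1)--(8) suffice to reduce any closed trivalent $\Gamma$ to circles, so the induction terminates. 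One must be slightly careful that the reductions used actually decrease a well-defined complexity (e.g., total width plus number of vertices, or an invariant tailored as in \cite{Wu-color}); I would fix such a complexity measure at the outset and confirm each relation strictly decreases it on at least one side.

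For part (2), the $\zed_2$-grading statement, the strategy is to track the $\zed_2$-degree through the same induction. The point is that the $\zed_2$-grading of $C(\Gamma)$, hence of $H(\Gamma)$, is constrained: each homological MOY relation is a homotopy equivalence of \emph{graded} matrix factorizations preserving the $\zed_2$-grading, and the grading shifts $\langle 1\rangle$ appearing (in Propositions \ref{circle-module}, \ref{decomp-I} and Theorem \ref{decomp-III}) shift the $\zed_2$-grading by $1$. On the combinatorial side, $\mathrm{cr}(\Gamma)$ is defined exactly so that resolving $\Gamma$ into $1$-colored circles and summing rotation numbers changes in a controlled way under each MOY move: a bouquet/contraction move does not change $\mathrm{cr}$ modulo $2$, while the moves introducing a $\langle n\rangle$ shift (MOY relations (1), (4), (7)) change $\mathrm{cr}(\Gamma)$ by exactly $n$ modulo $2$. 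So I would check, relation by relation, that the parity of $\mathrm{cr}(\Gamma_i) + (\text{total }\langle\cdot\rangle\text{-shift to }\Gamma_i)$ is independent of $i$ and equals $\mathrm{cr}(\Gamma)$, and then invoke the base-case computation for circles, where $H^{\ve,i}(\bigcirc_m)$ is concentrated in $\zed_2$-degree $m \equiv \mathrm{cr}(\bigcirc_m) \pmod 2$ by Proposition \ref{circle-module}.

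The main obstacle I anticipate is bookkeeping the $\zed_2$-grading through MOY relation (7) (Theorem \ref{decomp-V}) and, to a lesser extent, relation (5) (Theorem \ref{decomp-III}): these involve $\langle k\rangle$-shifts that depend on summation indices, and one must confirm that the parity of the shift on each summand $\Gamma_j\{\qb{l}{k-j}\}$ correlates correctly with the change in colored rotation number when the local picture is redrawn. Concretely, the hard part is an honest verification that $\mathrm{cr}$ behaves additively under the local replacements depicted in the figures for Theorems \ref{decomp-IV} and \ref{decomp-V}, since $\mathrm{cr}$ is a global quantity defined via a non-canonical resolution into circles — one needs that the parity of the rotation-number sum is insensitive to the resolution choice (virtual Reidemeister moves, as noted after \eqref{MOY-bracket-def}) and that it transforms predictably under edge contraction. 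Once this parity lemma for $\mathrm{cr}$ is in hand, the rest of part (2) follows formally. Finally, Theorem \ref{euler-char-main} is deduced by applying Lemma \ref{MOY-gdim-rt} to the MOY resolutions appearing in the chain complex $C(L)$ of a link diagram (Definitions \ref{complex-knotted-MOY-def}, \ref{complex-colored-crossing-def}), taking the alternating sum over homological degree to get the Euler characteristic: part (1) together with the definition of $\mathrm{RT}_L(q)$ (Definition \ref{MOY-poly-def}) gives $\mathrm{P}_L(1,q,-1) = \mathrm{RT}_L(q)$, and part (2) together with the observation that each crossing resolution and each shifting factor $\mathsf{s}(c)$ changes the relevant parities consistently yields the vanishing statement $H^{\ve,i,j}(L)=0$ for $\ve - \tc(L) \equiv 1 \pmod 2$.
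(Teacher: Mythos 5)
Your strategy matches the paper's: part (1) follows because the homological MOY relations of Sections \ref{sec-MOY-decomps-part1} and \ref{sec-MOY-decomps-part2} show that $\gdim(C(\Gamma))|_{\tau=1}$ satisfies MOY Relations (1)--(7), which by the uniqueness statement (Part (9) of Theorem \ref{MOY-poly-skein}) forces it to equal $\left\langle \Gamma\right\rangle_N$, and part (2) is obtained by tracking the $\zed_2$-parity through the same induction (which the paper runs on the highest color of an edge, answering your question about a terminating complexity measure). The parity bookkeeping for $\mathrm{cr}$ that you flag as the main obstacle is indeed where the real work lies, and it is deferred to \cite[Theorem 14.7]{Wu-color} in the paper as well.
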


\begin{proof}[Sketch of Proof]
By Part (9) of Theorem \ref{MOY-poly-skein}, we know that MOY Relations (1)-(7) uniquely determine the $\mathfrak{sl}(N)$ MOY graph polynomial. From Sections \ref{sec-MOY-decomps-part1} and \ref{sec-MOY-decomps-part2}, we know that $C(\Gamma)$ satisfies homological versions of these relation, which implies that $\gdim(C(\Gamma))|_{\tau=1}$ satisfies MOY Relations (1)-(7). So $\gdim(C(\Gamma))|_{\tau=1} = \left\langle \Gamma \right\rangle_N$.

The proof of Part (9) of Theorem \ref{MOY-poly-skein} in \cite[Proof of Theorem 14.7]{Wu-color} is an induction on the highest color of an edge of $\Gamma$. The same inductive argument can be used to show that $H^{\ve,i}(\Gamma)=0$ if $\ve-\mathrm{cr}(\Gamma)=1$. (See \cite[Theorem 14.7]{Wu-color} for more details.)
\end{proof}

\begin{proof}[Proof of Theorem \ref{euler-char-main}]
Theorem \ref{euler-char-main} follows easily from Lemma \ref{MOY-gdim-rt} and Definitions \ref{MOY-poly-def} and \ref{complex-colored-crossing-def}.
\end{proof}

\section{Invariance under Reidemeister Moves}\label{sec-inv-reidemeister}

In this section, we sketch a proof of the invariance of the homotopy type of $C(D)$ under Reidemeister moves, which completes the proof of Theorem \ref{main}. The proof is an induction on the highest color of the link. When the highest color of a link is $1$, then the link is uncolored and the invariance is proved by Khovanov and Rozansky in \cite{KR1}. To complete this inductive argument, we need to establish the invariance of the unnormalized chain complex $\hat{C}(D)$ under the so called fork sliding.

\subsection{Invariance under fork sliding}\label{subsec-inv-fork}

\begin{figure}[ht]
$
\xymatrix{
\input{fork-sliding-general-10} & \input{fork-sliding-general-11} && \input{fork-sliding-general-12} & \input{fork-sliding-general-13} \\
\input{fork-sliding-general-20} & \input{fork-sliding-general-21} && \input{fork-sliding-general-22} & \input{fork-sliding-general-23} \\
\input{fork-sliding-general-30} & \input{fork-sliding-general-31} && \input{fork-sliding-general-32} & \input{fork-sliding-general-33} \\
\input{fork-sliding-general-40} & \input{fork-sliding-general-41} && \input{fork-sliding-general-42} & \input{fork-sliding-general-43}
}
$
\caption{}\label{fork-sliding-invariance-general-fig}

\end{figure}

\begin{proposition}\cite[Theorem 12.1]{Wu-color}\label{fork-sliding-invariance-general}
Let $D_{i,j}^\pm$ be the knotted MOY graphs in Figure \ref{fork-sliding-invariance-general-fig}. Then $\hat{C}(D_{i,0}^+) \simeq \hat{C}(D_{i,1}^+)$ and $\hat{C}(D_{i,0}^-) \simeq \hat{C}(D_{i,1}^-)$. That is, $\hat{C}(D_{i,0}^+)$ (resp. $\hat{C}(D_{i,0}^-)$) is isomorphic in $\hch(\hmf)$ to $\hat{C}(D_{i,1}^+)$ (resp. $\hat{C}(D_{i,1}^-)$).
\end{proposition}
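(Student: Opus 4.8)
The plan is to establish the fork-sliding invariance by reducing it, via the homological MOY calculus and the uniqueness statements for the elementary morphisms, to a comparison of two small complexes that can be distinguished by their graded dimension and the Krull-Schmidt property. First I would fix a marking of each $D_{i,j}^\pm$ compatible with the one-to-one correspondence of end points, so that $\hat{C}(D_{i,0}^\pm)$ and $\hat{C}(D_{i,1}^\pm)$ are genuinely objects of the same $\hch(\hmf_{R,w})$; by Corollary \ref{complex-knotted-MOY-marking-independence} the homotopy type does not depend on this choice. By the symmetry statements at the start of Section \ref{sec-MOY-decomps-part2} (reversing orientations of the MOY graph or of $\mathbb{R}^2$) it suffices to treat one representative case, say $D_{1,0}^+$ versus $D_{1,1}^+$; all other cases follow by these symmetries together with, where needed, replacing $c^+$ by $c^-$ (which only reverses the direction of the differentials $d_k^\pm$ and shifts the homological grading, cf.\ Definition \ref{complex-colored-crossing-def}).

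The core step is to resolve the crossing in both $D_{1,0}^+$ and $D_{1,1}^+$ using Definition \ref{complex-colored-crossing-def}. This expresses each $\hat{C}(D_{1,j}^+)$ as a bounded complex whose $k$-th chain object is $C(\Gamma_k^L)$ glued (via tensor product over the common alphabet) to the fixed trivalent vertex of color $(m,l,m+l)$ sitting either below the crossing (for $D_{1,0}^+$) or above it (for $D_{1,1}^+$). Next I would apply Theorem \ref{decomp-V} (the homological version of MOY Relation (7)) to each chain object: this decomposes every $C(\Gamma_k^L \cup \text{vertex})$ into a direct sum of the MOY graphs appearing in the other resolution, with the quantum-integer multiplicities $\qb{l}{k-j}$ prescribed by the MOY calculus. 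The point is that both complexes, after this decomposition, have chain objects that are direct sums of the \emph{same} collection of indecomposable matrix factorizations, so the two complexes differ only in how the differentials are assembled. Here I would invoke the uniqueness clauses — Corollary \ref{complex-colored-crossing-well-defined}, Lemma \ref{colored-crossing-res-HMF}, and the uniqueness of the $\chi$-morphisms and of $\phi,\overline\phi$ in Section \ref{sec-morph} — to pin down each component of the differential up to homotopy and a nonzero scalar, reducing the identification of $\hat{C}(D_{1,0}^+)$ with $\hat{C}(D_{1,1}^+)$ to a Gaussian-elimination argument on filtered complexes: the off-diagonal pieces that obstruct a direct identification are shown to be strictly lower in an appropriate filtration (induced by the quantum grading, as in Lemma \ref{colored-crossing-res-HMF}, where the lowest nonvanishing grading in $\Hom_\HMF(C(\Gamma_j^L),C(\Gamma_k^L))$ is $(k-j)^2$), hence can be cancelled by the standard homotopy-category Gaussian elimination.

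I expect the main obstacle to be precisely this bookkeeping: one must verify that after applying Theorem \ref{decomp-V} the resulting ``matrix of morphisms'' for the differential really is upper-triangular (up to homotopy) with the diagonal entries being exactly the elementary morphisms $d_k^\pm$ occurring in the other resolution, and that the homotopies implementing Gaussian elimination are compatible with the grading shifts in Definition \ref{complex-colored-crossing-def}. The subtlety is that Theorem \ref{decomp-V} gives only an abstract homotopy equivalence — not canonical maps — so one cannot simply ``read off'' the differential; instead one uses the Krull-Schmidt property (Lemma \ref{Krull-Schmidt-hmf}, Lemma \ref{KS-oplus-cancel}, Lemma \ref{yonezawa-lemma}) to conclude that once the graded dimensions of $\hat{C}(D_{1,0}^+)$ and $\hat{C}(D_{1,1}^+)$ agree in $\hch(\hmf)$ (a direct computation from the MOY polynomial side, using that $\gdim$ satisfies all MOY relations) and the obstruction complex has been shown to have zero graded dimension, Proposition \ref{prop-homology-detects-homotopy} forces it to be contractible. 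A secondary technical point, worth isolating as a lemma, is the case $m+l > N$ or $n$ at the boundary of the allowed range, where some $\Gamma_k^L$ become zero objects; the conventions $\qb{a}{b}=0$ for $b<0$ or $b>a$ in Theorem \ref{decomp-V} handle this, but one should check the degenerate complexes explicitly. Full details are in \cite[Section 12]{Wu-color}.
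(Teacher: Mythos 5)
Your overall toolkit (Gaussian elimination, Krull--Schmidt, MOY decompositions) overlaps with the paper's, but the strategy is different and contains a genuine gap at its crucial step. You propose to resolve both crossings, decompose each chain object of $\hat{C}(D_{1,1}^{+})$ via Theorem \ref{decomp-V}, observe that the chain objects of the two complexes then agree, and conclude by arguing that once the graded dimensions match, the ``obstruction complex'' has zero graded dimension and Proposition \ref{prop-homology-detects-homotopy} forces contractibility. This last inference is where the argument fails: Proposition \ref{prop-homology-detects-homotopy} applies to a single matrix factorization, not to a bounded chain complex over $\hmf$. Two objects of $\hch(\hmf)$ with isomorphic chain objects in every homological degree and equal graded Euler characteristics need not be homotopy equivalent --- the differential matters, and the alternating sum of graded dimensions cannot detect it (compare $0\to V\xrightarrow{\id}V\to 0$ with $0\to V\xrightarrow{0}V\to 0$). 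To run Gaussian elimination you must know that specific components of the differential of $\hat{C}(D_{1,1}^{+})$ are isomorphisms (not merely nonzero multiples of a morphism unique up to scaling --- they could a priori be zero), and the uniqueness clauses of Lemma \ref{colored-crossing-res-HMF} and Corollary \ref{complex-colored-crossing-well-defined} do not supply this. You correctly flag that Theorem \ref{decomp-V} gives no canonical maps, but your proposed workaround does not close the gap.

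The paper's route is structured precisely to avoid this problem. It first proves the special cases $m=1$ or $l=1$ (Lemma \ref{fork-sliding-invariance-special}), where explicit formulas for the differentials $d_k^{\pm}$ are available from \cite[Theorem 11.26]{Wu-color}; only with these explicit descriptions can one verify that the relevant components are isomorphisms and apply the Gaussian Elimination Lemma honestly. The general case is then obtained by induction on the color: one tensors with an extra fork so that MOY Relation (3) (Proposition \ref{decomp-II}) yields $\hat{C}(\widetilde{D}_{1,j}^{+})\cong\hat{C}(D_{1,j}^{+})\{[k+1]\}$, slides the lower-colored forks across using the induction hypothesis and the $l=1$ (or $m=1$) case, and finally cancels the factor $[k+1]$ using the Krull--Schmidt property via Lemmas \ref{Krull-Schmidt-hmf} and \ref{yonezawa-lemma}. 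This induction is the mechanism that lets one bypass explicit differentials for general colors; your proposal is missing both the induction and the explicit base-case computation that would make the Gaussian elimination legitimate.
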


We prove Proposition \ref{fork-sliding-invariance-general} by induction. The hardest part of the proof is to show that Proposition \ref{fork-sliding-invariance-general} is true for some special cases in which either $m=1$ or $l=1$. Once we prove these special cases, the rest of the induction is quite easy. Next, we state these special cases of Proposition \ref{fork-sliding-invariance-general} separately as Lemma \ref{fork-sliding-invariance-special} and then use this lemma to prove Proposition \ref{fork-sliding-invariance-general}.

\begin{lemma}\cite[Proposition 12.2]{Wu-color}\label{fork-sliding-invariance-special}
Let $D_{i,j}^\pm$ be the knotted MOY graphs in Figure \ref{fork-sliding-invariance-general-fig}. 
\begin{enumerate}[(i)]
	\item If $l=1$, then $\hat{C}(D_{i,0}^+) \simeq \hat{C}(D_{i,1}^+)$ and $\hat{C}(D_{i,0}^-) \simeq \hat{C}(D_{i,1}^-)$ for $i=1,4$.
	\item If $m=1$, then $\hat{C}(D_{i,0}^+) \simeq \hat{C}(D_{i,1}^+)$ and $\hat{C}(D_{i,0}^-) \simeq \hat{C}(D_{i,1}^-)$ for $i=2,3$.
\end{enumerate}
\end{lemma}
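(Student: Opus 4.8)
The plan is to reduce the fork-sliding equivalence to a statement about chain complexes of colored crossings that can be verified term-by-term using the homological MOY relations of Sections \ref{sec-MOY-decomps-part1} and \ref{sec-MOY-decomps-part2}, together with the Krull-Schmidt property (Lemmas \ref{Krull-Schmidt-hmf}, \ref{KS-oplus-cancel}, \ref{yonezawa-lemma}). By the symmetries of the construction under reversing the orientation of $\mathbb{R}^2$ and of the MOY graph, it suffices to treat, say, part (i) with $i=1$ and the positive crossing; the remaining cases follow by applying these symmetries and by dualizing (the $c^-$ case is obtained from $c^+$ by the obvious reflection of chain complexes). So I would fix $l=1$, $m$ arbitrary, and compare $\hat C(D_{1,0}^+)$ with $\hat C(D_{1,1}^+)$.

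First I would write out both unnormalized chain complexes explicitly using Definition \ref{complex-colored-crossing-def}. Each of $\hat C(D_{1,0}^+)$ and $\hat C(D_{1,1}^+)$ is obtained by tensoring the complex $\hat C(c^+_{\bullet,\bullet})$ of the relevant crossing (resolved into the MOY graphs $\Gamma^L_k$ of Figure \ref{decomp-V-special-1-figure}) with the matrix factorization of the trivalent vertex, over the common end points. The key observation is that in each homological degree the resulting MOY graph contains the local configuration appearing on the left-hand side of MOY Relation (6) (Theorem \ref{decomp-IV}) or of MOY Relation (7) (Theorem \ref{decomp-V}), with $l$ specialized appropriately. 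Applying the homotopy equivalences of Theorems \ref{decomp-IV} and \ref{decomp-V} termwise, I would decompose each term of both complexes into a direct sum of the ``same'' building-block MOY graphs (with grading shifts recorded by quantum binomials). The claim then becomes: the two resulting complexes, built from identical objects but a priori different differentials, are homotopy equivalent.

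The next step is to pin down the differentials. Here I would use the uniqueness clauses: the morphisms $d^\pm_k$ between crossing resolutions are unique up to homotopy and scaling (Corollary \ref{complex-colored-crossing-well-defined}), the $\chi$-morphisms are unique in their degree (Proposition \ref{general-general-chi-maps}(iv)), and the edge-splitting/merging and bouquet morphisms are likewise unique (Sections \ref{subsec-bouquet}, \ref{subsec-edge-splitting}). Consequently the induced differential between the decomposed terms is, in each matrix entry, forced up to scalar; a quantum-degree count (as in the proof of Corollary \ref{complex-colored-crossing-well-defined}) shows the off-diagonal ``wrong-direction'' entries must vanish up to homotopy. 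What remains is to check that the nonzero scalars are actually nonzero — i.e. that the candidate equivalence is not accidentally degenerate — which one does by composing with the known nondegenerate compositions ($\chi^1\circ\chi^0$, $\overline\phi\circ\phi$, $\epsilon\circ\iota$, $\eta\circ\iota$, etc.) from Lemmas \ref{iota-epsilon-composition}, \ref{phibar-compose-phi} and Propositions \ref{creation+compose+saddle}, \ref{saddle+compose+annihilation}. Having matched objects and differentials, the two complexes are isomorphic in $\hch(\hmf)$, possibly after absorbing an overall invertible triangular change of basis justified by Krull-Schmidt.

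I expect the main obstacle to be the bookkeeping in the middle step: decomposing every term of both complexes consistently so that the decompositions on the $D_{i,0}$ side and the $D_{i,1}$ side are compatible with the differentials, rather than merely abstractly isomorphic in each fixed homological degree. The subtlety is that Theorems \ref{decomp-IV} and \ref{decomp-V} give homotopy equivalences that are only implicit (especially Theorem \ref{decomp-V}, whose equivalence is obtained via Krull-Schmidt and not written down), so one cannot simply ``plug in'' an explicit formula; instead one must argue that any choice of such equivalences can be corrected, degree by degree, to commute with the differentials — using that each correction term lives in a $\Hom_\HMF$ space whose low-degree part is one-dimensional. Carrying this out cleanly is precisely the technical heart of \cite[Section 12]{Wu-color}, and I would organize the induction on $m$ (resp. $l$) so that at each stage only the $k=1$ case of Theorem \ref{decomp-V}, already established in the sketch of its proof, is needed.
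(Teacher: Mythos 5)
Your route is genuinely different from the paper's: the actual proof first obtains \emph{explicit} formulas for the differentials $d_k^{\pm}$ (from \cite[Theorem 11.26]{Wu-color}) and then repeatedly applies the Gaussian Elimination Lemma of \cite{Bar-fast} to contract $\hat{C}(D_{i,1}^{\pm})$ onto $\hat{C}(D_{i,0}^{\pm})$, whereas you propose a termwise decomposition via Theorems \ref{decomp-IV} and \ref{decomp-V} followed by a uniqueness-of-morphisms argument. The gap in your version is the claim that after the termwise decomposition the two complexes are ``built from identical objects'' in each homological degree, so that only the differentials need to be matched. This is false: $\hat{C}(D_{i,1}^{\pm})$ is the total complex of a tensor product of \emph{two} crossing complexes and has strictly more summands, and a wider homological support, than $\hat{C}(D_{i,0}^{\pm})$. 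Already for $m=n=l=1$ the former is supported in three homological degrees and the latter in two; the equality of graded Euler characteristics coming from Theorem \ref{MOY-poly-skein} holds only because of sign cancellation. Consequently no triangular change of basis can identify the two complexes. One must instead split off contractible two-term subcomplexes of $\hat{C}(D_{i,1}^{\pm})$, which requires showing that specific matrix entries of the differential between isomorphic summands are homotopy equivalences --- i.e.\ precisely Gaussian elimination. Your suggestion of detecting nonvanishing by composing with the nondegenerate pairings of Lemmas \ref{iota-epsilon-composition} and \ref{phibar-compose-phi} is the right tool for that verification (a nonzero degree-zero endomorphism is invertible since the relevant endomorphism space is one-dimensional by Lemma \ref{colored-crossing-res-HMF}), but this is the technical heart of the proof, not a final sanity check; uniqueness up to scalar cannot by itself exclude that a needed entry is the zero scalar, and for that one really does need the explicit differentials or an equivalent amount of computation.

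Two smaller cautions. First, your plan to run an internal induction on $m$ (for $l=1$) risks circularity: this lemma is the base case of the induction in Proposition \ref{fork-sliding-invariance-general}, and any inductive step for the lemma that slides a fork past a crossing of lower color would be invoking fork sliding itself. The paper avoids this by proving the lemma directly, with $m$ arbitrary, by explicit computation. Second, the passage from $c^{+}$ to $c^{-}$ is not a formal ``reflection of chain complexes''; it is justified by the symmetry of the whole construction under reversing the orientation of $\mathbb{R}^2$, which should be stated rather than treated as automatic dualization.
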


\begin{proof}[Sketch of Proof of Lemma \ref{fork-sliding-invariance-special}]
Note that the the differential map $d_k^\pm$ is defined implicitly in Definition \ref{complex-colored-crossing-def}. To prove Lemma \ref{fork-sliding-invariance-special}, one need explicit descriptions of these differential maps, which are given in \cite[Theorem 11.26]{Wu-color}. With these explicit descriptions, one can use the Gaussian Elimination Lemma \cite[Lemma 4.2]{Bar-fast} repeatedly to simplify the chain complex $\hat{C}(D_{i,1}^\pm)$ and deduce that it is homotopic to $\hat{C}(D_{i,0}^\pm)$. Please see \cite[Subsections 11.4-5 and Section 12]{Wu-color} for the details.
\end{proof}

\begin{proof}[Proof of Proposition \ref{fork-sliding-invariance-general} (assuming Lemma \ref{fork-sliding-invariance-special} is true)]
Each homotopy equivalence in Proposition \ref{fork-sliding-invariance-general} can be proved by an induction on $m$ or $l$. We only give the details for the proof of 
\begin{equation}\label{fork-sliding-invariance-general-induction-1-+}
\hat{C}(D_{1,0}^+) \simeq \hat{C}(D_{1,1}^+).
\end{equation} 
The proof of the rest of Proposition \ref{fork-sliding-invariance-general} is very similar and left to the reader.

We prove \eqref{fork-sliding-invariance-general-induction-1-+} by an induction on $l$. The $l=1$ case is covered by Part (i) of Lemma \ref{fork-sliding-invariance-special}. Assume that \eqref{fork-sliding-invariance-general-induction-1-+} is true for some $l=k\geq 1$. Consider $l=k+1$. 

\begin{figure}[ht]
$
\xymatrix{
\input{fork-sliding-induction-10} \ar[rr]^{h}_{\cong} && \input{fork-sliding-induction-morph1} \ar[rr]^{\alpha}_{\simeq} && \input{fork-sliding-induction-morph2} \ar[lllld]_{\beta}^{\simeq} \\
\input{fork-sliding-induction-morph3} \ar[rr]^{\xi}_{\simeq} && \input{fork-sliding-induction-morph4} \ar[rr]^{\overline{h}}_{\cong} && \input{fork-sliding-induction-11}
}
$
\caption{}\label{fork-sliding-invariance-induction-fig2}

\end{figure}

Let $\widetilde{D}_{10}^+$ and $\widetilde{D}_{11}^+$ be the first and last knotted MOY graphs in Figure \ref{fork-sliding-invariance-induction-fig2}. By MOY Relation (3) (Proposition \ref{decomp-II}), we have  $\hat{C}(\widetilde{D}_{10}^+) \cong \hat{C}(D_{10}^+)\{[k+1]\}$ and $\hat{C}(\widetilde{D}_{11}^+) \cong \hat{C}(D_{11}^+)\{[k+1]\}$ in $\ch(\hmf)$. Consider the diagram in Figure \ref{fork-sliding-invariance-induction-fig2}. Here, $h$ and $\overline{h}$ are the isomorphisms in $\ch(\hmf)$ induced by the apparent bouquet moves. $\alpha$ is the isomorphism in $\hch(\hmf)$ given by induction hypothesis. $\beta$ is the isomorphism in $\hch(\hmf)$ given by Part (i) of Lemma \ref{fork-sliding-invariance-special}. $\xi$ is also the isomorphism in $\hch(\hmf)$ given by Part (i) of Lemma \ref{fork-sliding-invariance-special}. Altogether, we have
\[
\hat{C}(D_{10}^+)\{[k+1]\} \cong \hat{C}(\widetilde{D}_{10}^+) \simeq \hat{C}(\widetilde{D}_{11}^+) \cong \hat{C}(D_{11}^+)\{[k+1]\}.
\]
So, by Lemmas \ref{Krull-Schmidt-hmf} and \ref{yonezawa-lemma}, $\hat{C}(D_{10}^+) \simeq \hat{C}(D_{11}^+)$ when $l=k+1$.
\end{proof}

\subsection{Invariance under Reidemeister moves} In this subsection, we prove the invariance of the homotopy type of $C(D)$ under Reidemeister moves. In fact, we prove the following slightly more general theorem.

\begin{theorem}\cite[Theorem 13.1]{Wu-color}\label{invariance-reidemeister-all}
Let $D_0$ and $D_1$ be two knotted MOY graphs. Assume that there is a finite sequence of Reidemeister moves that changes $D_0$ into $D_1$. Then $C(D_0) \simeq C(D_1)$, that is, they are isomorphic as objects of $\hch(\hmf)$.
\end{theorem}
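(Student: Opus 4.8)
The plan is to prove Theorem \ref{invariance-reidemeister-all} by induction on the highest color $M$ of the strands of $D_0$ and $D_1$, the role of the induction being to replace a Reidemeister move carried out along an $M$-colored strand by Reidemeister moves along strands of strictly smaller color together with fork slides. First I would reduce to the case that $D_0$ and $D_1$ differ by a single Reidemeister move, since homotopy equivalences in $\hch(\hmf)$ compose; moreover, since a Reidemeister move is supported in a disk, it is enough to produce a grading-preserving homotopy equivalence between the complexes of the two local tangles inside that disk, because tensoring with the (fixed) complement is a functor on $\hch(\hmf)$ that preserves homotopy equivalences and all three gradings. The base case $M=1$ is the invariance of the Khovanov--Rozansky chain complex under R1, R2 and R3 proved in \cite{KR1}, which applies here because of Corollary \ref{explicit-differential-1-n-crossings--res}.

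For the inductive step, assume the statement whenever the highest color is at most $M-1$, and let $T_0$ and $T_1$ be local tangles differing by one Reidemeister move that involves a strand colored $M\ge 2$. First, using MOY Relation (3) (Proposition \ref{decomp-II}), insert on each $M$-colored strand entering the move a digon splitting it into parallel strands colored $M-1$ and $1$; this turns $T_i$ into $T_i'$ with $\hat C(T_i')\simeq \hat C(T_i)\{[M]^{r}\}$, where $r$ is the number of strands split and $[M]=\qb{M}{1}$ is a nonzero element of $\zed_{\ge 0}[q,q^{-1}]$. Next, using fork sliding (Proposition \ref{fork-sliding-invariance-general}) repeatedly, push the merging vertices of these digons past every crossing of the move region, so that inside that region the $M$-colored strands are replaced by their $(M-1,1)$-cables and only colors $\le M-1$ survive; the original Reidemeister move then becomes its cabled version, a finite composite of ordinary Reidemeister moves among strands of color $\le M-1$, which is a homotopy equivalence by the inductive hypothesis. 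Sliding the vertices back, we obtain $\hat C(T_0')\simeq \hat C(T_1')$, hence $\hat C(T_0)\{[M]^{r}\}\simeq \hat C(T_1)\{[M]^{r}\}$. Since $\hch(\hmf)$ is Krull--Schmidt (Lemma \ref{Krull-Schmidt-hmf}) and the quantum grading shift $\{q\}$ is a strongly non-periodic autofunctor, Lemma \ref{yonezawa-lemma} applied with $G=[M]^{r}$ cancels the common factor and gives $\hat C(T_0)\simeq \hat C(T_1)$, compatibly with all three gradings.

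It then remains to pass from $\hat C$ to $C$. For an R2 or R3 move the multiset of crossings, with their colors and signs, is unchanged, so the normalization shifts of Definitions \ref{MOY-poly-def} and \ref{complex-colored-crossing-def} agree on the two sides and $C(T_0)\simeq C(T_1)$ follows immediately. For an R1 move one must check that the shift $\mathsf{s}$ attached to the new $c^{\pm}_{M,M}$ crossing equals the product of the shifts of the crossings created by cabling the curl, corrected by the quantum shift that is picked up when the cabled curl is undone; this is exactly the Euler-characteristic bookkeeping by which the normalization $\mathsf{s}$ was designed in \cite{MOY} to make $\mathrm{RT}$ invariant, so the shifts balance and $C(T_0)\simeq C(T_1)$ here as well. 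Reassembling the local tangles with their complements gives $C(D_0)\simeq C(D_1)$, completing the induction.

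The step I expect to be the real obstacle is the fork-sliding invariance used above, Proposition \ref{fork-sliding-invariance-general}: its special cases with $m=1$ or $l=1$ (Lemma \ref{fork-sliding-invariance-special}) require the explicit description of the differentials $d_k^{\pm}$ implicit in Corollary \ref{complex-colored-crossing-well-defined} and a long sequence of Gaussian eliminations. Once that is granted, the remainder of the inductive step is essentially formal: the combinatorial observation that a colored Reidemeister move is a composite of a cabled lower-color move and fork slides, and the shift bookkeeping in the R1 case, are routine but lengthy and are carried out in full in \cite[Sections 12--13]{Wu-color}.
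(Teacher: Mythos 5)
Your overall strategy --- induction on the highest color, cabling each $M$-colored strand into an $(M-1,1)$-pair via MOY Relation (3), transporting the splitting/merging vertices through the crossings by fork sliding, applying the inductive hypothesis to the cabled move, and cancelling the common factor $[M]^r$ with the Krull--Schmidt property and Lemma \ref{yonezawa-lemma} --- is exactly the paper's argument for the regular moves II$_a$, II$_b$ and III, and that part of your proposal is sound.

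The gap is in the Reidemeister I case. After you cable the curled $M$-colored strand and fork-slide the two vertices through the crossing, what you obtain is \emph{not} a configuration that can be undone by lower-color Reidemeister moves alone: besides the two lower-color curls and the clasp between the two cables, each trivalent vertex is left with a residual crossing between its two adjacent branches (a ``twisted fork'' $\Gamma_{m,1}^{\pm}$ in the notation of Figure \ref{twisted-forks-fig}). No Reidemeister move removes this crossing, because the two strands involved are joined at the vertex. The paper handles it with a separate computation (Lemma \ref{twisted-forks}, due to Yonezawa): $\hat C(\Gamma_{m,1}^{\pm})\simeq \hat C(\Gamma_{m,1})\{q^{\pm m}\}$, and these two extra factors of $q^{m}$ are indispensable for the grading bookkeeping --- in the paper's chain $\Gamma_1,\dots,\Gamma_7$ one needs $q^{-N}\cdot q^{-m(N+1-m)}\cdot q^{m}\cdot q^{m}=q^{-(m+1)(N-m)}$ to match the normalization of $C(c^{\pm}_{m+1,m+1})$ in Definition \ref{complex-colored-crossing-def}. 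Your appeal to ``the Euler-characteristic bookkeeping by which $\mathsf{s}$ was designed'' cannot substitute for this: an identity of graded Euler characteristics does not produce the homotopy equivalence, and in any case the shifts simply do not balance without the twisted-fork contribution. A second, smaller point: for R1 the unnormalized complex is not invariant but changes by $\left\langle m\right\rangle\|\pm m\|\{q^{\mp m(N+1-m)}\}$, so the inductive statement you carry along must be the explicit shift formula \eqref{invariance-reidemeister-I-unnormal+}--\eqref{invariance-reidemeister-I-unnormal-} rather than an unnormalized homotopy equivalence; the cancellation against the normalization of Definition \ref{complex-colored-crossing-def} then happens only at the very end.
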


The proof of Theorem \ref{invariance-reidemeister-all} is an induction on the highest color of the edges involved in the Reidemeister move. The starting point of our induction is the following theorem by Khovanov and Rozansky \cite{KR1}.

\begin{theorem}\cite[Theorem 2]{KR1}\label{invariance-reidemeister-all-color=1}
Let $D_0$ and $D_1$ be two knotted MOY graphs. Assume that there is a Reidemeister move changing $D_0$ into $D_1$ that involves only edges colored by $1$. Then $C(D_0) \simeq C(D_1)$, that is, they are isomorphic as objects of $\hch(\hmf)$. 
\end{theorem}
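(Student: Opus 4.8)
The plan is to reduce the statement to the locality of the construction and then to the invariance of the Khovanov--Rozansky chain complex, which is exactly \cite[Theorem 2]{KR1}. First I would observe that a Reidemeister move takes place inside a disk $B\subset\mathbb{R}^2$ in which every edge is colored by $1$, and that $D_0$ and $D_1$ coincide outside $B$. Writing $D_i=T_i\cup D'$, where $T_i=D_i\cap B$ is a color-$1$ tangle diagram and $D'$ is the fixed complement, Definition \ref{complex-knotted-MOY-def} gives $C(D_i)\cong C(T_i)\otimes_{\Sym(\mathbb{W}_1|\cdots|\mathbb{W}_l)}C(D')$, the tensor being taken over the alphabets $\mathbb{W}_1,\dots,\mathbb{W}_l$ assigned to the points of $\partial B$. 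Since $C(D')$ is a fixed object of $\hch(\hmf)$ and the endofunctor $(-)\otimes C(D')$ of $\hch(\hmf)$ sends homotopic morphisms to homotopic morphisms, it suffices to prove $C(T_0)\simeq C(T_1)$. Moreover, because every edge of $T_i$ is colored $1$, every piece of $T_i$ is a color-$1$ arc or a crossing $c^{\pm}_{1,1}$; by Definition \ref{complex-colored-crossing-def} with $m=n=1$ and the uniqueness of the differentials $d^{\pm}_k$ in Corollary \ref{complex-colored-crossing-well-defined}, $C(c^{\pm}_{1,1})$ is the two-term complex attached to a crossing by Khovanov and Rozansky, so $C(T_i)$ is isomorphic in $\hch(\hmf)$ to the Khovanov--Rozansky complex of $T_i$ --- a tangle version of Corollary \ref{explicit-differential-1-n-crossings--res}. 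Thus the claim is precisely \cite[Theorem 2]{KR1}, and what remains is to outline that proof.

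Next I would, using the symmetries noted at the start of Section \ref{sec-MOY-decomps-part1} (reversal of the orientation of the MOY graph or of $\mathbb{R}^2$) together with taking mirror images of crossings, reduce to one representative of each move. For \textbf{R1}, resolving the crossing of a color-$1$ curl produces a two-term complex whose two terms are $C$ of the straightened strand decorated by an extra color-$1$ ``bubble'' and $C$ of a diagram with a circle; the bubble edge is removed by the color-$1$ case of MOY Relation (3) (Proposition \ref{decomp-II}) and circle removal (Corollary \ref{circle-dimension}), and then a single Gaussian elimination \cite[Lemma 4.2]{Bar-fast} contracts the acyclic part, leaving $C$ of the straight strand up to a grading shift; the normalization shift $\{q^{N}\}\left\langle 1\right\rangle\|\mp 1\|$ built into $C(c^{\pm}_{1,1})$ for $m=n$ is exactly what cancels that shift, so $C(D_0)\simeq C(D_1)$ on the nose. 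For \textbf{R2}, the two-crossing tangle gives a three-term complex; expanding the middle resolutions by the color-$1$ MOY relations (Propositions \ref{decomp-II} and \ref{decomp-IV}, and for antiparallel strands the color-$1$ case of Theorem \ref{decomp-III}) exhibits an acyclic subcomplex, and repeated Gaussian elimination collapses the complex to that of the trivial tangle.

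For \textbf{R3} I would follow \cite{KR1}: one proves the braid-like R3 by decomposing the relevant $1,1,1$-colored MOY graph appearing in the middle of both complexes in two different ways, using that the morphisms connecting the resolutions are pinned down up to a nonzero scalar by their quantum degree --- this is the kind of degree-forced uniqueness encoded in Proposition \ref{general-general-chi-maps} --- so that the two sides of the move are each shown to be homotopy equivalent to a common third complex; the mixed-crossing variants of R3 then follow by combining the braid-like case with R2. Finally, chaining Steps gives $C(T_0)\simeq C(T_1)$ and hence $C(D_0)\simeq C(D_1)$. The hard part will be R3: unlike R1 and R2 it does not reduce to a single Gaussian-elimination computation, and matching the two sides requires the two-fold decomposition of a trivalent MOY graph plus the uniqueness (by degree) of the intertwining morphisms, followed by somewhat delicate bookkeeping to descend from the braid-like move to the general one; a secondary technical point is checking that the normalization shifts make R1 an honest homotopy equivalence rather than one up to an overall shift.
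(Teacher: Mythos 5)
Your proposal is essentially the route the paper takes: the paper offers no proof of this statement, but treats it as Khovanov and Rozansky's Theorem 2 in \cite{KR1}, the identification of the color-$1$ complex with theirs being exactly the content of Corollary \ref{explicit-differential-1-n-crossings--res} (via the uniqueness of the differentials $d_k^{\pm}$). Your locality step --- cutting out the disk where the move occurs, noting the tangle inside is entirely $1$-colored, and tensoring the resulting $\Sym(\mathbb{W}_1|\cdots|\mathbb{W}_l)$-linear homotopy equivalence with $C(D')$ --- is a correct and worthwhile elaboration of the reduction that the paper leaves implicit, since here $D_0,D_1$ are knotted MOY graphs that may carry arbitrary colors away from the move. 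One slip in your sketch of the R1 step: the closed-up wide resolution of a $1$-colored curl is the graph of MOY Relation (4), not Relation (3); its through-edge is colored $1$ with a $1$-colored loop attached along a $2$-colored segment, so it is removed by Proposition \ref{decomp-I} with $m=n=1$, contributing $\qb{N-1}{1}=[N-1]$ copies of the strand, while Relation (3)/Proposition \ref{decomp-II} does not apply to that local configuration (and would give $[2]$, spoiling the count). With Relation (4) the bookkeeping works as you describe: the circle term contributes $[N]$ copies by Corollary \ref{circle-dimension}, Gaussian elimination cancels $[N-1]$ of them, and the leftover shift is absorbed by the $m=n$ normalization in Definition \ref{complex-colored-crossing-def}. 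The rest of your outline of the R2 and R3 arguments is a fair summary of \cite{KR1} and is anyway not reproduced in this paper.
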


\begin{figure}[ht]
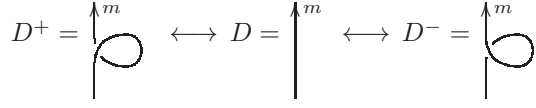

$
D^+= \xygraph{
!{0;/r1pc/:}
[u]
!{\xcapv@(0)=>>{m}}
!{\hover}
!{\hcap}
[ld]!{\xcapv@(0)}
}\, \longleftrightarrow \,
D=\xygraph{
!{0;/r1pc/:}
[u]
!{\xcapv@(0)=>>{m}}
!{\xcapv@(0)}
!{\xcapv@(0)}
}\, \longleftrightarrow \,
D^-=\xygraph{
!{0;/r1pc/:}
[u]
!{\xcapv@(0)=>>{m}}
!{\hunder}
!{\hcap}
[ld]!{\xcapv@(0)}
}
$
\caption{Reidemeister Move I}\label{reidemeisterI}

\end{figure}

\begin{figure}[ht]
$
\xygraph{
!{0;/r2pc/:}
[u]
!{\vtwist=<>{m}|{n}}
!{\vtwistneg}
} \, \longleftrightarrow \,
\xygraph{
!{0;/r2pc/:}
[u]
!{\xcapv[-1]@(0)=<>{m}}
!{\xcapv@(0)}
[ruu]!{\xcapv@(0)=><{n}}
!{\xcapv@(0)}
} \, \longleftrightarrow \,
\xygraph{
!{0;/r2pc/:}
[u]
!{\vtwistneg=<>{n}|{m}}
!{\vtwist}
}
$
\caption{Reidemeister Move II$_a$}\label{reidemeisterII-a}

\end{figure}

\begin{figure}[ht]
$
\xygraph{
!{0;/r2pc/:}
[u]
!{\vcross=>>{n}}
!{\vcrossneg[-1]|{m}}
} \, \longleftrightarrow \,
\xygraph{
!{0;/r2pc/:}
[u]
!{\xcapv[-1]@(0)=>}
!{\xcapv[-1]@(0)>{m}}
[ruu]!{\xcapv@(0)>{n}}
!{\xcapv@(0)=>}
} \, \longleftrightarrow \,
\xygraph{
!{0;/r2pc/:}
[u]
!{\vcrossneg=>|{n}}
!{\vcross<{m}}}
$
\caption{Reidemeister Move II$_b$}\label{reidemeisterII-b}

\end{figure}

\begin{figure}[ht]
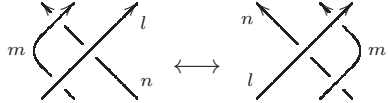

$
\xygraph{
!{0;/r1pc/:}
[uu]
!{\xoverv=<}
!{\xcapv[-1]|{m}}
!{\xoverv}
[uur]!{\xoverv}
[r]!{\zbendh@(0)>{n}}
[uul]!{\sbendh@(0)=>>{l}}
}\, \longleftrightarrow \,
\xygraph{
!{0;/r-1pc/:}
[u]
!{\xoverv}
!{\xcapv[-1]|{m}}
!{\xoverv=>}
[uur]!{\xoverv}
[r]!{\zbendh@(0)=>>{n}}
[uul]!{\sbendh@(0)>{l}}
}
$
\caption{Reidemeister Move III}\label{reidemeisterIII}

\end{figure}

\begin{proof}[Sketch of Proof of Theorem \ref{invariance-reidemeister-all}]
We need to show that the homotopy type of $C(D)$ is invariant under the Reidemeister moves in Figures \ref{reidemeisterI}, \ref{reidemeisterII-a}, \ref{reidemeisterII-b} and \ref{reidemeisterIII}.

The proofs of invariance under the regular Reidemeister moves (II$_a$,II$_b$ and III) are similar. We only give details for the Reidemeister move II$_a$ here.

\begin{figure}[ht]
$
D_0=\xygraph{
!{0;/r2pc/:}
[u]
!{\xcapv[-1]@(0)=<>{m}}
!{\xcapv@(0)}
[ruu]!{\xcapv@(0)=><{n}}
!{\xcapv@(0)}
}  \hspace{2cm}
D_1=\xygraph{
!{0;/r2pc/:}
[u]
!{\vtwist=<>{m}|{n}}
!{\vtwistneg}
}
$
\caption{}\label{reidemeisterII-a-proof}

\end{figure}

Let $D_0$ and $D_1$ be the knotted MOY graphs in Figure \ref{reidemeisterII-a-proof}. We prove by induction on $k$ that $C(D_0) \simeq C(D_1)$ if $1 \leq m,n \leq k$. If $k=1$, then this statement is true because it is a special case of Theorem \ref{invariance-reidemeister-all-color=1}. Assume that this statement is true for some $k\geq 1$. 

\begin{figure}[ht]
$\Gamma_0 =\xygraph{
!{0;/r3pc/:}
[u(1.25)]
!{\xcapv[-0.75]@(0)=<<{m}}
[u(0.25)]!{\xcapv=>|{1}}
[u]!{\xcapv[-1]=<|{m-1}}
!{\xcapv[-0.75]@(0)=<>{m}}
[r][u(2.75)]!{\xcapv[-0.75]@(0)=<<{n}}
[u(0.25)]!{\xcapv=>|{n-1}}
[u]!{\xcapv[-1]=<|{1}}
!{\xcapv[-0.75]@(0)=<>{n}}
}
\Gamma_2 = \xygraph{
!{0;/r1pc/:}
[uuuu]
!{\xcapv[-1]@(0)=<<{m}}
[ur]!{\xcapv@(0)=>>{n}}
[dll]!{\zbendv<{m-1}}
!{\vtwist}
[ru]!{\sbendv[-1]<{n-1}}
[lll]!{\vtwist}
[urr]!{\vtwist}
[l]!{\vtwist=<}
[lu]!{\xcapv@(0)=>}
!{\xcapv[-1]@(0)<{1}}
[rrruu]!{\xcapv@(0)=>}
!{\xcapv@(0)>{1}}
[llu]!{\vtwistneg}
[l]!{\vtwistneg}
[urr]!{\vtwistneg}
[l]!{\vtwistneg}
[lu]!{\sbendv}
[r]!{\zbendv}
[dl]!{\xcapv@(0)=><{n}}
[lu]!{\xcapv[-1]@(0)=<>{m}}
}
\Gamma_1=
\xygraph{
!{0;/r2pc/:}
[uu]
!{\xcapv[-0.5]@(0)=<<{m}}
[u(0.5)]!{\xcapv=>>{1}}
[u]!{\xcapv[-1]=<|{m-1}}
[r][u(1.5)]!{\xcapv[-0.5]@(0)=<<{n}}
[u(0.5)]!{\xcapv=>|{n-1}}
[u]!{\xcapv[-1]=<>{1}}
[l]!{\vtwist}
!{\vtwistneg}
!{\xcapv[-0.5]@(0)=<>{m}}
[ru]!{\xcapv[0.5]@(0)=><{n}}
}
$
\caption{}\label{reidemeisterII-a-bubbles}

\end{figure}

Now consider $k+1$. Assume that $1\leq m,n \leq k+1$ in $D_0$ and $D_1$. Let $\Gamma_0$, $\Gamma_1$ and $\Gamma_2$ be in the knotted MOY graphs in Figure \ref{reidemeisterII-a-bubbles}. Here, in case $m$ or $n=1$, we use the convention that an edge colored by $0$ is an edge that does not exist. By MOY Relation (3) (Proposition \ref{decomp-II}), we know that $\hat{C}(\Gamma_0) \simeq \hat{C}(D_0)\{[m][n]\}$ and $\hat{C}(\Gamma_1) \simeq \hat{C}(D_1)\{[m][n]\}$. Note that $m-1,n-1\leq k$. By the induction hypothesis and the normalization in Definition \ref{complex-colored-crossing-def}, we know that $\hat{C}(\Gamma_0) \simeq \hat{C}(\Gamma_2)$. By the invariance under the fork sliding (Proposition \ref{fork-sliding-invariance-general}), we know that $\hat{C}(\Gamma_1) \simeq \hat{C}(\Gamma_2)$. Thus, $\hat{C}(\Gamma_0) \simeq \hat{C}(\Gamma_1)$. By Lemmas \ref{Krull-Schmidt-hmf} and \ref{yonezawa-lemma}, it follows that $\hat{C}(D_0) \simeq \hat{C}(D_1)$, which, by the normalization in Definition \ref{complex-colored-crossing-def}, is equivalent to $C(D_0) \simeq C(D_1)$. This completes the induction and proves the invariance under Reidemeister move II$_a$. The invariance under Reidemeister moves II$_b$ and III follows similarly.

The invariance under Reidemeister move I is slightly more complex. We need the following lemma.

\begin{figure}[ht]
$
\xymatrix{
\input{twisted-fork-1-n} & \input{twisted-fork-1-n-neg} & \input{fork-1-n} \\
\input{twisted-fork-m-1} & \input{twisted-fork-m-1-neg} & \input{fork-m-1}
} 
$
\caption{}\label{twisted-forks-fig} 

\end{figure}

\begin{lemma}\cite[Proposition 6.1]{Yonezawa3}\label{twisted-forks}
Let $\Gamma_{1,n}$, $\Gamma_{1,n}^\pm$, $\Gamma_{m,1}$ and $\Gamma_{m,1}^\pm$ be the knotted MOY graphs in Figure \ref{twisted-forks-fig}. Then
\begin{eqnarray*}
\hat{C}(\Gamma_{1,n}^+) \simeq \hat{C}(\Gamma_{1,n})\{q^{n}\}, && \hat{C}(\Gamma_{1,n}^-) \simeq \hat{C}(\Gamma_{1,n})\{q^{-n}\}, \\
\hat{C}(\Gamma_{m,1}^+) \simeq \hat{C}(\Gamma_{m,1})\{q^{m}\}, && \hat{C}(\Gamma_{m,1}^-) \simeq \hat{C}(\Gamma_{m,1})\{q^{-m}\},
\end{eqnarray*}
where ``$\simeq$" is the isomorphism in $\hch(\hmf)$.
\end{lemma}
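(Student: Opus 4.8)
The plan is to prove $\hat{C}(\Gamma_{1,n}^{+})\simeq\hat{C}(\Gamma_{1,n})\{q^{n}\}$ in detail and to indicate how the other three statements follow by entirely analogous computations. First I would unwind the definitions. Up to the bouquet moves of Subsection \ref{subsec-bouquet}, $\Gamma_{1,n}^{+}$ is the colored crossing $c$ of Figure \ref{colored-crossing-sign-figure} between an edge of color $1$ and an edge of color $n$ with the merge vertex $V$ (merging the color-$1$ and color-$n$ edges into a single color-$(n+1)$ edge) glued on top, so that $\hat{C}(\Gamma_{1,n}^{+})\cong\hat{C}(c)\otimes C(V)$. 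Since one of the two colors at $c$ equals $1$, Definition \ref{complex-colored-crossing-def} shows that $\hat{C}(c)$ has exactly two nonzero terms, in homological degrees $0$ and $1$: they are $C(\Gamma_{a})$ and $C(\Gamma_{b})\{q^{-1}\}$, where $\Gamma_{a}$ is the ``wide'' resolution (containing a color-$(n+1)$ edge) and $\Gamma_{b}$ the other resolution of Figure \ref{decomp-V-special-1-figure} appropriate to $c$. Gluing $V$ on top, $\hat{C}(\Gamma_{1,n}^{+})$ becomes a two-term complex $0\to C(\Gamma_{a}\cup V)\xrightarrow{d} C(\Gamma_{b}\cup V)\{q^{-1}\}\to 0$.

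Next I would identify the two terms using the homological MOY calculus of Sections \ref{sec-MOY-decomps-part1}--\ref{sec-MOY-decomps-part2}. In $\Gamma_{a}\cup V$ the color-$(n+1)$ edge is split into colors $n$ and $1$ and then immediately re-merged by $V$, a genuine bigon, so MOY Relation (3) (Proposition \ref{decomp-II}) gives $C(\Gamma_{a}\cup V)\simeq C(\Gamma_{1,n})\{\qb{n+1}{1}\}=C(\Gamma_{1,n})\{[n+1]\}$. In $\Gamma_{b}\cup V$, after re-associating the merges by MOY Relation (2) (Corollary \ref{contract-expand}) and contracting edges (Lemma \ref{edge-contraction}), a bigon splitting a color-$n$ edge into colors $1$ and $n-1$ appears, so Proposition \ref{decomp-II} gives $C(\Gamma_{b}\cup V)\simeq C(\Gamma_{1,n})\{[n]\}$. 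As an internal consistency check, comparing graded dimensions with the alternating sign of the homological grading shows the Euler characteristic of $\hat{C}(\Gamma_{1,n}^{+})$ to be $([n+1]-q^{-1}[n])\cdot\gdim C(\Gamma_{1,n})=q^{n}\cdot\gdim C(\Gamma_{1,n})$, as it must be.

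Finally, using $[n+1]=q^{n}+q^{-1}[n]$ I would split $C(\Gamma_{a}\cup V)\simeq C(\Gamma_{1,n})\{q^{n}\}\oplus C(\Gamma_{1,n})\{q^{-1}[n]\}$, matching the target $C(\Gamma_{b}\cup V)\{q^{-1}\}\simeq C(\Gamma_{1,n})\{q^{-1}[n]\}$. The morphism $d$ is, up to homotopy and nonzero scalar, the unique homogeneous morphism of the relevant bidegree (Corollary \ref{complex-colored-crossing-well-defined} and Lemma \ref{colored-crossing-res-HMF}); feeding in the explicit description of the crossing differentials from \cite[Theorem 11.26]{Wu-color} — which expresses them through the $\chi$-morphisms of Proposition \ref{general-general-chi-maps} — together with the composition identities of Proposition \ref{general-general-chi-maps}(ii), Lemma \ref{phibar-compose-phi} and Lemma \ref{iota-epsilon-composition}, one checks that under these identifications the component of $d$ on the $C(\Gamma_{1,n})\{q^{-1}[n]\}$ summand is an isomorphism onto the target. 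The Gaussian Elimination Lemma \cite[Lemma 4.2]{Bar-fast} then cancels this acyclic pair, leaving $C(\Gamma_{1,n})\{q^{n}\}$; since $\hch(\hmf)$ is Krull--Schmidt (Lemma \ref{Krull-Schmidt-hmf}) this is an isomorphism in $\hch(\hmf)$. The statement for $\Gamma_{m,1}^{+}$ is obtained the same way, the two bigons now splitting a color-$(m+1)$ edge into $m,1$ and a color-$m$ edge into $1,m-1$, so that $[m+1]=q^{m}+q^{-1}[m]$ produces the shift $q^{m}$; and the negative-crossing cases are the entirely analogous computations, which replace the two-term complex in degrees $0,1$ by one in degrees $-1,0$ and produce the shifts $q^{-n}$ and $q^{-m}$.

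The hard part is this last computation. Because each crossing differential $d^{\pm}_{k}$ is only pinned down up to homotopy and nonzero scalar, verifying that its ``diagonal'' component on the cancellable summand is a genuine isomorphism — and not merely a nonzero morphism — requires either the explicit matrix-factorization formulas of \cite[Theorem 11.26]{Wu-color} or a nondegeneracy argument assembled from the pairing Lemmas \ref{iota-epsilon-composition} and \ref{phibar-compose-phi}. A secondary but essential point is the grading bookkeeping: the shifts produced by the two applications of Proposition \ref{decomp-II} must recombine, via $[n+1]=q^{n}+q^{-1}[n]$, to exactly $q^{n}$ (and analogously $q^{m}$); landing on any other element of $\zed_{\geq 0}[q,q^{-1}]$ would signal an error in the identification of $C(\Gamma_{a}\cup V)$ or $C(\Gamma_{b}\cup V)$.
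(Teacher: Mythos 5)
The paper does not actually prove this lemma --- it is quoted directly from \cite[Proposition 6.1]{Yonezawa3} and used as a black box in the Reidemeister I argument --- so there is no in-paper proof to compare against line by line. That said, your reconstruction follows the standard route (Yonezawa's, and the one used in \cite[Sections 11--12]{Wu-color} for the analogous decompositions): when one color is $1$ the crossing complex has exactly two terms, gluing the trivalent vertex on top turns them into $C(\Gamma_{1,n})\{[n+1]\}$ and $C(\Gamma_{1,n})\{q^{-1}[n]\}$ via MOY relations (2) and (3), and the identity $[n+1]=q^{n}+q^{-1}[n]$ together with Gaussian elimination yields the shift $\{q^{n}\}$. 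Your grading bookkeeping is correct in all four cases (for the negative crossings one uses $[n+1]=q[n]+q^{-n}$, giving the degree-$0$ leftover $\{q^{-n}\}$), and the Euler-characteristic sanity check is right.

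The only place where the sketch falls short of a proof is exactly the step you flag: showing that the restriction of the differential to the $C(\Gamma_{1,n})\{q^{-1}[n]\}$ summand is an isomorphism. Triangularity of that matrix is cheap (negative-degree endomorphisms of $C(\Gamma_{1,n})$ vanish and the degree-$0$ part of $\mathrm{End}_{\HMF}(C(\Gamma_{1,n}))$ is spanned by the identity), but the non-vanishing of every diagonal scalar cannot be extracted from the uniqueness statement in Corollary \ref{complex-colored-crossing-well-defined} alone: uniqueness up to homotopy and scaling only tells you $d$ is not null-homotopic, not that each diagonal entry survives after tensoring with $C(V)$ and decomposing. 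So the argument genuinely needs the explicit description of $d_k^{\pm}$ via the $\chi$-morphisms from \cite[Theorem 11.26]{Wu-color} (or an equivalent pairing computation with Lemmas \ref{iota-epsilon-composition} and \ref{phibar-compose-phi}); as written this is an honest citation rather than a gap in logic, but be aware that all of the content of the lemma is concentrated in that one computation, and your sketch does not carry it out.
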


Let $D^+$, $D^-$ and $D$ be the knotted MOY graphs in Figure \ref{reidemeisterI}. We claim:
\begin{eqnarray}
\label{invariance-reidemeister-I-unnormal+}\hat{C}(D^+) & \simeq & \hat{C}(D)\left\langle m \right\rangle\|m\| \{q^{-m(N+1-m)}\}, \\
\label{invariance-reidemeister-I-unnormal-}\hat{C}(D^-) & \simeq & \hat{C}(D)\left\langle m \right\rangle\|-m\| \{q^{m(N+1-m)}\},
\end{eqnarray}
where $\|\ast\|$ means shifting the homological grading up by $\ast$. 

We prove \eqref{invariance-reidemeister-I-unnormal+} by an induction on $m$. The proof of \eqref{invariance-reidemeister-I-unnormal-} is similar and left to the reader. 

\begin{figure}[ht]
\[
\Gamma_1=\xygraph{
!{0;/r2pc/:}
[uu]
!{\xcapv@(0)=>>{m+1}}
!{\hover}
!{\hcap}
[ld]!{\xcapv=><{1}}
[u]!{\xcapv[-1]=<>{m}}
!{\xcapv@(0)=><{m+1}}
}\hspace{.5cm}
\Gamma_2=\xygraph{
!{0;/r1pc/:}
[uuuu]
!{\xcapv@(0)=>>{m+1}}
!{\zbendh}
!{\hcross}
[d]!{\hcross}
!{\hcap=<}
[lld]!{\hcross}
[llu]!{\hover}
[uul]!{\xcapv[2]@(0)}
[dd]!{\xcapv[-2]@(0)|{m}}
[d]!{\sbendh|{1}}
[dl]!{\xcapv@(0)=><{m+1}}
[uuuuurr]!{\hloop[3]=<}
}\hspace{.5cm}
\Gamma_3=\xygraph{
!{0;/r1pc/:}
[uuuu]
!{\xcapv@(0)=>>{m+1}}
!{\zbendh}
!{\hcross}
[d]!{\hcap=>}
[ld]!{\hcross}
[llu]!{\hover}
[uul]!{\xcapv[2]@(0)}
[dd]!{\xcapv[-2]@(0)<{m}}
[d]!{\sbendh|{1}}
[dl]!{\xcapv@(0)=><{m+1}}
[uuuuurr]!{\hcap[3]=<}
}
\]
\[
\Gamma_4=\xygraph{
!{0;/r1.5pc/:}
[uu]
!{\xcapv[0.5]@(0)=>>{m+1}}
[u(0.5)]!{\xcapv@(0)}
[u]!{\sbendv}
[l]!{\vtwist}
!{\xcapv[-1]@(0)=<<{1}}
[ur]!{\hover}
!{\hcap}
[lld]!{\vtwist}
!{\xcapv[-1]@(0)=<<{m}}
!{\zbendv}
[dl]!{\xcapv[0.5]@(0)=>>{m+1}}
}\hspace{.5cm}
\Gamma_5=\xygraph{
!{0;/r2pc/:}
[uu]
!{\xcapv[0.5]@(0)=>>{m+1}}
[u(0.5)]!{\xcapv@(0)}
[u]!{\sbendv}
[l]!{\vtwist}
!{\vtwist}
!{\xcapv@(0)=>>{m}}
!{\zbendv[-1]=<<{1}}
[dl]!{\xcapv[0.5]@(0)=>>{m+1}}
}\hspace{.5cm}
\Gamma_6=\xygraph{
!{0;/r2pc/:}
[uu]
!{\xcapv[0.5]@(0)=>>{m+1}}
[u(0.5)]!{\xcapv@(0)}
[u]!{\sbendv}
[l]!{\vtwist}
!{\xcapv@(0)=>>{m}}
!{\zbendv[-1]=<<{1}}
[dl]!{\xcapv[0.5]@(0)=>>{m+1}}
}\hspace{.5cm}
\Gamma_7=\xygraph{
!{0;/r2pc/:}
[uu]
!{\xcapv[0.5]@(0)=>>{m+1}}
[u(0.5)]!{\xcapv@(0)}
[u]!{\sbendv}
[l]!{\xcapv@(0)=>>{m}}
!{\zbendv[-1]=<<{1}}
[dl]!{\xcapv[0.5]@(0)=>>{m+1}}
}~
\]
\caption{}\label{reidemeisterI-bubble}

\end{figure}

If $m=1$, then \eqref{invariance-reidemeister-I-unnormal+} follows from \cite[Theorem 2]{KR1}. (See Theorem \ref{invariance-reidemeister-all-color=1} above.) Assume that \eqref{invariance-reidemeister-I-unnormal+} is true for some $m\geq 1$. Let us prove \eqref{invariance-reidemeister-I-unnormal+} for $m+1$. 

Consider the knotted MOY graphs $\Gamma_1,\dots,\Gamma_7$ in Figure \ref{reidemeisterI-bubble}. By MOY Relation (3) (Proposition \ref{decomp-II}), we have 
\begin{eqnarray*}
\hat{C}(\Gamma_1) \simeq \hat{C}(D^+)\{[m+1]\} & \text{ and } & \hat{C}(\Gamma_7) \simeq \hat{C}(D)\{[m+1]\}.
\end{eqnarray*}
By Proposition \ref{fork-sliding-invariance-general}, we have $\hat{C}(\Gamma_1) \simeq \hat{C}(\Gamma_2)$. Since \eqref{invariance-reidemeister-I-unnormal+} is true for $1$, we know that $\hat{C}(\Gamma_2) \simeq \hat{C}(\Gamma_3)\left\langle 1 \right\rangle\|1\| \{q^{-N}\}$. From the invariance under Reidemeister moves II and III, one can see that $\hat{C}(\Gamma_3) \simeq \hat{C}(\Gamma_4)$. Since \eqref{invariance-reidemeister-I-unnormal+} is true for $m$, we know that $\hat{C}(\Gamma_4) \simeq \hat{C}(\Gamma_5)\left\langle m \right\rangle\|m\| \{q^{-m(N+1-m)}\}$. By Lemma \ref{twisted-forks}, we know that $\hat{C}(\Gamma_5) \simeq \hat{C}(\Gamma_6)\{q^{m}\}$ and $\hat{C}(\Gamma_6) \simeq \hat{C}(\Gamma_7)\{q^{m}\}$. Putting these together, we get that
\[
\hat{C}(\Gamma_1) \simeq \hat{C}(\Gamma_7)\left\langle m+1 \right\rangle\|m+1\| \{q^{-(m+1)(N-m)}\}.
\]
By Lemmas \ref{Krull-Schmidt-hmf} and \ref{yonezawa-lemma}, it follows that \eqref{invariance-reidemeister-I-unnormal+} is true for $m+1$. This completes the induction and proves \eqref{invariance-reidemeister-I-unnormal+}.

Comparing \eqref{invariance-reidemeister-I-unnormal+} and \eqref{invariance-reidemeister-I-unnormal-} to the normalization in Definition \ref{complex-colored-crossing-def}, we get that 
\begin{eqnarray*}
C(D^+) & \simeq & C(D), \\
C(D^-) & \simeq & C(D).
\end{eqnarray*}
\end{proof}


\begin{thebibliography}{99}
\bibliographystyle{plain}
  \bibitem{Bar-fast}
   D. Bar-Natan,
   \textit{Fast Khovanov Homology Computations,}
   arXiv:math.GT/0606318.
  \bibitem{Brundan-Stroppel}
   J. Brundan, C. Stroppel, 
   \emph{Highest weight categories arising from Khovanov's diagram algebra III: category $\mathcal{O}$,} 
   arXiv:0812.1090v2.
  \bibitem{Cautis-talk}
   S. Cautis,
   \emph{D-modules and categorical $\mathfrak{sl}(2)$ actions,} 
   talk at Low-Dimensional Topology and Categorification, 2010, Stony Brook University, New York.  
   \bibitem{Cautis-Kamnitzer-1}
   S. Cautis, J. Kamnitzer, 
   \emph{Knot homology via derived categories of coherent sheaves. I. The ${\mathfrak{sl}}(2)$-case,} 
   Duke Math. J. \textbf{142} (2008), no. 3, 511--588.
  \bibitem{Cautis-Kamnitzer-2}
   S. Cautis, J. Kamnitzer, 
   \emph{Knot homology via derived categories of coherent sheaves. II. $\mathfrak{sl}\sb m$ case,} 
   Invent. Math. \textbf{174} (2008), no. 1, 165--232.
  \bibitem{Drozd}
   Y. Drozd,
   \emph{Derived categories of modules and coherent sheaves,} 
   Singularities and computer algebra, 79--128, London Math. Soc. Lecture Note Ser., \textbf{324}, Cambridge Univ. Press, Cambridge, 2006.
  \bibitem{FW}
   J. Franks, R. F. Williams,
   \textit{Braids and the Jones polynomial,}
   Trans. Amer. Math. Soc. \textbf{303} (1987), no. 1, 97--108.
  \bibitem{HOMFLY}
   P. Freyd, D. Yetter, J. Hoste, W. B. R. Lickorish, K. Millett, A. Ocneanu,
   \emph{A new polynomial invariant of knots and links,} 
   Bull. Amer. Math. Soc. (N.S.) \textbf{12} (1985), no. 2, 239--246. 
  \bibitem{Fulton-notes}
   W. Fulton,
   \emph{Equivariant cohomology in algebraic geometry,}
   notes by D. Anderson, available at http://www.math.washington.edu/$\sim$dandersn/eilenberg/
  \bibitem{Gornik}
   B. Gornik
   \textit{Note on Khovanov link cohomology,}
   arXiv:math.QA/0402266.
  \bibitem{Jones}
   V. F. R. Jones,
   \emph{A polynomial invariant for knots via von Neumann algebras,} 
   Bull. Amer. Math. Soc. (N.S.) \textbf{12} (1985), no. 1, 103--111.
  \bibitem{K1}
   M. Khovanov,
   \emph{A categorification of the Jones polynomial,}
   Duke Math. J. \textbf{101} (2000), no. 3, 359-–426.
  \bibitem{KR1}
   M. Khovanov, L. Rozansky,
   \emph{Matrix factorizations and link homology,}
   Fund. Math. \textbf{199} (2008), no. 1, 1--91.
  \bibitem{KR2}
   M. Khovanov, L. Rozansky,
   \emph{Matrix factorizations and link homology II,}
   Geom. Topol. \textbf{12} (2008), no. 3, 1387--1425.
  \bibitem{KR3}
   M. Khovanov, L. Rozansky,
   \emph{Virtual crossings, convolutions and a categorification of the $SO(2N)$ Kauffman polynomial,}
   arXiv:math/0701333v1.
  \bibitem{Krasner}
   D. Krasner,
   \emph{Equivariant sl(n)-link homology,}
   arXiv:0804.3751v2.
  \bibitem{Lascoux-notes}
   A. Lascoux,
   \emph{Interpolation,}
   http://www.combinatorics.net/lascoux/articles/interp.ps.
  \bibitem{Lee2}
   E. Lee,
   \emph{An endomorphism of the Khovanov invariant,}
   Adv. Math. \textbf{197} (2005), no. 2, 554--586. 
  \bibitem{Mackaay-Stosic-Vaz}
   M. Mackaay, M. Stosic, P. Vaz,
   \emph{$\mathfrak{sl}(N)$-link homology using foams and the Kapustin-Li formula,}
   arXiv:0708.2228v2.
  \bibitem{Mackaay-Stosic-Vaz2}
   M. Mackaay, M. Stosic, P. Vaz,
   \emph{The $1,2$-coloured HOMFLY-PT link homology,}
   arXiv:0809.0193v1.
  \bibitem{Mazorchuk-Stroppel}
   V. Mazorchuk, C.Stroppel,
   \emph{A combinatorial approach to functorial quantum $\mathfrak{sl}_k$ knot invariants,}
   Amer. J. Math. \textbf{131} (2009), no. 6, 1679--1713.
  \bibitem{Mo}
   H. R. Morton,
   \textit{Seifert circles and knot polynomials,}
   Math. Proc. Cambridge Philos. Soc. \textbf{99} (1986), no. 1, 107--109. 
  \bibitem{MOY}
   H. Murakami, T. Ohtsuki, S. Yamada,
   \textit{Homfly polynomial via an invariant of colored plane graphs,}
    Enseign. Math. (2) \textbf{44} (1998), no. 3-4, 325--360.  
  \bibitem{PT}
   J. Przytycki, P. Traczyk,
   \emph{Conway algebras and skein equivalence of links,} 
   Proc. Amer. Math. Soc. \textbf{100} (1987), no. 4, 744--748. 
  \bibitem{Resh-Tur1}
   Y. Reshetikhin, V. Turaev,
   \emph{Ribbon graphs and their invariants derived from quantum groups,} 
   Comm. Math. Phys. \textbf{127} (1990), no. 1, 1--26.
  \bibitem{Resh-Tur2}
   Y. Reshetikhin, V. Turaev,
   \emph{Invariants of $3$-manifolds via link polynomials and quantum groups,} 
   Invent. Math. \textbf{103} (1991), no. 3, 547--597.
  \bibitem{Stroppel}
   C. Stroppel,
   \emph{Categorification of the Temperley-Lieb category, tangles, and cobordisms via projective functors,} 
   Duke Math. J. \textbf{126} (2005), no. 3, 547--596.
  \bibitem{Stroppel2}
   C. Stroppel,
   \emph{Parabolic category $\mathcal{O}$, perverse sheaves on Grassmannians, Springer fibres and Khovanov homology,} 
   Compos. Math. \textbf{145} (2009), no. 4, 954--992.
  \bibitem{Webster1}
   B. Webster,
   \emph{Knot invariants and higher representation theory I: diagrammatic and geometric categorification of tensor products}
   arXiv:1001.2020v4.
  \bibitem{Webster2}
   B. Webster,
   \emph{Knot invariants and higher representation theory II: the categorification of quantum knot invariants}
   arXiv:1005.4559v2.
  \bibitem{Webster-Williamson}
   B. Webster, G. Williamson,
   \emph{A geometric construction of colored HOMFLYPT homology,} 
   arXiv:0905.0486v1.
  \bibitem{Wu7}
   H. Wu,
   \emph{On the quantum filtration of the Khovanov-Rozansky cohomology,}
   Adv. Math. 221 (2009), no. 1, 54--139.
  \bibitem{Wu-color}
   H. Wu,
   \emph{A colored $\mathfrak{sl}(N)$-homology for links in $S^3$,}
   arXiv:0907.0695v5.
  \bibitem{Wu-color-equi}
   H. Wu,
   \emph{Equivariant colored $\mathfrak{sl}(N)$-homology for links,}
   arXiv:1002.2662.
  \bibitem{Wu-color-ras}
   H. Wu,
   \emph{Generic deformations of the colored $\mathfrak{sl}(N)$-homology for links,}
   arXiv:1011.2254.
  \bibitem{Wu-color-MFW}
   H. Wu,
   \emph{Colored Morton-Franks-Williams inequalities,}
   arXiv:1102.0586.
  \bibitem{Yonezawa3}
   Y. Yonezawa,
   \emph{Quantum $(\mathfrak{sl}_n,\wedge^i V_n)$ link invariant and matrix factorization,} 
    arXiv:0906.0220v2.
\end{thebibliography}
\end{document}